\theoremstyle{plain}
\newtheorem{theorem}[subsection]{Theorem}
\newtheorem{proposition}[subsection]{Proposition}
\newtheorem{lemma}[subsection]{Lemma}
\newtheorem{corollary}[subsection]{Corollary}
\theoremstyle{definition}
\newtheorem{conjecture}[subsection]{Conjecture}
\newtheorem{definition}[subsection]{Definition}
\theoremstyle{remark}
\newtheorem*{remark*}{Remark}
\newtheorem*{example*}{Example}
\numberwithin{equation}{subsection}
\begin{document}

\title[Infinite Reduced Words, Lattice Property And Braid Graph]{Infinite Reduced Words, Lattice Property And Braid Graph of Affine Weyl Groups}
\author{Weijia Wang}
\address{School of Mathematics (Zhuhai)
\\ Sun Yat-sen University \\
Zhuhai, Guangdong, 519082 \\ China}
\email{wangweij5@mail.sysu.edu.cn}

\begin{abstract}
In this paper, we establish a bijection between the
  infinite reduced words of an affine Weyl group and certain biclosed
  sets of its positive system and determine all finitely generated biclosed sets in the positive system of an affine Weyl group. Using these results, we show firstly that
  the biclosed sets in the standard positive system of rank 3 affine Weyl groups when ordered by inclusion form a
  complete algebraic
  ortholattice and secondly that the (generalized) braid graphs of those Coxeter groups are connected, which can be thought of as an infinite version of Tit's solution to the word problem.
\end{abstract}

\maketitle

\section*{Introduction}\label{sectIntro}

Given an arbitrary Coxeter system $(W,S)$ one can attach a root system to it in a canonical way.
A particular total ordering on the set of the positive roots called a \emph{reflection order}
is the notion analogous to the reduced expression of the longest element of a finite Coxeter group.
Such orderings encapsulate important combinatorics of the Iwahori-Hecke algebra, Kazhdan-Lusztig polynomials and $R$-polynomials.
For example, reflection orders can be used to generalize the symmetry of the structure constants of the Iwahori-Hecke algebra associated to an infinite Coxeter group by playing the role of the reduced expressions of the longest element of a finite Coxeter group. See for example \cite{dyerhecke}, \cite{DyerTwistedBruhat}.
Reflection orders can also be used to provide nonrecursive combinatorial formulae for  Kazhdan-Lusztig polynomials and  $R$-polynomails. See \cite{bjornerbrenti} Chapter 5.

However, the structure of the reflection orders of an infinite Coxeter group are in general not well understood. Several questions about the reflection orders and their initial sections are still open. Among them are two long-standing conjectures of Dyer, the first of which relates the initial sections of the reflection orders to the subsets of the positive system that are both closed and coclosed under a certain natural closure operator (these sets will be called \emph{biclosed sets}).  The first conjecture asserts that any maximal chain of biclosed sets arises from the set of all initial sections
of some reflection order and that a set is biclosed if and only if it is an initial section of some reflection order. The second conjecture asserts that the initial sections of all reflection orders when partially ordered by inclusion form a complete lattice.
These two conjectures can be motivated by the analogy between the reflection orders and the basic properties of the reduced expressions by considering the initial sections as elements of possibly ``infinite length''. With this point of view, the second conjecture is a natural generalization to the set of initial sections of the classical result that the weak order of a finite Coxeter group is a lattice  and the first conjecture is analogous to the fact that every element has a reduced expression. The conjectures are true for all finite Coxeter groups (where they reduce to well-known properties of weak order on $W$) and  can be easily verified  for the infinite dihedral case ($\widetilde{A}_1$).

In \cite{DyerReflOrder}, Dyer proves the first conjecture in the case of affine Weyl groups.
Provided that the first conjecture holds, the second conjecture can be restated as follows: the set of all biclosed sets when partially ordered by inclusion forms a complete lattice.
Such a conjecture stimulates many studies. In \cite{DyerWeakOrder}, some evidence in support of the conjectures is given and the conjectural join and meet of two biclosed sets are described in terms of the closure operator. In \cite{labbe}, the author uses a slightly different convex closure operator and proves a variant of the conjecture in rank three cases. In \cite{Hohlw}, the authors characterize
 the existence of the join of a set of elements in $W$ by the corresponding inversion sets
and their position relative to the imaginary cone. In \cite{viard}, the author studies the conjecture using his theory of projective valued digraphs.

In this paper, we study the second conjecture in the case of rank three affine  Weyl groups.
To do this, we first consider the poset of those biclosed sets that are inversion sets of $W$ and the infinite reduced expressions. We show that such a poset is a complete meet semilattice. Then we investigate the infinite reduced words of an affine Weyl group and characterize the biclosed sets which are the inversion sets of infinite reduced words. An immediate consequence of this characterization is that for rank three affine Weyl groups any biclosed set is either an inversion set of some (finite or infinite) word or the complement of an inversion set.
This special property makes the conjecture for rank three affine Weyl groups more accessible.
To be precise, in rank three affine cases the poset of the biclosed sets splits into a lower half (consisting of the inversion sets of finite or infinite words) and a top half (consisting of the complements of the inversion sets of finite or infinite words). The key ingredient to show the conjecture is to prove that in the lower half, a family of biclosed sets is either bounded or its orthogonal family is bounded. We prove this property by some case studies. In this way we show that for rank three affine Weyl groups Dyer's conjecture holds. We also classify the finitely generated biclosed sets in the positive system of an affine root system and this allows us to conclude that the lattices under study are algebraic. The paper is organized as follows.

In section \ref{sect:infbasics}, we consider the set $\overline{W}$ consisting of the infinite reduced words and the elements in the group $W$. The weak order on $W$ is naturally extended to $\overline{W}$. We study such an order and  show that $\overline{W}$ is a complete meet semilattice which is a complete lattice if and only if $W$ is the weak direct product of countably many finite or locally finite Coxeter groups. We also show that $\overline{W}$ has a  Join Orthogonality Property and admits maximal elements provided the rank of $W$ is at most infinite countable.

In section \ref{sec:bijection}, We specialize to the case of affine Weyl groups and establish a ($W-$equivariant) bijection between the infinite reduced words and the certain biclosed sets in the positive system. By using this bijection we give a characterization of affine Weyl groups: a finite rank irreducible infinite Coxeter system is affine if and only if $\overline{W}$ admits finitely many maximal elements.

A consequence of the classification obtained in section \ref{sec:bijection} is that for rank three affine positive systems, the poset of the biclosed sets splits into two halves where the lower half consists of the inversion sets of the (finite or infinite) words. Section \ref{sec:lattice} is devoted to  proving that the lower half of the poset has the following favorable property: a family of its elements is either bounded or its orthogonal family is bounded. By using this property we show the existence of the join of two biclosed sets and therefore prove that the poset of biclosed sets is a lattice. We also prove that in the positive system of an affine root system if the closure of the union of a family of finite biclosed sets is finite then it is biclosed.

In section \ref{sec:compact}, we study those biclosed sets in the positive system of an affine root system that are finitely generated. In rank three cases they are the compact elements of the lattice of biclosed sets. Our results show that those lattices are algebraic.

In section \ref{sec:braid}, we show that the (generalized) braid graphs (defined using reflection orders) of rank three affine Weyl groups are all connected, which generalizes the fact that the reduced expressions of a (finite) word are connected by braid moves. In this infinite setting, a biclosed set of the standard positive system can be thought of as an element of ``infinite length''. The reflection order is a substitute for reduced expression. A braid move is characterized by the reversal of a dihedral string. The proof makes use of the explicit description of the biclosed sets as either inversion sets or their complements in affine rank three cases.

\section{Preliminaries}\label{sectpreli}
A partially ordered set $L$ is a \emph{meet semilattice}  if
any two elements $x,y\in L$ admit a greatest lower bound
(meet), denoted by $x\wedge y$. A partially ordered set $L$ is called a \emph{complete meet
semilattice} if any subset $A\subset L$ admits a greatest
lower bound (meet), denoted by $\bigwedge A.$ A partially ordered set $L$ is a \emph{join semilattice}
if any two elements $x,y\in L$ admit a least upper bound
(join), denoted by $x\vee y$. A partially ordered set $L$ is called a \emph{complete join semilattice}
if any subset $A\subset L$  admits a least upper bound
(join), denoted by $\bigvee A.$ If a partially ordered set  is both a meet semilattice and a
join semilattice, it is called a \emph{lattice}. If a partially ordered set  is both a complete
meet semilattice and a complete join  semilattice, it is called a
\emph{complete lattice}. An \emph{ortholattice} $L$ is a lattice $L$ such that it has a minimum element,
denoted by 0, and a maximum element, denoted by 1 and there exists
an order-reversing involution on $L$, denoted by $x\mapsto x^{oc}$
such that $x\wedge x^{oc}=0$ and $x\vee x^{oc}=1$ for all
$x\in L$.

An element $x$ in a complete lattice $L$ is called \emph{compact} if $x\leq
\bigvee Y$ for some $Y\subset L$ implies that $x\leq \bigvee Y_0$
for some finite $Y_0\subset Y.$ A complete lattice is called
\emph{algebraic} if every element of it is a join of a set of compact
elements.

We record the following easy lemma that will be used frequently in the paper.

\begin{lemma}\label{techsemijoin}
Let $(P,\leq)$ be a complete meet semilattice. Suppose that $A$ is a bounded subset of $P$. Then the join of $A$ exists in $P$.
\end{lemma}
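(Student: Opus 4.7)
The plan is to build the join of $A$ as the meet of the set of its upper bounds. Concretely, let
\[
U := \{\, u \in P : a \leq u \text{ for all } a \in A \,\}
\]
be the set of upper bounds of $A$ in $P$. The boundedness hypothesis on $A$ guarantees that $U$ is nonempty. Since $P$ is a complete meet semilattice, the meet $m := \bigwedge U$ exists in $P$, and this element is the candidate for $\bigvee A$.

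The verification then splits into two short steps. First, I would check that $m$ is an upper bound of $A$: for each fixed $a \in A$, every $u \in U$ satisfies $a \leq u$ by definition of $U$, so $a$ is a lower bound of the set $U$. Since $m$ is the greatest lower bound of $U$, it follows that $a \leq m$, and as $a$ was arbitrary, $m$ is indeed an upper bound of $A$. Second, I would show $m$ is the least upper bound: any other upper bound $v$ of $A$ lies in $U$ by definition, and $m$ being a lower bound of $U$ forces $m \leq v$. Combining the two steps yields $m = \bigvee A$, so the join of $A$ exists in $P$.

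There is no real obstacle in this argument; it is essentially a direct application of the universal property of meets together with the observation that upper bounds of $A$ form a nonempty subset of $P$. The only mild subtlety is to note that completeness of the meet semilattice is used to obtain the meet of the (possibly infinite) set $U$, not merely of finite subsets.
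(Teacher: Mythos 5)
Your proof is correct: taking $m=\bigwedge U$ where $U$ is the (nonempty) set of upper bounds of $A$, and using the universal property of the meet to show $m$ is both an upper bound of $A$ and below every element of $U$, is exactly the standard argument. The paper records this lemma without proof, and your argument is the intended one.
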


For a totally ordered set $T$, a set $I\subset T$ is called an \emph{initial section} if for all $x\in I, y\in T\backslash I$ we have that $x\leq y$. A set $F\subset T$ is called a \emph{final section} if for all $x\in F, y\in T\backslash F$ we have that $x\geq y.$

Let $(P,\leq)$ be a partially ordered set. A subset $Q$ of $P$ is said to be a \emph{totally ordered subset} (\emph{chain}) of $P$ if the restriction of $\leq$ to $Q$ is a total order. A totally ordered subset $Q$ of $P$ is called a \emph{maximal totally ordered subset} if for any $x\in P\backslash Q$, $Q\cup \{x\}$ is not a totally ordered subset.
A poset $(P,\leq)$ is said to be \emph{chain complete} if any chain of it has a least upper bound and a greatest lower bound.

Let $(W,S)$ be a Coxeter system with root system $\Phi$, see Chapter 5 of
\cite{Hum} for example. Note that in this paper we allow $S$ to be infinite. The set of positive roots and negative roots
are denoted by $\Phi^+$ and $\Phi^-$ respectively. The set of simple
roots is denoted by $\Pi$. An effective way to visualize the infinite root system in lower rank is to consider the projective representation of $W$. The idea is to project all the roots in the standard reflection representation to an affine hyperplane which is transverse to $\Phi^+.$ For the projective representation see Section 2 of \cite{DyerLimitRoot}, Section 2 of \cite{Hohlw2} and Section 2.3 of \cite{labbe}. Given $w\in W$ define the \emph{inversion set} of
$w^{-1}$ to be $\Phi_w:=\{\alpha\in \Phi^+|w^{-1}(\alpha)\in
\Phi^-\}.$ Let $T:=\{wsw^{-1}|w\in W, s\in S\}.$ The set $T$ is called the set of \emph{reflections}.
There exists a canonical bijection between $T$ and $\Phi^+$ and denote by $s_{\alpha}$ the reflection corresponding to a positive root $\alpha$.
Let $W'$ be a subgroup of $W$ generated by a subset of $T$. Then $W'$ is called a \emph{reflection subgroup}.
It can be shown that $(W',\{t\in T\cap W'|l(t't)>l(t), \forall t'\in T\cap W'\backslash \{t\}\})$ is a Coxeter system. For reflection subgroups see Section 3 of  \cite{DyerReflSubgrp}.

The \emph{(right) weak  order} $\leq$ on $W$ is defined as $x\leq y$ if and
only if $\Phi_x\subset \Phi_y$ for all $x,y\in W$. The poset $(W,\leq)$ is a
complete meet semilattice. It is a complete lattice if and only if
$W$ is finite. See Chapter 3 of \cite{bjornerbrenti}.

A set $\Gamma\subset \Phi^+$ is called \emph{closed} (in $\Phi^+$) if for
all $\alpha,\beta\in \Gamma$, $k_1\alpha+k_2\alpha\in
\Phi^+$ with $k_1,k_2\in \mathbb{R}_{\geq 0}$ implies that
$k_1\alpha+k_2\alpha\in \Gamma$. A subset $\Gamma$ is called \emph{coclosed} (in $\Phi^+$) if $\Phi^+\backslash \Gamma$ is closed (in $\Phi^+$). A set $\Gamma\subset \Phi^+$ is called
\emph{biclosed} (in $\Phi^+$) if $\Gamma$ and $\Phi^+\backslash \Gamma$ are
both closed (in $\Phi^+$). Replacing $\Phi^+$ with $\Phi$ in the
above definition, we define the closed sets, coclosed sets and biclosed sets in
$\Phi$. We usually denote by $\overline{X}$ the closure of a set $X$. Such a closure operator is well-defined (since the intersection of closed sets
stays closed).

It is known that the finite biclosed sets in $\Phi^+$ are precisely
the inversion sets  of the elements in the Coxeter group (See \cite{DyerWeakOrder} Lemma 4.1(d)).

Denote by $\mathscr{B}(\Phi^+)$ the set of all biclosed sets in
$\Phi^+$, regarded as a partially ordered set under set inclusion. Two biclosed sets $B_1,B_2$ of this poset are said to be in the same \emph{block} if they differ by finitely many roots.

For a set $\Gamma$ biclosed in $\Phi^+$ and $x\in W$, define
$$x\cdot \Gamma:=(\Phi_x\backslash x(-\Gamma))\cup (x(\Gamma)\backslash
(-\Phi_x)).$$ Then $x\cdot \Gamma$ is again a biclosed set and this
is a group action of $W$ on the set of biclosed sets in $\Phi^+.$
See \cite{DyerWeakOrder} Lemma 4.1(a)(c).

A total order $\prec$ on $\Phi^+$
is called a \emph{reflection order} if for all $\alpha,\beta\in \Phi^+,\alpha\prec \beta$ and $a,b\in \mathbb{R}_{\geq 0}$ such that
$a\alpha+b\beta\in \Phi^+$ we have that $\alpha\prec a\alpha+b\beta\prec \beta.$ Suppose that $(W,S)$ is a dihedral Coxeter system with $S=\{s_{\alpha},s_{\beta}\}$. Then
$$\alpha\prec s_{\alpha}(\beta)\prec s_{\alpha}s_{\beta}(\alpha)\prec \cdots \prec s_{\beta}(\alpha)\prec \beta$$
is a reflection order.

A \emph{locally finite Coxeter system} $(W,S)$ is a Coxeter system such that $W$ is infinite
and each parabolic subgroup $W_J$ of it is finite if $J$ is finite.
Irreducible locally finite Coxeter systems are precisely the ones of
type $A_{\infty}, A_{\infty,\infty}, B_{\infty}, D_{\infty}.$ See
Exercise 4.14 of \cite{Kac}.

Now let $\Phi$ be an irreducible crystallographic root system of a
Weyl group $W$ and be contained in the Euclidean space $V$ as in
\cite{Bourbaki} Chapter VI, \S 1. A subset $\Gamma\subset \Phi$ is called
$\mathbb{Z}-$\emph{closed} if for all $\alpha,\beta\in \Gamma$ such that
$\alpha+\beta\in \Phi$, we have that $\alpha+\beta\in \Gamma.$

Let $\Delta$ be a simple system of $\Phi$ with the corresponding
positive system $\Phi^+$. Let $\gamma$ be the highest root. Suppose that
$\Delta^{'}$ and $\Delta^{''}$ are two subsets of $\Delta$ which are
orthogonal (i.e. $(\alpha,\beta)=0,\forall \alpha\in
\Delta^{'},\beta\in \Delta^{''}$). Define
$$\Phi^+_{\Delta^{'},\Delta^{''}}=(\Phi^+\backslash \Phi_{\Delta'})\cup \Phi_{\Delta''}$$
where $\Phi_{\Delta'}$ is the root subsystem of $\Phi$ generated by $\Delta'.$

In \cite{DyerReflOrder} (Theorem 1.15) and in \cite{biclosedphi} (Theorem 4) all  biclosed sets in $\Phi$ are
classified:

\begin{theorem}\label{theorem:classifiybiclosedweyl}
Let $\Gamma\subset \Phi$. The set $\Gamma$ is biclosed in $\Phi$ if and
only if there exists a simple system $\Delta$ with the corresponding positive
system $\Psi^+$ and  orthogonal subsets
$\Delta^{'},\Delta^{''}\subset \Delta$ such that
$\Gamma=\Psi^+_{\Delta^{'},\Delta^{''}}$.
\end{theorem}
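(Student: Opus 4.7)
The \emph{if} direction is a direct verification. Set $\Gamma := (\Psi^+\setminus \Phi_{\Delta'})\cup \Phi_{\Delta''}$. The orthogonality (hence disjointness and linear independence) of $\Delta'$ and $\Delta''$ inside the simple system $\Delta$ ensures that $\Phi_{\Delta'}$ and $\Phi_{\Delta''}$ are disjoint root subsystems living in orthogonal subspaces of $V$; in particular $\alpha+\beta\notin \Phi$ for any $\alpha \in \Phi_{\Delta'}$, $\beta \in \Phi_{\Delta''}$. Given $\alpha,\beta\in\Gamma$ with $\alpha+\beta\in\Phi$, a case analysis on which of the two pieces $\Psi^+\setminus \Phi_{\Delta'}$ and $\Phi_{\Delta''}$ each root belongs to reduces the verification to two pure cases, both of which follow from $\Psi^+$ being a positive system and $\Phi_{\Delta''}$ being a root subsystem. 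Since $\Phi\setminus \Gamma = (\Psi^- \setminus \Phi_{\Delta''}) \cup \Phi_{\Delta'}$ has the same shape with the roles of $\Delta',\Delta''$ and $\Psi^\pm$ swapped, closure of the complement follows by the symmetric argument.

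For the converse, let $\Gamma$ be biclosed in $\Phi$ and decompose
\[
A := \Gamma \cap (-\Gamma),\qquad D := (\Phi\setminus \Gamma)\cap -(\Phi\setminus \Gamma),\qquad B := \Gamma\setminus A,\qquad C := -B,
\]
so that $\Phi = A \sqcup B \sqcup C \sqcup D$. The plan is to find a simple system $\Delta$ of $\Phi$ with orthogonal subsets $\Delta', \Delta'' \subset \Delta$ such that $\Phi_{\Delta'} = D$ and $\Phi_{\Delta''} = A$, together with a positive system $\Psi^+$ containing $B$; with such data, $\Gamma=\Psi^+_{\Delta', \Delta''}$ follows by inspection. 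First, $A$ is closed in $\Phi$: if $\alpha, \beta \in A$ and $\alpha+\beta\in\Phi$, then applying closure of $\Gamma$ to $\alpha,\beta \in \Gamma$ and then to $-\alpha,-\beta \in \Gamma$ yields both $\pm(\alpha+\beta) \in \Gamma$, so $\alpha+\beta \in A$. The analogous argument applied to $\Phi\setminus \Gamma$ shows $D$ is closed. Both are symmetric by construction, hence root subsystems.

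Next I would show that $\alpha \in A$, $\beta \in D$ forces $\alpha+\beta\notin \Phi$: if $\alpha+\beta\in \Gamma$ then $(-\alpha)+(\alpha+\beta)=\beta\in \Gamma$ by closure of $\Gamma$, contradicting $\beta\in D$; if $\alpha+\beta\in \Phi\setminus\Gamma$ then $(-\beta)+(\alpha+\beta)=\alpha\in\Phi\setminus \Gamma$ by closure of $\Phi\setminus\Gamma$, contradicting $\alpha\in A$. In the crystallographic setting $(\alpha,\beta)\neq 0$ forces some sign of $\alpha\pm\beta$ to lie in $\Phi$, so this simultaneously yields $A\perp D$ and that $A\cup D$ is closed, hence an orthogonal root subsystem $A\oplus D$. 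A parallel closure argument shows $B$ is sum-closed: if $b_1,b_2\in B$ and $b_1+b_2\in\Phi$, closure of $\Gamma$ gives $b_1+b_2\in\Gamma$ while closure of $\Phi\setminus \Gamma$ applied to $-b_1,-b_2\in C$ gives $-(b_1+b_2)\in\Phi\setminus\Gamma$, so $b_1+b_2\in B$.

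The crucial and hardest step is to prove that $A\cup D$ is a \emph{parabolic} subsystem of $\Phi$, equivalently $A\cup D = \Phi \cap \mathrm{span}(A\cup D)$. I would proceed by induction on $\mathrm{rank}(\Phi)$: setting $\Phi':=\Phi\cap \mathrm{span}(A\cup D)$, one checks routinely that $\Gamma':=\Gamma\cap \Phi'$ is biclosed in $\Phi'$ with the same symmetric parts $A$ and $D$. If $\mathrm{rank}\,\Phi'<\mathrm{rank}\,\Phi$, applying the inductive hypothesis to $(\Phi',\Gamma')$ produces a simple system of $\Phi'$ exhibiting $A\cup D=\Phi'$, which is then parabolic in $\Phi$; the residual case $\mathrm{span}(A\cup D)=V$ is where the real work lies, and one must show $B=\emptyset$. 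I would attack this via a Farkas-type separation argument: the sum-closure of $B$ together with the crystallographic structure of $\Phi$ should rule out any nontrivial nonnegative linear relation among elements of $B$ landing in $\mathrm{span}(A\cup D)$, producing a linear functional on $V$ that vanishes on $A\cup D$ and is strictly positive on $B$; if $\mathrm{span}(A\cup D)=V$ this functional must be identically zero, forcing $B=\emptyset$. With $A\cup D$ shown to be parabolic, one extends a simple system of $A\oplus D$ (the union of any simple systems of $A$ and $D$) to a simple system $\Delta$ of $\Phi$, sets $\Delta'':=\Delta\cap A$ and $\Delta':=\Delta\cap D$, chooses $\Psi^+$ via the separating functional so that $B\subset\Psi^+$, and reads off $\Gamma = \Psi^+_{\Delta', \Delta''}$ directly from the definition.
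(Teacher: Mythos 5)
First, a point of reference: the paper does not prove this statement at all --- it is imported verbatim as Theorem \ref{theorem:classifiybiclosedweyl} from \cite{DyerReflOrder} (Theorem 1.15) and \cite{biclosedphi} (Theorem 4), so there is no in-paper proof to compare your argument against. Judged on its own terms, your proposal gets the routine parts right. The ``if'' direction is indeed a direct verification (though note that the case of two roots in $\Psi^+\setminus\Phi_{\Delta'}$ uses that $\Phi_{\Delta'}$ is a \emph{parabolic} subsystem, i.e.\ detected by vanishing of coefficients on $\Delta\setminus\Delta'$, not merely that $\Psi^+$ is a positive system). In the converse, the decomposition $\Phi=A\sqcup B\sqcup C\sqcup D$, the closedness and symmetry of $A$ and $D$, the orthogonality $A\perp D$, and the sum-closure of $B$ are all correctly established, and the endgame (extending simple systems of $A\oplus D$ and reading off $\Gamma=\Psi^+_{\Delta',\Delta''}$) is fine once the parabolicity of $A\cup D$ and the containment $B\subset\Psi^+$ are in hand.

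The genuine gap is exactly at what you yourself call ``the crucial and hardest step.'' Reducing by induction on rank to the case $\mathrm{span}(A\cup D)=V$ is legitimate, but at that point the claim $B=\emptyset$ carries essentially the entire content of the theorem (in the target description $B=\Psi^+\setminus\Phi_{\Delta'\cup\Delta''}$ is empty precisely when $\Delta'\cup\Delta''$ spans $V$), and your proposal does not prove it: you assert that a Farkas-type separation argument ``should rule out any nontrivial nonnegative linear relation among elements of $B$ landing in $\mathrm{span}(A\cup D)$,'' but you supply no mechanism for deriving a contradiction from such a relation $\sum_i c_i b_i\in\mathrm{span}(A\cup D)$ with $c_i>0$. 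Sum-closure of $B$ is far too weak for this --- it says nothing when $b_i+b_j\notin\Phi$ --- and nothing established earlier in your argument (closedness of the four pieces, orthogonality of $A$ and $D$) interacts with an arbitrary positive linear relation. This is precisely where the published proofs do real work (an induction organized around the highest root, respectively the structure theory of closed sets $P$ with $P\cup -P=\Phi$ in \cite{biclosedphi}), and your sketch replaces that work with a hope. As written, the proof is incomplete at its central step.
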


Define a real vector space $V'=V\oplus\mathbb{R}\delta$ and extend
the inner product on $V$ to $V'$ by requiring $(\delta,V')=0.$ For
$\alpha\in \Phi^+$, we define
$\widehat{\alpha}=\{\alpha+n\delta|n\in \mathbb{Z}_{\geq 0}\}\subset
V'$. For $\alpha\in \Phi^-$, we define
$\widehat{\alpha}=\{\alpha+(n+1)\delta|n\in \mathbb{Z}_{\geq
0}\}\subset V'$. For a set $\Gamma\subset \Phi$, define
$\widehat{\Gamma}=\bigcup_{\alpha\in\Gamma}\widehat{\alpha}\subset
V'$.

Denote by $\widetilde{W}$ the (irreducible) affine Weyl group corresponding to $W$.
Then $\widetilde{W}$ has $\widehat{\Phi}\cup -\widehat{\Phi}$ as
root system with $\widehat{\Phi}(=\widetilde{\Phi}^+)$ as the set of positive roots.
The set of
simple roots $\widetilde{\Delta}=\Delta\bigcup \{\delta-\gamma\}$. For this construction, see Chapter 6 of \cite{Kac}  or Section 3.3 of
\cite{GusDyer}.
It is known that $\widetilde{W}=W\ltimes Q$ where $Q$ is the coroot lattice. Denote by $\pi$ the natural epimorphism from $\widetilde{W}$ to $W$.

In \cite{DyerReflOrder} (Proposition 5.11, Corollary 5.12) all  biclosed sets in $\widehat{\Phi}$
are classified:

\begin{theorem}\label{classifybiclosedsetaffine}
(1) Let $\Gamma\subset \widehat{\Phi}$. The set $\Gamma$ is biclosed in
$\widehat{\Phi}$ if and only if $\Gamma=w\cdot \widehat{\Lambda}$
where $\Lambda$ is biclosed in $\Phi$ and $w\in \widetilde{W}$.

(2) The biclosed sets in $\widehat{\Phi}$ that are in the same block as $\widehat{\Psi^+_{\Delta_1,\Delta_2}}$ are those of the form $w\cdot \widehat{\Psi^+_{\Delta_1,\Delta_2}}$ for some  $w\in W'<\widetilde{W}$ where $W'$ is the reflection subgroup generated by $\{s_{\alpha}|\alpha \in \widehat{\Psi_{\Delta_1\cup \Delta_2}}\}$. In particular $w\cdot \widehat{\Psi^+_{\Delta_1,\Delta_2}}=\widehat{\Psi^+_{\Delta_1,\Delta_2}}+(\Phi_{W'}^+\cap w(-\Phi_{W'}^+))$ where $+$ denotes the set symmetric difference and $\Phi_{W'}^+\subset \widehat{\Phi}$ is the positive system of $W'$.
\end{theorem}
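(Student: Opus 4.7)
The plan is to prove the two parts in sequence, with part (2) providing a refinement of the orbit structure from part (1). I would first handle the easy direction of (1), then tackle the converse by a normalization argument using the $\widetilde{W}$-action, and finally derive (2) from the classification of finite biclosed sets as inversion sets.

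For the ``if'' direction of (1), I would directly check that $\widehat{\Lambda}$ is biclosed in $\widehat{\Phi}$ whenever $\Lambda$ is biclosed in $\Phi$. Any positive real combination of two affine roots $\alpha+n\delta$ and $\beta+m\delta$ that lies in $\widehat{\Phi}$ takes the form $\gamma+p\delta$ with $\gamma=k_1\alpha+k_2\beta\in\Phi$ (since $\delta\perp V$) and $p=k_1 n+k_2 m$, so closure transfers between $\Phi$ and $\widehat{\Phi}$ level by level, and the analogous computation for the complement gives coclosure. Invariance of $\widehat{\Lambda}$'s biclosedness under $w\cdot$ then follows from the $\widetilde{W}$-equivariance of biclosedness recalled earlier.

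The converse in (1) is the main obstacle. I would normalize $\Gamma$ within its $\widetilde{W}$-orbit by analyzing its restriction to each ``column'' $\widehat{\alpha}\cup\widehat{-\alpha}$ (for $\alpha\in\Phi^+$), which is a rank-one affine subsystem of type $\widetilde{A}_1$; biclosed subsets of such a subsystem are controlled by a single integer cutoff. Using the translation part $Q$ of $\widetilde{W}=W\ltimes Q$ to shift these cutoffs simultaneously to $0$ would ideally reduce $\Gamma$ to the shape $\widehat{\Lambda}$ with $\Lambda\subset\Phi$. The technical heart is reconciling the cutoffs across different $\alpha$'s, because closure across columns imposes interlocking constraints on them; once this normalization succeeds, the resulting $\Lambda$ is automatically biclosed in $\Phi$, and Theorem \ref{theorem:classifiybiclosedweyl} then describes it explicitly.

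For part (2), suppose $\Gamma$ and $\widehat{\Psi^+_{\Delta_1,\Delta_2}}$ lie in the same block, so that their symmetric difference $\Xi$ is finite. A local inspection shows that any affine root that can be toggled without breaking either closure or coclosure must lie in $\widehat{\Psi_{\Delta_1\cup\Delta_2}}=\Phi^+_{W'}$, whence $\Xi\subset\Phi^+_{W'}$; the biclosedness of $\Gamma$ then restricts to make $\Xi$ a finite biclosed set of $\Phi^+_{W'}$. Since finite biclosed sets are inversion sets by the lemma from \cite{DyerWeakOrder} recalled above, $\Xi=\Phi^+_{W'}\cap w(-\Phi^+_{W'})$ for some $w\in W'$, and unwinding the definition of the dot-action yields the displayed identity $w\cdot\widehat{\Psi^+_{\Delta_1,\Delta_2}}=\widehat{\Psi^+_{\Delta_1,\Delta_2}}+\Xi$. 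The subtle point, and the one most likely to require care, is proving that all of $\Xi$ is forced into $\Phi^+_{W'}$ rather than spilling outside, which amounts to showing that the orthogonality structure of $\Delta_1,\Delta_2$ rigidly controls where finite flips are admissible.
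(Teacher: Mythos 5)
This theorem is not proved in the paper at all: it is quoted verbatim from Dyer's preprint \cite{DyerReflOrder} (Proposition 5.11 and Corollary 5.12) as part of the preliminaries, so there is no in-paper argument to compare yours against. Judged on its own terms, your proposal is a reasonable outline of how such a proof would have to go, and the easy direction of (1) (checking that $\widehat{\Lambda}$ is biclosed when $\Lambda$ is, via the identification $k_1(\alpha+n\delta)+k_2(\beta+m\delta)=\gamma+p\delta$ with $\gamma=k_1\alpha+k_2\beta$, plus the $\widetilde{W}$-equivariance from \cite{DyerWeakOrder} Lemma 4.1) is fine. But the proposal stops short of a proof precisely at the two places you yourself flag as ``the technical heart'' and ``the subtle point,'' and those are the entire content of the theorem.

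Concretely, for the converse of (1): knowing that $\Gamma\cap(\widehat{\alpha}\cup\widehat{-\alpha})$ is governed by an integer cutoff in each rank-one column is the trivial part. The difficulty is that the $\cdot$-action of the translation lattice $Q$ does not shift the cutoffs in each column independently --- a single $w\in\widetilde{W}$ imposes one coherent family of shifts determined by $\Phi_w$ --- and one must also handle columns where $\Gamma$ meets both $\widehat{\alpha}$ and $\widehat{-\alpha}$ cofinitely (these produce the $\Delta''$-part of $\Psi^+_{\Delta',\Delta''}$ and cannot be normalized to cutoff $0$ at all). Showing that the interlocking closure constraints force the column data to be realized by some $\Phi_w$ together with a biclosed $\Lambda\subset\Phi$ is the whole theorem, and no mechanism for it is given. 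Similarly in (2), the assertions that the symmetric difference $\Xi$ is forced into $\Phi^+_{W'}=\widehat{\Psi_{\Delta_1\cup\Delta_2}}$, and that $\Xi$ then inherits the structure of a finite biclosed set of the positive system of $W'$ (note $\Xi$ is a symmetric difference, not a subset of either biclosed set, so this already requires an argument in the style of \cite{DyerWeakOrder}), are stated as goals rather than established. As it stands the proposal is an honest plan with the decisive steps missing, so it cannot be accepted as a proof; since the paper treats the result as an external citation, the appropriate course is either to cite \cite{DyerReflOrder} as the paper does or to carry out the normalization argument in full.
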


It is worth noting that in (2) of the above Theorem $\widehat{\Psi_{\Delta_1}}$ and $\widehat{\Psi_{\Delta_2}}$ are orthogonal.
So, letting $W_i, i=1,2$ be the reflection subgroup generated by $\widehat{\Psi_{\Delta_i}}, i=1,2$, one has $W' = W_1 \times W_2$.

We also need the following results which are proved in \cite{DyerReflOrder} (Lemma 5.10(f), Proposition 5.20, Proposition 5.22).

\begin{theorem}\label{reflectionorderaffine}
(1) Any maximal totally ordered subset of $\mathscr{B}(\widehat{\Phi})$ is the set of all initial sections
of some reflection order of $\widehat{\Phi}$ and any biclosed set in $\widehat{\Phi}$ is an initial section of some reflection order.

(2)  Any maximal totally ordered subset of $\mathscr{B}(\widehat{\Phi})$ is also a maximal totally ordered subset of the poset $(\mathcal{P}(\widehat{\Phi}),\subset)$, the poset of all subsets of $\widehat{\Phi}$ under inclusion.

(3)  Denote by $\pi$  the canonical epimorphism from
$\widetilde{W}$ to $W$. Suppose that $\Psi^+$ is a positive system in $\Phi$ and $M,N$ are orthogonal subsets of its simple system. Let $w\in \widetilde{W}$. Then the biclosed sets $w\cdot \widehat{\Psi^+_{M,N}}$ and $\widehat{\pi(w)(\Psi^+_{M,N})}=\widehat{\Lambda^+_{M',N'}}$ (where $\Lambda^+=\pi(w)(\Psi^+)$, $M'=\pi(w)M$, $N'=\pi(w)N$) are in the same block in the poset $\mathscr{B}(\widehat{\Phi})$.
\end{theorem}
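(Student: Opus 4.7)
The plan is to prove part (1) first, which provides the tool for part (2); part (3) will use a separate direct computation.

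For part (1), one direction is immediate: if $I$ is an initial section of a reflection order $\prec$ and $\alpha, \beta \in I$ with $\alpha \prec \beta$ combine to give $\gamma = k_1\alpha + k_2\beta \in \widehat{\Phi}$ with $k_i \geq 0$, the axiom $\alpha \prec \gamma \prec \beta$ puts $\gamma \in I$, so $I$ is closed; the complement of $I$ is a final section and is closed by the symmetric argument. For the converse, given a biclosed $B$, I would extend $\{\emptyset, B, \widehat{\Phi}\}$ to a maximal chain $\mathcal{C}$ in $\mathscr{B}(\widehat{\Phi})$ via Zorn's lemma (using that ascending unions and descending intersections of biclosed sets along a chain remain biclosed), then define $\alpha \prec \beta$ to mean that some $C \in \mathcal{C}$ contains $\alpha$ but not $\beta$. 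Totality would come from maximality of $\mathcal{C}$: a failure would produce two roots not separated by any member of $\mathcal{C}$, and Theorem \ref{classifybiclosedsetaffine} provides enough biclosed sets in $\widehat{\Phi}$ to refine $\mathcal{C}$ and break any such tie. The reflection-order axiom would follow from the biclosedness of each $C \in \mathcal{C}$: given $\alpha \prec \beta$ witnessed by $C$, any positive combination $\gamma$ must lie in $C$ by closedness but not in the complement by coclosedness, placing it strictly between $\alpha$ and $\beta$.

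For part (2), I would take a maximal chain $\mathcal{C}$ in $\mathscr{B}(\widehat{\Phi})$ and a subset $X \subseteq \widehat{\Phi}$ comparable under $\subset$ with every $C \in \mathcal{C}$, and argue that $X$ must itself be biclosed. For each root $\alpha$, the sets in $\mathcal{C}$ containing $\alpha$ form an up-set in $\mathcal{C}$, and comparability forces $\alpha \in X$ precisely when $X$ lies above this up-set; this identifies $X$ with an initial section of the reflection order from part (1), hence biclosed. For part (3), I would decompose $w = v\, t_y$ with $v = \pi(w) \in W$ and $t_y \in Q$ a translation. Since $t_y$ has finite length in $\widetilde{W}$, its inversion set $\Phi_{t_y}$ is finite, and the action formula $x \cdot \Gamma = (\Phi_x \setminus x(-\Gamma)) \cup (x(\Gamma) \setminus (-\Phi_x))$ shows that $t_y \cdot \widehat{\Psi^+_{M,N}}$ and $\widehat{\Psi^+_{M,N}}$ differ by only finitely many roots (all supported within $\Phi_{t_y}$). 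The element $v$, viewed in $\widetilde{W}$, also has a finite inversion set $\Phi_v$ consisting only of level-zero roots, and the same formula shows $v \cdot \widehat{\Lambda}$ differs from $\widehat{v(\Lambda)}$ by finitely many roots for any $\Lambda$ biclosed in $\Phi$. Composing, $w \cdot \widehat{\Psi^+_{M,N}}$ and $\widehat{\pi(w)(\Psi^+_{M,N})}$ differ by finitely many roots, placing them in the same block.

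The hardest step is the separation property required for part (1): in the infinite setting one cannot simply insert a singleton biclosed set into a chain, since singletons are typically not biclosed in $\widehat{\Phi}$. The technical heart is to show that for any chain $\mathcal{C}$ and any two roots $\alpha, \beta$ not separated by $\mathcal{C}$, one can construct a biclosed set compatible with $\mathcal{C}$ that separates them; Theorem \ref{classifybiclosedsetaffine} together with the twisted $\widetilde{W}$-action on $\mathscr{B}(\widehat{\Phi})$ supplies the flexibility to do so. Once the separation property is secured, parts (1) and (2) fall out cleanly; part (3) is then a careful but routine computation with the action formula and the decomposition $\widetilde{W} = W \ltimes Q$.
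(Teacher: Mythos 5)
You should first be aware that the paper itself supplies no proof of this theorem: it is imported verbatim from Dyer's preprint \cite{DyerReflOrder} (Lemma 5.10(f), Propositions 5.20 and 5.22), so there is no internal argument to compare against. Judged on its own merits, your part (3) is essentially correct and complete: the decomposition $w=vt_y$ with $t_y$ a translation, the observation that $t_y$ shifts each string $\widehat{\alpha}$ by a bounded amount while $v$ sends $\widehat{\alpha}$ to $\widehat{v(\alpha)}$ up to at most one root, and the fact that the twisted action changes a set by a subset of the finite set $\Phi_x$, together give the finite symmetric difference. Your reductions in parts (1) and (2) are also structurally sound: given that a maximal chain $\mathcal{C}$ separates points of $\widehat{\Phi}$, the relation $\prec$ is a total order, its initial sections are exactly the members of $\mathcal{C}$, and any subset comparable with all of $\mathcal{C}$ is an initial section of $\prec$, hence biclosed, hence in $\mathcal{C}$.

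The genuine gap is the separation property itself, which you correctly identify as ``the technical heart'' but then dispose of in one sentence by appealing to Theorem \ref{classifybiclosedsetaffine} and the twisted action ``supplying the flexibility.'' That is not a proof; it is precisely the content of Dyer's Propositions 5.20 and 5.22. Concretely, if $\alpha,\beta$ are not separated by $\mathcal{C}$, set $A=\bigcup\{C\in\mathcal{C}:\alpha\notin C\}$ and $B=\bigcap\{C\in\mathcal{C}:\alpha\in C\}$; both lie in $\mathcal{C}$ by maximality and $\{\alpha,\beta\}\subset B\setminus A$, so what you must actually prove is: for biclosed $A\subsetneq B$ with $|B\setminus A|\geq 2$ there is a biclosed $D$ with $A\subsetneq D\subsetneq B$ (equivalently, covering pairs in $\mathscr{B}(\widehat{\Phi})$ differ by a single root). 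This requires a real analysis of the sets $w\cdot\widehat{\Lambda}$ from the classification — comparing the blocks of $A$ and $B$, using the weak order on the reflection subgroup $W'$ within a block, and handling the case where $A$ and $B$ lie in different blocks (where $B\setminus A$ is infinite and one must manufacture intermediate biclosed sets of a different type). None of this is routine, and nothing in your proposal indicates how it would go; without it, $\prec$ need not be total and parts (1) and (2) both collapse. As written, the proposal proves part (3) and reduces parts (1)--(2) to an unproved statement that carries essentially all of the theorem's difficulty.
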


In this paper we denote the disjoint union by $\uplus$ and we denote the cardinality of a countably infinite set by $\aleph_0$.

\section{infinite reduced words of a Coxeter System}
\label{sect:infbasics}

Let $(W,S)$ be a Coxeter system with $W$ infinite.
\begin{definition} An \emph{infinite reduced expression} is a
sequence $$s_1s_2s_3\cdots$$ with $s_i\in S$ such that for all
$k\geq 1$, $s_1s_2\cdots s_k$ is a reduced expression.
\end{definition}

If $s_1s_2s_3\cdots$ is an infinite reduced expression then
immediately we have that
$$\Phi_{s_1}\subset \Phi_{s_1s_2}\subset \Phi_{s_1s_2s_3}\subset\dots.$$

\begin{definition}
Let $s_1s_2s_3\cdots$ be an infinite reduced expression.
Define
$$\Phi_{s_1s_2s_3\cdots}=\bigcup_{i=1}^{\infty}\Phi_{s_1s_2\cdots s_i}$$
and
$$\Phi_{s_1s_2s_3\cdots}'=\Phi^+\backslash \Phi_{s_1s_2s_3\cdots}.$$
\end{definition}
It is clear from the definition that $\Phi_{s_1s_2s_3\cdots}$ is
infinite and biclosed. Given two infinite reduced expressions
$s_1s_2s_3\cdots$ and $r_1r_2r_3\cdots$. Write
$s_1s_2s_3\cdots\thicksim r_1r_2r_3\cdots$ if
$\Phi_{s_1s_2s_3\cdots}=\Phi_{r_1r_2r_3\cdots}$. It is clear that
this is an equivalence relation.

\begin{definition}
Let $L$ be the set of all infinite reduced expressions.
Define $W_l=L/\thicksim$. An element in $W_l$ is called an
\emph{infinite reduced word} of $(W,S)$. Denote $\overline{W}=W_l\uplus
W.$ Given $w=[s_1s_2s_3\cdots]\in W_l$, $s_1s_2s_3\cdots$ is called
a \emph{reduced expression} of $w$.
\end{definition}

If no
confusion is possible, we write $w=s_1s_2s_3\cdots$ if
$w=[s_1s_2s_3\cdots]$.

\begin{definition}
Given $x,y\in \overline{W}$, define $x\leq y$ if and only if $\Phi_x\subset
\Phi_y$. Call this order the \emph{weak order} on $\overline{W}$. Let $x\in
W_l$. An element $w\in W$ is called a \emph{prefix} of $x$ if $w\leq x$.
Say two elements $x,y\in \overline{W}$ are \emph{orthogonal}, denoted by
$x\perp y$ if $\Phi_x\cap \Phi_y=\emptyset.$
\end{definition}

In the above definition we extend the weak order on $W$ to
$\overline{W}$ in an obvious way. Such
an order has been studied
by various authors, for example see \cite{Hohlw}, \cite{Lam1} and \cite{Lam}.

\begin{remark*}
It follows from the definition that we can define the weak order in terms of braid moves.
Let $w_1, w_2$ be two infinite reduced words. Suppose that the reduced expression $s_1s_2\cdots s_k$ (denoted by $\underline{u_k}$) appear as the first $k$ letters in a reduced expression of $w_1$. Then $w_1\leq w_2$ if and only if for any reduced expression of $w_2$ and any $k$, one can apply braid moves  finitely many times to convert such reduced expression to a reduced expression starting with $\underline{u_k}$.
\end{remark*}

\begin{example*}
Consider the Coxeter group $W$ of type $\widetilde{A}_2$, i.e. the group with presentation $\langle s_1,s_2,s_3|s_1^2=s_2^2=s_3^2=(s_1s_2)^3=(s_1s_3)^2=(s_2s_3)^2=e\rangle$.
Let $w_1=s_1s_2s_3s_1s_2s_3s_1s_2s_3\cdots$ and $w_2=s_1s_2s_1s_3s_1s_2s_1s_3s_1s_2s_1s_3\cdots$. Then $w_1\lvertneqq w_2$. To see this, one notes that any (finite) prefix of $w_1$ is also a (finite) prefix of $w_2$. But the prefix $s_1s_2s_1s_3$ of $w_2$ is not a prefix of $w_1.$
\end{example*}

\begin{definition}\label{def:sprod}
Let $s\in S$ and $w$ be an infinite reduced word of $W$. We define
$sw$ in the following way:

Case I: if $s\perp w$, we define $sw$ to be the word obtained by
concatenating $s$ with a reduced expression of $w$.

Case II: if $s\not\perp w$ (i.e. $s\in \Phi_w$), choose one reduced expression
$s_1s_2s_3\cdots$ of $w$ and one can find $u=s_1s_2\cdots s_k$ such
that $l(su)=l(u)-1$. We define $sw$ to be the word obtained by
concatenating one reduced expression of $su$ with the word
$s_{k+1}s_{k+2}\cdots$.
\end{definition}

This multiplication is well-defined
thanks to Lemma \ref{lem:infinitelongbasic} below.

\begin{definition}\label{def:wprod}
Let $u\in W$ and $w$ be an infinite reduced word. Let $s_1s_2\cdots
s_k$ be any (reduced or non-reduced) expression of $u$. Define $uw$
to be
$$s_1(\cdots(s_{k-1}(s_kw))\cdots)$$
\end{definition}

Again Lemma \ref{lem:infinitelongbasic} below ensures that this
multiplication is well-defined.

We summarize the basic properties of infinite reduced words in the
following lemmas. Their proofs reduce easily to the analogous
statements of Coxeter groups.
\begin{lemma}\label{lem:infinitelongbasic}
(a) Let $w_1=s_1s_2s_3\cdots$ and $w_2=r_1r_2r_3\cdots$ be two
infinite reduced words of $W$. Then $w_1\leq w_2$ if and only if any
prefix of $w_1$ is a prefix of $w_2$.

(b) Let $x,y\in \overline{W}$. Then $x\perp y$ if and only if for any
prefix $u$ of $x$ and any prefix $v$ of $y$ we have that
$l(u^{-1}v)=l(u)+l(v)$.

(c) The multiplications in Definition $\ref{def:sprod}$ and
$\ref{def:wprod}$ are all well-defined, i.e. in $\ref{def:sprod}$
case I the product $sw$ is independent of the choice of the reduced
expression of $w$, and in $\ref{def:sprod}$ case II the product $sw$
is independent of the choice of reduced expression of $w$, the
choice of $u$ and the choice of the reduced expression of $su$, and
in $\ref{def:wprod}$ the product $uw$ does not depend on the choice
of the expression of $u$.

(d) Let $x,y\in W$ and $z\in \overline{W}$. Then
$x\cdot\Phi_z=\Phi_{xz}$, $x\cdot\Phi_z'=\Phi_{xz}'$ and
$(xy)z=x(yz).$
\end{lemma}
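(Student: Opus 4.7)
The plan is to dispatch the four parts in sequence, with (a) and (b) essentially unwinding definitions, and (c) carrying the technical weight; (d) then falls out of the work done in (c).

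For (a), the forward direction is immediate: any prefix $u$ of $w_1$ satisfies $\Phi_u \subset \Phi_{w_1} \subset \Phi_{w_2}$. Conversely, writing $w_1 = s_1 s_2 s_3 \cdots$, one has $\Phi_{w_1} = \bigcup_i \Phi_{s_1 \cdots s_i}$, so if every $s_1 \cdots s_i$ is a prefix of $w_2$ the inclusion $\Phi_{w_1} \subset \Phi_{w_2}$ follows. For (b), I would invoke the finite-case fact that for $u,v \in W$, $l(u^{-1}v) = l(u) + l(v)$ if and only if $\Phi_u \cap \Phi_v = \emptyset$ (standard, via the inversion-set formula and the no-cancellation criterion for reducedness of a concatenation). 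Since $\Phi_x = \bigcup_{u \leq x} \Phi_u$ and similarly for $y$ (with $u,v$ ranging over finite prefixes), one has $\Phi_x \cap \Phi_y = \emptyset$ iff $\Phi_u \cap \Phi_v = \emptyset$ for every such pair, which by the finite fact is equivalent to the stated length condition.

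The heart of the argument is (c). The key reduction is to prove the identity $\Phi_{sw} = s \cdot \Phi_w$ for each simple $s \in S$ and each infinite reduced word $w$, with $sw$ defined as in Definition \ref{def:sprod}. In Case I ($s \perp w$), reducedness of the concatenation follows from (b), and the formula for $s \cdot \Gamma$ combined with $\alpha_s \notin \Phi_w$ gives $\Phi_{sw} = \{\alpha_s\} \cup s\Phi_w = s \cdot \Phi_w$. In Case II ($\alpha_s \in \Phi_w$), the required $k$ exists because $\alpha_s \in \bigcup_i \Phi_{s_1 \cdots s_i}$; for each $j \geq 0$ one then has $l(s \cdot s_1 \cdots s_{k+j}) = k + j - 1$ (since $\alpha_s \in \Phi_{s_1 \cdots s_k} \subset \Phi_{s_1 \cdots s_{k+j}}$), which is precisely what makes the constructed expression reduced. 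Taking the union, $\Phi_{sw} = \bigcup_j s \cdot \Phi_{s_1 \cdots s_{k+j}}$ via the finite-case identity in \cite{DyerWeakOrder} Lemma 4.1; because $s \cdot \Gamma = s\Gamma \setminus \{-\alpha_s\}$ whenever $\alpha_s \in \Gamma$, the operator $s \cdot$ commutes with this increasing union, yielding $\Phi_{sw} = s \cdot \Phi_w$. Since the right-hand side depends only on $\Phi_w$, well-definedness of Definition \ref{def:sprod} is automatic. For Definition \ref{def:wprod}, induction on the length of a chosen expression $s_1 \cdots s_k$ of $u$ then gives $\Phi_{uw} = (s_1 \cdots s_k) \cdot \Phi_w = u \cdot \Phi_w$ using the group action law; the result is thereby independent of the choice of expression for $u$.

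Part (d) is essentially harvested from (c). The equality $x \cdot \Phi_z = \Phi_{xz}$ is the identity proved in (c) when $z \in W_l$, and is \cite{DyerWeakOrder} Lemma 4.1 when $z \in W$. Associativity $(xy)z = x(yz)$ follows because both sides have associated biclosed set $(xy) \cdot \Phi_z = x \cdot (y \cdot \Phi_z)$ by the group action law. For $x \cdot \Phi_z' = \Phi_{xz}'$ I would verify the general compatibility $x \cdot (\Phi^+ \setminus \Gamma) = \Phi^+ \setminus (x \cdot \Gamma)$ by a short case analysis on the sign of $x^{-1}\alpha$ for $\alpha \in \Phi^+$, directly from the definition of $x \cdot \Gamma$. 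The main obstacle is the well-definedness portion of (c) Case II, where several choices (the reduced expression of $w$, the prefix length $k$, and the reduced expression of $su$) must simultaneously become irrelevant; routing everything through the single identity $\Phi_{sw} = s \cdot \Phi_w$ collapses all these independence claims into one manifest fact and makes (d) immediate.
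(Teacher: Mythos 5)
Your proposal is correct and follows essentially the same route as the paper: both arguments reduce (a) and (b) to standard finite facts, establish (c) by computing the inversion set of $sw$ explicitly (the paper writes it as $\Phi_s\cup s\Phi_w$, resp. $s(\Phi_w)\setminus(-\Phi_s)$, which is exactly your $s\cdot\Phi_w$) so that it visibly depends only on $\Phi_w$, and then harvest (d) from that identity together with the general fact $x\cdot(\Phi^+\setminus\Gamma)=\Phi^+\setminus(x\cdot\Gamma)$.
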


\begin{proof}
(a) and (b) follow easily from the definition and Proposition 3.1.3 of \cite{bjornerbrenti}.
Now let $s_1s_2s_3\cdots$ and $r_1r_2r_3\cdots$ be two reduced expressions of an infinite reduced word $w$ and suppose that $s\perp w$.
It follows that $\Phi_{ss_1s_2s_3}=\bigcup_{i=1}^{\infty}\Phi_{ss_1s_2\cdots s_i}=\bigcup_{i=1}^{\infty}(\Phi_s\cup s\Phi_{s_1s_2\cdots s_i})=\Phi_s\cup s(\bigcup_{i=1}^{\infty}\Phi_{s_1s_2\cdots s_i})=\Phi_s\cup s(\bigcup_{i=1}^{\infty}\Phi_{r_1r_2\cdots r_i})=\bigcup_{i=1}^{\infty}\Phi_{sr_1r_2\cdots r_i}=\Phi_{sr_1r_2\cdots}$.
 Now instead assume that $s\not\perp w$. Let $u=s_1s_2\cdots s_k$ and $l(su)=k-1$. By the basic properties of the reduced expressions of an element in a Coxeter group, one has $l(sus_{k+1}s_{k+2}\cdots s_j)=j-1$ for any $j\geq k+1$. Therefore concatenating a reduced expression of $su$ (denoted by $\underline{su}$) with $s_{k+1}s_{k+2}\cdots$ gives rise to a valid infinite reduced word. It also follows that $\Phi_{\underline{su}s_{k+1}s_{k+2}\cdots}=\Phi_{su}\cup su\Phi_{s_{k+1}s_{k+2}\cdots}$. Therefore the product is independent of the choice of the reduced expression of $su$. Also note that $\Phi_{su}\cup su\Phi_{s_{k+1}s_{k+2}\cdots}=(s\Phi_u\backslash (-\Phi_s))\cup s(u\Phi_{s_{k+1}s_{k+2}\cdots})=s(\Phi_w)\backslash (-\Phi_s)$. This shows that the product is independent of the choice of the reduced expression of $w$ and the prefix $u$.
 Therefore  the multiplication in Definition $\ref{def:sprod}$ is well defined.

 Now let $s\in S, w\in \overline{W}$. One checks in two cases: $s\perp w$ and $s\not\perp w$ (by using the formula $x\cdot \Gamma:=(\Phi_x\backslash x(-\Gamma))\cup (x(\Gamma)\backslash
(-\Phi_x))$) that $s\cdot \Phi_w=\Phi_{sw}$. This proves that the multiplication in Definition \ref{def:wprod} is well defined. It also follows that $x\cdot \Phi_z=\Phi_{xz}$ and $x(yz)=(xy)z$ in (d). Finally $x\cdot \Phi_z'=\Phi_{xz}'$ follows from the general fact that $x\cdot (\Phi^+\backslash \Gamma)=\Phi^+\backslash (x\cdot \Gamma)$ for $\Gamma$ biclosed in $\Phi^+$.
\end{proof}

Now we study the weak order on $\overline{W}$. First we prove an
easy but useful lemma.

\begin{lemma}\label{lem:boundingrp}
Let $A\subset W$ and $|A|<\infty$. Suppose that $A$ is bounded in
$\overline{W}$. Then $A$ is bounded in $W$ and $\bigvee A$ exists in
$W$.
\end{lemma}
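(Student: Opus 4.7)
The plan is to leverage the complete meet semilattice structure of the weak order on $W$ together with finiteness to reduce the problem of boundedness in $\overline{W}$ to boundedness in $W$. Once boundedness in $W$ is established, the existence of $\bigvee A$ follows directly from Lemma \ref{techsemijoin}.

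First, I would pick an arbitrary upper bound $z\in\overline{W}$ of $A$, so that $\Phi_x\subset\Phi_z$ for every $x\in A$. If $z\in W$ there is nothing to do: $A$ is already bounded in $W$, and since $(W,\leq)$ is a complete meet semilattice, Lemma \ref{techsemijoin} yields $\bigvee A\in W$. The substantive case is $z\in W_l$. Fix a reduced expression $z=s_1 s_2 s_3 \cdots$, so that by definition
\[
\Phi_z=\bigcup_{i=1}^{\infty}\Phi_{s_1 s_2\cdots s_i},
\]
an increasing union.

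Next, I would exploit the two finiteness hypotheses: each $x\in A$ lies in $W$ so $\Phi_x$ is finite, and $A$ itself is finite. For each $x\in A$, since $\Phi_x\subset\Phi_z$ is a finite subset of an increasing union of sets $\Phi_{s_1\cdots s_i}$, there exists $N_x\in\mathbb{Z}_{\geq 0}$ with $\Phi_x\subset\Phi_{s_1 s_2\cdots s_{N_x}}$, i.e.\ $x\leq s_1 s_2\cdots s_{N_x}$ in $W$ by the definition of the weak order. Set $N:=\max_{x\in A} N_x$, which is well-defined because $A$ is finite. Then $\Phi_x\subset\Phi_{s_1 s_2\cdots s_N}$ for every $x\in A$, so $s_1 s_2\cdots s_N\in W$ is an upper bound of $A$ in $W$.

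Finally, having shown that $A$ is bounded in the complete meet semilattice $(W,\leq)$, I apply Lemma \ref{techsemijoin} to conclude that $\bigvee A$ exists in $W$. There is no genuine obstacle here; the only thing one must notice is that a finite subset of an increasing union is contained in one term of the union, and that finiteness of $A$ then allows a uniform choice of cutoff $N$.
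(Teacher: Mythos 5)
Your proposal is correct and follows essentially the same route as the paper: bound each element of $A$ by a finite prefix of the infinite upper bound (the paper asserts this directly, while you justify it via the finiteness of each $\Phi_x$ inside the increasing union), take the longest such prefix using finiteness of $A$, and then invoke the complete meet semilattice property of $W$ together with Lemma \ref{techsemijoin}. No substantive difference.
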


\begin{proof}
Suppose that $A$ is bounded by $s_1s_2s_3\cdots$. Then every element of
$A$ is bounded by some $s_1s_2\cdots s_k\in W$. Since $A$ is finite,
we can take the longest of these bounds and it is a bound in $W$ of
$A$. The second assertion follows from the fact that $W$ is a complete
meet semilattice and Lemma \ref{techsemijoin}.
\end{proof}

\begin{theorem}\label{semilattice}
The poset $(\overline{W},\leq)$ is a complete meet semilattice.
\end{theorem}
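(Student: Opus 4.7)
The plan is to construct $\bigwedge A$ explicitly for any nonempty $A\subseteq\overline{W}$, assembling it from the common lower bounds of $A$ that happen to lie in the group $W$. Put $L=\{u\in W:u\leq x\text{ for all }x\in A\}$; it is nonempty since $e\in L$, and I will show that $L$ has a supremum in $\overline{W}$ which will serve as the meet of $A$.

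First I would check that $L$ is upward-directed. Given $u_1,u_2\in L$, any $x\in A$ bounds the pair in $\overline{W}$, so by Lemma \ref{lem:boundingrp} the join $u_1\vee u_2$ exists in $W$. To see $u_1\vee u_2\in L$, fix $x\in A$: if $x\in W$ this is just the definition of the $W$-join, and if $x=s_1s_2\cdots\in W_l$, the finite sets $\Phi_{u_1}$ and $\Phi_{u_2}$ both lie inside some common $\Phi_{s_1\cdots s_k}$ (since $\Phi_x=\bigcup_k\Phi_{s_1\cdots s_k}$), so $u_1,u_2\leq s_1\cdots s_k$ in $W$ and consequently $u_1\vee u_2\leq s_1\cdots s_k\leq x$. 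I also observe that $L$ is at most countable: fixing any $x\in A$, every $u\in L$ is determined by the finite inversion set $\Phi_u\subseteq\Phi_x$, and $\Phi_x$ has only countably many finite subsets.

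If $L$ is finite, directedness forces it to have a maximum $m\in W$, and I claim $m=\bigwedge A$. Certainly $m\in L$ is a lower bound. For any other common lower bound $y\in\overline{W}$, every $u\in W$ with $u\leq y$ automatically lies in $L$ and is therefore $\leq m$, so $\Phi_y=\bigcup\{\Phi_u:u\in W,u\leq y\}\subseteq\Phi_m$, which gives $y\leq m$.

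Otherwise $L$ is countably infinite. I would enumerate $L=\{v_1,v_2,\ldots\}$ and form the partial joins $u_k=v_1\vee\cdots\vee v_k\in L$ (existing by iterated directedness); these form an ascending chain in $W$ with $\bigcup_k\Phi_{u_k}=\bigcup_{u\in L}\Phi_u$, and cannot eventually stabilize since a stable value would be a maximum of $L$. Passing to a strictly increasing subchain $u_{k_1}<u_{k_2}<\cdots$ and concatenating reduced expressions of the successive differences (each $u_{k_{i+1}}$ admits a reduced expression prolonging one for $u_{k_i}$ because $u_{k_i}<u_{k_{i+1}}$ in the weak order on $W$) yields an infinite reduced expression whose equivalence class $m\in W_l$ has $\Phi_m=\bigcup_{u\in L}\Phi_u$, and the same containment argument as in the finite case then shows $m=\bigwedge A$. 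The main obstacle is ensuring that the biclosed set $\bigcup_{u\in L}\Phi_u$ is actually the inversion set of some element of $\overline{W}$, and it is the explicit inductive construction of the infinite reduced word above, combined with the countability of $L$, that secures this.
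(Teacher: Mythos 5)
Your proof is correct and follows essentially the same route as the paper's: both reduce to the (countable) set of common lower bounds of $A$ lying in $W$, form an ascending chain of finite joins using Lemma \ref{lem:boundingrp}, and realize the union of the resulting inversion sets as an element of $\overline{W}$ which is then verified to be the greatest lower bound. Your only departure is cosmetic — you handle all $A$ uniformly through the set $L$, where the paper splits into the cases $A\subset W_l$, $A\subset W$, and the mixed case.
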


\begin{proof}
We first show

\emph{Claim}: Let $w$ be an infinite reduced word. Then the set of
its prefixes is countably infinite.

Let $s_1s_2s_3\cdots$ be a reduced expression of $w$. Then the set
of the prefixes of $w$ is $\bigcup_{i=1}^{\infty}\{u\in W|u\leq
s_1s_2\cdots s_i\}$ and each set in this union is finite. Therefore the claim is true.

We first show that any non-empty subset of $W_l$ has a meet. Take
$\emptyset\neq A\subset W_l$. As a consequence of the claim, the set
$B:=\{w\in W|w\leq a,\forall a\in A\}$ is countable. So we let
$B=\{b_1,b_2,\dots\}$. Then we obtain an ascending chain under weak
order:
$$b_1\leq b_1\vee b_2\leq b_1\vee b_2\vee b_3\leq \dots.$$
Each term in this chain exists in $W$ by Lemma \ref{lem:boundingrp}.
Therefore this chain gives rise to a well-defined infinite reduced
word or an element in $W$. Denote it by $v$. We shall show that $v$ is
the meet of $A$.

First we note that it is immediate that $v\geq b,\forall b\in B$.
Then take some $w\in \overline{W}$ which is a lower bound of $A$. So
any prefix of $w$ is in $B$ and therefore is bounded by $v$. This
shows that $w\leq v$. So $v$ is the meet of $A$.

Since $W$ is a complete meet semilattice, any subset of $W$ admits a
meet in $W$ and it is clear that such meet remains to be the meet in
$\overline{W}$.

Now we consider $A\subset \overline{W}$ with $A\cap W_l\neq
\emptyset$ and $A\cap W\neq \emptyset$. Let $a=\bigwedge
(A\cap W_l)$ and $b=\bigwedge(A\cap W)$. Fix the reduced expressions
$s_1s_2s_3\cdots$ (finite if $a\in W$) for $a$. Then we have a
chain:
$$b\wedge s_1\leq b\wedge s_1s_2\leq b\wedge s_1s_2s_3\leq \dots.$$
This chain defines an infinite reduced word or an element in the
group. Denote it by $y$. One readily sees that all lower bounds of $A$
are less than or equal to $y$.
\end{proof}

The poset $\overline{W}$ and its semilattice property was considered by various authors, for example in \cite{Lam1} Theorem 4.10. We incorporate this  proof for general Coxeter groups and emphasize that this meet semilattice is complete.

It is natural to ask when $\overline{W}$ is a complete lattice. This
question is answered in the following Theorem.

\begin{theorem}
The poset $\overline{W}$ is a complete lattice if and only if $W\simeq
{}^w\Pi_{i=1}^{\infty}W_i$ where each $W_i$ is either a finite or
locally finite irreducible Coxeter group and ${}^w\Pi$ denotes the
weak direct product.
\end{theorem}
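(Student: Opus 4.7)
The plan is to first recast the statement as a question about maximum elements of $\overline{W}$. By Theorem~\ref{semilattice}, $\overline{W}$ is already a complete meet semilattice; by Lemma~\ref{techsemijoin}, it is a complete lattice precisely when every subset is bounded above, which happens exactly when $\overline{W}$ has a maximum $w_{\max}$, characterized by $\Phi_{w_{\max}}=\Phi^+$. Thus the theorem reduces to showing that $\Phi^+$ is an inversion set of some element of $\overline{W}$ if and only if $W\simeq{}^{w}\Pi_{i}W_{i}$ with each $W_{i}$ finite or locally finite irreducible.

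For the sufficient direction, assume $W\simeq{}^{w}\Pi_{i}W_{i}$, so $\Phi^+=\biguplus_{i}\Phi_{i}^{+}$ with distinct $\Phi_{i}^{+}$ pairwise orthogonal. In each finite component take $w_{i}$ to be the longest element, and in each locally finite irreducible component (of type $A_{\infty}$, $A_{\infty,\infty}$, $B_{\infty}$, or $D_{\infty}$) construct $w_{i}$ as an explicit infinite reduced word exhausting $\Phi_{i}^{+}$ by nesting longest elements of an increasing chain of finite rank standard parabolic subgroups (for instance $s_{1}(s_{2}s_{1})(s_{3}s_{2}s_{1})\cdots$ in $A_{\infty}$). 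A diagonal interleaving of reduced expressions of the $w_{i}$'s then yields an infinite reduced expression for $w_{\max}\in\overline{W}$ with $\Phi_{w_{\max}}=\biguplus_{i}\Phi_{w_{i}}=\Phi^+$; reducedness is preserved because letters from distinct components commute, so the length of any finite prefix of the interleaving equals the sum of the lengths of the corresponding sub-prefixes of the $w_{i}$'s.

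Conversely, suppose $\overline{W}$ has a maximum $w_{\max}$, and decompose $W={}^{w}\Pi_{i}W_{i}$ into its irreducible components (one per connected component of the Coxeter diagram). Extracting from any reduced expression of $w_{\max}$ the subsequence of letters in $S_{i}$ produces, by length additivity across a direct product, a reduced expression for an element $w_{i}\in\overline{W_{i}}$ with $\Phi_{w_{i}}=\Phi_{w_{\max}}\cap\Phi_{i}^{+}=\Phi_{i}^{+}$, so each $\overline{W_{i}}$ has a maximum. The theorem therefore reduces to the following \emph{key claim}: any irreducible $(W',S')$ such that $\overline{W'}$ has a maximum is finite or locally finite. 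Arguing the contrapositive, if $W'$ is irreducible, infinite and not locally finite, choose $J\subset S'$ finite of minimal cardinality with $W_{J}$ infinite, making $(W_{J},J)$ an infinite irreducible Coxeter system of finite rank. The maximum of $\overline{W'}$ then descends to one of $\overline{W_{J}}$: the set $A=\{u\in W_{J}:u\le w_{\max}\}$ is countable and closed under the joins existing in $W'$ (since the join of elements whose inversion sets lie in the biclosed set $\Phi_{J}^{+}$ again has inversion set inside $\Phi_{J}^{+}$, hence belongs to $W_{J}$), and taking successive finite joins along any enumeration of $A$ gives an element of $\overline{W_{J}}$ whose inversion set is all of $\Phi_{J}^{+}$, because every reflection $s_{\alpha}$ with $\alpha\in\Phi_{J}^{+}$ satisfies $\Phi_{s_\alpha}\subset\Phi^+=\Phi_{w_{\max}}$, whence $s_\alpha\le w_{\max}$ and $s_\alpha\in A$.

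The principal obstacle is then to show that no infinite irreducible Coxeter system of finite rank admits a maximum in $\overline{\,\cdot\,}$. My plan is to exhibit two distinct maximal infinite reduced words in $(W_{J},J)$, contradicting the existence of a maximum. For $|J|=2$ (the infinite dihedral case) this is explicit: the two alternating words $s_{\alpha}s_{\beta}s_{\alpha}\cdots$ and $s_{\beta}s_{\alpha}s_{\beta}\cdots$ have disjoint inversion sets whose union is $\Phi_{J}^{+}$. For $|J|\ge 3$ I would use the projective representation: an infinite reduced word corresponds to a chamber walk in the Tits cone converging to a single boundary point, and its inversion set consists of those hyperplanes crossed by the walk; by irreducibility and infiniteness of $W_{J}$, the Tits-cone boundary admits genuinely different accessible directions, yielding incomparable maximal inversion sets and precluding any single maximum. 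Making this geometric step precise, presumably via the theory of limit roots, is where I expect the main care to be required.
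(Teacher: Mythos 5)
Your reduction to the existence of a maximum element, and your construction of that maximum in the sufficient direction by diagonally interleaving the words $w_0(W_i)$, are both correct and essentially identical to the paper's argument. The problem is the necessary direction. You correctly reduce it to the key claim that an infinite irreducible Coxeter system of finite rank $J$ cannot have a maximum in $\overline{W_J}$, but you do not prove that claim: for $|J|\ge 3$ you only outline a program via the Tits cone and limit roots and explicitly defer ``the main care.'' That deferred step is the entire content of the ``only if'' direction, so as written the proof is incomplete. It also sets up far more machinery than is needed. The short argument is linear-algebraic: if $w\in\overline{W_J}$ had $\Phi_w=\Phi_J^+$, then since $\Pi_J$ is finite, some finite prefix $u$ of $w$ already satisfies $\Pi_J\subset\Phi_u$; but then $u^{-1}$ sends every simple root of $J$ to a negative root, hence (by linearity, every positive root of $\Phi_J$ being a nonnegative combination of $\Pi_J$) sends all of $\Phi_J^+$ into $\Phi^-$, so $\Phi_J^+\subset\Phi_u$ is finite --- contradicting that $W_J$ is infinite. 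The paper runs the same idea in a slightly different packaging: from completeness every finite subset of $W$ is bounded, so by Lemma \ref{lem:boundingrp} the set $J$ of simple reflections has a join $v$ in $W$, and $v$ having all of $J$ as descents forces $W_J$ finite. Either way, no geometry of limit roots is required, and your explicit rank-$2$ case is subsumed.

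Two smaller points. First, the theorem asserts a \emph{countable} weak direct product, and your necessity argument never addresses why there are only countably many nontrivial components; the paper does this by noting that uncountably many generators cannot have a join, since inversion sets of elements of $\overline{W}$ are countable. (In your setup it follows because every $s\in S$ must occur as a letter of a single reduced expression of $w_{\max}$, but you should say so.) Second, your parenthetical claim that the join in $W'$ of elements of $W_J$ again lies in $W_J$ because its inversion set stays inside $\Phi_J^+$ is true but not immediate from anything you cite; it needs the description of joins via the closure of the union of inversion sets. Both are repairable, but the unproven key claim is the substantive gap.
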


\begin{proof}
We first prove the ``only if'' part. A Coxeter group $W$ can always be written as the
weak direct product of a family of irreducible Coxeter groups,
called its components. Since any two elements of $W$ have an upper
bound in $\overline{W}$, they have an upper bound in $W$ by Lemma
\ref{lem:boundingrp}. So each group of that family must either be
locally finite or finite. If the family consists of uncountably many
non-trivial Coxeter groups, then we take one generator from each of
these Coxeter groups. Clearly their join cannot possibly be an
infinite reduced word or an element in $W$.

To prove the converse, we first note that for a finite Coxeter group
$W$ there exists a longest element $w_0(W)$. For a locally finite
Coxeter group $W$, one can find a sequence of parabolic subgroups
$$W_1<W_2<W_3<\dots$$ such that each $W_i$ is a finite
and $\bigcup_{i=1}^{\infty} W_i=W$ by the classification  of
locally finite Coxeter groups. Thus we have a chain
$$w_0(W_1)\leq w_0(W_2)\leq w_0(W_3)\leq \dots.$$
This chain defines an infinite reduced word $w_0(W)$ which is the
unique maximal element in $(\overline{W},\leq)$. Now suppose that
$W\simeq {}^w\Pi_{i=1}^{\infty}W_i$ and that
$w_0(W_i)=s_{i,1}s_{i,2}s_{i,3}\cdots.$ Listing these elements
$$s_{1,1}s_{1,2}s_{1,3}\cdots$$
$$s_{2,1}s_{2,2}s_{2,3}\cdots$$
$$s_{3,1}s_{3,2}s_{3,3}\cdots$$
$$\cdots$$
and then  enumerating $s_{i,j}$'s in a zig-zag way (i.e.
$s_{1,1}s_{1,2}s_{2,1}s_{3,1}s_{2,2}s_{1,3}s_{1,4}s_{2,3}\cdots$)
give rise to a well-defined infinite reduced word which is maximal
under the weak order. The existence of such an element makes the
complete semilattice a complete lattice.
\end{proof}

\begin{remark*}
It can be shown that for a locally finite Coxeter group  any biclosed set in $\Phi^+$  is the inversion set of a (finite or infinite) word. Thus the above theorem proves that the poset of biclosed sets in $\Phi^+$ is a lattice for those types of Coxeter groups.
\end{remark*}

Next we show that $\overline{W}$ has the following favorable
property and we refer to it as Join Orthogonal Property.

\begin{theorem}\label{theorem:jop}
Suppose that $A\subset \overline{W}$ and $A$ admits a join $w\in
\overline{W}.$ Let $v\in \overline{W}$
such that $a\perp v$ for all $a\in A$. Then $w\perp v$.
\end{theorem}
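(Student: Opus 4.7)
The plan is to first reduce to the case where $v$ is finite. Any finite prefix $u$ of $v$ satisfies $\Phi_u \subset \Phi_v$ and hence $u \perp a$ for every $a \in A$; since $\Phi_v$ is the union of the inversion sets of its finite prefixes, establishing $u \perp w$ for each such $u$ yields $\Phi_v \cap \Phi_w = \emptyset$, that is, $v \perp w$. So we may assume $v \in W$.

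For finite $v$, proceed by induction on $l(v)$. The case $l(v) = 0$ is trivial. For the inductive step, write $v = sv'$ with $s \in S$ and $l(v') = l(v) - 1$, so that $\Phi_v = \{\alpha_s\} \sqcup s\Phi_{v'}$. The hypothesis $v \perp a$ then splits into the two conditions $s \perp a$ and $v' \perp sa$ for every $a \in A$. Granted the sub-claim $s \perp w$ below, the product formula $\Phi_{sa} = \{\alpha_s\} \sqcup s\Phi_a$ allows a direct verification that $sw = \bigvee\{sa : a \in A\}$ in $\overline{W}$, after which the inductive hypothesis applied to $v'$ and the family $\{sa : a \in A\}$ yields $v' \perp sw$. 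Combining $s \perp w$ with $v' \perp sw$ via the decomposition of $\Phi_v$ produces $v \perp w$.

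The crux is the sub-claim for simple reflections: if $s \in S$, $s \perp a$ for every $a \in A$, and $w = \bigvee A$ exists in $\overline{W}$, then $s \perp w$. I would argue by contradiction. If $\alpha_s \in \Phi_w$, write $w = sw''$ with $s \perp w''$, so $\Phi_w = \{\alpha_s\} \sqcup s\Phi_{w''}$; from $\Phi_a \subset \Phi_w$ together with $\alpha_s \notin \Phi_a$, we obtain $\Phi_a \subset s\Phi_{w''}$ for every $a \in A$. The strategy is to exhibit an upper bound of $A$ inside the downward-closed subset $\overline{W}^{\perp s} := \{z \in \overline{W} : s \perp z\}$; any such upper bound would, by minimality of $w$ as the least upper bound of $A$, force $w \in \overline{W}^{\perp s}$, contradicting $\alpha_s \in \Phi_w$. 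Constructing this upper bound is the main obstacle: the natural candidate has inversion set equal to the biclosed closure (in $\Phi^+$) of $\bigcup_{a \in A} \Phi_a$, which is contained in $\Phi^+ \setminus \{\alpha_s\}$ because simple roots are extremal in the positive cone and cannot be produced as positive combinations of other positive roots. Verifying that this biclosed closure is in fact the inversion set of some element of $\overline{W}$—particularly when $w$ or the $a$'s are infinite reduced words—is the most delicate step of the proof.
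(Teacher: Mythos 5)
There is a genuine gap, and it sits exactly where you flag it: the sub-claim that $s\perp a$ for all $a\in A$ forces $s\perp\bigvee A$ is never proved, and the route you sketch for it does not go through. Your candidate upper bound has inversion set $\overline{\bigcup_{a\in A}\Phi_a}$, but (i) the $2$-closure of a union of biclosed sets is closed, not in general coclosed, so this set need not be biclosed; and (ii) even when it is biclosed, for a general infinite Coxeter system a biclosed set need not be $\Phi_z$ for any $z\in\overline{W}$ (in the affine case the sets $\widehat{\Psi^+_{N,M}}$ with $M\neq\emptyset$ of Lemma \ref{lem:biclosedsetthatisnotinfinielongword} are explicit counterexamples). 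Identifying when $\overline{\bigcup_a\Phi_a}$ is realized as the inversion set of a (finite or infinite) word is essentially the content of the conjectures this paper is studying, so it cannot be invoked as a tool here. Note also that your outer induction is not needed once $v$ is finite: since Theorem \ref{theorem:jop} is stated for arbitrary Coxeter systems, the whole difficulty is already present for $v=s$ and $A\subset W$, which is precisely the statement of \cite{DyerWeakOrder} Lemma 4.2(b); reproving that from scratch is not a routine step.

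The paper closes this gap by reducing in the opposite direction: it approximates $w$ rather than $v$. When $w\in W_l$, set $B=\{b\in W\mid b\le a\text{ for some }a\in A\}$; then $B$ is countable, $\bigvee B=w$, and by the construction of joins in Theorem \ref{semilattice} every finite prefix of $w$ lies below $\bigvee_{i=1}^{n}b_i$ for some $n$. Each such finite join is orthogonal to every finite prefix of $v$ by the quoted finite result, and passing to unions of inversion sets gives $w\perp v$. Your first paragraph (reducing to $v$ finite via prefixes) is correct and is half of this argument; to repair the proof you should replace the induction on $l(v)$ and the unproven sub-claim by this prefix approximation of $w$ together with the citation of the known join orthogonality for elements of $W$.
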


\begin{proof}
Suppose that $w$ is in $W$ (then $A\subset W$). If $v\in W$, the
assertion follows from Lemma 4.2(b) in \cite{DyerWeakOrder}. If
$v\in W_l$, then every prefix of $v$ is orthogonal to $w$ by the
above argument and thus $v\perp w$. Suppose that $w$ is in $W_l$.
Consider $B=\{b\in W|\exists a\in A, b\leq a\}$. Clearly $\bigvee
B=\bigvee A=w$ and the set $B$ is contained in the set of prefixes of $w$
and hence countable by the claim in the proof of Theorem
\ref{semilattice}. Then write $B=\{b_1,b_2,\dots\}$. We also have that
$b_i\perp v$ for all $i$. Therefore any prefix of $w$ must be
smaller than $\bigvee_{i=1}^{n}b_i$ for some $n$ and thereby
orthogonal to $v$.
\end{proof}

The following proposition will be used in proving a characterization
of affine Weyl groups in the next section. Now suppose that $W$ is
countably generated (i.e. $|S|\leq \aleph_0$). As a consequence $W$
is countable.

\begin{proposition}\label{prop:maximalinwbar}
The poset $(\overline{W},\leq)$ contains maximal elements. Every element
$w\in \overline{W}$ is less than or equal to some maximal element.
\end{proposition}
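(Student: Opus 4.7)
The plan is to apply Zorn's lemma. To do this, I need to show that every chain $C \subseteq \overline{W}$ has an upper bound in $\overline{W}$; the countability hypothesis $|S| \leq \aleph_0$ (and hence $|W| \leq \aleph_0$) will be essential, since $C$ itself may well be uncountable.

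Given such a chain $C$, consider the set of its finite prefixes
$$P := \{u \in W : u \leq c \text{ for some } c \in C\}.$$
Since $P \subseteq W$ and $W$ is countable, $P$ is countable; enumerate $P = \{p_1, p_2, \ldots\}$. For each $n$ the finite subset $\{p_1,\ldots,p_n\}$ is bounded above in $\overline{W}$: each $p_i$ lies below some $c_i \in C$, and the maximum of the finitely many $c_i$'s (which exists because $C$ is a chain) bounds them all. Lemma \ref{lem:boundingrp} then supplies the finite joins
$$q_n := p_1 \vee p_2 \vee \cdots \vee p_n \in W,$$
and these form an ascending chain $q_1 \leq q_2 \leq \cdots$ in $(W,\leq)$.

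Next I would observe that this chain either stabilizes — in which case the common value is an element $x \in W$ bounding $P$ — or strictly increases, in which case, by choosing reduced expressions of $q_{n+1}$ extending those of $q_n$, one obtains a well-defined infinite reduced word $x \in W_l$. Either way $\Phi_x = \bigcup_n \Phi_{q_n} = \bigcup_{p \in P} \Phi_p$. Now for each $c \in C$, every finite prefix of $c$ belongs to $P$, so $\Phi_c = \bigcup_{p \in W,\, p \leq c} \Phi_p \subseteq \Phi_x$, giving $c \leq x$. Hence $x$ is the desired upper bound of $C$.

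With every chain bounded, Zorn's lemma produces a maximal element in $\overline{W}$. For the second assertion, apply Zorn's lemma to the principal filter $\{y \in \overline{W} : y \geq w\}$, which is non-empty and whose chains are bounded by exactly the same argument. The only mildly delicate step is confirming that the non-stabilizing chain $(q_n)$ really does assemble into an infinite reduced word, but this is routine from the characterization of infinite reduced expressions as ascending sequences of reduced finite expressions.
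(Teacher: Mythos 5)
Your argument is correct and is essentially the paper's own proof: both verify the Zorn hypothesis by passing to the countable set of finite prefixes of the chain, building the ascending sequence of finite joins via Lemma \ref{lem:boundingrp}, and reading off the resulting (finite or infinite) word as an upper bound, then applying the same reasoning to the principal filter above $w$ for the second assertion. No gaps.
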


\begin{proof}
To prove the first assertion it suffices to show that the condition
of Zorn's Lemma is satisfied. Suppose that we have an ascending chain in
$\overline{W}$: $\{w_i\}_{i\in I}$. Then the set $B=\{b\in W|b\leq
w_i$ for some $i\}$ is countable since $W$ is countable. Let
$B=\{b_1,b_2,b_3,\dots\}$. For $b_1,b_2,\dots,b_k$, one can find
some $w_j$ which is their upper bound. Hence $b_1,b_2,\dots,b_k$ admit a
join in $W$ by Lemma \ref{lem:boundingrp}. So we have an ascending
chain of elements in $W$:
$$b_1\leq b_1\vee b_2\leq b_1\vee b_2\vee b_2\leq \dots.$$
This defines an element in $\overline{W}$ (denoted by $v$) which will be shown to be
the join of the original chain $\{w_i\}_{i\in I}$. Take any $w_i$ in the chain. Since any finite prefix of $w_i$ is $b_l$ for some $l$, it is less than $v$. Therefore we have that $w_i\leq v$.
Let $u$ be any upper bound of the chain $\{w_i\}_{i\in I}$. Since $\bigvee_{i=1}^k b_i$ is bounded by some $w_j$, $\bigvee_{i=1}^k b_i\leq w_j\leq u$. Therefore $v\leq u$.
Hence we conclude that $v=u.$
The second assertion
follows by the same argument applied to $\{x\in \overline{W}|x\geq
w\}$.
\end{proof}

Another consequence of the  above proof is the following Lemma.

\begin{lemma}\label{chaininwbar}
Let $W$ be countably generated and $\{w_i\}_{i\in I}$ be an ascending chain in $\overline{W}$ under the weak order. Then the join of $\{w_i\}_{i\in I}$ exists in $\overline{W}$ and $\Phi_{\vee_i w_i}=\cup_{i\in I}\Phi_{w_i}$.
\end{lemma}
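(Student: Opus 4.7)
The plan is to recycle the construction already used in the proof of Proposition \ref{prop:maximalinwbar}, where exactly the same argument shows that any ascending chain has a least upper bound; here we additionally need to verify the description of the inversion set of the join.

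First I would observe that since $W$ is countably generated, $W$ itself is countable. Consequently the set $B:=\{b\in W\mid b\leq w_i \text{ for some } i\in I\}$ is countable, so we can enumerate $B=\{b_1,b_2,b_3,\dots\}$. For each $k$, the finite set $\{b_1,\dots,b_k\}$ is bounded in $\overline{W}$ (pick any $w_i$ above all the $b_j$'s, which exists because the $w_i$'s form a chain and each $b_j$ lies under some $w_{i_j}$), and therefore by Lemma \ref{lem:boundingrp} the join $c_k:=b_1\vee\cdots\vee b_k$ exists in $W$. The chain
\[c_1\leq c_2\leq c_3\leq\cdots\]
in $W$ then defines an element $v\in\overline{W}$, exactly as in Theorem \ref{semilattice} and Proposition \ref{prop:maximalinwbar}.

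Next I would check that $v=\bigvee_{i\in I}w_i$. For the upper bound property, take any $w_i$ and any finite prefix $u$ of $w_i$; then $u\in B$, so $u=b_j$ for some $j$, hence $u\leq c_j\leq v$. Thus every prefix of $w_i$ lies under $v$, i.e.\ $w_i\leq v$ by Lemma \ref{lem:infinitelongbasic}(a). For minimality, if $u\in\overline{W}$ bounds the chain from above, then each $b_j\leq w_{i_j}\leq u$, so by induction $c_k\leq u$ for every $k$; again by Lemma \ref{lem:infinitelongbasic}(a) this forces $v\leq u$.

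Finally I would verify the inversion set identity. By construction $\Phi_v=\bigcup_{k}\Phi_{c_k}$. Each $c_k$ is bounded by some $w_{j}$, so $\Phi_{c_k}\subset\Phi_{w_j}\subset\bigcup_{i\in I}\Phi_{w_i}$; this gives $\Phi_v\subset\bigcup_{i\in I}\Phi_{w_i}$. Conversely, any $\alpha\in\Phi_{w_i}$ lies in $\Phi_u$ for some finite prefix $u$ of $w_i$, and this prefix belongs to $B$, hence equals some $b_j$; therefore $\alpha\in\Phi_{b_j}\subset\Phi_{c_j}\subset\Phi_v$. The only non-routine point is checking that the element $v$ produced by the chain $\{c_k\}$ really has $\Phi_v=\bigcup_k\Phi_{c_k}$, but this is exactly how $v$ was defined in the proof of Theorem \ref{semilattice} (the chain of inversion sets is ascending and their union is biclosed, defining either an element of $W$ or an infinite reduced word whose inversion set is that union).
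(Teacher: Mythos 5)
Your proposal is correct and follows essentially the same route as the paper: both rely on the chain $b_1\leq b_1\vee b_2\leq\cdots$ built from the countable set of finite prefixes (as in Proposition \ref{prop:maximalinwbar}) and then verify the two inclusions for the inversion set exactly as you do. Your write-up is simply a more explicit version of the paper's argument.
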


\begin{proof}
Such a join (denoted by $v$) is constructed in the proof of Proposition \ref{prop:maximalinwbar}. We only need to show that $\Phi_v=\cup_{i\in I}\Phi_{w_i}.$ Take a finite prefix $b$ of $w_i$ for some $i\in I$. By the construction of $v$, $\Phi_b\subset \Phi_{v}$. Therefore $\Phi_{w_i}\subset \Phi_v$ for any $i$. Hence $\Phi_v\supset\cup_{i\in I}\Phi_{w_i}.$ Conversely $v$ is the word defined by the ascending chain:
$$b_1\leq b_1\vee b_2\leq b_1\vee b_2\vee b_2\leq \dots$$
where $\{b_1,b_2,b_3,\dots\}$ is the set of finite prefixes of $\{w_i\}_{i\in I}$.
Hence the inversion set of each term in this ascending  chain is contained in $\cup_{i\in I}\Phi_{w_i}$. Therefore we see the reverse inclusion.
\end{proof}

\section{infinite reduced words And Biclosed Sets In an Affine Root
System}\label{sec:bijection}

The main aim of this section is to obtain a description of $\Phi_w$ for $w\in \overline{W}$ in the affine case.  Related descriptions have been studied by other authors, for example in \cite{Lam1} Section 4. We characterize those biclosed sets arising from infinite reduced words by using the description of affine root systems as introduced in Section \ref{sectpreli} and the action of $W$ on the set of biclosed sets. Our proof is different and  uses  an interesting property of the finite root systems (Proposition \ref{prop:twofunctionequal}) which has not been observed previously to the knowledge of the author and this property will be used again later in the paper (Theorem \ref{thm:classifycompact}).
A proof of the classification without using Proposition \ref{prop:twofunctionequal} is also possible and is given in a subsequent paper \cite{orderpaper}.

Let $\Phi$ be an irreducible crystallographic root system  of a Weyl
group $W$. Let $\Pi$ be a simple system of $\Phi$ and
$L\varsubsetneqq \Pi.$ Denote the positive system corresponding
to $\Pi$ by $\Phi^+$.

We begin by defining the function
$$h_L: \Phi^+\rightarrow \mathbb{N}.$$
Suppose that $\Phi^+\ni\alpha=\sum_{\alpha_i\in \Pi}k_i\alpha_i$. Let
$h_L(\alpha):=\sum_{\alpha_i\not\in L}k_i$. (Note that each $k_i$ is an
integer and that $h_L$ is clearly additive.)

We define another function
$$d_L: \Phi^+\rightarrow \mathbb{N}.$$
If $\alpha\in \Phi^+_{L,\emptyset}$,
$$d_L(\alpha)=\max\{n|\alpha \:\text{can be written as the sum of}\: n \:\text{elements
in}\: \Phi^+_{L,\emptyset}\}.$$ If $\alpha\not\in
\Phi^+_{L,\emptyset}$ (that is $\alpha\in \Phi_L$), then $d_L(\alpha)=0.$

When $L$ is fixed (as we do in the following discussion), we denote the functions $d_L$ and $h_L$ by $d$ and $h$ respectively for convenience.

In order to establish the relation between these two functions, we
need the following lemma which is suggested and proved by Matthew
Dyer. (This has been generalized to sets of $n$ roots in \cite{rootsum} Theorem 3.7.)

\begin{lemma}\label{threerootsum}
Let $\alpha,\beta,\gamma\in \Phi^+$. Suppose that $\alpha+\beta$ and
$\alpha+\beta+\gamma$ are  both in $\Phi^+$. Then either
$\alpha+\gamma$ or $\beta+\gamma$ is in $\Phi^+$.
\end{lemma}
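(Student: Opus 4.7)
My plan is to argue by contradiction, assuming both $\alpha+\gamma\notin \Phi^+$ and $\beta+\gamma\notin \Phi^+$. I would first dispose of the coincidence cases: if $\alpha=\gamma$, then $\beta+\gamma=\alpha+\beta\in\Phi^+$ by hypothesis, and similarly if $\beta=\gamma$; the case $\alpha=\beta$ is ruled out because $2\alpha$ is not a root in a reduced crystallographic root system. So one may assume $\alpha,\beta,\gamma$ are pairwise distinct, hence pairwise linearly independent. Combining the contradiction hypothesis with the standard fact that linearly independent positive roots $\mu,\nu$ satisfying $(\mu,\nu)<0$ have $\mu+\nu\in\Phi^+$, one obtains $(\alpha,\gamma)\geq 0$ and $(\beta,\gamma)\geq 0$.

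Next I would analyze the $\gamma$-strings through $\alpha$, $\beta$ and $\alpha+\beta$. Writing the Cartan integers as $\langle\mu,\gamma^\vee\rangle=p_\mu-q_\mu$, the contradiction hypothesis gives $q_\alpha=q_\beta=0$, so $p_\alpha,p_\beta\geq 0$. The $\gamma$-string through $\alpha+\beta$ satisfies $q\geq 1$ (from $\alpha+\beta+\gamma\in\Phi$), $p-q=p_\alpha+p_\beta\geq 0$, and the universal bound $p+q\leq 3$ on the length of crystallographic root strings. These inequalities force $p_\alpha+p_\beta\leq 1$, leaving only two cases.

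In the case $p_\alpha=p_\beta=0$, both $(\alpha,\gamma)=0$ and $(\beta,\gamma)=0$ hold, and $\alpha+\beta\pm\gamma\in\Phi$. The identity $|\alpha+\beta+\gamma|^2=|\alpha+\beta|^2+|\gamma|^2$ rules out simply-laced types and $G_2$ by comparing the allowed squared root lengths, so we are in type $B_n$, $C_n$ or $F_4$, with $\alpha+\beta$ and $\gamma$ short and $\alpha+\beta+\gamma$ long. A direct inspection in standard coordinates (for example $\alpha+\beta=e_i$ in $B_n$, forcing $\{\alpha,\beta\}=\{e_i-e_j,e_j\}$ for some $j\neq i$ and $\gamma=e_k$ with $k\neq i,j$) then shows that one of $\alpha+\gamma$, $\beta+\gamma$ equals $e_j+e_k\in\Phi^+$, contradicting our assumption; the arguments for $C_n$ and $F_4$ are entirely analogous. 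The remaining case $p_\alpha+p_\beta=1$ yields $p=2$, $q=1$, a $\gamma$-string of length $4$, which only occurs in $G_2$; inspecting the unique such string $b,a+b,2a+b,3a+b$ with $\gamma=a$ and $\alpha+\beta=2a+b$, the only decomposition is $\alpha+\beta=a+(a+b)$, which forces one of $\alpha,\beta$ to equal $\gamma$, returning us to an already-handled case.

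The main obstacle is the finite case inspection in Case I, where one must verify that no choice of short roots $\alpha+\beta$ and $\gamma$ in $B_n$, $C_n$ or $F_4$ produces three pairwise distinct positive roots $\alpha,\beta,\gamma$ with $\alpha+\gamma,\beta+\gamma\notin \Phi$. A much cleaner alternative would be to pass to the complex semisimple Lie algebra $\mathfrak{g}$ with root system $\Phi$ and choose nonzero root vectors $e_\mu$. The hypothesis makes $[[e_\alpha,e_\beta],e_\gamma]$ a nonzero scalar multiple of $e_{\alpha+\beta+\gamma}$, while the Jacobi identity expresses it as $[e_\alpha,[e_\beta,e_\gamma]]-[e_\beta,[e_\alpha,e_\gamma]]$; hence at least one of $[e_\alpha,e_\gamma]$, $[e_\beta,e_\gamma]$ is nonzero, and positivity of $\alpha,\beta,\gamma$ rules out $\alpha+\gamma=0$ or $\beta+\gamma=0$, yielding the required positivity.
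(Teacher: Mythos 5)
Your proposal is correct, but it reaches the conclusion by a genuinely different route than the paper. The paper's proof is a short, case-free metric computation: from the hypothesis that neither $\alpha+\gamma$ nor $\beta+\gamma$ is a root it extracts the four inequalities $(\alpha^{\vee},\gamma)\geq 0$, $(\beta^{\vee},\gamma)\geq 0$, $(\alpha+\beta+\gamma,-\alpha^{\vee})\geq 0$, $(\alpha+\beta+\gamma,-\beta^{\vee})\geq 0$, deduces $(\beta,\alpha^{\vee})\leq -2$ and $(\alpha,\beta^{\vee})\leq -2$, multiplies these to get $(\alpha,\beta)^2\geq(\alpha,\alpha)(\beta,\beta)$, and invokes Cauchy--Schwarz to force $\alpha=\beta$, contradicting $\alpha+\beta\in\Phi$ in a reduced system. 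Your first route (root strings plus a type-by-type inspection in $B_n$, $C_n$, $F_4$, $G_2$) is sound in outline but, as you acknowledge, leaves the $C_n$ and $F_4$ inspections as ``entirely analogous''; on its own it would need those cases written out, and it is considerably longer than the paper's three-line computation. Your second route, via the Jacobi identity $[[e_\alpha,e_\beta],e_\gamma]=[e_\alpha,[e_\beta,e_\gamma]]-[e_\beta,[e_\alpha,e_\gamma]]$ together with the fact that $[\mathfrak{g}_\mu,\mathfrak{g}_\nu]=\mathfrak{g}_{\mu+\nu}$ whenever $\mu,\nu,\mu+\nu\in\Phi$, is complete and arguably the most conceptual proof: it makes the statement transparent and generalizes immediately by induction to sums of $n$ roots (the direction pursued in the cited work of Ferdinands--Pilkington). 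What it costs is importing the semisimple Lie algebra attached to $\Phi$, whereas the paper stays entirely inside the Euclidean combinatorics of the crystallographic root system, which fits the self-contained style of the surrounding section. Both arguments use crystallographicity essentially, so neither extends to non-crystallographic Coxeter root systems.
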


\begin{proof}
Assume that to the contrary, neither $\alpha+\gamma$ nor $\beta+\gamma$
is a (positive) root. Since the root system is crystallographic, we have that $(\alpha,\gamma)\geq 0$ and that $(\beta,\gamma)\geq
0.$ Also since neither $(\alpha+\beta+\gamma)+(-\beta)$ nor
$(\alpha+\beta+\gamma)+(-\alpha)$ is a (positive) root, we
have that $(\alpha+\beta+\gamma,-\beta)\geq 0$ and that
$(\alpha+\beta+\gamma,-\alpha)\geq 0.$ Let
$v^{\vee}=\frac{2v}{(v,v)}$. Then we conclude that
$$(\alpha^{\vee},\gamma)\geq 0,$$
$$(\beta^{\vee},\gamma)\geq 0,$$
$$(\alpha+\beta+\gamma,-\alpha^{\vee})\geq 0,$$
$$(\alpha+\beta+\gamma,-\beta^{\vee})\geq 0.$$
This gives that
$$(\beta,\alpha^{\vee})\leq -2,$$
$$(\alpha,\beta^{\vee})\leq -2.$$
This yields that
$$(\alpha,\beta)^2\geq (\alpha,\alpha)(\beta,\beta).$$
But by Cauchy-Schwarz inequality this forces that $\alpha=\beta$, which
again contradicts the assumption that $\alpha+\beta$ is a root.
\end{proof}

\begin{proposition}\label{prop:twofunctionequal}
Let functions $d,h$ be defined as above. Then for any $\alpha\in \Phi^+$, $d(\alpha)=h(\alpha)$.
\end{proposition}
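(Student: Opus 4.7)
The proof splits cleanly into an easy inequality and a substantive one. For the upper bound $d_L(\alpha) \le h_L(\alpha)$: if $\alpha \in \Phi_L$ both sides vanish, while if $\alpha \in \Phi^+_{L,\emptyset}$ and $\alpha = \beta_1 + \cdots + \beta_n$ with $\beta_i \in \Phi^+_{L,\emptyset}$, then since $h_L$ is additive with respect to sums of roots and $h_L(\beta_i) \ge 1$ for each $i$, one has $h_L(\alpha) = \sum_i h_L(\beta_i) \ge n$; taking the supremum yields $d_L(\alpha) \le h_L(\alpha)$.

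For the lower bound $d_L(\alpha) \ge h_L(\alpha)$, the plan is to prove by induction on the ordinary height $\mathrm{ht}(\alpha)$ the following strengthening: \emph{if $h_L(\alpha) = n \ge 1$, then there exist $\sigma_1, \ldots, \sigma_n \in \Phi^+_{L,\emptyset}$ with $\alpha = \sigma_1 + \cdots + \sigma_n$ and each partial sum $\sigma_1 + \cdots + \sigma_j \in \Phi^+$.} The partial-sum clause is essential, since it is what allows Lemma \ref{threerootsum} to be invoked at the next height level. The base case $\alpha$ simple is immediate: take $\sigma_1 = \alpha$ when $h_L(\alpha) = 1$. For the inductive step, I rely on the standard fact that for any non-simple $\alpha \in \Phi^+$ there is a simple root $\alpha_i$ appearing with positive coefficient in $\alpha$ and with $(\alpha, \alpha_i) > 0$; by the crystallographic $\alpha_i$-string argument this forces $\alpha' := \alpha - \alpha_i \in \Phi^+$.

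Two cases then split according to whether $\alpha_i \in L$. If $\alpha_i \notin L$, then $h_L(\alpha') = n - 1$, and (assuming $n \ge 2$) I apply the inductive hypothesis to $\alpha'$ to obtain $\alpha' = \sigma'_1 + \cdots + \sigma'_{n-1}$ and append $\sigma_n := \alpha_i$; the partial-sum conditions pass through trivially. The substantive case is $\alpha_i \in L$: then $h_L(\alpha') = n$, and the inductive hypothesis on $\alpha'$ yields $\alpha' = \sigma'_1 + \cdots + \sigma'_n$ with partial sums $\tau'_j := \sigma'_1 + \cdots + \sigma'_j \in \Phi^+$. When $n \ge 2$, I apply Lemma \ref{threerootsum} to the triple $(\tau'_{n-1}, \sigma'_n, \alpha_i)$ — its hypotheses $\tau'_{n-1} + \sigma'_n = \tau'_n \in \Phi^+$ and $\tau'_n + \alpha_i = \alpha \in \Phi^+$ are exactly what the inductive set-up provides. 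Of the two possibilities: if $\sigma'_n + \alpha_i \in \Phi^+$, I absorb $\alpha_i$ into the last block by setting $\sigma_j := \sigma'_j$ for $j < n$ and $\sigma_n := \sigma'_n + \alpha_i$ (which has $h_L = 1$); if instead $\tau'_{n-1} + \alpha_i \in \Phi^+$, I set $\alpha'' := \tau'_{n-1} + \alpha_i$, which satisfies $h_L(\alpha'') = n - 1 \ge 1$ and $\mathrm{ht}(\alpha'') < \mathrm{ht}(\alpha)$, apply the inductive hypothesis to $\alpha''$, and append $\sigma_n := \sigma'_n$. The edge case $n = 1$ in either case is handled by the trivial decomposition $\sigma_1 := \alpha$.

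The principal obstacle is precisely the sub-case $\alpha_i \in L$: the peeled-off simple root contributes nothing to $h_L$ and must be reabsorbed into a decomposition already built entirely from roots outside $\Phi_L$. Without the three-root lemma, and without the partial-sum property carried through as an inductive invariant, there is no mechanism for this absorption; with them, the two dichotomous conclusions of Lemma \ref{threerootsum} correspond exactly to attaching $\alpha_i$ at the rightmost block or recursing into a shorter prefix, and the induction closes.
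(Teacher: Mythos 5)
Your proof is correct. Both arguments hinge on Lemma \ref{threerootsum} in exactly the same role (the dichotomy ``absorb the new root into the last summand, or recurse into the prefix''), but the surrounding induction is organized differently. The paper first reduces the lower bound to the superadditivity $d(\alpha+\beta)\ge d(\alpha)+d(\beta)$ over sums of positive roots, and then proves the essential case $d(\alpha)=0$, $d(\beta)>0$ by induction on $d(\beta)$; to set up the three-root lemma there it must invoke the Bourbaki fact that a decomposition $\beta=\beta_1+\cdots+\beta_{m+1}$ into (non-simple) roots of $\Phi^+_{L,\emptyset}$ can be reordered so that all partial sums are roots. You instead induct on the ordinary height of $\alpha$, peel off a single simple root $\alpha_i$ with $\alpha-\alpha_i\in\Phi^+$, and carry the partial-sum condition as an explicit inductive invariant of a strengthened statement. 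What this buys you is that the only external input beyond Lemma \ref{threerootsum} is the elementary fact about subtracting a simple root; the reordering result for sums of non-simple positive roots is never needed, since the invariant is rebuilt at each step. The cost is a slightly more delicate bookkeeping (the two sub-cases of $\alpha_i\in L$, the second of which re-invokes the inductive hypothesis at the strictly smaller height $\mathrm{ht}(\alpha)-\mathrm{ht}(\sigma'_n)$, so the induction must be read as strong induction on height), but all of these steps check out.
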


\begin{proof} (M. Dyer)
If $\alpha\not\in \Phi^+_{L,\emptyset},$ then the both sides are
$0$. So we assume that $\alpha\in \Phi^+_{L,\emptyset}$. (This implies that $\alpha\not\in \Phi_L$.)

Suppose that $\alpha$ can be written as a sum of $n$ elements in
$\Phi^+_{L,\emptyset}.$ When writing such an element as the positive
linear combination of simple roots, there must be at least one
simple root outside $L$ appearing in the sum. Hence we see that
$d(\alpha)\leq h(\alpha)$.

Now we prove that $h(\alpha)\leq d(\alpha)$. For simple roots we
clearly have that $h(\alpha)=d(\alpha)$. Also note that any positive root
$\alpha$ can be written as $\alpha_1+\alpha_2+\cdots+\alpha_k$ where
each $\alpha_i$ is simple and
$\alpha_1+\alpha_2+\cdots+\alpha_i$ is a root. So it suffices to
show that $d(\alpha+\beta)\geq d(\alpha)+d(\beta)$ for
$\alpha,\beta,\alpha+\beta$ all positive roots.

If $d(\alpha)=d(\beta)=0$, the inequality is trivial. If
$d(\alpha)>0$ and $d(\beta)>0$, then the inequality follows from the
definition of the function $d$. So it suffices to prove the case where
$d(\alpha)=0$ and $d(\beta)>0.$ We prove this by  induction on
$d(\beta)$. The case where $d(\beta)=1$ is trivial. Suppose that for
$d(\beta)\leq m$, $d(\alpha)+d(\beta)\leq d(\alpha+\beta)$.

Assume that $d(\beta)=m+1$ and $\beta=\beta_1+\beta_2+\cdots+\beta_{m+1}$
with each $\beta_i\not\in \Phi_L$. We can even assume that  $\beta_1+\beta_2+\cdots+\beta_i$ is a root for all
$i$ by Chapter VI \S 1
Proposition 19 in \cite{Bourbaki}. By Lemma \ref{threerootsum},
either $\alpha+\beta_{m+1}$ or
$\alpha+\beta_1+\beta_2+\cdots+\beta_m$ is a (positive) root. In the
first case
$$\alpha+\beta=(\alpha+\beta_{m+1})+(\beta_1)+(\beta_2)+\cdots+(\beta_{m}).$$
So we have that $d(\alpha+\beta)\geq m+1$. In the second case,
$$\alpha+\beta=(\alpha+\beta_1+\beta_2+\cdots+\beta_m)+\beta_{m+1}.$$
We have that $d(\alpha+\beta_1+\beta_2+\cdots+\beta_m)>0$ and so by the
previous discussion $d(\alpha+\beta)\geq
d(\alpha+\beta_1+\beta_2+\cdots+\beta_m)+1$. But by induction
$d(\alpha+\beta_1+\beta_2+\cdots+\beta_m)\geq m$ and we are done.
\end{proof}

\begin{corollary}\label{cor:additive}
If $\alpha=k_1\beta+k_2\gamma$ with $\alpha, \beta, \gamma\in
\Phi^+$ and $k_1, k_2\in \mathbb{R}_{\geq 0}$, then
$d(\alpha)=k_1d(\beta)+k_2d(\gamma)$.
\end{corollary}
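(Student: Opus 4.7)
The plan is to reduce the statement immediately to Proposition \ref{prop:twofunctionequal} and then appeal to the linearity of $h_L$. First I would observe that the function $h = h_L$, as defined via the coefficients in the simple root expansion, is the restriction to $\Phi^+$ of the linear functional on the ambient real vector space given by $v = \sum_{\alpha_i \in \Pi} c_i \alpha_i \mapsto \sum_{\alpha_i \notin L} c_i$. In particular, for any real linear combination of vectors we have $h(k_1 v_1 + k_2 v_2) = k_1 h(v_1) + k_2 h(v_2)$, with no integrality or positivity requirement needed.

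Next I would apply Proposition \ref{prop:twofunctionequal} to each of $\alpha, \beta, \gamma \in \Phi^+$ to replace $d$ by $h$. Since by hypothesis $\alpha = k_1 \beta + k_2 \gamma$ holds as an equality of vectors in the ambient space, linearity of $h$ yields
\[
d(\alpha) = h(\alpha) = k_1 h(\beta) + k_2 h(\gamma) = k_1 d(\beta) + k_2 d(\gamma),
\]
which is the desired identity.

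There is essentially no obstacle here, as all the nontrivial content has already been packaged into Proposition \ref{prop:twofunctionequal}; the only mild point worth a sentence is to emphasize that although $d$ itself is defined only on $\Phi^+$ by a nonlinear combinatorial prescription (maximal length of a decomposition into roots outside $\Phi_L$), its identification with the manifestly linear function $h$ turns the decomposition identity $\alpha = k_1\beta + k_2\gamma$ directly into the claimed additivity. Thus the corollary follows with no further case analysis.
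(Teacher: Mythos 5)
Your proposal is correct and is essentially identical to the paper's own proof: both reduce the claim to the linearity of $h$ via Proposition \ref{prop:twofunctionequal}, the only difference being that you spell out the linearity of $h$ slightly more explicitly. No issues.
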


\begin{proof}
By Proposition \ref{prop:twofunctionequal} it suffices to show that
$h(\alpha)=k_1h(\beta)+k_2h(\gamma)$. But this is clear from the
definition of the $h$ function.
\end{proof}

From now on we consider the associated affine root system
$\widetilde{\Phi}$ with the positive system $\widehat{\Phi}$. The
corresponding (irreducible) affine Weyl group is denoted by $\widetilde{W}$.

\begin{definition}
Denote  the set $$\{\alpha+k\delta|\alpha\in
\Phi^+_{L,\emptyset},0\leq k\leq n\}$$ by $(\Phi^+_{L,\emptyset})_n$.
\end{definition}

\begin{lemma}
The identity
$$\overline{(\Phi^+_{L,\emptyset})_n}=\{\alpha+k\delta|\alpha\in \Phi^+_{L,\emptyset},0\leq k\leq nd(\alpha)\}$$
holds.
\end{lemma}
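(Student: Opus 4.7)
Write $R:=\{\alpha+k\delta\mid \alpha\in\Phi^+_{L,\emptyset},\ 0\le k\le nd(\alpha)\}$ for the right-hand side. The plan is to establish the two inclusions separately. For $\overline{(\Phi^+_{L,\emptyset})_n}\subseteq R$, note first that $(\Phi^+_{L,\emptyset})_n\subseteq R$ since $d(\alpha)\ge 1$ for every $\alpha\in\Phi^+_{L,\emptyset}$, whence $n\le nd(\alpha)$; then verify that $R$ itself is closed in $\widehat{\Phi}$. For the reverse inclusion $R\subseteq\overline{(\Phi^+_{L,\emptyset})_n}$, induct on $d(\alpha)$, repeatedly splitting $\alpha+k\delta$ into two summands that are forced into the closure by the inductive hypothesis.

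To check closedness of $R$, take $\alpha_1+k_1\delta,\alpha_2+k_2\delta\in R$ and non-negative reals $c_1,c_2$ with $c_1(\alpha_1+k_1\delta)+c_2(\alpha_2+k_2\delta)=(c_1\alpha_1+c_2\alpha_2)+(c_1k_1+c_2k_2)\delta\in\widehat{\Phi}$. Then the finite root part $c_1\alpha_1+c_2\alpha_2$ lies in $\Phi$, and being a non-negative combination of positive roots it must lie in $\Phi^+$; inheriting a strictly positive simple-root coefficient outside $L$ from $\alpha_1$ or $\alpha_2$, it in fact lies in $\Phi^+_{L,\emptyset}$. By Corollary \ref{cor:additive}, $d(c_1\alpha_1+c_2\alpha_2)=c_1d(\alpha_1)+c_2d(\alpha_2)$, and hence
\[ c_1k_1+c_2k_2\le n\bigl(c_1d(\alpha_1)+c_2d(\alpha_2)\bigr)=nd(c_1\alpha_1+c_2\alpha_2), \]
so the combination belongs to $R$.

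For the reverse inclusion I argue by induction on $d(\alpha)$. When $d(\alpha)=1$, the condition $0\le k\le nd(\alpha)=n$ places $\alpha+k\delta$ directly in $(\Phi^+_{L,\emptyset})_n$. When $d(\alpha)\ge 2$, the definition of $d$ produces a decomposition $\alpha=\beta+\gamma$ with $\beta,\gamma\in\Phi^+_{L,\emptyset}$, and Corollary \ref{cor:additive} forces $d(\beta)+d(\gamma)=d(\alpha)$; in particular both $d(\beta)$ and $d(\gamma)$ are strictly smaller than $d(\alpha)$. Split $k$ as $k=k_1+k_2$ with $0\le k_1\le nd(\beta)$ and $0\le k_2\le nd(\gamma)$ (for instance $k_1=\min(k,nd(\beta))$), so that $\beta+k_1\delta,\gamma+k_2\delta\in R$. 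By the inductive hypothesis both summands lie in $\overline{(\Phi^+_{L,\emptyset})_n}$; since their sum equals $\alpha+k\delta\in\widehat{\Phi}$, the closedness of $\overline{(\Phi^+_{L,\emptyset})_n}$ completes the induction.

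The main obstacle is the closedness of $R$: both directions of the equality $d(c_1\alpha_1+c_2\alpha_2)=c_1d(\alpha_1)+c_2d(\alpha_2)$ are needed (the forward direction for the bound on the $\delta$-coefficient in the closedness argument, and the reverse direction for forcing $d(\beta)+d(\gamma)=d(\alpha)$ in the inductive step), so the nontrivial identity $d=h$ supplied by Proposition \ref{prop:twofunctionequal} is essential. Once Corollary \ref{cor:additive} is in hand, both inclusions follow by the routine arguments above.
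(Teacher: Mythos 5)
Your proof is correct and follows essentially the same route as the paper: both directions rest on Proposition \ref{prop:twofunctionequal} via Corollary \ref{cor:additive}, and your closedness argument for the right-hand side $R$ is the paper's argument verbatim (root part lands in $\Phi^+_{L,\emptyset}$ because $h>0$, then additivity of $d$ bounds the $\delta$-coefficient). For the inclusion $R\subseteq\overline{(\Phi^+_{L,\emptyset})_n}$ the paper writes $\alpha+k\delta$ directly as a sum of $d(\alpha)$ elements of $(\Phi^+_{L,\emptyset})_n$ and invokes Bourbaki, Ch.~VI \S 1, Proposition 19 once, whereas you unroll this into an induction on $d(\alpha)$ with a binary split $k=k_1+k_2$; the two are equivalent. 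The one step you should make explicit is the existence of the decomposition $\alpha=\beta+\gamma$ with $\beta,\gamma$ both \emph{roots} in $\Phi^+_{L,\emptyset}$: the definition of $d$ only gives $\alpha$ as a sum of $d(\alpha)$ elements of $\Phi^+_{L,\emptyset}$, and you need the same Bourbaki rearrangement result to group them so that the partial sum $\beta$ is itself a root (it then lies in $\Phi^+_{L,\emptyset}$ since $h(\beta)>0$). With that citation added, the argument is complete.
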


\begin{proof}
Let $0\leq k\leq nd(\alpha)$. Then we can find $0\leq
a_1,a_2,\dots,a_{d(\alpha)}\leq n$ such that
$$a_1+a_2+\cdots+a_{d(\alpha)}=k.$$
By definition of the function $d$, we have that
$\alpha=\alpha_1+\alpha_2+\cdots+\alpha_{d(\alpha)}$ where
$\alpha_i\in \Phi^+_{L,\emptyset}$. Therefore
$$\alpha+k\delta=(\alpha_1+a_1\delta)+(\alpha_2+a_2\delta)+\cdots+(\alpha_{d(\alpha)}+a_{d(\alpha)}\delta).$$
This combined with Chapter VI \S 1 Proposition 19 in \cite{Bourbaki}
shows that $\alpha+k\delta\in \overline{(\Phi^+_{L,\emptyset})_n}.$

Conversely we just need to prove that the set on the right is closed, which is enough
since it lies between the set and its closure. Assume that $\alpha_1,\alpha_2\in \Phi^+_{L,\emptyset}$,
$0\leq k_1\leq nd(\alpha_1), 0\leq k_2\leq nd(\alpha_2)$,
$m_1,m_2\geq 0$ ($k_1, k_2\in \mathbb{Z}$, $m_1, m_2\in
\mathbb{R}$) and $m_1(\alpha_1+k_1\delta)+m_2(\alpha_2+k_2\delta)$
is a root in $\widehat{\Phi}$. Then $m_1\alpha_1+m_2\alpha_2\in
\Phi.$ Since $h(\alpha_i)>0$ for $i=1,2$ and $m_1, m_2$ cannot both
be zero so we have that $h(m_1\alpha_1+m_2\alpha_2)>0$ and therefore
$m_1\alpha_1+m_2\alpha_2\in \Phi^+_{L,\emptyset}$. By Corollary
\ref{cor:additive},
$d(m_1\alpha_1+m_2\alpha_2)=m_1d(\alpha_1)+m_2d(\alpha_2)\geq
m_1\frac{k_1}{n}+m_2\frac{k_2}{n}$. So
$$nd(m_1\alpha_1+m_2\alpha_2)\geq m_1k_1+m_2k_2.$$
This implies that $m_1\alpha_1+m_2\alpha_2+(m_1k_1+m_2k_2)\delta$ is still
in $\{\alpha+k\delta|\alpha\in \Phi^+_{L,\emptyset},0\leq k\leq
nd(\alpha)\}$.
\end{proof}

An immediate consequence of this lemma is

\begin{lemma}\label{lem:finite}
The set $\overline{(\Phi^+_{L,\emptyset})_n}$ is  finite.
\end{lemma}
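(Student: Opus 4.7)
The plan is to deduce finiteness directly from the explicit description furnished by the preceding lemma, namely
\[
\overline{(\Phi^+_{L,\emptyset})_n}=\{\alpha+k\delta \mid \alpha\in \Phi^+_{L,\emptyset},\ 0\leq k\leq nd(\alpha)\}.
\]
Since $\Phi$ is a finite irreducible crystallographic root system, the subset $\Phi^+_{L,\emptyset}\subset \Phi^+$ is finite, so there are only finitely many choices for $\alpha$ contributing to the union on the right.

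For each fixed such $\alpha$, the set of admissible $k$ is the finite interval $\{0,1,\ldots,nd(\alpha)\}$ of integers. Here $d(\alpha)$ is a finite non-negative integer by definition (it is the maximum length of a decomposition of $\alpha$ into elements of $\Phi^+_{L,\emptyset}$, and such decompositions cannot have more than $h(\alpha)$ summands since each summand contributes at least $1$ to $h$, and $h(\alpha)$ is finite as $\alpha$ has only finitely many simple root coefficients). Equivalently, by Proposition \ref{prop:twofunctionequal}, $d(\alpha)=h(\alpha)$, which is bounded above by the sum of the (finitely many) coefficients of $\alpha$ in the simple root basis.

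Therefore $\overline{(\Phi^+_{L,\emptyset})_n}$ is a finite union (indexed by $\alpha\in \Phi^+_{L,\emptyset}$) of finite sets (indexed by $k\in\{0,\ldots,nd(\alpha)\}$), hence finite. There is no genuine obstacle here; the entire content of the lemma is packaged in the previous description, and the present statement is essentially a bookkeeping corollary.
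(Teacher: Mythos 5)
Your proof is correct and follows exactly the route the paper intends: the paper states this lemma as an immediate consequence of the preceding explicit description $\overline{(\Phi^+_{L,\emptyset})_n}=\{\alpha+k\delta \mid \alpha\in \Phi^+_{L,\emptyset},\ 0\leq k\leq nd(\alpha)\}$, and your argument simply spells out the finiteness bookkeeping (finitely many $\alpha$, finitely many $k$ for each) that the paper leaves implicit.
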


We also have

\begin{lemma}\label{lem:biclosed}
The set $\overline{(\Phi^+_{L,\emptyset})_n}$ is  biclosed.
\end{lemma}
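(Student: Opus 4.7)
The plan is to use the explicit description from the previous lemma together with a single $\mathbb{R}$-linear functional on $V'$ that cuts out $B:=\overline{(\Phi^+_{L,\emptyset})_n}$. Combining the previous lemma with Proposition~\ref{prop:twofunctionequal} gives
$$B=\{\alpha+k\delta\mid\alpha\in\Phi^+_{L,\emptyset},\ 0\le k\le nh(\alpha)\}.$$
Closedness of $B$ is already proved in the course of the previous lemma (the right-hand side was shown to be closed in order to bound the closure above), so what remains is to establish coclosedness.

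Extending $h$ to an $\mathbb{R}$-linear functional on $V$ (by reading off the coefficients on $\Pi\setminus L$) and pulling it back to $V'$ by ignoring the $\delta$-coordinate, I would define $\phi\colon V'\to\mathbb{R}$ by $\phi(v+m\delta):=nh(v)-m$. First I would verify the following sign dichotomy: for every $\rho=\beta+m\delta\in\widehat{\Phi}\setminus B$,
$$\phi(\rho)\le 0,$$
with equality precisely when $m=0$ and $\beta\in\Phi_L^+$. This is a short check dividing the complement according to whether $\beta$ lies in $\Phi^+_{L,\emptyset}$ (use $m>nh(\beta)$), $\Phi_L^+$ (use $h(\beta)=0$), $\Phi_L^-$ (use $h(\beta)=0$ and $m\ge 1$), or $-\Phi^+_{L,\emptyset}$ (use $h(\beta)<0$ and $m\ge 1$).

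For coclosedness, suppose $\rho_1,\rho_2\in\widehat{\Phi}\setminus B$ and $\rho=c_1\rho_1+c_2\rho_2\in\widehat{\Phi}$ with $c_1,c_2\ge 0$, and assume for contradiction that $\rho=\beta+m\delta\in B$. Then $\beta\in\Phi^+_{L,\emptyset}$ and $m\le nh(\beta)$ yield $\phi(\rho)\ge 0$, while $\mathbb{R}$-linearity of $\phi$ together with the sign dichotomy gives $\phi(\rho)=c_1\phi(\rho_1)+c_2\phi(\rho_2)\le 0$. Hence $\phi(\rho)=0$ and $c_i\phi(\rho_i)=0$ for $i=1,2$, so for each $i$ with $c_i>0$ we must be in the equality case, forcing $\rho_i=\beta_i\in\Phi_L^+$ and $m_i=0$. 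But then $\beta=c_1\beta_1+c_2\beta_2$ (with $c_i\beta_i$ read as $0$ when $c_i=0$) lies in the $\mathbb{R}$-span of $L$, so $h(\beta)=0$, contradicting $\beta\in\Phi^+_{L,\emptyset}$.

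I do not expect any serious obstacle; the only mildly delicate point is verifying the sign dichotomy cleanly across all four cases of the complement, which is straightforward once $h$ is extended linearly and the positivity conventions on $m$ for affine roots are kept track of.
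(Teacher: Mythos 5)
Your argument is correct, and it reorganizes the paper's proof rather than reproducing it. The paper also reduces to coclosedness and relies on the same additivity fact (Proposition \ref{prop:twofunctionequal} via Corollary \ref{cor:additive}), but it proceeds by a three-way case analysis on where the two roots $\alpha+k_1\delta,\beta+k_2\delta$ of the complement sit (both in $\widehat{\Phi^+_{L,\emptyset}}$ above the cutoff; both outside $\Phi^+_{L,\emptyset}$, where it invokes biclosedness of $\Phi^+_{L,\emptyset}$ in $\Phi$; and the mixed case), verifying in each case the inequality $t_1k_1+t_2k_2>nd(t_1\alpha+t_2\beta)$. Your single functional $\phi(v+m\delta)=nh(v)-m$ packages all three cases into one sign computation: the paper's Case I inequality is exactly $\phi<0$ on that part of the complement, and your equality analysis (forcing the $\rho_i$ with $c_i>0$ into $\Phi_L^+$ at level $0$, hence $h(\beta)=0$, contradicting $\beta\in\Phi^+_{L,\emptyset}$) is what replaces the paper's appeal to biclosedness of $\Phi^+_{L,\emptyset}$ in $\Phi$ in Cases II--III. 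Your sign dichotomy checks out in all four strata of the complement (keeping in mind that $m\ge 1$ for $\beta\in\Phi^-$ and that $h(\beta)>0$ exactly on $\Phi^+\setminus\Phi_L$), so the boundary case $\phi=0$ is the only delicate point and you handle it correctly. What the functional buys is uniformity and a cleaner verification; what the paper's case split buys is that it never needs to isolate the equality locus of a linear functional, at the cost of more bookkeeping.
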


\begin{proof}
We only need to show that the set is coclosed.

Case I:

Consider $\alpha+k_1\delta, \beta+k_2\delta$ where $\alpha, \beta\in
\Phi^+_{L,\emptyset}$ and $k_1>nd(\alpha), k_2>nd(\beta).$  Suppose that
$t_1\alpha+t_2\beta+(t_1k_1+t_2k_2)\delta, t_1, t_2\geq 0$ is a root
of $\widehat{\Phi}$. So
$d(t_1\alpha+t_2\beta)=t_1d(\alpha)+t_2d(\beta)<t_1\frac{k_1}{n}+t_2\frac{k_2}{n}$.
Therefore
$$t_1k_1+t_2k_2>nd(t_1\alpha+t_2\beta).$$
That implies that $t_1\alpha+t_2\beta+(t_1k_1+t_2k_2)\delta\not\in
\overline{(\Phi^+_{L,\emptyset})_n}.$

Case II:

Consider $\alpha+k_1\delta, \beta+k_2\delta$ where $\alpha,
\beta\not\in \Phi^+_{L,\emptyset}$. Suppose that
$t_1\alpha+t_2\beta+(t_1k_1+t_2k_2)\delta, t_1, t_2\geq 0$ is a root
of $\widehat{\Phi}$. Then $t_1\alpha+t_2\beta\not\in
\Phi^+_{L,\emptyset}$ as $\Phi^+_{L,\emptyset}$ is a biclosed set in
$\Phi.$

Case III:

Consider $\alpha+k_1\delta, \beta+k_2\delta$ where $\alpha\not\in
\Phi^+_{L,\emptyset}$ and $\beta\in \Phi^+_{L,\emptyset}$ and
$k_2>nd(\beta)$. Suppose that $t_1\alpha+t_2\beta+(t_1k_1+t_2k_2)\delta,
t_1, t_2\geq 0$ is a root of $\widehat{\Phi}$. We only need to
examine the case where $t_1\alpha+t_2\beta\in \Phi^+_{L,\emptyset}$.
In this case $d(t_1\alpha+t_2\beta)\leq
t_2d(\beta)<t_2\frac{k_2}{n}$(If $\alpha$ is positive then the first
$\leq$ is $=$). Therefore
$$t_1k_1+t_2k_2\geq t_2k_2>nd(t_1\alpha+t_2\beta).$$
That implies that $t_1\alpha+t_2\beta+(t_1k_1+t_2k_2)\delta\not\in
\overline{(\Phi^+_{L,\emptyset})_n}.$
\end{proof}

\begin{corollary}\label{cor:infinitelongform}
The biclosed set $\widehat{\Phi^+_{L,\emptyset}}$ is of the form $\Phi_w$ where $w$
is an infinite reduced word (in $\overline{\widetilde{W}}$).
\end{corollary}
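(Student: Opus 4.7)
The plan is to exhibit $\widehat{\Phi^+_{L,\emptyset}}$ as an ascending union of the finite biclosed sets $\overline{(\Phi^+_{L,\emptyset})_n}$ constructed just above, promote this to a chain in $\widetilde{W}$ under the weak order, and take its join inside $\overline{\widetilde{W}}$.

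First I would verify the set identity
$$\widehat{\Phi^+_{L,\emptyset}} \;=\; \bigcup_{n=1}^{\infty} \overline{(\Phi^+_{L,\emptyset})_n},$$
using the explicit formula $\overline{(\Phi^+_{L,\emptyset})_n}=\{\alpha+k\delta\mid \alpha\in\Phi^+_{L,\emptyset},\,0\le k\le nd(\alpha)\}$. The inclusion $\supseteq$ is immediate. For $\subseteq$, an arbitrary element of $\widehat{\Phi^+_{L,\emptyset}}$ has the form $\alpha+k\delta$ with $\alpha\in\Phi^+_{L,\emptyset}$ and $k\ge 0$; since $\alpha\notin\Phi_L$ we have $d(\alpha)\ge 1$, so choosing $n\ge k$ puts $\alpha+k\delta$ into $\overline{(\Phi^+_{L,\emptyset})_n}$.

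Next, Lemmas \ref{lem:finite} and \ref{lem:biclosed} tell us that each $\overline{(\Phi^+_{L,\emptyset})_n}$ is finite and biclosed in $\widehat{\Phi}$. Invoking the characterization recalled in Section~\ref{sectpreli} that finite biclosed sets are precisely the inversion sets of group elements, I obtain $w_n\in\widetilde{W}$ with $\Phi_{w_n}=\overline{(\Phi^+_{L,\emptyset})_n}$. The inclusions $\overline{(\Phi^+_{L,\emptyset})_n}\subseteq\overline{(\Phi^+_{L,\emptyset})_{n+1}}$ translate into an ascending chain $w_1\le w_2\le \cdots$ in the weak order on $\widetilde{W}$. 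Since $\widetilde{W}$ is finitely generated, Lemma \ref{chaininwbar} applies and delivers a join $w\in\overline{\widetilde{W}}$ satisfying
$$\Phi_w \;=\; \bigcup_{n=1}^{\infty}\Phi_{w_n} \;=\; \bigcup_{n=1}^{\infty}\overline{(\Phi^+_{L,\emptyset})_n} \;=\; \widehat{\Phi^+_{L,\emptyset}}.$$
Because this union is infinite, $w$ cannot lie in $\widetilde{W}$; hence $w\in W_l$ is an infinite reduced word with the prescribed inversion set.

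I do not anticipate a genuine obstacle: the real content has already been packed into Lemmas \ref{lem:finite} and \ref{lem:biclosed} (whose proofs rest on the additivity Corollary \ref{cor:additive}) together with the chain lemma of Section~\ref{sect:infbasics}. The present corollary is essentially a repackaging of those results within the framework of $\overline{\widetilde{W}}$.
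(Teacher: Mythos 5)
Your proposal is correct and follows essentially the same route as the paper: the paper's proof is precisely the observation that $\widehat{\Phi^+_{L,\emptyset}}=\bigcup_{n}\overline{(\Phi^+_{L,\emptyset})_n}$ together with Lemmas \ref{lem:finite} and \ref{lem:biclosed}, the finite biclosed sets giving an ascending chain of inversion sets whose union is the inversion set of an infinite reduced word. You have merely spelled out the details (the identity via $d(\alpha)\geq 1$, and the passage to the join via Lemma \ref{chaininwbar}) that the paper leaves implicit.
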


\begin{proof}
This follows from the above Lemma \ref{lem:finite} and Lemma
\ref{lem:biclosed} and
$$\widehat{\Phi^+_{L,\emptyset}}=\bigcup_{n=0}^{\infty}\overline{(\Phi^+_{L,\emptyset})_n}.$$
\end{proof}

\begin{lemma}\label{lem:biclosedsetthatisnotinfinielongword}
Let $\Psi^+$ be a positive system of $\Phi$, $\Delta$ be its simple
system, $N,M\subset \Delta$ with $N$ orthogonal to $M$ and $M\neq\emptyset$. Then
$\widehat{\Psi_{N,M}^+}$ is not of the form $\Phi_w$ where $w$ is an
infinite reduced word.
\end{lemma}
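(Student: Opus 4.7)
The plan is to pick a single element $\beta \in M$ (possible since $M \neq \emptyset$) and exhibit two specific roots of $\widehat{\Psi^+_{N,M}}$ whose $\widehat{\Phi}$-closure is already infinite. Since any inversion set of an infinite reduced word is an increasing union of finite biclosed sets, having both of these roots lie in one finite biclosed set would force that set to be infinite, giving the desired contradiction.

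Concretely, I would argue as follows. Because $\beta \in M \subset \Delta \subset \Psi^+$, we have $\beta \in \Psi^+_{N,M}$, hence $\beta \in \widehat{\Psi^+_{N,M}}$. On the other hand, $\Psi_M$ is the root subsystem generated by $M$ (hence closed under negation), so $-\beta \in \Psi_M \subset \Psi^+_{N,M}$; since $-\beta \in \Psi^- \subset \Phi^-$, the definition $\widehat{-\beta} = \{-\beta + (n+1)\delta : n \ge 0\}$ recalled in the preliminaries yields $-\beta + \delta \in \widehat{\Psi^+_{N,M}}$ as well.

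Now suppose for contradiction that $\widehat{\Psi^+_{N,M}} = \Phi_w$ for some infinite reduced word $w = s_1 s_2 s_3 \cdots$ of $\widetilde{W}$. Writing $u_n := s_1 s_2 \cdots s_n$, we have $\Phi_w = \bigcup_{n \ge 1} \Phi_{u_n}$, and each $\Phi_{u_n}$ is a finite biclosed subset of $\widehat{\Phi}$. Since both $\beta$ and $-\beta + \delta$ lie in $\Phi_w$, we may choose $n$ large enough that $\Phi_{u_n}$ contains both of them. For every integer $k \ge 0$ we have the identity $(k+1)\beta + k(-\beta + \delta) = \beta + k\delta$, which is a positive root in $\widehat{\Phi}$ (as $\beta \in \Phi^+$ and $k \ge 0$). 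The closedness of $\Phi_{u_n}$ in $\widehat{\Phi}$ therefore forces $\beta + k\delta \in \Phi_{u_n}$ for all $k \ge 0$, contradicting the finiteness of $\Phi_{u_n}$.

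I do not anticipate any serious obstacle; the argument is essentially a one-line closure computation after the two ``conflicting'' roots $\beta$ and $-\beta + \delta$ have been exhibited. The only prerequisites are the explicit description of $\widehat{\Gamma}$ given in the preliminaries and the elementary observation that every finite prefix of an infinite reduced word has a finite biclosed inversion set. Note that the orthogonality hypothesis $N \perp M$ plays no role in this argument; only $M \neq \emptyset$ is used.
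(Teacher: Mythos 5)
Your proposal follows essentially the same route as the paper's own proof: pick $\beta\in M$, exhibit $\delta$-shifts of both $\beta$ and $-\beta$ inside $\widehat{\Psi^+_{N,M}}$, and observe that any finite biclosed set containing two such roots must, by closedness, contain $\beta+k\delta$ for infinitely many $k$, contradicting finiteness of the inversion set of a finite prefix. One inaccuracy to flag: the hat construction is defined relative to the \emph{standard} positive system $\Phi^+$, whereas $\Psi^+$ is an arbitrary positive system, so $\beta\in M\subset\Psi^+$ does not give $\beta\in\Phi^+$. If $\beta\in\Phi^-$ then $\widehat{\beta}=\{\beta+\delta,\beta+2\delta,\dots\}$ and your chosen root $\beta$ (with $\delta$-coefficient $0$) need not lie in $\widehat{\Psi^+_{N,M}}$; likewise the assertion ``$-\beta\in\Psi^-\subset\Phi^-$'' fails unless $\Psi^+=\Phi^+$. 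The slip is harmless: for all $p,q\ge 1$ both $\beta+p\delta$ and $-\beta+q\delta$ do lie in $\widehat{\Psi^+_{N,M}}$ irrespective of the sign of $\beta$, and the identical closure computation $(k+1)(\beta+p\delta)+k(-\beta+q\delta)=\beta+\bigl((k+1)p+kq\bigr)\delta$ finishes the argument --- which is precisely the form in which the paper runs it.
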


\begin{proof}
Let $\alpha\in M.$ Then by definition $\alpha+p\delta$ and $-\alpha+q\delta$
are both in $\widehat{\Psi_{N,M}^+}$ for all $p,q\in
\mathbb{Z}_{\geq 1}$.
Suppose that to the contrary $\widehat{\Psi_{N,M}^+}=\Phi_w$ where $w$ is an infinite reduced
word. Then they must both be contained in some finite biclosed set $\Phi_u, u\in
\widetilde{W}$. Since $\Phi_u$ is biclosed so we have that
$2(\alpha+p\delta)+(-\alpha+q\delta)=\alpha+(2p+q)\delta\in \Phi_u, 3(\alpha+p\delta)+2(-\alpha+q\delta)=\alpha+(3p+2q)\delta, \dots$.

This implies that $\Phi_u$ is infinite and this is a contradiction.
\end{proof}

The proof of the above lemma also implies the following useful result that will be repeatedly used later in the paper.

\begin{lemma}\label{closedaffine}
Let $C$ be a closed set in $\widehat{\Phi}$ and $\alpha\in \Phi$. If there exist $\alpha+n\delta, -\alpha+m\delta, m,n\in \mathbb{Z}_{\geq 1}$ both in $C$, then $C$ is infinite.
\end{lemma}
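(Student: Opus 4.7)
The plan is to imitate exactly the calculation already performed in the proof of Lemma~\ref{lem:biclosedsetthatisnotinfinielongword}, but to extract only what closedness (not biclosedness) buys us, and to produce directly an infinite family of distinct elements of $C$. The point is that given the two elements $\alpha+n\delta$ and $-\alpha+m\delta$ of $C$, all nonnegative real combinations of them that happen to lie in $\widehat{\Phi}$ must, by the definition of closedness, also lie in $C$.

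Concretely, I would fix an integer $k\geq 1$ and form the combination
$$k(\alpha+n\delta)+(k-1)(-\alpha+m\delta)=\alpha+\bigl(kn+(k-1)m\bigr)\delta.$$
The coefficients $k$ and $k-1$ are nonnegative, and since $n,m\geq 1$ and $k\geq 1$ we have $kn+(k-1)m\geq 1$. Hence $\alpha+(kn+(k-1)m)\delta$ is a positive affine root in $\widehat{\Phi}$, regardless of whether $\alpha\in\Phi^{+}$ or $\alpha\in\Phi^{-}$ (recall that a vector $\beta+N\delta$ with $\beta\in\Phi$ lies in $\widehat{\Phi}$ as soon as $N\geq 1$). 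Applying the closedness of $C$ to the pair $(\alpha+n\delta,-\alpha+m\delta)$ with coefficients $(k,k-1)$, the element $\alpha+(kn+(k-1)m)\delta$ lies in $C$.

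Finally, as $k$ ranges over $1,2,3,\ldots$, the integer $kn+(k-1)m$ is strictly increasing (since $n\geq 1$), so the elements $\alpha+(kn+(k-1)m)\delta$ are pairwise distinct. This exhibits an infinite subfamily of $C$, proving that $C$ is infinite.

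I do not anticipate a real obstacle: the entire content of the argument is a single application of the definition of closedness, plus the elementary verification that the combination produced still lies in $\widehat{\Phi}$. The only minor point to keep in mind is that the hypothesis $n,m\geq 1$ (rather than $\geq 0$) is what guarantees membership in $\widehat{\Phi}$ for every $k\geq 1$, so the argument works uniformly for $\alpha\in\Phi^{+}$ and $\alpha\in\Phi^{-}$.
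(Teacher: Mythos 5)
Your proof is correct and is essentially the paper's argument: the paper derives this lemma from the computation $k(\alpha+p\delta)+(k-1)(-\alpha+q\delta)=\alpha+(kp+(k-1)q)\delta$ in the proof of Lemma \ref{lem:biclosedsetthatisnotinfinielongword}, which is exactly the family of combinations you produce. Your verification that each combination lies in $\widehat{\Phi}$ because $kn+(k-1)m\geq 1$ is the right (and only) point that needs checking.
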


\begin{lemma}\label{tworepresentation}
One has the following equality of sets:

$\{v\cdot \widehat{\Psi^+_{M,N}}|v\in \widetilde{W}, \Psi^+\,\text{is a positive system of }\,\Phi, M,N\, \text{are orthogonal subsets}\newline \text{of the simple system of }\Psi^+\}$
$=\{w\cdot \widehat{\Phi^+_{L,K}}|w\in \widetilde{W}, L, K\subset \Pi, L\,\text{and}\, K \,\text{are othogonal}\}$.
Furthermore, if $v\cdot \widehat{\Psi^+_{M,N}}=w\cdot \widehat{\Phi^+_{L,K}}$, then $|M|=|L|$ and $|N|=|K|.$
\end{lemma}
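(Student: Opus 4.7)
The plan is to establish the two inclusions of the set equality and then deduce the cardinality assertion. The containment $\supset$ is immediate: taking $\Psi^+ = \Phi^+$ (so the simple system is $\Pi$), $M = L$, and $N = K$ exhibits every $w\cdot \widehat{\Phi^+_{L,K}}$ as a set of the form on the left-hand side.

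For the containment $\subset$, I would begin with $v\cdot \widehat{\Psi^+_{M,N}}$ and use the classical fact that every positive system of $\Phi$ has the form $u\Phi^+$ for a unique $u\in W$. Writing $\Psi^+ = u\Phi^+$, its simple system is $u\Pi$; orthogonal subsets of $u\Pi$ are in bijection with orthogonal subsets of $\Pi$ via $u$, so $M = uL$ and $N = uK$ for some orthogonal $L, K\subset \Pi$. A direct unwinding of the definition yields $\Psi^+_{M,N} = u\Phi^+_{L,K}$ inside $\Phi$. Embedding $W$ into $\widetilde{W}$ via the decomposition $\widetilde{W} = W\ltimes Q$, I view $u$ as an element of $\widetilde{W}$ with $\pi(u) = u$, and I apply Theorem \ref{reflectionorderaffine}(3) with $(\Psi^+, M, N, w) = (\Phi^+, L, K, u)$ to conclude that $u\cdot \widehat{\Phi^+_{L,K}}$ and $\widehat{u\Phi^+_{L,K}} = \widehat{\Psi^+_{M,N}}$ share a block. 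Theorem \ref{classifybiclosedsetaffine}(2) then describes the block of $\widehat{\Psi^+_{M,N}}$ as the orbit of $\widehat{\Psi^+_{M,N}}$ under the reflection subgroup $W'$ generated by $\{s_\alpha\,|\,\alpha\in \widehat{\Psi_{M\cup N}}\}$, so there exists $w_0\in W'$ with $u\cdot \widehat{\Phi^+_{L,K}} = w_0\cdot \widehat{\Psi^+_{M,N}}$. Rearranging gives $\widehat{\Psi^+_{M,N}} = (w_0^{-1}u)\cdot \widehat{\Phi^+_{L,K}}$, whence $v\cdot \widehat{\Psi^+_{M,N}} = (vw_0^{-1}u)\cdot \widehat{\Phi^+_{L,K}}$ with $w = vw_0^{-1}u\in \widetilde{W}$.

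For the cardinality assertion, suppose $v\cdot \widehat{\Psi^+_{M,N}} = w\cdot \widehat{\Phi^+_{L,K}}$, and apply Theorem \ref{reflectionorderaffine}(3) to each side separately: the left side lies in the block of $\widehat{\Lambda^+_{M', N'}}$, where $\Lambda^+ = \pi(v)\Psi^+$, $M' = \pi(v)M$, $N' = \pi(v)N$, and analogously the right side lies in the block of $\widehat{\Xi^+_{L', K'}}$, where $\Xi^+ = \pi(w)\Phi^+$, $L' = \pi(w)L$, $K' = \pi(w)K$. Since both sides agree, $\widehat{\Lambda^+_{M', N'}}$ and $\widehat{\Xi^+_{L', K'}}$ differ by only finitely many roots. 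However, for any $A, B \subset \Phi$ the symmetric difference $\widehat{A}\triangle \widehat{B} = \bigsqcup_{\alpha\in A\triangle B}\widehat{\alpha}$ is infinite whenever $A \ne B$, since each $\widehat{\alpha}$ is an infinite set; hence $\Lambda^+_{M', N'} = \Xi^+_{L', K'}$ as subsets of $\Phi$. The common biclosed set determines the subsystems $\Phi_{M'}$ and $\Phi_{N'}$ intrinsically via the formulas $\Phi_{M'} = \{\alpha\in \Phi : \{\alpha, -\alpha\}\cap \Lambda^+_{M', N'} = \emptyset\}$ and $\Phi_{N'} = \{\alpha\in \Phi : \{\alpha, -\alpha\}\subset \Lambda^+_{M', N'}\}$, which follow from $\Lambda^+_{M', N'} = (\Lambda^+\backslash \Phi_{M'})\cup \Phi_{N'}$ together with $\Phi_{M'}\cap \Phi_{N'} = \emptyset$. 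Consequently $\Phi_{M'} = \Phi_{L'}$ and $\Phi_{N'} = \Phi_{K'}$, so $|M'| = \mathrm{rank}(\Phi_{M'}) = \mathrm{rank}(\Phi_{L'}) = |L'|$ and similarly $|N'| = |K'|$; because $\pi(v)$ and $\pi(w)$ are bijections on $\Phi$, this yields $|M| = |L|$ and $|N| = |K|$. The main obstacle is the passage from the "same block" conclusion of Theorem \ref{reflectionorderaffine}(3) to an exact identity of biclosed sets, and this gap is bridged precisely by Theorem \ref{classifybiclosedsetaffine}(2), which identifies each block with a single orbit under a prescribed reflection subgroup.
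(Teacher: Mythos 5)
Your proof is correct and follows essentially the same route as the paper: both reduce to showing $\widehat{\Psi^+_{M,N}}$ lies in the right-hand side by combining Theorem \ref{reflectionorderaffine}(3) (same block) with Theorem \ref{classifybiclosedsetaffine}(2) (blocks are $W'$-orbits). Your treatment of the cardinality claim is a more carefully justified version of the paper's one-line assertion, recovering $\Phi_{M'}$ and $\Phi_{N'}$ intrinsically from the biclosed set, but it is the same underlying argument.
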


\begin{proof}
 It is clear that we only need to show that the left hand side is contained in the right hand side.  Furthermore it suffices to show that $\widehat{\Psi^+_{M,N}}$ is contained in the right hand side for  $\Psi^+$  a positive system of  $\Phi$ and $M,N$  two orthogonal subsets of the simple system of $\Psi^+$.

There exists $z\in W$ such that $z\Psi^+=\Phi^+.$ Then $z\Psi^+_{M,N}=\Phi^+_{z(M),z(N)}$ is a biclosed set in $\Phi$. Now by
Theorem \ref{reflectionorderaffine} (2) take $u\in
\widetilde{W}$ such that $\pi(u)=z^{-1}$ (where $\pi$ is the canonical surjection from $\widetilde{W}$ to $W$), we have that
$\widehat{\Psi^+_{M,N}}$ and $u\cdot\widehat{\Phi^+_{z(M),z(N)}}
$ differ only by finitely many roots. Now by Theorem \ref{classifybiclosedsetaffine} (2), there exists $u'\in W'$ where $W'$ is the reflection subgroup generated by $\{s_{\alpha}|\alpha\in \widehat{\Psi_{M\cup N}}\}$ such that $u'\cdot (u\cdot\widehat{\Phi^+_{z(M),z(N)}})=\widehat{\Psi^+_{M,N}}$. So the equality of the two sets is proved.

For the second assertion, one notes that if $v\cdot \widehat{\Psi^+_{M,N}}=w\cdot \widehat{\Phi^+_{L,K}}$ one has that $\pi(w^{-1}v)(\Psi^+)=\Phi^+, \pi(w^{-1}v)(M)=L$ and $\pi(w^{-1}v)(N)=K.$ Therefore  $|M|=|L|$ and $|N|=|K|.$
\end{proof}

\begin{theorem}\label{thm:bijection}
The map
$$u\mapsto \Phi_u$$
gives

(1) a bijection between
$\widetilde{W}_l$ and the following subset of $\mathscr{B}(\widehat{\Phi})$
$$\{w\cdot \widehat{\Phi^+_{L,\emptyset}}|w\in \widetilde{W}, L\subsetneq\Pi\},$$

(2) a bijection between $\overline{\widetilde{W}}$ and the following subset of $\mathscr{B}(\widehat{\Phi})$
$$\{w\cdot \widehat{\Phi^+_{L,\emptyset}}|w\in \widetilde{W}, L\subset \Pi\}.$$
\end{theorem}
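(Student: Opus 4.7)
The plan is to prove (2) and then read off (1) as its restriction to infinite reduced words. The map $u\mapsto \Phi_u$ is well-defined on $\overline{\widetilde{W}}$, and injectivity is essentially built in: on $\widetilde{W}_l$ the equivalence relation $\sim$ is defined precisely by equality of inversion sets, and the two subcases do not mix since $\Phi_u$ is finite if and only if $u\in \widetilde{W}$. I would therefore concentrate on showing that the image of $u\mapsto \Phi_u$ is exactly $\{w\cdot \widehat{\Phi^+_{L,\emptyset}} \mid w\in \widetilde{W},\, L\subset \Pi\}$.

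For the inclusion \emph{image $\supseteq$ target}, I would split on $L$. When $L=\Pi$, $\Phi^+_{L,\emptyset}=\emptyset$, so $w\cdot\emptyset=\Phi_w$ realises the element $w\in\widetilde{W}$. When $L\subsetneq \Pi$, Corollary \ref{cor:infinitelongform} produces an infinite reduced word $v$ with $\widehat{\Phi^+_{L,\emptyset}}=\Phi_v$; then Lemma \ref{lem:infinitelongbasic}(d) yields $w\cdot \Phi_v = \Phi_{wv}$, so the biclosed set is realised by $wv\in \overline{\widetilde{W}}$.

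The substantive direction is \emph{image $\subseteq$ target}. Given $u\in \overline{\widetilde{W}}$, Theorem \ref{classifybiclosedsetaffine}(1) combined with Theorem \ref{theorem:classifiybiclosedweyl} writes $\Phi_u = v\cdot \widehat{\Psi^+_{M,N}}$ for some positive system $\Psi^+$ of $\Phi$ and orthogonal $M,N$ in its simple system. Lemma \ref{tworepresentation} then recasts this in the standard positive system as $\Phi_u = w\cdot \widehat{\Phi^+_{L,K}}$ with $|K|=|N|$. Acting by $w^{-1}$ on both sides and using Lemma \ref{lem:infinitelongbasic}(d), one finds $\widehat{\Phi^+_{L,K}} = w^{-1}\cdot \Phi_u = \Phi_{w^{-1}u}$, so $\widehat{\Phi^+_{L,K}}$ is itself the inversion set of an element of $\overline{\widetilde{W}}$. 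If $K\neq \emptyset$, Lemma \ref{lem:biclosedsetthatisnotinfinielongword} forbids its being the inversion set of an infinite reduced word, and since each $\widehat{\alpha}$ is infinite it cannot be the inversion set of an element of $\widetilde{W}$ either. Hence $K=\emptyset$, which is the required form.

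Part (1) then follows from (2): $\widehat{\Phi^+_{L,\emptyset}}$ is empty precisely when $L=\Pi$, so the finite inversion sets (i.e.\ elements of $\widetilde{W}$) correspond to $L=\Pi$ and the infinite ones (i.e.\ infinite reduced words) to $L\subsetneq\Pi$. The tight spot in the argument is the collapse from the two-parameter family $(L,K)$ to a one-parameter family forced by $K=\emptyset$; this is where the asymmetry between ``initial'' and ``final'' directions in the affine root system enters, and is captured cleanly by Lemma \ref{lem:biclosedsetthatisnotinfinielongword}.
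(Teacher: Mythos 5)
Your proposal is correct and follows essentially the same route as the paper: reduce via Theorems \ref{theorem:classifiybiclosedweyl}, \ref{classifybiclosedsetaffine} and Lemma \ref{tworepresentation} to sets of the form $w\cdot\widehat{\Phi^+_{L,K}}$, use Corollary \ref{cor:infinitelongform} and Lemma \ref{lem:infinitelongbasic}(d) to realise the $K=\emptyset$ cases, and Lemma \ref{lem:biclosedsetthatisnotinfinielongword} (together with equivariance) to exclude $K\neq\emptyset$. The only cosmetic difference is that you pull $\Phi_u$ back by $w^{-1}$ where the paper pushes the excluded sets forward by $w$, and you spell out injectivity, which the paper leaves implicit.
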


\begin{proof}
Suppose that $u\in \overline{\widetilde{W}}$.
In view of Theorem \ref{theorem:classifiybiclosedweyl},
\ref{classifybiclosedsetaffine} and Lemma \ref{tworepresentation}, we need to examine whether a set
$\widehat{\Phi^+_{M,N}},M,N\subset \Pi$ is in the image of the given
map. Corollary \ref{cor:infinitelongform} shows that
$\widehat{\Phi_{L,\emptyset}^+}$ is in the image of the map
$u\mapsto \Phi_u$. By Lemma \ref{lem:infinitelongbasic} (d) $w\cdot \widehat{\Phi_{L,\emptyset}^+}$ is
also in the image. Lemma
\ref{lem:biclosedsetthatisnotinfinielongword} shows that
$\widehat{\Phi_{N,M}^+}$ where $M\neq \emptyset$ is not in the
image. Then neither is $w\cdot \widehat{\Phi_{N,M}^+}$ by Lemma
\ref{lem:infinitelongbasic} (d). Finally we see that $w\cdot
\widehat{\Phi_{\Pi,\emptyset}^+}$ is finite and thus not in the
image if $u\in \widetilde{W}_l$.
\end{proof}

\begin{remark*}
It is worth noting that the case $L = \Pi$ gives the finite biclosed sets.
\end{remark*}

\begin{corollary}\label{cor:biclosedsetofinflongword}
Let $\Psi^+$ be a positive system in $\Phi$ and $\Delta$ be the
simple system of it. Suppose that $M\subset \Delta$ and that $v\in
\widetilde{W}$. Then $v\cdot \widehat{\Psi^+_{M,\emptyset}}$ is
equal to $\Phi_x$ for some $x\in \overline{\widetilde{W}}.$
\end{corollary}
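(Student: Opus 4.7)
The plan is to deduce the corollary directly from Theorem \ref{thm:bijection}(2) after rewriting the given biclosed set in the standard form appearing there. Concretely, Theorem \ref{thm:bijection}(2) identifies the image of $u\mapsto \Phi_u$ on $\overline{\widetilde{W}}$ as the set $\{w\cdot\widehat{\Phi^+_{L,\emptyset}}\mid w\in\widetilde{W},\ L\subset\Pi\}$, so the entire task is to show that $v\cdot\widehat{\Psi^+_{M,\emptyset}}$ belongs to this family.

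First I would invoke Lemma \ref{tworepresentation}, which asserts the equality of two naturally defined families of biclosed sets, one built from an arbitrary positive system $\Psi^+$ of $\Phi$ and orthogonal subsets $M,N$ of its simple system, the other built from the fixed simple system $\Pi$ and orthogonal subsets $L,K$ of $\Pi$. Applying this lemma with $N=\emptyset$ produces $w\in\widetilde{W}$ and orthogonal $L,K\subset\Pi$ such that $v\cdot\widehat{\Psi^+_{M,\emptyset}}=w\cdot\widehat{\Phi^+_{L,K}}$. The second conclusion of Lemma \ref{tworepresentation} gives $|K|=|N|=0$, hence $K=\emptyset$, so we have written $v\cdot\widehat{\Psi^+_{M,\emptyset}}=w\cdot\widehat{\Phi^+_{L,\emptyset}}$ with $L\subset\Pi$.

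At this point Theorem \ref{thm:bijection}(2) applies verbatim: there exists $x\in\overline{\widetilde{W}}$ with $\Phi_x=w\cdot\widehat{\Phi^+_{L,\emptyset}}=v\cdot\widehat{\Psi^+_{M,\emptyset}}$, which is what we wanted.

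There is essentially no obstacle here beyond bookkeeping; the entire content has been packaged into Lemma \ref{tworepresentation} (which translates between an arbitrary simple system and the fixed $\Pi$ via the action of $\widetilde{W}$) and Theorem \ref{thm:bijection} (which identifies which biclosed sets in the standard form arise as inversion sets of elements of $\overline{\widetilde{W}}$). The only subtle point to verify is that the matching of cardinalities $|N|=|K|$ in Lemma \ref{tworepresentation} forces $K=\emptyset$ once $N=\emptyset$, which is precisely why the hypothesis of having only the first orthogonal subset (and not the second) is preserved by the change of simple system.
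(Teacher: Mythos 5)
Your proof is correct and follows essentially the same route as the paper, which likewise deduces the corollary from Lemma \ref{tworepresentation} and Theorem \ref{thm:bijection} (together with Lemma \ref{lem:infinitelongbasic}(d), whose content you have implicitly absorbed into the statement of Theorem \ref{thm:bijection}(2)). The observation that the cardinality clause of Lemma \ref{tworepresentation} forces $K=\emptyset$ is exactly the point that makes the reduction work.
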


\begin{proof}
The assertion follows immediately from Lemma \ref{tworepresentation}, Theorem \ref{thm:bijection} and
Lemma \ref{lem:infinitelongbasic} (d).
\end{proof}

\begin{example*}
Let $W$ be of type $A_2$. Denote by $\alpha,\beta$ the two simple roots of $\Phi^+$, the standard positive system of $\Phi$. The biclosed set $\widehat{\Phi^+_{\{\beta\},\emptyset}}=\widehat{\{\alpha,\alpha+\beta\}}$ is an inversion set of an infinite reduced word by the above Theorem. In fact it is equal to $\Phi_{s_{\alpha}s_{\beta}s_{\delta-\alpha-\beta}s_{\alpha}s_{\beta}s_{\delta-\alpha-\beta}s_{\alpha}s_{\beta}s_{\delta-\alpha-\beta}\cdots}$
\end{example*}

In the subsequent work \cite{orderpaper}, for arbitrary infinite Coxeter groups a conjectural characterization of biclosed sets from infinite words  by the semilattice property of the associated ``twisted weak order'' is given.

As a consequence of the bijection discussed above, we provide a
characterization of affine Weyl groups. Let $B$ be a subset of
$\widehat{\Phi}$. Define $I_B=\{\alpha\in
\Phi||\widehat{\alpha}\bigcap B|=\aleph_0\}$. Define
$A_B=\{\alpha\in \Phi|\widehat{\alpha}\bigcap B\neq \emptyset\}$.

\begin{lemma}\label{lem:zclosedlemma}
Let $\Gamma\subset \Phi$ be $\mathbb{Z}-$closed. If $\Gamma\cap
-\Gamma=\emptyset$ then $\Gamma$ is contained in a positive system
of $\Phi$.
\end{lemma}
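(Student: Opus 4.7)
The plan is to prove, equivalently, that $0$ does not lie in the convex hull of $\Gamma$. Granting this, the standard separating hyperplane theorem produces a linear functional $f$ with $f(\alpha)>0$ for every $\alpha\in\Gamma$; since $\Gamma$ is finite and $f$ is bounded away from $0$ on $\Gamma$, a small generic perturbation gives an $f'$ that is nonzero on every root of $\Phi$ and still strictly positive on $\Gamma$, and the set $\{\beta\in\Phi\mid f'(\beta)>0\}$ is then a positive system containing $\Gamma$.

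To establish the absence of such a convex dependence, I argue by contradiction. Suppose there exist distinct $\alpha_1,\dots,\alpha_k\in\Gamma$ and $c_1,\dots,c_k>0$ with $\sum_{i=1}^{k} c_i\alpha_i=0$, and take $k$ minimal. The case $k=1$ is impossible since roots are nonzero, and $k=2$ forces $\alpha_2\in\mathbb{R}_{<0}\alpha_1$, hence $\alpha_2=-\alpha_1$, contradicting $\Gamma\cap -\Gamma=\emptyset$.

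For $k\geq 3$, expand the identity $\bigl\|\sum_i c_i\alpha_i\bigr\|^2=0$: the diagonal contribution $\sum_i c_i^2\|\alpha_i\|^2$ is strictly positive, so some off-diagonal pair must satisfy $(\alpha_i,\alpha_j)<0$. For crystallographic root systems this implies $\alpha_i+\alpha_j\in\Phi\cup\{0\}$, and the value $0$ is excluded since $\alpha_j=-\alpha_i$ would again place a root and its opposite in $\Gamma$. Hence $\alpha_i+\alpha_j\in\Phi$, and by $\mathbb{Z}$-closedness $\alpha_i+\alpha_j\in\Gamma$. Assuming without loss of generality $c_i\leq c_j$, I rewrite
$$c_i\alpha_i+c_j\alpha_j=c_i(\alpha_i+\alpha_j)+(c_j-c_i)\alpha_j$$
and substitute into the original relation to obtain a new nonnegative linear dependence supported on $\Gamma$; in every sub-case (whether $c_i=c_j$ so that the $\alpha_j$-term drops, or $\alpha_i+\alpha_j$ coincides with some $\alpha_l$ so that coefficients merge) the number of distinct roots appearing strictly decreases, contradicting the minimality of $k$.

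The main obstacle I expect is the bookkeeping in this last reduction step: one has to check in each configuration that $\alpha_i$ is genuinely eliminated and that no new distinct root is introduced beyond the possibly merged $\alpha_i+\alpha_j$. Once these sub-cases are verified, the minimality contradiction closes the argument and the lemma follows.
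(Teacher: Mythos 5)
The paper does not actually prove this lemma itself --- it just cites Bourbaki (Ch.~VI, \S 1, Proposition 22) --- so you are in effect reproving that proposition, and your overall strategy (show $0\notin\operatorname{conv}(\Gamma)$, separate by a hyperplane, perturb to a generic functional whose positive half gives a positive system) is sound. The gap is in the reduction step. Your induction is on the number $k$ of \emph{distinct} roots in a positive dependence $\sum_i c_i\alpha_i=0$, and you claim this number strictly decreases after replacing $c_i\alpha_i+c_j\alpha_j$ by $c_i(\alpha_i+\alpha_j)+(c_j-c_i)\alpha_j$. That fails in the generic sub-case: if $c_i<c_j$ and $\alpha_i+\alpha_j$ is not already one of the $\alpha_l$, the new support is $(\{\alpha_1,\dots,\alpha_k\}\setminus\{\alpha_i\})\cup\{\alpha_i+\alpha_j\}$, which again has exactly $k$ elements. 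So the minimality of $k$ is not contradicted, and iterating the move gives no guarantee of termination.

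The standard repair is to change the measure. Since the roots lie in the rational span of $\Phi$, a nontrivial nonnegative real dependence forces a nontrivial nonnegative \emph{rational} one; clearing denominators gives $\sum_i m_i\alpha_i=0$ with $m_i\in\mathbb{Z}_{>0}$, i.e.\ an expression of $0$ as a sum of $N=\sum_i m_i$ elements of $\Gamma$ counted with multiplicity. Your rewriting then sends $N$ to $N-m_i<N$ (the root $\alpha_i+\alpha_j$ contributes $m_i$ summands and $\alpha_j$ contributes $m_j-m_i$), so induction on $N$ terminates, with the base cases $N=1,2$ handled exactly as your $k=1,2$ cases. Everything else in your write-up --- the positivity of the diagonal term forcing some $(\alpha_i,\alpha_j)<0$, the deduction that $(\alpha_i,\alpha_j)<0$ with $\alpha_j\neq-\alpha_i$ gives $\alpha_i+\alpha_j\in\Phi$ and hence in $\Gamma$ by $\mathbb{Z}$-closedness, and the separation-plus-perturbation finish --- is correct.
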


\begin{proof}
This is \cite{Bourbaki} Chapter IV, $\S 1$, Proposition 22.
\end{proof}

\begin{lemma}\label{lem:asetzclosed}
Let $\Gamma$ be a closed subset of $\widetilde{\Phi}^+$. Then
$A_{\Gamma}$ is $\mathbb{Z}-$closed in $\Phi$.
\end{lemma}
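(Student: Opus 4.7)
The plan is to unwind the definitions and exhibit an explicit witness. Suppose $\alpha,\beta\in A_{\Gamma}$ with $\alpha+\beta\in\Phi$. By definition there exist $n,m$ with $\alpha+n\delta\in \widehat{\alpha}\cap \Gamma$ and $\beta+m\delta\in \widehat{\beta}\cap \Gamma$. I would take the natural candidate
\[
(\alpha+n\delta)+(\beta+m\delta)=(\alpha+\beta)+(n+m)\delta
\]
as my witness that $\alpha+\beta\in A_{\Gamma}$. Two things then need to be verified: first, that this element lies in $\widetilde{\Phi}^{+}$ so that closedness of $\Gamma$ applies and forces it into $\Gamma$; and second, that the shift $n+m$ satisfies the lower bound needed to place the element in $\widehat{\alpha+\beta}$ rather than merely in $\widetilde{\Phi}^{+}$.

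The main bookkeeping is the sign analysis, since the definition of $\widehat{\gamma}$ asks for shift $\geq 0$ when $\gamma\in\Phi^{+}$ and shift $\geq 1$ when $\gamma\in\Phi^{-}$. I would split on the sign of $\alpha+\beta$. If $\alpha+\beta\in\Phi^{+}$, both the membership in $\widetilde{\Phi}^{+}$ and in $\widehat{\alpha+\beta}$ reduce to $n+m\geq 0$, which is automatic. If $\alpha+\beta\in\Phi^{-}$, both conditions reduce to $n+m\geq 1$. Here I would invoke the elementary observation that the sum of two positive roots, when it is a root at all, is positive; hence $\alpha+\beta\in\Phi^{-}$ forces at least one of $\alpha,\beta$ to be in $\Phi^{-}$, and the corresponding shift (say $m$) is then at least $1$ by the definition of $\widehat{\cdot}$ on negative roots. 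So $n+m\geq 1$ in this case as well.

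Once these shift constraints are in place, closedness of $\Gamma$ in $\widetilde{\Phi}^{+}$, applied to the positive affine roots $\alpha+n\delta,\beta+m\delta\in \Gamma$ whose sum $(\alpha+\beta)+(n+m)\delta$ is also in $\widetilde{\Phi}^{+}$, yields $(\alpha+\beta)+(n+m)\delta\in \Gamma$. Combined with the fact that this element lies in $\widehat{\alpha+\beta}$, we obtain $\widehat{\alpha+\beta}\cap\Gamma\neq\emptyset$, i.e.\ $\alpha+\beta\in A_{\Gamma}$. I expect no genuine obstacle; the only subtlety is the case analysis above ensuring $n+m\geq 1$ when $\alpha+\beta\in\Phi^{-}$, and I would make sure to write that out cleanly since it is where the asymmetry between the definitions of $\widehat{\alpha}$ for positive and negative roots comes into play.
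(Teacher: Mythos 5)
Your proof is correct and follows essentially the same route as the paper: add the two witnesses $\alpha+n\delta,\beta+m\delta\in\Gamma$, and use the observation that $\alpha+\beta\in\Phi^-$ forces one of $\alpha,\beta$ to be negative, hence its shift to be at least $1$, so the sum lies in $\widetilde{\Phi}^+$ (indeed in $\widehat{\alpha+\beta}$) and closedness of $\Gamma$ applies. Your write-up is if anything slightly more explicit than the paper's about the two membership conditions being checked.
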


\begin{proof}
Let $\alpha,\beta\in A_{\Gamma}$ such that $\alpha+\beta$ is a root. This
implies that $\alpha+m\delta$ and $\beta+n\delta$ are in $\Gamma$ for some
$m,n\in \mathbb{Z}_{\geq 0}$. Then $m+n\in \mathbb{Z}_{\geq 0}$. If
$\alpha+\beta\in \Phi^-$, then at least one  root among $\alpha$ and $\beta$
is in $\Phi^-$, in which case $m+n\in \mathbb{Z}_{>0}$. So
$\alpha+\beta+(m+n)\delta\in\widetilde{\Phi}^+$, and therefore in
$\Gamma$.
\end{proof}

\begin{theorem}\label{maximalthm}
Let $(W,S)$ be a finite rank, irreducible, infinite Coxeter system.
Then $W$ is affine if and only if $\overline{W}$ admits finitely
many maximal elements.
\end{theorem}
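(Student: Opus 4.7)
My plan is to handle the two directions separately. For the forward direction, assume the Coxeter system is affine, so it can be written as $\widetilde W$ in the notation of the paper, with underlying finite root system $\Phi$ and finite Weyl group $W_0$. Then Theorem \ref{thm:bijection}(2) identifies $\overline{\widetilde W}$ with the biclosed sets of the form $w\cdot\widehat{\Phi^+_{L,\emptyset}}$ ($w\in \widetilde W$, $L\subset \Pi$), and I will reduce the finiteness of maximal elements of $\overline{\widetilde W}$ to a count of maximal biclosed sets in this image. The converse direction will require a geometric argument.

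For the forward direction I would show the maximal biclosed sets in this image are precisely $\widehat{\Psi^+}$ as $\Psi^+$ ranges over the positive systems of $\Phi$. Given $\Phi_u=w\cdot\widehat{\Phi^+_{L,\emptyset}}$, Theorem \ref{reflectionorderaffine}(3) and Theorem \ref{classifybiclosedsetaffine}(2) together give the explicit description
\[
\Phi_u=\widehat{\Lambda^+_{L',\emptyset}}\sqcup F,
\]
where $\Lambda^+=\pi(w)(\Phi^+)$, $L'=\pi(w)(L)$, and $F$ is a finite biclosed subset of the positive system $\widehat{\Lambda_{L'}}$ of the affine reflection subgroup generated by $L'$. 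Writing $A_F\subset \Lambda_{L'}$ for the set of base roots of $F$, I would verify that $A_F$ is pair-free (a pair would force $F$ infinite via Lemma \ref{closedaffine}) and, moreover, that the $\mathbb{Z}$-closure of $A_F\cup(\Lambda^+\setminus\Lambda_{L'})$ in $\Phi$ remains pair-free (a problematic cross-pair of the form $\alpha+\beta,-(\alpha+\beta)$ with $\alpha,\beta\in\Lambda^+\setminus\Lambda_{L'}$ and $\alpha+\beta\in\Lambda_{L'}$ would be pulled into $A_F$ by closure of $\Phi_u$, contradicting the previous step). Then Lemma \ref{lem:zclosedlemma} places this $\mathbb{Z}$-closure inside some positive system $\Psi^+$ of $\Phi$, yielding $\Phi_u\subset\widehat{\Psi^+}$ with strict inclusion exactly when $L'\neq\emptyset$ (since $F$ is finite while $\widehat{\Psi^+\cap \Lambda_{L'}}$ is infinite in that case). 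Hence $u$ is maximal iff $L=\emptyset$, i.e.\ $\Phi_u=\widehat{\pi(w)(\Phi^+)}$. A cardinality argument (no $\widehat{\Psi^+}$ can be strictly contained in another $\widehat{\Psi'^+}$, since positive systems of $\Phi$ all have the same size) confirms each such $\widehat{\Psi^+}$ is indeed maximal. As $W_0$ acts simply transitively on positive systems of $\Phi$, this produces exactly $|W_0|<\infty$ maximal elements of $\overline{\widetilde W}$.

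For the converse I would argue the contrapositive: if $W$ is finite rank, irreducible, infinite, and not affine, then $\overline W$ has infinitely many maximal elements. My plan is to invoke the theory of limit roots (Dyer; Hohlweg--Labb\'e--Ripoll): for such $W$ the associated bilinear form is indefinite, the imaginary cone has nonempty interior, and the set of limit directions on the boundary of the Tits cone is infinite. To each limit direction I would associate an infinite reduced word whose sequence of alcoves approaches it, extend to a maximal element via Proposition \ref{prop:maximalinwbar}, and use an asymptotic comparison of inversion sets to conclude that distinct directions yield distinct maximal elements.

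The main obstacle is the converse direction. The forward direction rests cleanly on the explicit bijection and on the block classification of biclosed sets available in the affine case, but the converse requires external geometric input (limit roots, imaginary cone) lying outside the combinatorial framework developed in the paper. A purely combinatorial argument---for example by constructing an infinite antichain of infinite reduced words inside a rank-3 hyperbolic reflection subgroup and then transferring to general non-affine $W$ by passing to suitable rank-3 reflection subgroups---would be more in keeping with the paper's style but appears technically delicate.
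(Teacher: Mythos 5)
Your forward direction is essentially sound and, despite the detour through the block decomposition of Theorem \ref{classifybiclosedsetaffine}(2) and Theorem \ref{reflectionorderaffine}(3), it amounts to the same argument the paper gives: for any $u\in\overline{\widetilde W}$ the set $A_{\Phi_u}$ of base roots is $\mathbb{Z}$-closed (Lemma \ref{lem:asetzclosed}) and meets its negative trivially (a pair $\alpha,-\alpha$ would force some $\alpha+m\delta$ and $-\alpha+n\delta$ into the finite biclosed inversion set of a single finite prefix, contradicting Lemma \ref{closedaffine}), so Lemma \ref{lem:zclosedlemma} gives $\Phi_u\subset\widehat{\Psi^+}$ directly, with no need to decompose $\Phi_u$ as $\widehat{\Lambda^+_{L',\emptyset}}\sqcup F$ and track the base roots of $F$. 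Your identification of the maximal elements with the sets $\widehat{\Psi^+}$, and the count $|W_0|$, match the paper.

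The genuine gap is the converse, which you yourself flag as the obstacle. Your limit-root sketch leaves the two essential steps unproved: (i) how to attach to each of the infinitely many limit directions an infinite reduced word approaching it, and (ii) why passing to maximal elements above these words via Proposition \ref{prop:maximalinwbar} does not collapse infinitely many directions onto finitely many maximal elements. Without (ii) the argument fails outright, since a priori one maximal element could dominate all the words you construct. The paper's actual argument is short, combinatorial, and internal to its framework: by Edgar's theorem (\cite{UniversalRefl}, Theorem 2.7.2), a finite-rank irreducible infinite non-affine $W$ contains universal reflection subgroups of arbitrarily large rank. If $\overline W$ had only $k$ maximal elements, take canonical generators $t_1,\dots,t_{k+1}$ of a rank-$(k+1)$ universal reflection subgroup, with corresponding positive roots $\alpha_1,\dots,\alpha_{k+1}$; by Proposition \ref{prop:maximalinwbar} and pigeonhole some maximal $w$ has $\alpha_i,\alpha_j\in\Phi_w$ with $i\neq j$, hence both lie in the finite biclosed inversion set of some finite prefix of $w$, which must then contain the infinite closed set $\{k_1\alpha_i+k_2\alpha_j\mid k_1,k_2\in\mathbb{R}_{\geq 0}\}\cap\Phi$ (infinite because $t_i$ and $t_j$ generate an infinite dihedral reflection subgroup) --- a contradiction. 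This universal-reflection-subgroup-plus-pigeonhole idea is what your proposal is missing.
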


\begin{proof}
The existence of such maximal elements is guaranteed by Proposition
\ref{prop:maximalinwbar}. For only if, we show that the maximal
elements in $\overline{W}$ where $W$ is an affine Weyl group correspond to
$\widehat{\Psi^+}$ where $\Psi^+$ is a positive system of $\Phi$
(and therefore there are only finitely many of them). First from
Corollary \ref{cor:biclosedsetofinflongword} every
$\widehat{\Psi^+}$ is indeed the inversion set of an infinite reduced
word.  Lemma
\ref{closedaffine} shows that any biclosed set
properly containing $\widehat{\Psi^+}$ is not the inversion set of an element
in $\overline{W}$ (such biclosed sets must contain some $\alpha+m\delta$ and $-\alpha+n\delta$ for some $\alpha\in \Phi$). Now it suffices to show that any other element
$u$ in $\overline{W}$ is not maximal. By Lemma
\ref{lem:asetzclosed}, $A_{\Phi_u}$ is $\mathbb{Z}$-closed. We claim that $A_{\Phi_u}\cap -A_{\Phi_u}=\emptyset$. To see the claim suppose that $\alpha\in A_{\Phi_u}\cap -A_{\Phi_u}$. So some $\alpha+m\delta$ and $-\alpha+n\delta$ are both in $\Phi_u$. Then  Lemma
\ref{closedaffine} shows that this is not possible. Hence by Lemma \ref{lem:zclosedlemma},
$A_{\Phi_u}\subset \Psi^+$ where $\Psi^+$ is a positive system in
$\Phi$ and $\Phi_u\subset \widehat{\Psi^+}$.

Conversely, by \cite{UniversalRefl}
Theorem 2.7.2, for $W$ finite rank,
irreducible, infinite and non-affine, there exists a universal reflection subgroup
of arbitrarily large rank. Suppose that $\overline{W}$ has $k$ maximal
elements. Choose a rank $k+1$ universal reflection subgroup
generated by $\{t_1,t_2,\dots,t_{k+1}\}$ (as canonical generators).
These reflections correspond to a set of positive roots
$\{\alpha_1,\alpha_2,\dots,\alpha_{k+1}\}$. There must be a maximal
element $w$ such that $\Phi_w$ contains $\alpha_i$ and $\alpha_j,
i\neq j$ by pigeonhole. But $\{k_1\alpha_i+k_2\alpha_j|k_1,k_2\in \mathbb{R}_{\geq 0}\}\cap \Phi$ is infinite and therefore they
cannot be contained in any finite biclosed set. This is a
contradiction.
\end{proof}

\begin{example*}
Let $W$ be of type $A_2$. Then $\overline{\widetilde{W}}$ has 6 maximal elements. They are in bijection with the 6 positive systems of $\Phi$. Denote by $\alpha,\beta$ the two simple roots of $\Phi^+$.
$$\widehat{\{\alpha,\beta,\alpha+\beta\}}=s_{\alpha}s_{\beta}s_{\alpha}s_{\delta-\alpha-\beta}s_{\alpha}s_{\beta}s_{\alpha}s_{\delta-\alpha-\beta}s_{\alpha}s_{\beta}s_{\alpha}s_{\delta-\alpha-\beta}\cdots,$$
$$\widehat{\{-\alpha,-\beta,-\alpha-\beta\}}=s_{\delta-\alpha-\beta}s_{\alpha}s_{\beta}s_{\alpha}s_{\delta-\alpha-\beta}s_{\alpha}s_{\beta}s_{\alpha}s_{\delta-\alpha-\beta}s_{\alpha}s_{\beta}s_{\alpha}\cdots,$$
$$\widehat{\{-\alpha,\beta,-\alpha-\beta\}}=s_{\beta}s_{\delta-\alpha-\beta}s_{\beta}s_{\alpha}s_{\beta}s_{\delta-\alpha-\beta}s_{\beta}s_{\alpha}s_{\beta}s_{\delta-\alpha-\beta}s_{\beta}s_{\alpha}\cdots,$$
$$\widehat{\{\alpha,-\beta,\alpha+\beta\}}=s_{\alpha}s_{\beta}s_{\delta-\alpha-\beta}s_{\beta}s_{\alpha}s_{\beta}s_{\delta-\alpha-\beta}s_{\beta}s_{\alpha}s_{\beta}s_{\delta-\alpha-\beta}s_{\beta}\cdots,$$
$$\widehat{\{-\alpha,\beta,\alpha+\beta\}}=s_{\beta}s_{\alpha}s_{\delta-\alpha-\beta}s_{\alpha}s_{\beta}s_{\alpha}s_{\delta-\alpha-\beta}s_{\alpha}s_{\beta}s_{\alpha}s_{\delta-\alpha-\beta}s_{\alpha}\cdots,$$
$$\widehat{\{\alpha,-\beta,-\alpha-\beta\}}=s_{\alpha}s_{\delta-\alpha-\beta}s_{\alpha}s_{\beta}s_{\alpha}s_{\delta-\alpha-\beta}s_{\alpha}s_{\beta}s_{\alpha}s_{\delta-\alpha-\beta}s_{\alpha}s_{\beta}\cdots.$$
\end{example*}

\section{A Lattice of Biclosed Sets in Rank 3 Affine Root System}\label{sec:lattice}
Let $W$ be a Weyl group and $\Phi$ be its irreducible crystallographic root
system. Let $\Phi^+$ be a positive system of $\Phi$ with $\Pi$ the
corresponding simple system. Denote by $\widetilde{W}$  the corresponding (irreducible)
affine Weyl group. Then $\widetilde{\Phi}:=\widehat{\Phi}\bigcup
-\widehat{\Phi}$ is the root system of $\widetilde{W}$ and
$\widetilde{\Phi}^+=\widehat{\Phi}$.  For $\alpha\in \Phi^+$, define
$\alpha_0=\alpha$. For $\alpha\in \Phi^-$, define
$\alpha_0=\alpha+\delta.$ The following lemma in fact reveals the so called dominance order in an affine root system.

\begin{lemma}\label{lem:startfrombottom}
Let $x\in \widetilde{W}$. If $\alpha\in \Phi$ and
$\alpha_0+k\delta\in \Phi_x$ with $k\geq 0$, then
$\alpha_0,\alpha_0+\delta,\dots, \alpha_0+k\delta$ are all contained in
$\Phi_x$.
\end{lemma}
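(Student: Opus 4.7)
The strategy is to reduce to a dihedral affine subsystem. Let $\alpha^*=\alpha$ if $\alpha\in\Phi^+$ and $\alpha^*=-\alpha$ if $\alpha\in\Phi^-$, so $\alpha^*\in\Phi^+$ in either case. Let $\Psi:=\widehat{\alpha^*}\cup\widehat{-\alpha^*}=\{\alpha^*+n\delta:n\geq 0\}\cup\{-\alpha^*+n\delta:n\geq 1\}$, the positive system of the $\widetilde{A}_1$ subsystem of $\widetilde{\Phi}$ in the plane spanned by $\alpha^*$ and $\delta$. Because closedness and coclosedness are phrased in terms of non-negative real combinations that happen to land in the ambient positive system, it follows directly from the definitions that $C:=\Phi_x\cap\Psi$ is biclosed in $\Psi$; it is also finite since $\Phi_x$ is.

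The heart of the argument is classifying the finite biclosed subsets of $\Psi$. First, by Lemma \ref{closedaffine}, $C$ cannot contain roots from both halves $\widehat{\alpha^*}$ and $\widehat{-\alpha^*}$, so either $C\subseteq\widehat{\alpha^*}$ or $C\subseteq\widehat{-\alpha^*}$. Suppose $C\subseteq\widehat{\alpha^*}$. For any $\alpha^*+q\delta,\alpha^*+p\delta\in C$ with $q<n<p$, the identity $\alpha^*+n\delta=\tfrac{p-n}{p-q}(\alpha^*+q\delta)+\tfrac{n-q}{p-q}(\alpha^*+p\delta)$ and closure of $C$ force $\alpha^*+n\delta\in C$, so the indices appearing in $C$ form a consecutive range. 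To see that this range starts at $0$, note that $\widehat{-\alpha^*}\subseteq\Psi\setminus C$; if also $\alpha^*\in\Psi\setminus C$, then the identities $(k+1)\alpha^*+k(-\alpha^*+\delta)=\alpha^*+k\delta$ together with the closedness of $\Psi\setminus C$ in $\Psi$ force every $\alpha^*+k\delta$ into $\Psi\setminus C$, contradicting $C\neq\emptyset$. Hence a nonempty $C\subseteq\widehat{\alpha^*}$ has the form $\{\alpha^*,\alpha^*+\delta,\ldots,\alpha^*+m\delta\}$; the symmetric argument shows $C=\{-\alpha^*+\delta,\ldots,-\alpha^*+m\delta\}$ when $C\subseteq\widehat{-\alpha^*}$ is nonempty.

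With this classification in hand the lemma follows by cases. If $\alpha\in\Phi^+$ then $\alpha_0+k\delta=\alpha^*+k\delta\in C$, which forces $C=\{\alpha^*,\alpha^*+\delta,\ldots,\alpha^*+m\delta\}$ for some $m\geq k$, and so $\alpha_0+j\delta=\alpha^*+j\delta\in C\subseteq\Phi_x$ for all $0\leq j\leq k$. If $\alpha\in\Phi^-$ then $\alpha_0+k\delta=\alpha+(k+1)\delta=-\alpha^*+(k+1)\delta\in C$, which forces $C=\{-\alpha^*+\delta,\ldots,-\alpha^*+m\delta\}$ with $m\geq k+1$, and so $\alpha_0+j\delta=-\alpha^*+(j+1)\delta\in C\subseteq\Phi_x$ for all $0\leq j\leq k$.

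The main obstacle will be the ``starts at zero'' step in the classification: one must recognize that $-\alpha^*+\delta$, forced into the coclosure by Lemma \ref{closedaffine}, acts together with a hypothetical $\alpha^*\in\Psi\setminus C$ as a pair of generators whose non-negative real combinations sweep out all of $\widehat{\alpha^*}$, contradicting $C\neq\emptyset$. The rest is a routine bookkeeping of the two sign cases for $\alpha$.
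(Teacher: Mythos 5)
Your proof is correct, but it takes a genuinely different route from the paper's. The paper argues directly with the group element: writing $x^{-1}(\alpha_0)=\beta+m\delta$ and using that $x^{-1}$ fixes $\delta$, the hypothesis $x^{-1}(\alpha_0+k\delta)=\beta+(m+k)\delta\in\widetilde{\Phi}^-$ forces $m\leq -k-1$ (if $\beta\in\Phi^+$) or $m\leq -k$ (if $\beta\in\Phi^-$), and then every intermediate $x^{-1}(\alpha_0+p\delta)=\beta+(m+p)\delta$, $0\leq p\leq k$, is seen to be negative as well --- a three-line computation. You instead use only that $\Phi_x$ is a finite biclosed subset of $\widehat{\Phi}$: you restrict to the rank-one slice $\Psi=\widehat{\alpha^*}\cup\widehat{-\alpha^*}$, check that $C=\Phi_x\cap\Psi$ inherits biclosedness, and classify the nonempty finite biclosed subsets of $\Psi$ as the initial $\delta$-strings $\{\alpha^*,\dots,\alpha^*+m\delta\}$ or $\{-\alpha^*+\delta,\dots,-\alpha^*+m\delta\}$; your convexity identity gives the interval property and the identity $(k+1)\alpha^*+k(-\alpha^*+\delta)=\alpha^*+k\delta$ pins the interval to start at the bottom. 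All the identities check out, and the case bookkeeping at the end is right. What your argument buys is generality and a structural explanation: it applies verbatim to any finite biclosed subset of $\widehat{\Phi}$ without ever invoking the element $x$, and it makes visible the ``dominance order'' phenomenon the paper alludes to just before the lemma; what it costs is length, plus one small citation imprecision --- Lemma \ref{closedaffine} as stated requires both $\delta$-coefficients to be at least $1$, so to rule out $C$ meeting both halves when $\alpha^*$ itself (coefficient $0$) lies in $C$ you should first form $2\alpha^*+(-\alpha^*+m\delta)=\alpha^*+m\delta\in C$ and then apply the lemma; alternatively your own identity $(k+1)\alpha^*+k(-\alpha^*+\delta)$ already shows the closure is infinite in that case.
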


\begin{proof}
 Suppose that
$x^{-1}(\alpha_0)=\beta+m\delta$ for some $\beta\in \Phi$. One has that
$x^{-1}(\alpha_0+k\delta)=\beta+(k+m)\delta\in \widetilde{\Phi}^-$. If
$\beta\in \Phi^+$ then $k+m\leq -1$ i.e. $m\leq -k-1.$ Then for
$0\leq p\leq k$, $x^{-1}(\alpha_0+p\delta)=\beta+(m+p)\delta$.
Note that $m+p\leq m+k\leq -1$. So $x^{-1}(\alpha_0+p\delta)\in
\widetilde{\Phi}^-.$ If $\beta\in \Phi^-$ then $k+m\leq 0$ i.e.
$m\leq -k.$ Then for $0\leq p\leq k$,
$x^{-1}(\alpha_0+p\delta)=\beta+(m+p)\delta$. Note that $m+p\leq m+k\leq 0$. So
$x^{-1}(\alpha_0+p\delta)\in \widetilde{\Phi}^-.$
\end{proof}

The following theorem answers a question from \cite{DyerWeakOrder} in
the case of irreducible affine Weyl groups (and easily generalized to affine
Coxeter groups). See the remark after Theorem 1.5 of
\cite{DyerWeakOrder}.

\begin{theorem}
Let $X\subset \widetilde{W}$. If $\overline{\bigcup_{x\in X}\Phi_x}$
is finite, then it is  biclosed and the join of $X$ exists.
\end{theorem}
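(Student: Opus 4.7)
The plan is to construct an element $w\in\widetilde W$ with $\Phi_w=\Gamma:=\overline{\bigcup_{x\in X}\Phi_x}$. Since the finite biclosed subsets of $\widehat\Phi$ are exactly the inversion sets of elements of $\widetilde W$, this will simultaneously establish that $\Gamma$ is biclosed and that $w=\bigvee X$.

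First I would bound $X$ above in $\widetilde W$. Consider $A_\Gamma:=\{\alpha\in\Phi:\widehat\alpha\cap\Gamma\ne\emptyset\}$; by Lemma \ref{lem:asetzclosed} this set is $\mathbb Z$-closed in $\Phi$. If both $\alpha$ and $-\alpha$ belonged to $A_\Gamma$, then $\Gamma$ would contain affine roots of the forms $\alpha+n\delta$ and $-\alpha+m\delta$, forcing $\Gamma$ infinite via Lemma \ref{closedaffine}---contrary to hypothesis. Thus $A_\Gamma\cap(-A_\Gamma)=\emptyset$, and Lemma \ref{lem:zclosedlemma} places $A_\Gamma$ inside a positive system $\Psi^+$ of $\Phi$, so $\Gamma\subset\widehat{\Psi^+}$. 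By Corollary \ref{cor:infinitelongform}, $\widehat{\Psi^+}$ arises as $\Phi_u$ for some $u\in\overline{\widetilde W}$; writing $\Phi_u$ as the ascending union of the inversion sets of the finite prefixes of $u$ and using finiteness of $\Gamma$, we find $v\in\widetilde W$ with $\Gamma\subset\Phi_v$. Each $x\in X$ therefore satisfies $x\leq v$, so Theorem \ref{semilattice} and Lemma \ref{techsemijoin} yield the existence of $w:=\bigvee X$ in $\widetilde W$, and biclosedness of $\Phi_w$ together with $\Phi_w\supset\bigcup_x\Phi_x$ immediately gives $\Phi_w\supset\Gamma$.

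The crux is the reverse inclusion $\Phi_w\subset\Gamma$, equivalently the biclosedness of $\Gamma$. After conjugating by an element of $W$, I may assume $\Psi^+=\Phi^+$, so $\Gamma\subset\widehat{\Phi^+}$. Lemma \ref{lem:startfrombottom} then implies that for each $x\in X$ and $\alpha\in\Phi^+$, the intersection $\Phi_x\cap\widehat\alpha$ is an initial segment $\{\alpha,\alpha+\delta,\ldots,\alpha+K_x(\alpha)\delta\}$; this property is inherited by $\bigcup_x\Phi_x$ and, crucially, is preserved under closure in $\widehat\Phi$ (convex combinations within a single $\widehat\alpha$ only fill gaps, while combinations $k_1(\alpha+n\delta)+k_2(\beta+m\delta)$ lying in $\widehat{k_1\alpha+k_2\beta}$ fill an initial segment up to level $k_1 K(\alpha)+k_2 K(\beta)$). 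Consequently $\Gamma\cap\widehat\alpha=\{\alpha,\ldots,\alpha+K(\alpha)\delta\}$ for $K(\alpha):=\sup_{x\in X}K_x(\alpha)$. To verify coclosedness I would establish the subadditivity estimate
\begin{equation*}
K(\gamma)\leq k_1 K(\alpha)+k_2 K(\beta)+(k_1+k_2-1),\qquad \gamma=k_1\alpha+k_2\beta\in\Phi^+,
\end{equation*}
proved by explicit computation of $K_x$ from $x^{-1}\alpha=\alpha'+a\delta$ in $\Phi\oplus\mathbb Z\delta$ (using $\widetilde W$-invariance of $\delta$) and then taking the supremum over $x$. Given $\mu=\alpha+n\delta$ and $\nu=\beta+m\delta$ with $\mu,\nu\notin\Gamma$ and $\rho:=k_1\mu+k_2\nu\in\widehat\Phi$, the inequalities $n\geq K(\alpha)+1$ and $m\geq K(\beta)+1$ force $k_1 n+k_2 m>K(\gamma)$, placing $\rho$ strictly above $\Gamma\cap\widehat\gamma$ and hence outside $\Gamma$. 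The remaining mixed-sign cases are dispatched either automatically (if $\rho$ has negative $\Phi$-part then $\rho\notin\Gamma\subset\widehat{\Phi^+}$) or by appealing to the biclosedness of $\Phi_v$ together with the inclusion $\Phi_v\subset\widehat{\Phi^+}$, which reduces them to the main case above.

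The principal obstacle will be the subadditivity estimate and the bookkeeping needed to handle the mixed-sign subcases of coclosedness; the rest of the argument builds directly on the machinery developed in Sections \ref{sect:infbasics} and \ref{sec:bijection}.
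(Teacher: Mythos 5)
Your opening paragraph is correct and coincides with the paper's argument: Lemmas \ref{closedaffine}, \ref{lem:asetzclosed} and \ref{lem:zclosedlemma} place $\Gamma:=\overline{\bigcup_{x\in X}\Phi_x}$ inside $\widehat{\Psi^+}$ for some positive system $\Psi^+$ of $\Phi$, and finiteness then puts $\Gamma$ inside $\Phi_v$ for a finite prefix $v$ of the infinite reduced word corresponding to $\widehat{\Psi^+}$, so $X$ is bounded above in $\widetilde W$. At this point the paper simply invokes Theorem 1.5(a) of \cite{DyerWeakOrder} (a bounded subset of a Coxeter group has a join whose inversion set is the closure of the union of the inversion sets), which delivers both the biclosedness of $\Gamma$ and the existence of $\bigvee X$ in one stroke. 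You instead attempt to reprove that general fact by hand, and this is where the proposal breaks down.

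The decisive error is the assertion that $\Gamma\cap\widehat{\alpha}=\{\alpha,\dots,\alpha+K(\alpha)\delta\}$ with $K(\alpha)=\sup_{x\in X}K_x(\alpha)$. If that held for every $\alpha$, then, since each trace $\Phi_x\cap\widehat{\alpha}$ is an initial segment and $\widehat{\Phi^+}$ is the disjoint union of the $\widehat{\alpha}$, we would get $\Gamma=\bigcup_{x\in X}\Phi_x$, i.e.\ the union would already be closed. This fails already for $X=\{s_\alpha,s_\beta\}$ in type $\widetilde A_2$: the closure acquires $\alpha+\beta$ although $\Phi_{s_\alpha}\cap\widehat{\alpha+\beta}$ and $\Phi_{s_\beta}\cap\widehat{\alpha+\beta}$ are both empty. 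More generally the closure strictly raises the tops of the $\delta$-strings, since $\alpha+n\delta\in\Phi_{x_1}$ and $\beta+m\delta\in\Phi_{x_2}$ with $x_1\neq x_2$ produce $\gamma+(k_1n+k_2m)\delta$ at a level that no single $K_x(\gamma)$ controls. Consequently your subadditivity estimate cannot be obtained by computing $K_x$ elementwise and taking suprema; for the true tops of the strings of $\Gamma$ that estimate is essentially a restatement of the coclosedness you are trying to prove (closedness already forces the opposite bound $K(\gamma)\geq k_1K(\alpha)+k_2K(\beta)$, so neither inequality comes for free). Two further problems: the reduction ``after conjugating I may assume $\Psi^+=\Phi^+$'' is not available as stated, because the relevant action of $\widetilde W$ on subsets of $\widehat\Phi$ is the twisted action $w\cdot(-)$, whose compatibility with the closure operator on non-biclosed sets is not established here; and without that reduction the strings $\Gamma\cap\widehat{\gamma}$ need not start at $\gamma_0$, so ``not in $\Gamma$'' no longer means ``above the top'' and the case analysis for coclosedness is incomplete. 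The direct repair is to do what the paper does and quote Theorem 1.5(a) of \cite{DyerWeakOrder} once boundedness of $X$ is in hand.
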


\begin{proof}
We note that it could not happen that $\alpha+k\delta$ and
$-\alpha+m\delta$ are both in the finite set $\overline{\bigcup_{x\in X}\Phi_x}$
thanks to Lemma \ref{closedaffine}. Therefore by Lemma
\ref{lem:asetzclosed} all $\alpha\in \Phi$ with $\alpha+p\delta\in
\overline{\bigcup_{x\in X}\Phi_x}$ for some $p$  form a $\mathbb{Z}-$closed set
and this set has empty intersection with its negative. So by Lemma
\ref{lem:zclosedlemma} they belong to some positive system $\Xi^+$ of
$\Phi$. So we have that $\overline{\bigcup_{x\in
X}\Phi_x}\subset \widehat{\Xi^+}$. Since
$\overline{\bigcup_{x\in X}\Phi_x}$ is finite, it is contained in $\Phi_u$ where  $u$ is a finite
prefix of the infinite reduced word corresponding to
$\widehat{\Xi^+}$. So $X$ has a bound in $\widetilde{W}$ and the
assertion follows from Theorem 1.5(a) of \cite{DyerWeakOrder}.
\end{proof}

\begin{lemma}\label{lem:closedsetktoinfty}
Let $\Gamma$ be a closed subset of $\widetilde{\Phi}^+$ and
$\alpha\in \Phi.$ If $\alpha$ is in $I_{\Gamma}$ and $\alpha+N\delta\in
\Gamma$ then $\alpha+n\delta$ is in $\Gamma$ for all $n\geq N$.
\end{lemma}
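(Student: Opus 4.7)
The plan is to exploit the closedness of $\Gamma$ together with the hypothesis $\alpha \in I_\Gamma$ to realize each $\alpha + n\delta$ with $n \geq N$ as a nonnegative $\mathbb{R}$-linear combination of $\alpha + N\delta$ and some $\alpha + m\delta \in \Gamma$ with $m > n$. Fix such an $n$; the case $n = N$ is immediate, so assume $n > N$. Since $|\widehat{\alpha} \cap \Gamma| = \aleph_0$ by $\alpha \in I_\Gamma$, we can choose an integer $m > n$ with $\alpha + m\delta \in \Gamma$.

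Now I would observe the identity
$$\alpha + n\delta = \frac{m-n}{m-N}\,(\alpha + N\delta) + \frac{n-N}{m-N}\,(\alpha + m\delta),$$
in which both coefficients are strictly positive because $N < n < m$. To invoke the closedness of $\Gamma$ it remains to verify that $\alpha + n\delta \in \widetilde{\Phi}^+$. This is just a sign check on $\alpha$: if $\alpha \in \Phi^+$ then $\alpha + k\delta \in \widetilde{\Phi}^+$ for every $k \geq 0$, while if $\alpha \in \Phi^-$ then $\alpha + k\delta \in \widetilde{\Phi}^+$ for every $k \geq 1$. Since $\alpha + N\delta \in \Gamma \subset \widetilde{\Phi}^+$ by hypothesis, $N$ already lies in the admissible range, and hence so does every $n \geq N$. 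Closedness of $\Gamma$ in $\widetilde{\Phi}^+$ then yields $\alpha + n\delta \in \Gamma$.

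There is no real obstacle here; the argument is essentially a one-line convex combination once one unpacks the definitions of $I_\Gamma$ and of closedness. The only point requiring a moment of care is confirming that $\alpha + n\delta$ is actually positive in $\widetilde{\Phi}$, which is dispatched by the sign dichotomy above.
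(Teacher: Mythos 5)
Your proof is correct and follows essentially the same route as the paper: both arguments pick a larger element $\alpha+m\delta\in\Gamma$ (available because $\alpha\in I_\Gamma$) and realize the intermediate root as a nonnegative convex combination of $\alpha+N\delta$ and $\alpha+m\delta$, then invoke closedness. The paper does this one step at a time (producing $\alpha+(N+1)\delta$ and iterating), whereas you hit an arbitrary $n\geq N$ in a single combination; this is only a cosmetic difference.
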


\begin{proof}
There must be some $M>N$ such that $\alpha+M\delta\in \Gamma$ since
$\alpha\in I_{\Gamma}$. Then
$$\frac{1}{M-N}(\alpha+M\delta)+\frac{M-N-1}{M-N}(\alpha+N\delta)=\alpha+(N+1)\delta.$$
\end{proof}

\begin{lemma}\label{lem:upisnotinbottom}
Let $x,y\in \overline{\widetilde{W}}$. Then it cannot happen that
$\Phi_x'\subsetneq \Phi_y.$
\end{lemma}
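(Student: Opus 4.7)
The plan is to argue by contradiction, assuming $\Phi_x' \subsetneq \Phi_y$ and eventually showing $\Phi_y = \Phi_x'$, which contradicts the strictness. The argument is driven by studying, for each $\alpha \in \Phi$, how the ``column'' $\widehat{\alpha}$ distributes between $\Phi_x$ and $\Phi_y$.

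First, for any $z \in \overline{\widetilde{W}}$ and any $\alpha \in \Phi$, I will show that $\widehat{\alpha} \cap \Phi_z$ is a lower segment of the chain $\widehat{\alpha} = \{\alpha_0, \alpha_0 + \delta, \alpha_0 + 2\delta,\ldots\}$. For $z \in \widetilde{W}$ this is exactly Lemma \ref{lem:startfrombottom}; for $z \in \widetilde{W}_l$ one takes $\Phi_z$ as the nested union over the inversion sets of finite prefixes, and a nested union of lower segments is still a lower segment. Granted this, the complement $\widehat{\alpha} \cap \Phi_x'$ is the complement in $\widehat{\alpha}$ of a lower segment, hence either empty or an infinite upper tail. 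The assumption $\Phi_x' \subseteq \Phi_y$ forces this tail to lie inside the lower segment $\widehat{\alpha} \cap \Phi_y$, which can only happen if $\widehat{\alpha} \cap \Phi_y = \widehat{\alpha}$. So for every $\alpha \in \Phi$, either $\widehat{\alpha} \subseteq \Phi_x$ or $\widehat{\alpha} \subseteq \Phi_y$.

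Next, I invoke the auxiliary fact (coming from Proposition \ref{prop:maximalinwbar} and the analysis in the proof of Theorem \ref{maximalthm}) that for every $z \in \overline{\widetilde{W}}$ the set $A_{\Phi_z}$ is contained in a positive system $\Psi_z^+$ of the finite root system $\Phi$. The point is that if $\alpha \in A_{\Phi_z} \cap -A_{\Phi_z}$, then finitely many roots of the offending pair lie in the finite biclosed inversion set of some finite prefix of $z$, and Lemma \ref{closedaffine}'s closure computation produces infinitely many roots there, contradicting finiteness. Set
\[
\Psi_X := \{\alpha \in \Phi : \widehat{\alpha} \subseteq \Phi_x\}, \qquad \Psi_Y := \{\alpha \in \Phi : \widehat{\alpha} \subseteq \Phi_y\}.
\]
The previous step gives $\Psi_X \cup \Psi_Y = \Phi$, while $\Psi_X \subseteq A_{\Phi_x} \subseteq \Psi_x^+$ and $\Psi_Y \subseteq A_{\Phi_y} \subseteq \Psi_y^+$ give $|\Psi_X|, |\Psi_Y| \le |\Phi|/2$. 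A straightforward cardinality count then forces $|\Psi_X| = |\Psi_Y| = |\Phi|/2$, $\Psi_X \cap \Psi_Y = \emptyset$, and $A_{\Phi_x} = \Psi_X = \Psi_x^+$ (and symmetrically $A_{\Phi_y} = \Psi_Y = -\Psi_X$). In particular $\Psi_X$ itself is a positive system.

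With this in hand the conclusion is immediate: since every element of $\Phi_x$ has finite part in $A_{\Phi_x} = \Psi_X$, we have $\Phi_x \subseteq \widehat{\Psi_X}$; the reverse inclusion holds by definition of $\Psi_X$, so $\Phi_x = \widehat{\Psi_X}$. Symmetrically $\Phi_y = \widehat{-\Psi_X}$. But then
\[
\Phi_x' = \widehat{\Phi} \setminus \widehat{\Psi_X} = \widehat{-\Psi_X} = \Phi_y,
\]
contradicting $\Phi_x' \subsetneq \Phi_y$. The main subtlety is step one — verifying that the lower-segment property persists for infinite reduced words — together with the cardinality bookkeeping that upgrades the covering $\Psi_X \cup \Psi_Y = \Phi$ into a disjoint partition into a positive system and its negative. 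Everything else is routine once these are in place.
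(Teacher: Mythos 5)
Your proof is correct, and it takes a genuinely different route from the paper's. The paper deduces the lemma from the classification machinery: by Theorem \ref{classifybiclosedsetaffine}, Corollary \ref{cor:biclosedsetofinflongword} and Lemma \ref{lem:infinitelongbasic}(d), every $\Phi_x'$ has the form $w\cdot\widehat{\Psi^+_{\emptyset,M}}$, and one then argues that for $M\neq\emptyset$ such a set already contains infinite $\delta$-chains through some $\alpha$ and $-\alpha$ (so cannot sit inside any $\Phi_y$ by Lemma \ref{closedaffine}), while for $M=\emptyset$ it equals $\widehat{\Xi^+}$ and anything properly larger again acquires such chains. You avoid the classification entirely: the only inputs are the ``column'' structure of inversion sets (Lemma \ref{lem:startfrombottom}, extended to infinite reduced words via the nested union of prefix inversion sets), Lemma \ref{closedaffine}, the $\mathbb{Z}$-closedness argument of Lemmas \ref{lem:asetzclosed} and \ref{lem:zclosedlemma}, and a cardinality count. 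Your observation that $\Phi_x'\subseteq\Phi_y$ forces $\widehat{\alpha}\subseteq\Phi_x$ or $\widehat{\alpha}\subseteq\Phi_y$ for every $\alpha\in\Phi$, combined with the bound $|A_{\Phi_z}|\leq|\Phi|/2$, pins down $\Phi_x=\widehat{\Psi_X}$ and $\Phi_y=\widehat{-\Psi_X}$ for a positive system $\Psi_X$, so equality rather than strict containment must hold; as a bonus this re-derives the content of the remark following the lemma (that $\Phi_x'=\Phi_y$ forces both $x$ and $y$ to be maximal). The paper's argument is shorter given the machinery already in place; yours is more elementary and self-contained, and would survive even if one wanted to state the lemma before the classification results of Section \ref{sec:bijection}.
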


\begin{proof}
By Lemma \ref{lem:biclosedsetthatisnotinfinielongword}, Corollary
\ref{cor:biclosedsetofinflongword} and Lemma
\ref{lem:infinitelongbasic}(d) the biclosed sets $w\cdot
\widehat{\Psi^+_{\emptyset,M}}$ where $w\in \widetilde{W}$ are
precisely those $\Phi_x',x\in \overline{\widetilde{W}}$. Suppose that
$M\neq \emptyset$. Then by Theorem \ref{classifybiclosedsetaffine} (2) and Theorem \ref{reflectionorderaffine} (2), those biclosed sets of the form $w\cdot
\widehat{\Psi^+_{\emptyset,M}}$ must contain some
infinite $\delta$ chain through some $\alpha$ and $-\alpha$ (where
$\alpha\in \Phi$) as $w\cdot \widehat{\Psi^+_{\emptyset,M}}$ differs from $\widehat{\pi(w)(\Psi^+_{\emptyset,M})}$ by finitely many roots. This implies that these sets cannot be possibly
contained in a biclosed set coming from $\Phi_y,y\in
\overline{\widetilde{W}}$ by the proof of
Lemma \ref{lem:biclosedsetthatisnotinfinielongword}. Now suppose that
$M=\emptyset.$ Then $w\cdot
\widehat{\Psi^+_{\emptyset,\emptyset}}=\widehat{\Xi^+}$ for some
positive system $\Xi^+$ in $\Phi$.  Clearly if there is a biclosed
set properly containing $\widehat{\Xi^+}$, again it must have infinite
$\delta$-chains through some $\alpha,-\alpha\in \Phi$, which implies that
it is not of the form $\Phi_y,y\in \overline{\widetilde{W}}$.
\end{proof}

\begin{remark*}
Suppose that $\Phi_x'=\Phi_y$. Again $\Phi_x'=w\cdot \widehat{\Psi^+_{\emptyset, N}}$ for some $w\in \widetilde{W}$.  The same reasoning as in the proof of the above Lemma
implies that $N=\emptyset$. Therefore $\Phi_x'=\widehat{\Xi^+}$ for some positive system $\Xi^+=\pi(w)\Psi^+$ in $\Phi$. Therefore we conclude that if $\Phi_x'=\Phi_y$ then $x,y$ are both maximal in $\overline{\widetilde{W}}$ (by the first paragraph of the proof of Theorem \ref{maximalthm}).
Conversely if $x$ is maximal in $\overline{\widetilde{W}}$, then $\Phi_x=\widehat{\Xi^+}$ for some positive system $\Xi^+$ and consequently $\Phi_x'=\widehat{\Xi^-}$. Take the infinite reduced word $y$ such that $\Phi_y=\widehat{\Xi^-}$. Then we have that
$\Phi_x'=\Phi_y$.
\end{remark*}

From now on, let $\widetilde{W}$ be of type $\widetilde{A}_2$ or
$\widetilde{B}_2=\widetilde{C}_2$ or $\widetilde{G}_2$.

\begin{lemma}\label{lem:orthocaseone}
Suppose that $x,y\in \widetilde{W}$ and that $x\wedge y=e$. Let
$\Gamma=\overline{\Phi_x\cup \Phi_y}$ and assume that
$I_{\Gamma}=A_{\Gamma}=\Psi^+_{\emptyset,\{\alpha\}}$ for some
positive system $\Psi^+$ in $\Phi$ and a simple root $\alpha$ of
$\Psi^+$. Then $\Gamma=\widehat{\Psi^+_{\emptyset,\{\alpha\}}}$.
\end{lemma}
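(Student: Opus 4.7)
The plan is to prove both inclusions in $\Gamma = \widehat{\Psi^+_{\emptyset,\{\alpha\}}}$. The inclusion $\Gamma \subseteq \widehat{\Psi^+_{\emptyset,\{\alpha\}}}$ is immediate: any $\gamma \in \Gamma \subseteq \widetilde{\Phi}^+$ decomposes uniquely as $\gamma = \beta + n\delta$ with $\beta \in \Phi$, so $\beta \in A_\Gamma = \Psi^+_{\emptyset,\{\alpha\}}$, whence $\gamma \in \widehat{\beta} \subseteq \widehat{\Psi^+_{\emptyset,\{\alpha\}}}$.

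For the reverse inclusion I would first extract the column structure of $\Gamma$. Because $I_\Gamma = A_\Gamma$, every nonempty column $\widehat{\beta} \cap \Gamma$ is infinite; combined with Lemma \ref{lem:closedsetktoinfty} and the observation that, for $n < m$, the intermediate level $k$ is a convex combination $\beta + k\delta = \tfrac{m-k}{m-n}(\beta + n\delta) + \tfrac{k-n}{m-n}(\beta + m\delta)$ and therefore lies in the closed set $\Gamma$, each nonempty column is an uninterrupted tail $\{\beta + k\delta : k \geq N_\beta\}$. So $\widehat{\Psi^+_{\emptyset,\{\alpha\}}} \setminus \Gamma$ is finite, sitting in the bottom chunks of the relevant columns. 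The reverse inclusion then reduces to showing that for each $\beta \in \Psi^+_{\emptyset,\{\alpha\}}$, the starting level $N_\beta$ is the minimum admissible one ($0$ if $\beta \in \Phi^+$, $1$ if $\beta \in \Phi^-$). Here I would invoke Lemma \ref{lem:startfrombottom} applied to $\Phi_x$ and $\Phi_y$: if some $\beta + k\delta$ lies in $\Phi_x$ (respectively $\Phi_y$), then the whole chain from $\beta_0$ up to $\beta + k\delta$ lies in $\Phi_x \subseteq \Gamma$ (respectively $\Phi_y \subseteq \Gamma$), forcing $N_\beta$ to be minimal. So the task reduces to verifying that every $\beta \in A_\Gamma$ is actually in $A_x \cup A_y$, rather than arising only through closure of lifts in other columns.

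The main obstacle is to rule out the possibility $\beta \in A_\Gamma \setminus (A_x \cup A_y)$. In that situation the $\beta$-column is produced only by combinations $k_1(\beta_1 + n_1\delta) + k_2(\beta_2 + n_2\delta)$, and a level accounting shows that whenever one of the constituents comes from a column whose minimum level is $1$ (i.e.\ a $\Phi^-$ direction such as $-\alpha$), the resulting level $k_1 n_1 + k_2 n_2$ cannot be pushed below $1$, so $\beta_0$ would be missing from $\Gamma$ when $\beta \in \Phi^+$. The extremal elements of $A_\Gamma = \Psi^+ \cup \{-\alpha\}$ (those not expressible as a nontrivial positive combination of two other elements of $A_\Gamma$, in particular $\alpha$ and $-\alpha$) are automatically forced into $A_x \cup A_y$, since no closure combination within $A_\Gamma$ can produce them. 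For the non-extremal elements I would proceed by case analysis on the type ($\widetilde{A}_2, \widetilde{B}_2, \widetilde{G}_2$), exploiting the specific shape of $\Psi^+_{\emptyset,\{\alpha\}}$ with $\alpha$ simple in $\Psi^+$ together with the hypothesis $x \wedge y = e$ (equivalently $\Phi_x \cap \Phi_y = \emptyset$) to preclude configurations of finite biclosed sets $\Phi_x, \Phi_y$ in which some $\beta \in A_\Gamma$ is absent from $A_x \cup A_y$. Once this is secured, Lemma \ref{lem:startfrombottom} delivers $\beta_0 \in \Gamma$ for every $\beta \in A_\Gamma$, and combining with the tail structure of the columns completes the proof.
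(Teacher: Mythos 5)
Your setup is sound and matches the paper up to a point: the easy inclusion $\Gamma\subseteq\widehat{\Psi^+_{\emptyset,\{\alpha\}}}$, the observation that each nonempty column $\widehat{\beta}\cap\Gamma$ is an infinite uninterrupted tail (via the interpolation identity and Lemma \ref{lem:closedsetktoinfty}), and the anchoring of the extremal columns $\widehat{\alpha}$, $\widehat{-\alpha}$ in $\Phi_x\cup\Phi_y$. The gap is the reduction you then make: you propose to show that \emph{every} $\beta\in A_\Gamma$ lies in $A_{\Phi_x}\cup A_{\Phi_y}$ and to ``preclude configurations'' in which some $\beta$ is absent from it. That statement is false, so no case analysis can establish it. Concretely, in type $\widetilde{A}_2$ take $x=s_\alpha$ and $y=s_\beta s_{\delta-\alpha-\beta}$, so $\Phi_x=\{\alpha\}$, $\Phi_y=\{\beta,-\alpha+\delta\}$, $x\wedge y=e$. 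One checks directly that $\overline{\Phi_x\cup\Phi_y}=\widehat{\Phi^+_{\emptyset,\{\alpha\}}}$ (e.g. $2\alpha+(-\alpha+\delta)=\alpha+\delta$ and $(m-1)\alpha+m(-\alpha+\delta)=-\alpha+m\delta$ give the $\pm\alpha$ columns, then $\alpha+\beta+k\delta=\alpha+(\beta+k\delta)$ and $(\alpha+\beta)+(-\alpha+m\delta)=\beta+m\delta$ give the rest), so the hypotheses and conclusion of the lemma hold, yet $\alpha+\beta\in A_\Gamma\setminus(A_{\Phi_x}\cup A_{\Phi_y})$: that entire column arises purely by closure, already at level $0$. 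So the case you call the ``main obstacle'' is not a configuration to be ruled out; it is the generic situation, and it must be \emph{handled} by showing that the bottom element $\beta_0$ of each unanchored column is produced by the closure from columns already known to be full. That is exactly what the paper does: it anchors only $\widehat{\pm\alpha}$ plus one or two columns over simple roots of $\widehat{\Phi}$ (using $x\wedge y=e$ to force at least two simple roots of $\widehat{\Phi}$ into $\Phi_x\cup\Phi_y$), and then exhibits explicit rank-two identities expressing each remaining $\beta_0$ as a nonnegative combination of bottoms of anchored columns. Your ``level accounting'' does not rescue the reduction, because the hypothesis $I_\Gamma=A_\Gamma$ only tells you $\widehat{\beta}\cap\Gamma$ is an infinite tail, not that it contains $\beta_0$; concluding that ``$\beta_0$ would be missing'' is not a contradiction with the hypotheses but precisely the failure of the conclusion you are trying to prove.

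A secondary error: $x\wedge y=e$ is not equivalent to $\Phi_x\cap\Phi_y=\emptyset$; it only says that no \emph{simple} root of $\widehat{\Phi}$ lies in $\Phi_x\cap\Phi_y$ (for instance $x=s_\alpha s_\beta$ and $y=s_\beta s_\alpha$ in a standard parabolic of type $A_2$ satisfy $x\wedge y=e$ while $\alpha+\beta\in\Phi_x\cap\Phi_y$). Since your intended case analysis leans on this equivalence, it would need reworking even where the strategy could in principle be repaired.
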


\begin{proof}
The proof will be a case-by-case analysis. But before that we make a
few case-free observations.

(1) The sets $\widehat{\alpha}$ and $\widehat{-\alpha}$ are both contained
in $\Gamma$.

\emph{Proof of (1)}. Note that $I_{\Gamma}=A_{\Gamma}=\Psi^+_{\emptyset,\{\alpha\}}$. We claim that both $\alpha$ and $-\alpha$ are contained in  $A_{\Phi_x\cup \Phi_y}$. If $-\alpha\not\in A_{\Phi_x\cup \Phi_y}$, then $A_{\Phi_x\cup \Phi_y}\subset \Psi^+$ (note that $A_{\Phi_x\cup \Phi_y}\subset A_{\Gamma}$). Consequently $\widehat{-\alpha}\cap \Gamma=\emptyset$ (as one easily sees that $A_{\overline{S}}\subset \overline{A_S}$ for any $S\subset \widehat{\Phi}$), which is a contradiction. The same reasoning works for $\alpha$ in place of $-\alpha$. Therefore $\alpha_0$ and
$(-\alpha)_0$ are in $\Phi_x\bigcup \Phi_y$ by Lemma
\ref{lem:startfrombottom}. By Lemma \ref{lem:closedsetktoinfty}
we get the assertion.

(2) There is at least one simple root $\rho_0\in
\widetilde{\Phi}^+$ such that $\rho\neq \pm\alpha$ and
$\widehat{\rho}$ is contained in $\Gamma.$

\emph{Proof of (2)}. Because $\alpha_0$ and $(-\alpha)_0$ cannot both be simple roots in
$\widetilde{\Phi}^+$ (the simple roots of $\widetilde{\Phi}^+$ are $\gamma,\gamma$ simple in $\Phi^+$ and $-\eta+\delta$ where $\eta$ is the highest root in $\Phi^+$) and we assume that $x$ and $y$ have trivial
meet, there must be $\rho_0\in \Phi_x\bigcup \Phi_y, \rho_0\neq \alpha_0, (-\alpha)_0$ and $\rho_0$ is simple
in $\widetilde{\Phi}^+$. Since $\rho\in A_{\Gamma}=I_{\Gamma}$, this will cause $\widehat{\rho}$ to be
contained in $\Gamma$ by Lemma \ref{lem:closedsetktoinfty}.

(3) If neither  $\alpha_0$ nor $(-\alpha)_0$ is simple in
$\widetilde{\Phi}^+$, there are two simple roots $\rho_0,
\gamma_0\in \widetilde{\Phi}^+$ such that $\widehat{\rho}$ and
$\widehat{\gamma}$ are both contained in $\Gamma.$

\emph{Proof of (3)}. Again because $x$ and $y$ have trivial meet we must have at
least two simple roots of $\widetilde{\Phi}^+$ in $\Phi_x\cup \Phi_y$. Suppose that these two simple roots are $\rho_0,
\gamma_0$ (with $\gamma, \rho\in \Phi$). Then Lemma
\ref{lem:closedsetktoinfty} forces the assertion.

Now we carry out the case study.

$\mathbf{A}_2:$

The two simple roots of $\Phi^+$ are denoted by $\alpha,\beta.$ First we consider the case

$I_{\Gamma}=A_{\Gamma}=\{\alpha,\beta,\alpha+\beta,-\alpha\}$.

By the above fact (1), the sets
$\widehat{\alpha}$ and $\widehat{-\alpha}$ have to be contained in $\Gamma$. Then by the above fact (2), the set $\widehat{\beta}$ also has to be
contained in $\Gamma$. Then $\widehat{\alpha+\beta}$ can clearly be
generated by $\widehat{\alpha}\cup \widehat{\beta}$ and is in $\Gamma$ (since $\alpha+\beta+k\delta=(\alpha)+(\beta+k\delta)$).

Any permutation of the simple roots $\alpha,\beta,-\alpha-\beta+\delta$ induces an
isomorphism of the affine root system $\widetilde{\Phi}$ while preserving the positive system $\widetilde \Phi^+$.   By making use of this
 symmetry of $\widetilde A_2$, it is sufficient to treat the above case.

$\mathbf{B}_2:$

The short simple root is denoted by $\alpha$ and the long simple root
is denoted by $\beta.$

(1)
$I_{\Gamma}=A_{\Gamma}=\{\alpha,\beta,\alpha+\beta,2\alpha+\beta,-\alpha\}$.

By the above fact (1) the sets
$\widehat{\alpha}$ and $\widehat{-\alpha}$ have to be contained in $\Gamma$. By the above fact (2) we also have that $\widehat{\beta}\subset \Gamma$. Then $\widehat{\alpha+\beta}$ and
$\widehat{2\alpha+\beta}$ can clearly be generated by $\widehat{\{\alpha,-\alpha,\beta\}}$ and are thus in
$\Gamma$ (since $\alpha+\beta+k\delta=(\alpha)+(\beta+k\delta)$ and $2\alpha+\beta+k\delta=2(\alpha)+(\beta+k\delta)$).

A permutation of the simple roots $\beta,\delta-2\alpha-\beta$ induces an
isomorphism of the affine root system $\widetilde{\Phi}$ while preserving the positive system $\widetilde \Phi^+$.   By making use of this
 symmetry of $\widetilde B_2$,
the case where $I_{\Gamma}=A_{\Gamma}=\{-2\alpha-\beta,-\alpha,-\beta,-\alpha-\beta,\alpha\}$ can be proved in the same way.

(2)
$I_{\Gamma}=A_{\Gamma}=\{-2\alpha-\beta,\beta,\alpha+\beta,2\alpha+\beta,-\alpha\}$.

By the above fact (1) the sets
$\widehat{2\alpha+\beta}$ and $\widehat{-2\alpha-\beta}$ have to be contained in $\Gamma$. By the above fact (2) the set $\widehat{\beta}$
is also contained in $\Gamma$. Then note that
$\frac{1}{2}(2\alpha+\beta)+\frac{1}{2}\beta=\alpha+\beta,
-\alpha+\delta=\frac{1}{2}(-2\alpha-\beta+\delta)+\frac{1}{2}(\beta+\delta)$.
Therefore $\widehat{\alpha+\beta}$ and $\widehat{-\alpha}$ are in
$\Gamma$ by Lemma \ref{lem:closedsetktoinfty}.

Again a permutation of the simple roots $\beta,\delta-2\alpha-\beta$ induces an
isomorphism of the affine root system $\widetilde{\Phi}$ while preserving the positive system $\widetilde \Phi^+$.   By making use of this
 symmetry of $\widetilde B_2$,
the case where $I_{\Gamma}=A_{\Gamma}=\{-2\alpha-\beta,\beta,-\beta,-\alpha-\beta,-\alpha\}$ can be proved in the same way.

(3)
$I_{\Gamma}=A_{\Gamma}=\{-2\alpha-\beta,\beta,\alpha+\beta,-\alpha-\beta,-\alpha\}$.

By the above fact (1), the sets
$\widehat{\alpha+\beta}$ and $\widehat{-\alpha-\beta}$ are contained in $\Gamma$. Since neither $\alpha+\beta$ nor $\delta-\alpha-\beta$ is simple, by the above fact (3), the sets $\widehat{\beta},
\widehat{-2\alpha-\beta}$ have to be contained in $\Gamma$. Then
note that $-\alpha+\delta=(-\alpha-\beta+\delta)+\beta$. So the set
$\widehat{-\alpha}$ is in $\Gamma$  by Lemma \ref{lem:closedsetktoinfty}.

(4)
$I_{\Gamma}=A_{\Gamma}=\{-2\alpha-\beta,2\alpha+\beta,-\beta,-\alpha-\beta,\alpha\}$.

By the above fact (1) the sets
$\widehat{2\alpha+\beta}$ and $\widehat{-2\alpha-\beta}$
have to be contained in $\Gamma$. By the above fact (2) we also have that $\widehat{\alpha}\subset \Gamma$. Then note that
$-\alpha-\beta+\delta=(-2\alpha-\beta+\delta)+\alpha$ and
$-\beta+\delta=(-\alpha-\beta+\delta)+\alpha$. So
$\widehat{-\alpha-\beta}$ and $\widehat{-\beta}$ are in $\Gamma$  by Lemma \ref{lem:closedsetktoinfty}.

Again a permutation of the simple roots $\beta,\delta-2\alpha-\beta$ induces an
isomorphism of the affine root system $\widetilde{\Phi}$ while preserving the positive system $\widetilde \Phi^+$.   By making use of this
 symmetry of $\widetilde B_2$,
the case where $I_{\Gamma}=A_{\Gamma}=\{\alpha+\beta,2\alpha+\beta,-\beta,\beta,\alpha\}$ can be proved in the same way.

(5)
$I_{\Gamma}=A_{\Gamma}=\{\alpha+\beta,2\alpha+\beta,-\beta,-\alpha-\beta,\alpha\}$.

By the above fact (3) this situation cannot happen as there
will at most be one simple root in $\Gamma.$

$\textbf{G}_2$:

The long simple root is denoted by $\beta$ and the short simple root is
denoted by $\alpha.$

(1)
$I_{\Gamma}=A_{\Gamma}=\{\alpha,\beta,\alpha+\beta,2\alpha+\beta,3\alpha+\beta,3\alpha+2\beta,-\alpha\}$.

By the above fact (1) the sets
$\widehat{\alpha}$ and $\widehat{-\alpha}$ have to be
contained in $\Gamma$. By the above fact (2) we also have that $\widehat{\beta}\subset \Gamma$. Then the sets $\widehat{\alpha+\beta}$,
$\widehat{2\alpha+\beta}$, $\widehat{3\alpha+\beta}$ and
$\widehat{3\alpha+2\beta}$ can clearly be generated and are all in
$\Gamma$ thanks to Lemma \ref{lem:closedsetktoinfty}.

(2)
$I_{\Gamma}=A_{\Gamma}=\{-3\alpha-\beta,\beta,\alpha+\beta,2\alpha+\beta,3\alpha+\beta,3\alpha+2\beta,-\alpha\}$.

By the above fact (3) this situation cannot happen as there
will at most be one simple root in $\Gamma.$

(3)
$I_{\Gamma}=A_{\Gamma}=\{-3\alpha-\beta,\beta,\alpha+\beta,2\alpha+\beta,-2\alpha-\beta,3\alpha+2\beta,-\alpha\}$.

By the above fact (3) this situation cannot happen as there
will at most be one simple root in $\Gamma.$

(4)
$I_{\Gamma}=A_{\Gamma}=\{-3\alpha-\beta,\beta,\alpha+\beta,-3\alpha-2\beta,-2\alpha-\beta,3\alpha+2\beta,-\alpha\}$.

By the above fact (1) the sets
$\widehat{3\alpha+2\beta}$ and $\widehat{-3\alpha-2\beta}$ are contained in $\Gamma$. By the above fact (2) the set $\widehat{\beta}$
also has to be contained in $\Gamma$. Then note that
$\alpha+\beta=\frac{1}{3}(3\alpha+2\beta)+\frac{1}{3}\beta$,
$-\alpha+\delta=\frac{1}{3}(-3\alpha-2\beta+\delta)+\frac{2}{3}(\beta+\delta)$,
$-3\alpha-\beta+\delta=(-3\alpha-2\beta+\delta)+\beta$ and
$-2\alpha-\beta+\delta=(-3\alpha-2\beta+\delta)+(\alpha+\beta).$
Hence
$\widehat{\alpha+\beta},\widehat{-\alpha},\widehat{-3\alpha-\beta}$
and $\widehat{-2\alpha-\beta}$ are all in $\Gamma$ thanks to Lemma \ref{lem:closedsetktoinfty}.

(5)
$I_{\Gamma}=A_{\Gamma}=\{-3\alpha-\beta,\beta,\alpha+\beta,-3\alpha-2\beta,-2\alpha-\beta,-\alpha-\beta,-\alpha\}$.

By the above fact (1) the sets
$\widehat{\alpha+\beta}$ and $\widehat{-\alpha-\beta}$ are contained in $\Gamma$. Since neither $\alpha+\beta$ nor $\delta-\alpha-\beta$ is simple, by the above fact (3)
the sets $\widehat{\beta}$ and $\widehat{-3\alpha-2\beta}$ are also contained in $\Gamma$. Then
note that
$-\alpha+\delta=\frac{1}{3}(-3\alpha-2\beta+\delta)+\frac{2}{3}(\beta+\delta)$,
$-3\alpha-\beta+\delta=(-3\alpha-2\beta+\delta)+\beta$ and
$-2\alpha-\beta+\delta=(-3\alpha-2\beta+\delta)+(\alpha+\beta).$
Hence $\widehat{-\alpha},\widehat{-3\alpha-\beta}$ and
$\widehat{-2\alpha-\beta}$ are all in $\Gamma$ thanks to Lemma \ref{lem:closedsetktoinfty}.

(6)
$I_{\Gamma}=A_{\Gamma}=\{-3\alpha-\beta,\beta,-\beta,-3\alpha-2\beta,-2\alpha-\beta,-\alpha-\beta,-\alpha\}$.

By the above fact (1) the sets $\widehat{\beta}$ and $\widehat{-\beta}$ have to be contained in $\Gamma$.
By the above fact (2) we also have that
$\widehat{-3\alpha-2\beta}\subset \Gamma.$
 Then note that
$-\alpha+\delta=\frac{1}{3}(-3\alpha-2\beta+\delta)+\frac{2}{3}(\beta+\delta)$,
$-3\alpha-\beta+\delta=(-3\alpha-2\beta+\delta)+\beta$,
$-2\alpha-\beta+\delta=\frac{2}{3}(-3\alpha-2\beta+\delta)+\frac{1}{3}(\beta+\delta)$
and
$\frac{1}{3}(-3\alpha-2\beta+2\delta)+\frac{1}{3}(-\beta+\delta)=-\alpha-\beta+\delta$.
Hence $\widehat{-\alpha}, \widehat{-3\alpha-\beta},
\widehat{-2\alpha-\beta}$ and $\widehat{-\alpha-\beta}$ are all in
$\Gamma$ thanks to Lemma \ref{lem:closedsetktoinfty}.

(7)
$I_{\Gamma}=A_{\Gamma}=\{-3\alpha-\beta,\alpha,-\beta,-3\alpha-2\beta,-2\alpha-\beta,-\alpha-\beta,-\alpha\}$.

By the above fact (1) the sets $\widehat{\alpha}$ and $\widehat{-\alpha}$ have to be contained in $\Gamma$.
By the above fact (2) we also have that
$\widehat{-3\alpha-2\beta}\subset \Gamma.$
 Then note that
$-\alpha-\beta+\delta=\frac{1}{2}(-3\alpha-2\beta+\delta)+\frac{1}{2}(\alpha+\delta)$,
$-\beta+\delta=(-\alpha-\beta+\delta)+\alpha,
-2\alpha-\beta+\delta=\frac{1}{2}(-3\alpha-2\beta+\delta)+\frac{1}{2}(-\alpha+\delta)$.
Hence $\widehat{-\alpha-\beta}, \widehat{-\beta}$ and
$\widehat{-2\alpha-\beta}$ are all in $\Gamma$ thanks to Lemma \ref{lem:closedsetktoinfty}. Finally we note that
in this case the two elements $x,y$ are of the form $s_{\alpha}\cdots$ and
$s_{\delta-3\alpha-2\beta}\cdots$. Because $\Gamma$ is infinite they
could not be $s_{\alpha}$ and $s_{\delta-3\alpha-2\beta}$ (note that $s_{\alpha}$ and $s_{\delta-3\alpha-2\beta}$ commute to each other). Because
the two elements have trivial meet we cannot have
$s_{\alpha}s_{\delta-3\alpha-2\beta}\cdots$ or
$s_{\delta-3\alpha-2\beta}s_{\alpha}$. Finally if one of the elements is
$s_{\alpha}s_{\beta}\cdots$ then we will have that $3\alpha+\beta\in
A_{\Gamma}$, which is a contradiction. So we must have one element being
equal to $s_{\delta-3\alpha-2\beta}s_{\beta}\cdots$. Therefore we have that
$-3\alpha-\beta+\delta\in \Phi_x\cup \Phi_y$. So we conclude that
$\widehat{-3\alpha-\beta}\subset \Gamma$ thanks to Lemma \ref{lem:closedsetktoinfty}.

(8)
$I_{\Gamma}=A_{\Gamma}=\{-3\alpha-\beta,\alpha,-\beta,-3\alpha-2\beta,-2\alpha-\beta,-\alpha-\beta,3\alpha+\beta\}$.

By the above fact (1) the sets
$\widehat{-3\alpha-\beta}$ and $\widehat{3\alpha+\beta}$ are contained in $\Gamma$. Since neither $-3\alpha-\beta+
\delta$ nor $3\alpha+\beta$ is simple, by the above fact (3) we also have that $\widehat{\alpha}\subset \Gamma$ and $\widehat{-3\alpha-2\beta}\subset \Gamma.$
 Then note that
$-2\alpha-\beta+\delta=(-3\alpha-\beta+\delta)+\alpha$,
$-\alpha-\beta+\delta=(-2\alpha-\beta+\delta)+\alpha,$ and
$-\beta+\delta=(-\alpha-\beta+\delta)+\alpha$. Hence
$\widehat{-2\alpha-\beta}, \widehat{-\alpha-\beta}$ and
$\widehat{-\beta}$ are all in $\Gamma$  thanks to Lemma \ref{lem:closedsetktoinfty}.

(9)
$I_{\Gamma}=A_{\Gamma}=\{2\alpha+\beta,\alpha,-\beta,-3\alpha-2\beta,-2\alpha-\beta,-\alpha-\beta,3\alpha+\beta\}$.

By the above fact (1) the sets
$\widehat{-2\alpha-\beta}$ and $\widehat{2\alpha+\beta}$ are contained in $\Gamma$. Since neither $-2\alpha-\beta+\delta$ nor $2\alpha+\beta$ is simple, by the above fact (3) the sets $\widehat{\alpha}$ and $\widehat{-3\alpha-2\beta}$
have to be in $\Gamma.$ Then note that
$3\alpha+\beta=(2\alpha+\beta)+\beta$,
$-\alpha-\beta+\delta=(-2\alpha-\beta+\delta)+\alpha,$ and
$-\beta+\delta=(-\alpha-\beta+\delta)+\alpha$. Hence
$\widehat{3\alpha+\beta}, \widehat{-\alpha-\beta}$ and
$\widehat{-\beta}$ are all in $\Gamma$ thanks to Lemma \ref{lem:closedsetktoinfty}.

(10)
$I_{\Gamma}=A_{\Gamma}=\{2\alpha+\beta,\alpha,-\beta,-3\alpha-2\beta,3\alpha+2\beta,-\alpha-\beta,3\alpha+\beta\}$.

By the above fact (1) the sets
$\widehat{-3\alpha-2\beta}$ and $\widehat{3\alpha+2\beta}$ are contained in $\Gamma$.  By the above fact (2) the set
$\widehat{\alpha}$ is also contained in $\Gamma.$ Then note that
$2\alpha+\beta=\frac{1}{2}(3\alpha+2\beta)+\frac{1}{2}\alpha$,
$3\alpha+\beta=(2\alpha+\beta)+\alpha,
-\alpha-\beta+\delta=\frac{1}{2}(-3\alpha-2\beta+\delta)+\frac{1}{2}(\alpha+\delta)$
and $-\beta+\delta=(-\alpha-\beta+\delta)+\alpha$. Therefore
$\widehat{2\alpha+\beta}, \widehat{3\alpha+\beta},
\widehat{-\alpha-\beta}$ and $\widehat{-\beta}$ are all in $\Gamma$ thanks to Lemma \ref{lem:closedsetktoinfty}.

(11)
$I_{\Gamma}=A_{\Gamma}=\{2\alpha+\beta,\alpha,-\beta,\alpha+\beta,3\alpha+2\beta,-\alpha-\beta,3\alpha+\beta\}$.

By the above fact (3) this situation cannot happen as there
will at most be one simple root in $\Gamma.$

(12)
$I_{\Gamma}=A_{\Gamma}=\{2\alpha+\beta,\alpha,-\beta,\alpha+\beta,3\alpha+2\beta,\beta,3\alpha+\beta\}$.

By the above fact (1) the sets
$\widehat{-\beta}$ and $\widehat{\beta}$ have to be
contained in $\Gamma$. By the above fact (2) we also have that $\widehat{\alpha}\subset \Gamma.$ Then $\widehat{\alpha+\beta}$,
$\widehat{2\alpha+\beta}$, $\widehat{3\alpha+\beta}$ and
$\widehat{3\alpha+2\beta}$ can clearly be generated and are all in
$\Gamma$ thanks to Lemma \ref{lem:closedsetktoinfty}.
\end{proof}

\begin{lemma}\label{lem:orthocasetwo}
Suppose that $x,y\in \widetilde{W}$ and that $x\wedge y=e$. Let
$\Gamma=\overline{\Phi_x\bigcup \Phi_y}$ and assume that
$I_{\Gamma}=A_{\Gamma}=\Phi$.  Then
$\Gamma\supset\widehat{\Psi^+_{\emptyset,\{\alpha\}}}$ for some
positive system $\Psi^+$ and a simple root $\alpha$ of $\Psi^+$.
\end{lemma}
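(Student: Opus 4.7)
My plan is a case analysis in the spirit of Lemma \ref{lem:orthocaseone}, now exploiting the stronger hypothesis $x \wedge y = e$ through the left descents of $x$ and $y$. First, since $I_\Gamma = \Phi$ is non-empty, $\Gamma$ is infinite; but if $x = e$ then $\Gamma = \overline{\Phi_y} = \Phi_y$ (as $\Phi_y$ is biclosed, hence closed), which is finite. So neither $x$ nor $y$ is the identity, and $x \wedge y = e$ forces that no simple reflection is a left descent of both $x$ and $y$ (any common left descent $s$ would satisfy $s \leq x \wedge y = e$). Picking distinct simple reflections $s^{(x)}, s^{(y)}$ which are left descents of $x, y$ respectively, the corresponding simple affine roots $\sigma_0^{(x)}$ and $\sigma_0^{(y)}$ lie in $\Phi_x$ and $\Phi_y$; combined with $I_\Gamma = A_\Gamma = \Phi$ and Lemma \ref{lem:closedsetktoinfty}, this gives $\widehat{\sigma^{(x)}}, \widehat{\sigma^{(y)}} \subset \Gamma$.

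I would then enumerate the possibilities for the unordered pair $\{\sigma^{(x)}, \sigma^{(y)}\}$ modulo the diagram automorphism groups of $\widetilde{A}_2$, $\widetilde{B}_2 = \widetilde{C}_2$ and $\widetilde{G}_2$ (respectively $S_3$, $\mathbb{Z}/2$, and trivial). In each case, closure computations of exactly the kind used in Lemma \ref{lem:orthocaseone} exhibit a positive system $\Psi^+$ of $\Phi$ with $\widehat{\Psi^+} \subset \Gamma$.

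The remaining step is to produce a simple root $\alpha$ of $\Psi^+$ with $(-\alpha)_0 \in \Gamma$, for then Lemma \ref{lem:closedsetktoinfty} yields $\widehat{-\alpha} \subset \Gamma$ and hence the desired $\widehat{\Psi^+_{\emptyset,\{\alpha\}}} = \widehat{\Psi^+}\cup\widehat{-\alpha} \subset \Gamma$. Here I would examine the second letter of a reduced expression of whichever of $x, y$ has length at least two (at least one does, since otherwise $A_\Gamma$ would contain at most three roots, contradicting $A_\Gamma = \Phi$). This second letter is distinct from the first, so the second inversion of that element takes a restricted shape; moreover the constraint $\Phi_x \cap \Phi_y = \emptyset$ (coming from $x\wedge y = e$) rules out certain coincidences (for instance, in $\widetilde{A}_2$ one cannot have both $x = s_\alpha s_\beta \cdots$ and $y = s_\beta s_\alpha \cdots$, since $\alpha + \beta$ would then lie in both inversion sets). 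Direct inspection in each surviving sub-case identifies the second inversion as $(-\alpha)_0$ for a suitable simple root $\alpha$ of $\Psi^+$, completing the argument.

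The main obstacle is the bookkeeping in types $\widetilde{B}_2$ and $\widetilde{G}_2$, where the root system is larger and more sub-cases arise; however, the diagram symmetries together with the disjointness $\Phi_x \cap \Phi_y = \emptyset$ prune the sub-cases substantially, and each remaining closure computation is of the same flavor as those in Lemma \ref{lem:orthocaseone}.
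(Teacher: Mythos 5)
Your opening step is sound and matches the paper's first observation: $x,y\neq e$, they have no common left descent, and the two distinct simple affine roots so obtained give $\widehat{\sigma^{(x)}},\widehat{\sigma^{(y)}}\subset\Gamma$ via Lemma \ref{lem:closedsetktoinfty}. But there is a genuine gap after that, and it is the same gap twice: you never convert the hypothesis $A_\Gamma=I_\Gamma=\Phi$ into a usable geometric statement. The paper's engine is its observation (2)/(3): the set of $\eta\in\Phi$ with $\eta_0\in\Phi_x\cup\Phi_y$ cannot lie in a closed half-plane through the origin (otherwise $A_\Gamma$ would be contained in some $\Psi^+$ or $\Psi^+_{\emptyset,\{\alpha\}}$, contradicting $A_\Gamma=\Phi$). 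This fact is invoked repeatedly to force new $\delta$-strings into $\Gamma$, and without it both of your remaining steps fail.

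Concretely: your step 2 claims that the two full $\delta$-strings over the two simple roots always generate a full $\widehat{\Psi^+}$ by closure. This is false in types $\widetilde{B}_2$ and $\widetilde{G}_2$. For instance in $\widetilde{G}_2$, starting from $\widehat{\beta}$ and $\widehat{\delta-3\alpha-2\beta}$, the closure yields only $\widehat{\{\beta,-\alpha,-2\alpha-\beta,-3\alpha-\beta,-3\alpha-2\beta\}}$ --- five of the six roots of a positive system, with $\pm(\alpha+\beta)$ both absent --- and this set is already closed; the paper's Case (II) for $G_2$ must appeal to the half-plane fact to inject a sixth direction. Your step 3 is also unjustified: the second inversion of $x=s_1s_2\cdots$ is $s_1(\alpha_{s_2})$, which can perfectly well lie inside $\Psi^+$ rather than be $(-\alpha)_0$ for a simple root $\alpha$ of $\Psi^+$ (in $\widetilde{A}_2$, $x=s_\alpha s_\beta\cdots$ has second inversion $\alpha+\beta\in\Psi^+$). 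More fundamentally, $x$ and $y$ can spend arbitrarily long initial segments inside $\widehat{\Psi^+}$, so no inspection of a bounded prefix can produce the required root outside the half-plane spanned by $\Psi^+$; only the global hypothesis $A_\Gamma=\Phi$, channelled through the half-plane observation, guarantees its existence. To repair the argument you would need to establish and repeatedly apply that observation, at which point you essentially recover the paper's proof.
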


\begin{proof}
Again we begin with a few general observations. We
then make a case-by-case analysis.

(1) Since $x$ and
$y$ have trivial meet, there are two simple roots $\rho_0,
\gamma_0\in \Phi_x\bigcup \Phi_y$ where $\rho, \gamma\in \Phi$.

(2) There is no closed half-plane in $\mathbb{R}^2$ with 0 in its boundary containing all $\eta\in \Phi$ such that $\eta_0\in \Phi_x\bigcup \Phi_y$.

\emph{Proof of (2).} Assume that to the contrary such closed half-plane exists. Let this closed half-plane be defined by $\{v|(u,v)\geq 0\}$ where $u$ is a vector in $\mathbb{R}^2$.  Suppose that $wu, w\in W$ is contained in the closure of the fundamental chamber. Then $\Phi\cap \{v|(wu,v)\geq 0\}$ is either $\Phi^+$ (if no root is perpendicular to $wu$) or $\Phi^+_{\emptyset,\{\alpha\}}$ (if there exists a pair of roots ($\pm\alpha$)  perpendicular to $wu$) where $\alpha$ is a simple root of $\Phi^+$. Then $\Phi\cap \{v|(u,v)\geq 0\}$ is either $w^{-1}\Phi^+$ or $(w^{-1}\Phi^+)_{\emptyset,\{w^{-1}(\alpha)\}}$. So $A_{\Phi_x\cup \Phi_y}$ is contained in either $w^{-1}\Phi^+$ or $(w^{-1}\Phi^+)_{\emptyset,\{w^{-1}(\alpha)\}}$ by Lemma \ref{lem:startfrombottom}. Then $A_{\overline{\Phi_x\cup \Phi_y}}$ is also contained in either $w^{-1}\Phi^+$ or $(w^{-1}\Phi^+)_{\emptyset,\{w^{-1}(\alpha)\}}$ (since it is easy to see that $A_{\overline{S}}\subset \overline{A_S}$ for any $S\subset \widehat{\Phi}$) and hence cannot be the whole $\Phi$. This is a contradiction.

Combining the fact (1), (2) and Lemma \ref{lem:closedsetktoinfty} we have that

 (3) There exist $\widehat{\rho}, \widehat{\gamma}$ and $\widehat{\eta}, \rho, \gamma, \eta\in \Phi$ all completely in $\Gamma$ with the properties

  (a) $\rho_0, \gamma_0$ simple;

  (b) there is no closed half-plane in $\mathbb{R}^2$ with 0 in its boundary containing all $\rho, \gamma$ and $\eta$.

Now we carry out a case-by-case analysis.

$\textbf{A}_2:$

The two simple roots are denoted by $\alpha$ and $\beta.$
By the above fact (3), at least two of the three sets $\widehat{\alpha},\widehat{\beta},\widehat{-\alpha-\beta}$ are contained in $\Gamma.$
 Any permutation of the simple roots $\alpha,\beta,-\alpha-\beta+\delta$ induces an
isomorphism of the affine root system $\widehat{\Phi}$ while preserving the positive system $\widetilde \Phi^+$.   By making use of this
 symmetry of $\widetilde A_2$,  it is enough to prove this in the case where $\widehat{\alpha}$ and $\widehat{\beta}$ are in $\Gamma$.
 Since $\widehat{\alpha},\widehat{\beta}\subset \Gamma$, it follows that $\widehat{\alpha+\beta}\subset \Gamma$ (since $\alpha+\beta+k\delta=(\alpha)+(\beta+k\delta)$).
 The above fact (3) also forces that at least one of the three sets $\widehat{-\alpha},\widehat{-\beta}$, $\widehat{-\alpha-\beta}$ is contained in $\Gamma$.
If any of $\widehat{-\alpha}$ and $\widehat{-\beta}$ is in $\Gamma$,
then either $\Gamma\supset \widehat{\Phi^+_{\emptyset,\{\alpha\}}}$ or $\Gamma\supset \widehat{\Phi^+_{\emptyset,\{\beta\}}}$. If $\widehat{-\alpha-\beta}\in \Gamma$, then  the whole
$\widetilde{\Phi}^+$ is in $\Gamma$ (as all three simple roots of $\widehat{\Phi}$ are in $\Gamma$). So we see that the assertion holds
in this case.

$\textbf{B}_2:$

Suppose that $\beta$ is the long simple root and that $\alpha$ is the short
simple root. By the above fact (3), at least two of the three sets $\widehat{\alpha},\widehat{\beta}$, $\widehat{-2\alpha-\beta}$ are contained in $\Gamma.$

Case (I). Suppose that $\widehat{\beta}$ and $\widehat{\alpha}$ are both
contained in $\Gamma.$  As a consequence, $\widehat{\alpha+\beta}$ and $\widehat{2\alpha+\beta}$ are both contained in $\Gamma$ (since $\alpha+\beta+k\delta=(\alpha)+(\beta+k\delta), 2\alpha+\beta+k\delta=2(\alpha)+(\beta+k\delta)$).
Again by the above fact (3), one of the sets $\widehat{-\alpha},\widehat{-\beta},\widehat{-\alpha-\beta}$, $\widehat{-2\alpha-\beta}$ has to be contained in $\Gamma.$
If any of $\widehat{-\beta}$ and $\widehat{-\alpha}$ are contained in $\Gamma$, we are
done as $\widehat{\Phi^+_{\emptyset,\{\alpha\}}}$ or $\widehat{\Phi^+_{\emptyset,\{\beta\}}}$ is contained in $\Gamma$. Now
suppose that $-\beta-2\alpha+\delta\in \Gamma$, then
$\widetilde{\Phi}^+\subset \Gamma$ (as all three simple roots are in $\Gamma$). Finally suppose that
$\widehat{-\beta-\alpha}\subset \Gamma$. Since
$-\beta+k\delta=-\beta-\alpha+k\delta+\alpha\in \Gamma,$ we have that $\widehat{-\beta}\subset \Gamma$. So we see that the
assertion holds in this case.

Case (II). Suppose that $\widehat{-\beta-2\alpha}$ and $\widehat{\beta}$
are both contained in $\Gamma.$ As a consequence $\widehat{-\alpha}\subset \Gamma$ (as $-\alpha+k\delta=\frac{1}{2}(-\beta-2\alpha+k\delta)+\frac{1}{2}(\beta+k\delta)$).
By the above fact (3) at least one of the sets $\widehat{-\beta},\widehat{\alpha}$, $\widehat{2\alpha+\beta}$ has to be contained in $\Gamma$ (otherwise only $\widehat{\beta},\widehat{\alpha+\beta},\widehat{-\alpha},\widehat{-2\alpha-\beta}$ and $\widehat{-\alpha-\beta}$ may be completely contained in $\Gamma$ and the roots $\beta,\alpha+\beta,-\alpha,-2\alpha-\beta,-\alpha-\beta$ are contained in a closed half-plane with the origin in its boundary).

If $\widehat{-\beta}\subset \Gamma,$ then $\widehat{-\alpha-\beta}\subset \Gamma$ since $-\alpha-\beta+k\delta=\frac{1}{2}(-\beta+k\delta)+\frac{1}{2}(-2\alpha-\beta+k\delta)$.
Then $\widehat{\Psi^+_{\emptyset,\{\beta\}}}\subset \Gamma$ where $\Psi^+=\{\beta,-\alpha,-2\alpha-\beta,-\alpha-\beta\}$.

If $\widehat{2\alpha+\beta}\subset \Gamma$, then $\widehat{\alpha+\beta}\subset \Gamma$ since $\alpha+\beta+k\delta=\frac{1}{2}(2\alpha+\beta+2k\delta)+\frac{1}{2}\beta$.
Then $\widehat{\Psi^+_{\emptyset,\{2\alpha+\beta\}}}\subset \Gamma$ where $\Psi^+=\{\alpha+\beta,\beta,-\alpha,2\alpha+\beta\}.$

If $\widehat{\alpha}\in \Gamma$, then the
whole $\widetilde{\Phi}^+$ is in $\Gamma$ (as all three simple roots are in $\Gamma$). So we see that the assertion
holds in this case.

Case (III). Suppose that $\widehat{-\beta-2\alpha}$ and
$\widehat{\alpha}$ are both
contained in $\Gamma.$ A permutation of the simple roots $\beta,\delta-2\alpha-\beta$ induces an
isomorphism of the affine root system $\widetilde{\Phi}$ while preserving the positive system $\widetilde \Phi^+$.   By making use of this
 symmetry of $\widetilde B_2$,
  this case can be proved by using the same reasoning as in case (I).

$\textbf{G}_2$

The long simple root is denoted by $\beta$ and the short simple root is
denoted by $\alpha.$ By the above fact (3), at least two of the three sets $\widehat{\alpha},\widehat{\beta}$ and $\widehat{-3\alpha-2\beta}$ are contained in $\Gamma.$

Case (I). Suppose that $\widehat{\alpha}$ and $\widehat{\beta}$ are both
in $\Gamma.$ Then clearly $\widehat{\Phi^+}$ is contained in $\Gamma$ where $\Phi^+=\{\alpha,\beta,\alpha+\beta,2\alpha+\beta,3\alpha+\beta,3\alpha+2\beta\}$ is the standard positive system.
Again by the above fact (3), one of the sets $\widehat{-\alpha-\beta},\widehat{-2\alpha-\beta},\widehat{-3\alpha-\beta},\widehat{-3\alpha-2\beta},\widehat{-\beta}$ has to be contained in $\Gamma$ (since otherwise only $\widehat{\alpha},\widehat{\beta},\widehat{\alpha+\beta},\widehat{2\alpha+\beta},\widehat{3\alpha+\beta},\widehat{3\alpha+2\beta},\newline \widehat{-\alpha}$ may be completely contained in $\Gamma$, but the roots $\alpha,\beta,\alpha+\beta,2\alpha+\beta,3\alpha+\beta,3\alpha+2\beta,-\alpha$ are contained in a closed half-plane with the origin in its boundary).

  If $\widehat{-3\alpha-2\beta}\subset
\Gamma$, then the whole $\widetilde{\Phi}^+$ is in $\Gamma$ (as all three simple roots are in $\Gamma$).

Now
suppose that $\widehat{-3\alpha-\beta}\subset \Gamma$. Then
$-\beta+k\delta=(-3\alpha-\beta+k\delta)+3\alpha\in \Gamma.$ Therefore $\widehat{-\beta}\subset \Gamma$ and then $\widehat{\Phi^+_{\emptyset,\{\beta\}}}\subset \Gamma.$

 Now suppose that
$\widehat{-2\alpha-\beta}\subset \Gamma$. Then
$-\beta+k\delta=(-2\alpha-\beta+k\delta)+2\alpha\in \Gamma.$ Therefore $\widehat{-\beta}\subset \Gamma$  and then $\widehat{\Phi^+_{\emptyset,\{\beta\}}}\subset \Gamma.$

 Now suppose that
$\widehat{-\alpha-\beta}\subset \Gamma$. Then
$-\beta+k\delta=(-\alpha-\beta+k\delta)+\alpha\in \Gamma.$ Therefore $\widehat{-\beta}\subset \Gamma$  and then $\widehat{\Phi^+_{\emptyset,\{\beta\}}}\subset \Gamma.$

Now suppose that $\widehat{-\beta}\subset \Gamma$. Then $\Gamma\supset \widehat{\Phi^+_{\emptyset,\{\beta\}}}$.

Case (II). Suppose that $\widehat{-3\alpha-2\beta}$ and $\widehat{\beta}$
are both in $\Gamma.$
Note that $-\alpha+k\delta=\frac{1}{3}(-3\alpha-2\beta+3k\delta)+\frac{2}{3}\beta$, $-3\alpha-\beta+k\delta=(-3\alpha-2\beta+k\delta)+\beta$ and $-2\alpha-\beta+k\delta=\frac{2}{3}(-3\alpha-2\beta+k\delta)+\frac{1}{3}(\beta+k\delta)$. So the sets $\widehat{-\alpha},\widehat{-3\alpha-\beta}$ and $\widehat{-2\alpha-\beta}$ are all contained in $\Gamma.$

By the above fact (3) one of the sets $\widehat{\alpha},\widehat{3\alpha+\beta},\widehat{2\alpha+\beta},\widehat{-\beta},\widehat{-\alpha-\beta}$ has to be contained in $\Gamma$ (as otherwise only $\widehat{3\alpha+2\beta},\widehat{-3\alpha-2\beta},\widehat{\alpha+\beta},\widehat{\beta},\widehat{-\alpha},\widehat{-3\alpha-\beta},\widehat{-2\alpha-\beta}$ may be completely contained in $\Gamma$ but the roots $3\alpha+2\beta,-3\alpha-2\beta, \alpha+\beta, \beta, -\alpha, -3\alpha-\beta, -2\alpha-\beta$ are contained in a closed half-plane with the origin in its boundary).

Suppose that $\widehat{-\alpha-\beta}$ is contained in $\Gamma.$ The above fact (3) forces that one of the sets $\widehat{\alpha},\widehat{3\alpha+\beta},\widehat{2\alpha+\beta},\widehat{3\alpha+2\beta},\widehat{-\beta}$ must be contained in $\Gamma$ (as otherwise only  $\widehat{\alpha+\beta},\widehat{-\alpha-\beta},\widehat{\beta},\widehat{-\alpha},\widehat{-3\alpha-2\beta},\widehat{-3\alpha-\beta},\widehat{-2\alpha-\beta}$ may be completely contained in $\Gamma$ but the roots $\alpha+\beta, -\alpha-\beta, \beta, -\alpha, -3\alpha-2\beta, -3\alpha-\beta, -2\alpha-\beta$ are contained in a closed half-plane with the origin in its boundary). If $\widehat{\{\alpha,-\alpha-\beta\}}\subset \Gamma$, then we have that $\Gamma=\widehat{\Phi}$ as all three simple roots are in $\Gamma$. If $\widehat{\{3\alpha+\beta,-\alpha-\beta\}}\subset \Gamma$, then $\alpha+\beta+k\delta=\frac{1}{3}(3\alpha+\beta+3k\delta)+\frac{2}{3}\beta$. Consequently $\Gamma\supset \widehat{\Psi^+_{\emptyset,\{\alpha+\beta\}}}$ where $\Psi^+=\{\alpha+\beta,\beta,-\alpha,-3\alpha-\beta,-2\alpha-\beta,-3\alpha-2\beta\}$. If $\widehat{\{2\alpha+\beta,-\alpha-\beta\}}\subset \Gamma$, $\alpha+\beta+k\delta=\frac{1}{2}(2\alpha+\beta+2k\delta)+\frac{1}{2}\beta$. Consequently $\Gamma\supset \widehat{\Psi^+_{\emptyset,\{\alpha+\beta\}}}$ where $\Psi^+=\{\alpha+\beta,\beta,-\alpha,-3\alpha-\beta,-2\alpha-\beta,-3\alpha-2\beta\}$.
If $\widehat{\{3\alpha+2\beta,-\alpha-\beta\}}\subset \Gamma$, $\alpha+\beta+k\delta=\frac{1}{3}(3\alpha+2\beta+3k\delta)+\frac{1}{3}\beta$. Consequently $\Gamma\supset \widehat{\Psi^+_{\emptyset,\{\alpha+\beta\}}}$ where $\Psi^+=\{\alpha+\beta,\beta,-\alpha,-3\alpha-\beta,-2\alpha-\beta,-3\alpha-2\beta\}$.
If $\widehat{\{-\beta,-\alpha-\beta\}}\subset \Gamma$, then $\Gamma\supset \widehat{\Psi^+_{\emptyset,\{\beta\}}}$ where $\Psi^+=\{-\alpha-\beta,\beta,-\alpha,-3\alpha-\beta,-2\alpha-\beta,-3\alpha-2\beta\}$.

Suppose that $\widehat{\alpha}$ is contained in $\Gamma$. Then $\Gamma=\widehat{\Phi}$ as all three simple roots are in $\Gamma.$

Suppose that $\widehat{3\alpha+\beta}\subset \Gamma.$ Then $\alpha+\beta+k\delta=\frac13(3\alpha+\beta+3k\delta)+\frac23\beta$. Hence $\widehat{\alpha+\beta}\subset \Gamma.$
We also have that $3\alpha+2\beta+k\delta=(3\alpha+\beta+k\delta)+
\beta$. So $\widehat{3\alpha+2\beta}\subset \Gamma$
Consequently $\Gamma\supset \widehat{\Psi^+_{\emptyset, \{3\alpha+2\beta\}}}$ where $\Psi^+=\{3\alpha+2\beta, \alpha+\beta, \beta, -\alpha, -3\alpha-\beta, -2\alpha-\beta\}$.

Suppose that $\widehat{2\alpha+\beta}\subset \Gamma$. Then $\alpha+\beta+k\delta=\frac12(2\alpha+\beta+2k\delta)+\frac12\beta, 3\alpha+2\beta+k\delta=(\alpha+\beta+k\delta)+(2\alpha+\beta)$. Hence $\widehat{\alpha+\beta}\subset \Gamma$ and $\widehat{3\alpha+2\beta}\subset \Gamma.$ Consequently $\Gamma\supset \widehat{\Psi^+_{\emptyset, \{3\alpha+2\beta\}}}$ where $\Psi^+=\{3\alpha+2\beta, \alpha+\beta, \beta, -\alpha, -3\alpha-\beta, -2\alpha-\beta\}$.

Suppose that $\widehat{-\beta}\subset \Gamma.$ Then $-\alpha-\beta+k\delta=\frac{1}{3}(-3\alpha-2\beta+k\delta)+\frac13(-\beta+2k\delta)$.
Consequently $\widehat{\Psi^+_{\emptyset,\{\beta\}}}\subset \Gamma$ where $\Psi^+=\{\beta,-\alpha,-3\alpha-\beta,-2\alpha-\beta,-3\alpha-2\beta,-\alpha-\beta\}$.

Case (III). Suppose that $\widehat{-3\alpha-2\beta}$ and
$\widehat{\alpha}$ are both in $\Gamma.$ Note that $-\beta+k\delta=\frac12(-3\alpha-2\beta+2k\delta)+\frac32\alpha$ and $-\alpha-\beta+k\delta=\frac12(-3\alpha-2\beta+2k\delta)+\frac12\alpha$. Therefore $\widehat{-\alpha-\beta}$ and $\widehat{-\beta}$ are both contained $\Gamma.$

Then the above fact (3) forces that one of the sets $\widehat{\alpha+\beta},\widehat{\beta},\widehat{3\alpha+2\beta}, \widehat{2\alpha+\beta},\widehat{3\alpha+\beta}$ must be contained in $\Gamma$ (since otherwise only $\widehat{\alpha}, \widehat{-\beta},\widehat{-\alpha-\beta}, \widehat{-3\alpha-2\beta}, \widehat{-3\alpha-\beta}, \newline \widehat{-2\alpha-\beta},\widehat{-\alpha}$ may be completely contained in $\Gamma$ but the roots $\alpha,-\alpha,-\beta, -\alpha-\beta, -3\alpha-2\beta, -3\alpha-\beta, -2\alpha-\beta$ are contained in a closed half-plane with the origin in its boundary).

Suppose that the set $\widehat{\beta}$ is contained in $\Gamma$. Then $\Gamma=\widehat{\Phi}$ since all three simple roots are in $\Gamma$.

Suppose that the set $\widehat{\alpha+\beta}$ is contained in $\Gamma$. Note that $3\alpha+2\beta+k\delta=2(\alpha+\beta)+\alpha+k\delta$, $2\alpha+\beta+k\delta=(\alpha+\beta)+(\alpha+k\delta)$ and $3\alpha+\beta=(\alpha+\beta+k\delta)+2\alpha$. Hence $\widehat{3\alpha+2\beta}, \widehat{3\alpha+\beta}$ and $\widehat{2\alpha+\beta}$ are all contained in $\Gamma$. Then $\Gamma\supset \widehat{\Psi^+_{\emptyset,\{3\alpha+2\beta\}}}$ where $\Psi^+=\{3\alpha+2\beta, 2\alpha+\beta, 3\alpha+\beta, \alpha, -\beta, -\alpha-\beta\}$.

Suppose that the set $\widehat{3\alpha+2\beta}$ is contained in $\Gamma$. Note that $3\alpha+\beta+k\delta=\frac12(3\alpha+2\beta+2k\delta)+\frac{3}{2}\alpha$ and $2\alpha+\beta+k\delta=\frac12(3\alpha+2\beta+2k\delta)+\frac{1}{2}\alpha$. Hence $\widehat{3\alpha+2\beta}, \widehat{3\alpha+\beta}$ and $\widehat{2\alpha+\beta}$ are all contained in $\Gamma$. Then $\Gamma\supset \widehat{\Psi^+_{\emptyset,\{3\alpha+2\beta\}}}$ where $\Psi^+=\{3\alpha+2\beta, 2\alpha+\beta, 3\alpha+\beta, \alpha, -\beta, -\alpha-\beta\}$.

Suppose that the set $\widehat{2\alpha+\beta}$ is contained in $\Gamma$. Note that $3\alpha+\beta+k\delta=(2\alpha+\beta+k\delta)+\alpha$ and thus $\widehat{3\alpha+\beta}\subset \Gamma.$ The above fact (3) again forces that one of the sets $\widehat{\alpha+\beta},\widehat{\beta},\widehat{-\alpha},\widehat{-3\alpha-\beta},\widehat{-2\alpha-\beta}$ has to be contained in $\Gamma$ (since otherwise only  $\widehat{3\alpha+2\beta}, \widehat{2\alpha+\beta}, \widehat{3\alpha+\beta}, \widehat{\alpha}, \widehat{-\beta}, \widehat{-\alpha-\beta}, \widehat{-3\alpha-2\beta}$ may be completely contained in $\Gamma$ but the roots $3\alpha+2\beta, 2\alpha+\beta, 3\alpha+\beta, \alpha, -\beta, -\alpha-\beta, -3\alpha-2\beta$ are contained in a closed half-plane with the origin in its boundary). If $\widehat{\beta}$ or $\widehat{\alpha+\beta}$ is contained in $\Gamma$, then we are reduced to the case treated above. Now assume that $\widehat{\{2\alpha+\beta,-\alpha\}}$ is contained in $\Gamma$. Note that $-2\alpha-\beta+k\delta=\frac{1}{2}(-3\alpha-2\beta+k\delta)+\frac{1}{2}(-\alpha+k\delta)$.
Therefore $\Gamma\supset \widehat{\Psi^+_{\emptyset,\{2\alpha+\beta\}}}$ where $\Psi^+=\{2\alpha+\beta, 3\alpha+\beta, \alpha,-\alpha-\beta, -\beta, -3\alpha-2\beta\}$.
Assume that $\widehat{\{2\alpha+\beta,-2\alpha-\beta\}}\subset \Gamma$. Then $\Gamma\supset \widehat{\Psi^+_{\emptyset,\{2\alpha+\beta\}}}$ where $\Psi^+=\{2\alpha+\beta, 3\alpha+\beta, \alpha,-\alpha-\beta, -\beta, -3\alpha-2\beta\}$. Assume that $\widehat{\{2\alpha+\beta,-3\alpha-\beta\}}\subset \Gamma$. Note that $-2\alpha-\beta+k\delta=\frac13(-3\alpha-2\beta+2k\delta)+\frac13(-3\alpha-\beta+k\delta)$.  Then $\Gamma\supset \widehat{\Psi^+_{\emptyset,\{2\alpha+\beta\}}}$ where $\Psi^+=\{2\alpha+\beta, 3\alpha+\beta, \alpha,-\alpha-\beta, -\beta, -3\alpha-2\beta\}$.

Suppose that the set $\widehat{3\alpha+\beta}$ is contained in $\Gamma$. The above fact (3) again forces that one of the sets $\widehat{\alpha+\beta},\widehat{\beta},\widehat{-\alpha},\widehat{2\alpha+\beta},\widehat{3\alpha+2\beta}$ has to be contained in $\Gamma$ (since otherwise only  $\widehat{3\alpha+\beta}, \widehat{-3\alpha-\beta}, \widehat{-3\alpha-2\beta}, \widehat{\alpha}, \widehat{-\beta}, \widehat{-\alpha-\beta}, \widehat{-2\alpha-\beta}$ may be completely contained in $\Gamma$ but $3\alpha+\beta, -3\alpha-\beta, -3\alpha-2\beta, \alpha, -\beta, -\alpha-\beta, -2\alpha-\beta$ are contained in a closed half-plane with the origin in its boundary). Based on the above discussion we only need to consider the case where $\widehat{\{-\alpha,3\alpha+\beta\}}\subset \Gamma$.

Now if $\beta$ is contained in $\Gamma$, then $\Gamma=\widehat{\Phi}$ since all three simple roots are in $\Gamma$.

If $\beta\not\in \Gamma$, then we must have $x=s_{\alpha}\cdots$ and $y=s_{\delta-3\alpha-2\beta}\cdots$. Since $\overline{\Phi_x\cup \Phi_y}$ is infinite these two elements cannot be $s_{\alpha}$ and $s_{\delta-3\alpha-2\beta}$. Note that $s_{\alpha}$ and $s_{\delta-3\alpha-2\beta}$ commute and $x\wedge y=e.$ Then we must have $s_{\alpha}s_{\beta}\cdots$ or $s_{\delta-3\alpha-2\beta}s_{\beta}\cdots$. If one of the element is $s_{\delta-3\alpha-2\beta}s_{\beta}\cdots$, then $-3\alpha-\beta+\delta\in \Gamma$. Consequently $\widehat{-3\alpha-\beta}$ is contained in $\Gamma$ thanks to Lemma \ref{lem:closedsetktoinfty}.
Note that $-2\alpha-\beta+k\delta=\frac13(-3\alpha-2\beta+2k\delta)+\frac13(-3\alpha-\beta+k\delta)$. Therefore $\widehat{-2\alpha-\beta}\subset \Gamma$.
So we have that $\Gamma\supset \widehat{\Psi^+_{\emptyset,\{3\alpha+\beta\}}}$ where $\Psi^+=\{3\alpha+\beta,\alpha,-\beta,-\alpha-\beta,-3\alpha-2\beta,-2\alpha-\beta\}$.
Finally assume that one of the element is $s_{\delta-3\alpha-2\beta}$. Note that $A_{\Phi_x\cup \Phi_y}$ cannot be contained in $\Xi^+_{\emptyset,\{3\alpha+\beta\}}$ where $\Xi^+=\{3\alpha+\beta, \alpha, -\beta, -\alpha-\beta, -3\alpha-2\beta, -2\alpha-\beta\}$ because otherwise $A_{\overline{\Phi_x\cup \Phi_y}}\subset \overline{A_{\Phi_x\cup \Phi_y}}=\Xi^+_{\emptyset,\{3\alpha+\beta\}}$. Therefore at least one of roots  $-\alpha, \beta, \alpha+\beta, 3\alpha+2\beta, 2\alpha+\beta$ must be contained in $A_{\Phi_x\cup \Phi_y}$. Since one of the element, say $x$, is $s_{-3\alpha-2\beta+\delta}$, this implies that at least one of the roots $-\alpha, \beta, \alpha+\beta, 3\alpha+2\beta, 2\alpha+\beta$ must be contained in $A_{\Phi_y}$. But the fact that $y=s_{\alpha}\cdots$ forces that $-\alpha\not\in A_{\Phi_y}$ thanks to Lemma \ref{closedaffine}. So at least one of the roots $\beta, \alpha+\beta, 3\alpha+2\beta, 2\alpha+\beta$ is contained in $A_{\Phi_y}$. This implies that at least one of the sets $\widehat{\beta},\widehat{\alpha+\beta},\widehat{3\alpha+\beta},\widehat{2\alpha+\beta}$ is contained in $\Gamma$ thanks to Lemma \ref{lem:closedsetktoinfty}. Therefore we are reduced to the cases that have been treated above.
\end{proof}

\begin{lemma}\label{othogoalboundedlemma}
Let $u,v\in \widetilde{W}$ be such that $u,v$ do not have an upper bound in
$\widetilde{W}$ and $u\wedge v=e$.  Then there exists a positive system $\Psi^+$ such that $\widehat{\Psi^+}\supset B$ for any biclosed set $B$ with $B\cap (\Phi_u\cup \Phi_v)=\emptyset.$
\end{lemma}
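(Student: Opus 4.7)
The plan is to find a positive system $\Lambda^+$ of $\Phi$ and a simple root $\alpha$ of $\Lambda^+$ such that $\Gamma := \overline{\Phi_u \cup \Phi_v}$ contains $\widehat{\Lambda^+_{\emptyset,\{\alpha\}}}$, and then to take $\Psi^+ := -\Lambda^+$. This will do the job because, for any biclosed $B$ with $B \cap (\Phi_u \cup \Phi_v) = \emptyset$, the closed complement $\widehat{\Phi} \setminus B$ contains $\Phi_u \cup \Phi_v$, hence $\Gamma$, hence $\widehat{\Lambda^+_{\emptyset,\{\alpha\}}}$; therefore $B$ lies in $\widehat{\Phi} \setminus \widehat{\Lambda^+_{\emptyset,\{\alpha\}}} = \widehat{(-\Lambda^+) \setminus \{-\alpha\}} \subset \widehat{-\Lambda^+} = \widehat{\Psi^+}$.

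To locate $\Lambda^+$ and $\alpha$, I would first verify that $A_\Gamma \cap (-A_\Gamma) \neq \emptyset$: otherwise $A_\Gamma \subset \Lambda^+$ for some positive system of $\Phi$, whence $\Gamma \subset \widehat{\Lambda^+}$, which by Corollary \ref{cor:biclosedsetofinflongword} equals $\Phi_w$ for some $w \in \overline{\widetilde{W}}$, bounding $\{u, v\}$ in $\overline{\widetilde{W}}$ and, via Lemma \ref{lem:boundingrp}, in $\widetilde{W}$, against hypothesis. Picking $\alpha \in A_\Gamma \cap (-A_\Gamma)$, the closedness of $\Gamma$ applied to $(c+1)(\alpha+n\delta) + c(-\alpha+m\delta) = \alpha + (n + c(n+m))\delta$ (for suitable $c \geq 0$ producing an integral $\delta$-coefficient) yields $\alpha \in I_\Gamma$, and the analogous relation yields $-\alpha \in I_\Gamma$; a similar three-term combination extending by any $\beta + p\delta \in \Gamma$ then gives $I_\Gamma = A_\Gamma$. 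Since $\Phi$ has rank two, a direct inspection in each of types $A_2$, $B_2$, $G_2$ classifies the $\mathbb{Z}$-closed subsets of $\Phi$ containing $\pm\alpha$ as (besides the small set $\{\pm\alpha\}$) precisely the sets $\Lambda^+_{\emptyset,\{\alpha\}}$ with $\alpha$ simple in $\Lambda^+$ together with $\Phi$ itself; in those two cases Lemmas \ref{lem:orthocaseone} and \ref{lem:orthocasetwo}, applied with $x := u$ and $y := v$, give $\Gamma \supset \widehat{\Lambda^+_{\emptyset,\{\alpha\}}}$.

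The main technical obstacle is ruling out the small case $I_\Gamma = \{\pm\alpha\}$, in which $\Phi_u \cup \Phi_v \subset \widehat{\alpha} \cup \widehat{-\alpha}$. Here I would show that any $x \in \widetilde{W}$ with $\Phi_x \subset \widehat{\alpha} \cup \widehat{-\alpha}$ is either $e$ or a single simple reflection of $\widetilde{W}$: writing $x$ by a reduced expression $s_{i_1} s_{i_2} \cdots$, the first inversion $\alpha_{i_1}$ must be a simple affine root of $\widetilde{W}$ lying in $\widehat{\alpha} \cup \widehat{-\alpha}$, which pins $s_{i_1}$ down to a unique option $s$; a short case-by-case inspection in $\widetilde{A}_2$, $\widetilde{B}_2$, $\widetilde{G}_2$ then shows that the second inversion $s(\alpha_{i_2})$ always escapes $\widehat{\alpha} \cup \widehat{-\alpha}$, whatever simple reflection $s_{i_2}$ is chosen. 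Consequently $\{u, v\} \subset \{e, s\}$, so $s$ bounds $\{u, v\}$ in $\widetilde{W}$, contradicting the no-upper-bound hypothesis. This case check is the main technical content of the proof.
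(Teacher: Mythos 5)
Your overall architecture matches the paper's: show $\Gamma:=\overline{\Phi_u\cup\Phi_v}$ contains some $\widehat{\Lambda^+_{\emptyset,\{\alpha\}}}$, invoke Lemmas \ref{lem:orthocaseone} and \ref{lem:orthocasetwo}, and pass to the complement (your identity $\widehat{\Phi}\setminus\widehat{\Lambda^+_{\emptyset,\{\alpha\}}}=\widehat{(-\Lambda^+)_{\{-\alpha\},\emptyset}}\subset\widehat{-\Lambda^+}$ is exactly the paper's concluding step, and your treatment of the degenerate case $\Phi_u\cup\Phi_v\subset\widehat{\alpha}\cup\widehat{-\alpha}$ is a workable variant of the paper's simple-root argument). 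However, there is a genuine gap precisely where the paper does its heaviest work. Your claim that ``a similar three-term combination'' yields $I_\Gamma=A_\Gamma$ does not stand as written: the closure operator combines two roots at a time and every intermediate sum must itself be a root, but for $\beta=2\alpha_1+\alpha_2$ and $\alpha=\alpha_2$ in $B_2$, neither $(\alpha+n\delta)+(-\alpha+m\delta)$ nor $(\beta+p\delta)+c(\pm\alpha+n\delta)$ with $c>0$ is ever a root, so the three-term sum cannot be decomposed into valid closure steps. One must instead route through intermediate roots (here $\alpha_1=\tfrac12\beta+\tfrac12(-\alpha_2)$, with a parity condition on the $\delta$-coefficients), and verifying that such routes always exist --- including the delicate subcases $(\beta,\alpha)=0$ and $\beta\in\Phi^-$ --- is the content of the paper's half-plane argument, which occupies more than a page of case analysis in $A_2$, $B_2$, $G_2$.

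Second, your classification of the $\mathbb{Z}$-closed subsets of $\Phi$ containing $\pm\alpha$ is false as stated: in $B_2$ the four long roots form a $\mathbb{Z}$-closed set containing $\pm\alpha_2$ (no sum of two long roots is a root), and it is neither $\{\pm\alpha_2\}$, nor of the form $\Lambda^+_{\emptyset,\{\alpha_2\}}$, nor $\Phi$; the six long roots of $G_2$ give a similar counterexample. What is true is a classification of subsets closed under the real-cone operation $k_1\gamma_1+k_2\gamma_2\in\Phi$, $k_i\in\mathbb{R}_{\geq0}$ (which excludes the long-root subsystems via $\tfrac12(2\alpha_1+\alpha_2)+\tfrac12(-\alpha_2)=\alpha_1$), but establishing that $I_\Gamma$ enjoys this stronger closure is again part of the missing argument above. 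So the proof is not complete: the two sentences replacing the paper's half-plane analysis are where the real difficulty lives, and one of them asserts something incorrect.
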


\begin{proof}
By Lemma \ref{lem:asetzclosed} $A:=A_{\overline{\Phi_u\cup
\Phi_v}}$ is $\mathbb{Z}-$closed. If $A\cap -A=\emptyset$, by
Lemma \ref{lem:zclosedlemma}, $A\subset \Omega^+$ where $\Omega^+$
is a positive system in $\Phi$. This implies that $\overline{\Phi_u\cup
\Phi_v}\subset \widehat{\Omega^+}$. So $u$ and $v$ are both less
than or equal to an infinite reduced word by Theorem \ref{thm:bijection}, and
consequently less than or equal to a word in $\widetilde{W}$, contradicting
the assumption. This shows that there exists $\alpha\in \Phi$ such
that $\alpha$ and $-\alpha$ are both in $A$. By the closedness of
$\overline{\Phi_u\cup \Phi_v}$ and the calculation:
$$2(\alpha+m\delta)+(-\alpha+n\delta)=\alpha+(2m+n)\delta,$$
$$3(\alpha+m\delta)+2(-\alpha+n\delta)=\alpha+(3m+2n)\delta,$$
$$\dots,$$
 an infinite $\delta$ chain
through $\alpha$ and an infinite $\delta$ chain through $-\alpha$
are both in $\overline{\Phi_u\cup \Phi_v}$, i.e.
$\alpha,-\alpha\in I_{\overline{\Phi_u\cup \Phi_v}}$.

Now we claim that there must be at least one root $\beta$ such that $\beta\neq \alpha$ and
$\beta\neq -\alpha$ and $\beta\in A_{\Phi_u\cup \Phi_v}$. If not, $A_{\Phi_u\cup \Phi_v}\subset \{\alpha,-\alpha\}$.
The containment cannot be proper since the closure of a subset of $\widehat{\alpha}$ (resp. $\widehat{-\alpha}$) is again contained in $\widehat{\alpha}$ (resp. $\widehat{-\alpha}$). Hence $A_{\Phi_u\cup \Phi_v}=\{\alpha,-\alpha\}$.
We note that $A_{\Phi_u}$ (resp. $A_{\Phi_v}$) cannot be $\{-\alpha,\alpha\}$ because of Lemma \ref{closedaffine}.
Therefore we conclude that one of $A_{\Phi_u}$ and $A_{\Phi_v}$ is $\alpha$ and the other is $-\alpha$. But that is not possible because either none of the roots in $\widehat{-\alpha}$ is simple in $\widehat{\Phi}$ or none of the roots in $\widehat{\alpha}$ is simple in $\widehat{\Phi}$. Therefore there exists $\beta\in A_{\Phi_u\cup \Phi_v}, \beta\neq \pm\alpha$. By Lemma \ref{lem:startfrombottom}, $\beta_0\in \Phi_u\cup \Phi_v.$

Then $\alpha,-\alpha,\beta$ determines a closed half-plane $H$ with 0 in its boundary in $V=\mathbb{R}^2$ (i.e. take a vector $x$ in $\mathbb{R}^2$ such that $x\perp \alpha$ and $(x,\beta)>0$ and the closed half-plane is $\{y|(x,y)\geq 0\}$).
We shall show that $\Phi\cap H\subset I_{\overline{\Phi_u\cup
\Phi_v}}$. First we show that there exists a root $\gamma$ other than
$\alpha,-\alpha,\beta$ in this closed half-plane  such that $\gamma+p\delta\in \overline{\Phi_u\cup \Phi_v}$ for  sufficiently large $p$.

To do this first we assume that $\beta\in \Phi^+$. In this case $\beta\in \Phi_u\cup \Phi_v.$ Then there must exist some $\gamma$ other than
$\alpha,-\alpha,\beta$ in this closed half-plane (this is clear by
inspecting the graphs of rank 2 irreducible crystallographic root
systems).
Without loss of generality we assume that
$\gamma=m\alpha+n\beta$ where $m,n\in \mathbb{Q}_{>0}$. For
sufficiently large $k$, $\alpha+k\delta\in \overline{\Phi_u\cup
\Phi_v}$ thanks to Lemma \ref{lem:closedsetktoinfty}.  Then there are infinitely many $k$ such that
$\alpha+k\delta\in \overline{\Phi_u\cup \Phi_v}$ and $km\in
\mathbb{Z}_{>0}$. Therefore for such $k$, $\gamma+mk\delta=m(\alpha+k\delta)+n\beta.$ This shows that $\gamma\in I_{\overline{\Phi_u\cup
\Phi_v}}.$ By Lemma \ref{lem:closedsetktoinfty}, $\gamma+p\delta\in \overline{\Phi_u\cup \Phi_v}$ for sufficiently large $p$.

Now we assume that $\beta\in \Phi^-.$ Then $\beta+\delta\in \Phi_u\cup \Phi_v.$ We first treat the case where $(\beta,\alpha)\neq 0$. Then either $(\beta,\alpha)<0$ or $(\beta,-\alpha)<0$. Therefore either $\beta+\alpha$ is a root or $\beta-\alpha$ is a root by Chapter VI \S 1.3 Theorem 1 of \cite{Bourbaki}. Without loss of generality we assume that the former is the case. Let $\gamma=\beta+\alpha.$ For
sufficiently large $k$, $\alpha+k\delta\in \overline{\Phi_u\cup
\Phi_v}$ thanks to Lemma \ref{lem:closedsetktoinfty}. Then $\gamma+(k+1)\delta=\beta+\delta+(\alpha+k\delta)$. So we are done with this case. Now we treat the case where $(\beta,\alpha)=0$.
We show that we can always find some $\gamma$ such that $\gamma=\alpha'+\beta$ or $\gamma=\frac12\alpha'+\frac12\beta$ where $\alpha'$ can be $\alpha$ or $-\alpha$.
In type $A_2$ such a situation never happens. Suppose that $\Phi$ is of type $B_2.$
Denote the short simple root by $\alpha_1$ and the long simple root by $\alpha_2.$
If $\{\alpha,\beta,-\alpha\}=\{\alpha_1+\alpha_2,-\alpha_1,-\alpha_1-\alpha_2\}$, let $\gamma=\alpha_2=-\alpha_1+(\alpha_1+\alpha_2)$. If $\{\alpha,\beta,-\alpha\}=\{\alpha_2,-2\alpha_1-\alpha_2,-\alpha_2\}$, let $\gamma=-\alpha_1=\frac{1}{2}(-2\alpha_1-\alpha_2)+\frac12\alpha_2.$
If $\{\alpha,\beta,-\alpha\}=\{\alpha_1,-\alpha_1-\alpha_2,-\alpha_1\}$,
let $\gamma=-\alpha_2=-\alpha_1-\alpha_2+\alpha_1$. If $\{\alpha,\beta,-\alpha\}=\{2\alpha_1+\alpha_2,-\alpha_2,-2\alpha_1-\alpha_2\}$, let
$\gamma=\alpha_1=\frac12(-\alpha_2)+\frac12(2\alpha_1+\alpha_2)$. Suppose that $\Phi$ is of type $G_2$.
Denote the short simple root by $\alpha_1$ and the long simple root by $\alpha_2.$ If $\{\alpha,\beta,-\alpha\}=\{3\alpha_1+2\alpha_2, -\alpha_1,-3\alpha_1-2\alpha_2\}$, $\gamma$ can be chosen to be $\alpha_1+\alpha_2=\frac12(-\alpha_1)+\frac12(3\alpha_1+2\alpha_2)$. If $\{\alpha,\beta,-\alpha\}=\{\alpha_1+\alpha_2, -3\alpha_1-\alpha_2,-\alpha_1-\alpha_2\}$, $\gamma$ can be chosen to be $-\alpha_1=\frac12(-3\alpha_1-\alpha_2)+\frac12(\alpha_1+\alpha_2)$. If $\{\alpha,\beta,-\alpha\}=\{\alpha_2, -2\alpha_1-\alpha_2,-\alpha_2\}$, $\gamma$ can be chosen to be $-\alpha_1=\frac12(-2\alpha_1-\alpha_2)+\frac12(\alpha_2)$.  If $\{\alpha,\beta,-\alpha\}=\{\alpha_1, -3\alpha_1-2\alpha_2,-\alpha_1\}$, $\gamma$ can be chosen to be $-\alpha_1-\alpha_2=\frac12(-3\alpha_1-2\alpha_2)+\frac12(\alpha_1)$.
If $\{\alpha,\beta,-\alpha\}=\{3\alpha_1+\alpha_2, -\alpha_1-\alpha_2,-3\alpha_1-\alpha_2\}$, $\gamma$ can be chosen to be $\alpha_1=\frac12(3\alpha_1+\alpha_2)+\frac12(-\alpha_1-\alpha_2)$.
If $\{\alpha,\beta,-\alpha\}=\{2\alpha_1+\alpha_2, -\alpha_2,-2\alpha_1-\alpha_2\}$, $\gamma$ can be chosen to be $\alpha_1=\frac12(2\alpha_1+\alpha_2)+\frac12(-\alpha_2)$. Now assume that $\gamma=\alpha'+\beta$. Since for sufficiently large $k$, $\alpha'+k\delta\in \overline{\Phi_u\cup
\Phi_v}$ thanks to Lemma \ref{lem:closedsetktoinfty}. One has that $\gamma+(k+1)\delta=\beta+\delta+(\alpha'+k\delta)$. Assume that
$\gamma=\frac12\alpha'+\frac12\beta$. Again for sufficiently large $k$, $\alpha'+k\delta\in \overline{\Phi_u\cup
\Phi_v}$. Take $k$ to be odd. Then $\gamma+\frac{k+1}{2}\delta=\frac12(\beta+\delta)+\frac12(\alpha'+k\delta)$.
By Lemma \ref{lem:closedsetktoinfty}, $\gamma+p\delta\in \overline{\Phi_u\cup \Phi_v}$ for sufficiently large $p$.

Therefore we have proved the existence of such $\gamma.$
In any case such $\gamma$ equals $m\alpha+n\beta$ or $m(-\alpha)+n\beta$ where $m,n\in \mathbb{Q}_{>0}$. Without loss of generality assume that the former is the case.
So $\beta=\frac{1}{n}\gamma+\frac{m}{n}(-\alpha)$. There are infinitely many $k,k'$ such that
$-\alpha+k\delta, \gamma+k'\delta\in \overline{\Phi_u\cup \Phi_v}$ and $k\frac{m}{n},\frac{k'}{n}\in
\mathbb{Z}_{>0}$. Therefore for such $k,k'$, $\beta+(k\frac{m}{n}+\frac{k'}{n})\delta=\frac{1}{n}(\gamma+k'\delta)+\frac{m}{n}(-\alpha+k\delta).$ This shows that $\beta\in I_{\overline{\Phi_u\cup
\Phi_v}}.$ By Lemma \ref{lem:closedsetktoinfty}, $\beta+p\delta\in \overline{\Phi_u\cup \Phi_v}$ for sufficiently large $p$.
Now applying the same
reasoning to other roots (if they exist) in this closed half plane, we see that for all roots $\gamma$ contained in this closed half-plane $\gamma\in I_{\overline{\Phi_u\cup
\Phi_v}}$.

Now there are two possibilities: $A=\Phi\cap H$ and $A\varsupsetneqq (\Phi\cap H)$. If $A=\Phi\cap H$, then the previous paragraph shows that
$A=A_{\overline{\Phi_u\cup \Phi_v}}=I_{\overline{\Phi_u\cup \Phi_v}}=\Phi\cap H$. If $A\varsupsetneqq (\Phi\cap H)$, applying the the same reasoning as in the previous paragraph to both $H$ and the closed half plane with 0 in its boundary determined by $\alpha,-\alpha$ and a root in $A\backslash(\Phi\cap H)$ shows that $A=A_{\overline{\Phi_u\cup \Phi_v}}=I_{\overline{\Phi_u\cup \Phi_v}}=\Phi.$

Now we treat these two cases separately:

Case (I). Assume that $A=A_{\overline{\Phi_u\cup \Phi_v}}=I_{\overline{\Phi_u\cup \Phi_v}}=\Phi\cap H$.
One notes that $\Phi\cap H=\Psi^+_{\emptyset,\{\alpha\}}$ for some positive system $\Psi^+$ and a simple root $\alpha$ in $\Psi^+$. (To see this, suppose that $H=\{y|(x,y)\geq 0\}$ and that $wx$ is in the closure of fundamental chamber for some $w\in W$. Then $\Psi^+=w^{-1}\Phi^+$ where $\Phi^+$ is the standard positive system.)
Therefore $\overline{\Phi_u\cup
\Phi_v}=\widehat{\Psi^+_{\emptyset,\{\alpha\}}}$ by  Lemma \ref{lem:orthocaseone}.
By assumption $B\cap \Phi_u=\emptyset$ and $B\cap
\Phi_v=\emptyset$. Then $B\cap \overline{\Phi_u\cup
\Phi_v}=\emptyset$ since $B$ is coclosed. So we conclude that $B\subset
\widehat{\Psi^+_{\emptyset,\{\alpha\}}}^c=\widehat{(-\Psi^+)_{\{-\alpha\},\emptyset}}$.

Case (II). Assume that $A=A_{\overline{\Phi_u\cup \Phi_v}}=I_{\overline{\Phi_u\cup \Phi_v}}=\Phi.$ Then
Lemma \ref{lem:orthocasetwo} asserts that $\overline{\Phi_u\cup
\Phi_y}\supset \widehat{\Psi^+_{\emptyset,\{\alpha\}}}$ for some
positive system $\Psi^+$ in $\Phi$ and some simple root $\alpha$ of
$\Psi^+$. By assumption $B\cap \Phi_u=\emptyset$ and $B\cap
\Phi_v=\emptyset$. Then $B\cap \overline{\Phi_u\cup
\Phi_v}=\emptyset$ since $B$ is coclosed. So we conclude that $B\subset
\widehat{\Psi^+_{\emptyset,\{\alpha\}}}^c=\widehat{(-\Psi^+)_{\{-\alpha\},\emptyset}}$.
\end{proof}

\begin{corollary}\label{cor:orthofinal}
Suppose that $u,v\in \overline{\widetilde{W}}$ do not admit a join in
$\overline{\widetilde{W}}$. The set $\{x\in \overline{\widetilde{W}}|x\perp u, x\perp v\}$
admits a join in $\overline{\widetilde{W}}$.
\end{corollary}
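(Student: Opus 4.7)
The plan is to show that the set $S := \{x \in \overline{\widetilde{W}} \mid x \perp u,\, x \perp v\}$ is bounded in $\overline{\widetilde{W}}$; once this is established, $\bigvee S$ exists by Theorem \ref{semilattice} together with Lemma \ref{techsemijoin}. Note that by these same results, the hypothesis that $u, v$ admit no join is equivalent to their having no upper bound in $\overline{\widetilde{W}}$.

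First, I would pass to finite approximations. Let $u_n \leq u$ and $v_n \leq v$ be the prefixes of length $n$ in fixed reduced expressions; these lie in $\widetilde{W}$. If every pair $u_n, v_n$ admitted an upper bound in $\widetilde{W}$, then the joins $u_n \vee v_n$ would exist in $\widetilde{W}$ and form an ascending chain, whose join in $\overline{\widetilde{W}}$ (guaranteed by Lemma \ref{chaininwbar}) would be a common upper bound for $u, v$, contradicting the hypothesis. So we fix $n$ for which $u_n, v_n$ have no upper bound in $\widetilde{W}$.

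Next, I would absorb the meet $w_n := u_n \wedge v_n \in \widetilde{W}$ using the $W$-action on biclosed sets. Set $u'_n := w_n^{-1} u_n$ and $v'_n := w_n^{-1} v_n$. Because the $W$-action is an order automorphism of $\mathscr{B}(\widehat{\Phi})$, one has $u'_n \wedge v'_n = e$, and any upper bound $z'$ of $u'_n, v'_n$ would give $w_n z'$ as an upper bound of $u_n, v_n$, so $u'_n, v'_n$ still have no upper bound. Lemma \ref{othogoalboundedlemma} then furnishes a positive system $\Psi^+$ of $\Phi$ such that any biclosed $B$ with $B \cap (\Phi_{u'_n} \cup \Phi_{v'_n}) = \emptyset$ is contained in $\widehat{\Psi^+}$.

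Finally, I would transport the bound back to $S$. For $x \in S$, the containment $\Phi_{w_n} \subset \Phi_u$ gives $\Phi_x \cap \Phi_{w_n} = \emptyset$, so $w_n^{-1} x$ is well defined via Definition \ref{def:wprod} and $\Phi_{w_n^{-1} x} = w_n^{-1} \cdot \Phi_x$ by Lemma \ref{lem:infinitelongbasic}(d). Unpacking the $W$-action formula (using $w_n \leq u_n$ and $\Phi_x \cap \Phi_{u_n} = \emptyset$) yields $\Phi_{w_n^{-1} x} \cap \Phi_{u'_n} = \emptyset$, and likewise for $v'_n$. Hence $\Phi_{w_n^{-1} x} \subset \widehat{\Psi^+}$, so $\Phi_x \subset w_n \cdot \widehat{\Psi^+}$; by Theorem \ref{thm:bijection}, $w_n \cdot \widehat{\Psi^+}$ is the inversion set of a (maximal) element of $\overline{\widetilde{W}}$, giving the desired common upper bound of $S$. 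The main technical obstacle I foresee is verifying that the translation $x \mapsto w_n^{-1} x$ preserves both orthogonality conditions defining $S$, which is exactly where the full strength of $\Phi_x \cap \Phi_u = \emptyset$ (and not merely $\Phi_x \cap \Phi_{w_n} = \emptyset$) is used; the rest reduces to bookkeeping with the action formula and previously established results.
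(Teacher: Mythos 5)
Your overall architecture is the same as the paper's: reduce to a pair of finite elements with trivial meet and no common upper bound, invoke Lemma \ref{othogoalboundedlemma} to trap everything orthogonal to them inside some $\widehat{\Psi^+}$, and conclude by completeness of the meet semilattice. (The paper splits into the cases $u,v\in\widetilde{W}$ with trivial meet, with nontrivial meet, and at least one of $u,v$ infinite, but the content is the same; your reduction to prefixes via Lemma \ref{chaininwbar} is a clean way to package the infinite case.) There is, however, a genuine flaw in your translation step. The $W$-action on $\mathscr{B}(\widehat{\Phi})$ is \emph{not} an order automorphism: for a simple reflection $s$ one has $s\cdot\emptyset=\{\alpha_s\}$ while $s\cdot\{\alpha_s\}=\emptyset$, so inclusion is not preserved. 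Consequently your claim that \emph{any} upper bound $z'$ of $u_n',v_n'$ yields $w_nz'$ as an upper bound of $u_n,v_n$ is false as stated: nothing forces $l(w_nz')=l(w_n)+l(z')$, and without that, $u_n\le w_nz'$ does not follow.

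Both defects are repairable, and the repair is precisely the care the paper's own proof takes. For the ``no upper bound descends'' step, replace an arbitrary upper bound $z'$ by the join $j'=u_n'\vee v_n'$ (which exists by Lemma \ref{lem:boundingrp}); since $u_n'$ and $v_n'$ are both orthogonal to $w_n^{-1}$, the Join Orthogonality Property (Theorem \ref{theorem:jop}) gives $j'\perp w_n^{-1}$, hence $l(w_nj')=l(w_n)+l(j')$, and the order isomorphism of upper intervals $[w_n,\,\cdot\,]\cong[e,\,\cdot\,]$ under left translation (Proposition 3.12(vi) of Bj\"{o}rner--Brenti, which the paper's proof also invokes) then gives $u_n,v_n\le w_nj'$. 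That same restricted interval isomorphism, not a global order automorphism, is what justifies $u_n'\wedge v_n'=e$. Your final step $\Phi_x\subset w_n\cdot\widehat{\Psi^+}$ likewise cannot be deduced from monotonicity of the action; it does hold, but only because you additionally know $\Phi_x\cap\Phi_{w_n}=\emptyset$, which forces $\Phi_x=w_n(\Phi_{w_n^{-1}x})\setminus(-\Phi_{w_n})\subset w_n(\widehat{\Psi^+})\setminus(-\Phi_{w_n})\subset w_n\cdot\widehat{\Psi^+}$. With these corrections your argument goes through and is essentially the paper's proof run in the opposite direction: you translate the orthogonal elements down by $w_n^{-1}$, whereas the paper translates the join of the translated orthogonal set back up by $w_n$.
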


\begin{proof}
First we assume that $u,v\in \widetilde{W}$. If $u,v$ have trivial meet
then the assertion follows from Lemma \ref{othogoalboundedlemma},
Corollary \ref{cor:biclosedsetofinflongword} and the fact that
$\overline{\widetilde{W}}$ is a complete meet semilattice. Now
suppose that $u\wedge v=w\neq e$. Then by the previous discussion the
set $\{x|x\perp w^{-1}u, x\perp w^{-1}v\}$ admits a join $y$ in
$\overline{\widetilde{W}}$. So $w^{-1}\leq y.$ Then we claim that
$wy$ is the join of $\{x|x\perp u,x\perp v\}$. We show
that for $p\in \{x|x\perp u,x\perp v\}$, $w^{-1}p\in \{x|x\perp
w^{-1}u, x\perp w^{-1}v\}$ and $l(w^{-1}p)=l(w^{-1})+l(p)$:
$$l(p^{-1}w)+l(w^{-1}u)\geq l(p^{-1}ww^{-1}u)=l(p^{-1}u)=l(p^{-1})+l(u)$$
$$=l(p^{-1})+l(ww^{-1}u)=l(p^{-1})+l(w)+l(w^{-1}u)\geq l(p^{-1}w)+l(w^{-1}u)$$
(and the same for $v$ in place of $u$).

 So
$w^{-1}p\leq y$. Since $w^{-1}\leq w^{-1}p$ and $w^{-1}\leq y$, we
can multiply by $w$ on both sides of the inequality $w^{-1}p\leq y$ and
have $p\leq wy$ (See Proposition 3.12(vi) of \cite{bjornerbrenti}).
By Theorem \ref{theorem:jop}, $y\perp w^{-1}u$ and
$y\perp w^{-1}v$. Therefore $wy\perp u$ and $wy\perp v$:
$$l(y^{-1}w^{-1})+l(u)\geq l(y^{-1}w^{-1}u)=l(y^{-1})+l(w^{-1}u)$$$$=l(y^{-1})-l(w)+l(u)=l(y^{-1}w^{-1})+l(u)$$
(and the same for $v$ in place of $u$).

 This shows that
$wy$ is the join. Now if at least one of $u$ and $v$ is
an infinite word, then some prefix $x$ of $u$ and some prefix $y$ of
$v$ do not have bound in $\overline{\widetilde{W}}$. Also one notes
that $\{p|p\perp u,p\perp v\}\subset \{p|p\perp x,p\perp y\}$ and
thus the former is bounded in $\overline{\widetilde{W}}$. Then the existence of join follows from the fact $\overline{\widetilde{W}}$ is a complete meet semilattice.
\end{proof}

Now we have made all the preparations for proving that
$\mathscr{B}(\widetilde{\Phi}^+)$ is a lattice.

\begin{proposition}\label{prop:lattice}
Let $x,y\in \overline{\widetilde{W}}$,

(1) Suppose that $x\vee y$ exists in $\overline{\widetilde{W}}$. Then
the join of $\Phi_x$ and $\Phi_y$ exists in
$\mathscr{B}(\widetilde{\Phi}^+)$ and is given by $\Phi_{x\vee
y}$.

(2) Suppose that $x$ and $y$ have no upper bound in
$\overline{\widetilde{W}}$. Let $M=\{w|w\in
\overline{\widetilde{W}}, w\perp x, w\perp y\}$. Then the join of
$M$ exists in $\overline{\widetilde{W}}$ by Corollary \ref{cor:orthofinal} (denoted by $z$). Then the
join of $\Phi_x$ and $\Phi_y$ is given by $\Phi_z'$ in
$\mathscr{B}(\widetilde{\Phi}^+)$.

(3) The join of $\Phi_x'$ and $\Phi_y'$ exists in
$\mathscr{B}(\widetilde{\Phi}^+)$ and is given by $\Phi_{x\wedge
y}'$.

(4) The join of $\Phi_x$ and $\Phi_y'$ exists in
$\mathscr{B}(\widetilde{\Phi}^+)$ and is given by $\Phi_{z}'$ where
$z=\bigvee\{w|w\in \overline{\widetilde{W}}, w\perp x, w\leq y\}$.
\end{proposition}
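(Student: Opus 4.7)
The plan is to exploit a dichotomy specific to the rank three affine setting: every biclosed set in $\widetilde{\Phi}^+$ is either of the form $\Phi_u$ or of the form $\Phi_u'$ for some $u\in\overline{\widetilde{W}}$. This follows from Theorem \ref{thm:bijection}, Lemma \ref{tworepresentation}, and Theorem \ref{theorem:classifiybiclosedweyl}, together with the observation that in types $\widetilde{A}_2$, $\widetilde{B}_2$, $\widetilde{G}_2$ no two simple roots of $\Phi$ are orthogonal, so in any parametrization $w\cdot\widehat{\Phi^+_{M,N}}$ with $M,N$ orthogonal, one of $M,N$ must be empty; the identification of $\Phi_u'$ with the $M=\emptyset$ case is recorded in the proof of Lemma \ref{lem:upisnotinbottom}. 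Given this, to show a candidate biclosed set $C$ is the join of two biclosed sets, it suffices to check that $C$ contains both and that any biclosed set of either form containing both must contain $C$. In each part, containment of the candidate will follow immediately from its definition together with the Join Orthogonality Property (Theorem \ref{theorem:jop}); the real work is in the upper bound check.

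For (1), given $B\supset\Phi_x\cup\Phi_y$: if $B=\Phi_u$, then $u$ bounds $\{x,y\}$ in $\overline{\widetilde{W}}$, so $x\vee y\leq u$ and $\Phi_{x\vee y}\subset B$; if $B=\Phi_u'$, then $u\perp x$ and $u\perp y$, whence $u\perp x\vee y$ by Theorem \ref{theorem:jop}, giving $\Phi_{x\vee y}\cap\Phi_u=\emptyset$. Part (2) is analogous: $z=\bigvee M$ exists by Corollary \ref{cor:orthofinal}, Theorem \ref{theorem:jop} gives $z\perp x$ and $z\perp y$, so $\Phi_z'\supset\Phi_x\cup\Phi_y$; any $B=\Phi_u\supset\Phi_x\cup\Phi_y$ would bound $\{x,y\}$, contrary to hypothesis, so $B=\Phi_u'$, placing $u\in M$, hence $u\leq z$ and $\Phi_z'\subset\Phi_u'$.

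Parts (3) and (4) follow the same template, but the subcase $B=\Phi_u$ now requires Lemma \ref{lem:upisnotinbottom} and the remark following it. For (3), $\Phi_{x\wedge y}'\supset\Phi_x'\cup\Phi_y'$ is immediate; if $B=\Phi_u'$, then $u\leq x$ and $u\leq y$, so $u\leq x\wedge y$ and $\Phi_{x\wedge y}'\subset\Phi_u'$; if $B=\Phi_u\supset\Phi_x'$, Lemma \ref{lem:upisnotinbottom} forces $\Phi_u=\Phi_x'$, and likewise $\Phi_u=\Phi_y'$, whence $x=y$ and the conclusion is trivial. For (4), $\{w:w\perp x,\,w\leq y\}$ is bounded by $y$, so its join $z$ exists by Lemma \ref{techsemijoin}; Theorem \ref{theorem:jop} gives $z\perp x$, and clearly $z\leq y$, so $\Phi_z'\supset\Phi_x\cup\Phi_y'$; if $B=\Phi_u'$, then $u\perp x$ and $u\leq y$ place $u$ in the defining set, so $u\leq z$ and $\Phi_z'\subset\Phi_u'$; if $B=\Phi_u$, Lemma \ref{lem:upisnotinbottom} and its remark force $\Phi_u=\Phi_y'$ with $y$ maximal, and $\Phi_x\subset\Phi_y'$ gives $x\perp y$, so $y$ itself lies in the defining set; maximality of $y$ forces $z=y$, hence $B=\Phi_y'=\Phi_z'$.

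The main obstacle is the subcase $B=\Phi_u$ in parts (3) and (4): ruling out the situation in which an inversion set strictly encloses the complement of an inversion set rests on Lemma \ref{lem:upisnotinbottom}, and when equality does occur one must carefully use the maximality remark following that lemma to pin down $z$. Everything else reduces to elementary manipulations with weak order, orthogonality and the Join Orthogonality Property.
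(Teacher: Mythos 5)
Your proposal is correct and follows essentially the same route as the paper: reduce to the dichotomy that every biclosed set in $\widetilde{\Phi}^+$ is $\Phi_u$ or $\Phi_u'$, verify the candidate contains both sets via the weak order and the Join Orthogonality Property, and handle the upper-bound check case by case, with Lemma \ref{lem:upisnotinbottom} ruling out an inversion set properly containing a complement in parts (3) and (4). The only cosmetic difference is that you spell out a few subcases the paper leaves implicit (e.g. the $B=\Phi_u$ case in part (1)), and in part (4) the appeal to the maximality remark is unnecessary since $y$ lying in the defining set already forces $z=y$.
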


\begin{proof}
One first notes that the biclosed sets  in this (rank 3) case are
either $\Phi_x$ or $\Phi_x'$ where $x\in \overline{\widetilde{W}}$
by Theorem \ref{classifybiclosedsetaffine}, Corollary \ref{cor:biclosedsetofinflongword} and Lemma
\ref{lem:infinitelongbasic}(d).

(1) If for some $w\in \overline{\widetilde{W}}$,
$\Phi_x,\Phi_y\subset \Phi_{w}'$, by Join Orthogonality Property
(Theorem \ref{theorem:jop}) $\Phi_w'\supset \Phi_{x\vee y}$.

(2) If for some $w\in \overline{\widetilde{W}}$,
$\Phi_x,\Phi_y\subset \Phi_{w}'$, then this is equivalent to $w\perp
x, w\perp y$. $w\leq z$. So $\Phi_w'\supset \Phi_z'.$

(3) By Lemma \ref{lem:upisnotinbottom}, no $\Phi_z$ can properly
contain $\Phi_x'$ and $\Phi_y'$. If for some $w\in
\overline{\widetilde{W}}$, $\Phi_x',\Phi_y'\subset \Phi_{w}'$, then
$x\geq w$ and $y\geq w$ and thus $x\wedge y\geq w$. Therefore
$\Phi_{x\wedge y}'\subset \Phi_w'.$

(4) Note that $e\in \{w|w\in \overline{\widetilde{W}}, w\perp x, w\leq y\}$
and therefore $\{w|w\in \overline{\widetilde{W}}, w\perp x, w\leq
y\}$ is not empty. The existence of $z$ is guaranteed by the fact
that $\overline{\widetilde{W}}$ is a complete meet semilattice and
the set $\{w|w\in \overline{\widetilde{W}}, w\perp x, w\leq y\}$ is
bounded by $y$. By Join Orthogonality Property (Theorem
\ref{theorem:jop}), $z\perp x$. Hence $\Phi_x\subset \Phi_z'.$ It is
clear that $z\leq y$. Hence $\Phi_z\subset \Phi_y$ and thus
$\Phi_z'\supset \Phi_y'$. If there is $\Phi_u'$ such that
$\Phi_u'\supset \Phi_x$ and $\Phi_u'\supset \Phi_y'$, then it
follows that $u\in \{w|w\in \overline{\widetilde{W}}, w\perp x, w\leq
y\}$. So $u\leq z$ and $\Phi_u'\supset \Phi_z'.$ If $\Phi_w\supset
\Phi_x$ and $\Phi_w\supset \Phi_y'$, then by Lemma
\ref{lem:upisnotinbottom} $\Phi_w=\Phi_y'$ and thus $\Phi_y'\supset
\Phi_x.$ Therefore $x\perp y$. In this case $z=\bigvee\{w|w\in
\overline{\widetilde{W}}, w\perp x, w\leq y\}=y$.
$\Phi_w=\Phi_y'=\Phi_z'$.
\end{proof}

\begin{lemma}\label{chainunionintersection}
Let $\{B_i\}_{i\in I}$ be a chain of biclosed sets in the set of positive roots. Then their union and intersection are both biclosed.
\end{lemma}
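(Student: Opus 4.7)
The plan is to prove both statements simultaneously by exploiting the chain structure together with the self-dual nature of the biclosed condition (closed plus coclosed). The key observation is that in a chain of biclosed sets $\{B_i\}_{i\in I}$, the complements $\{\Phi^+\setminus B_i\}_{i\in I}$ form another chain (reversed under inclusion), so it suffices to prove a single lemma: the union of a chain of closed sets in $\Phi^+$ is closed. Applying this once yields closedness of $\bigcup_i B_i$ directly, and applying it to the chain of complements yields closedness of $\bigcup_i (\Phi^+\setminus B_i) = \Phi^+\setminus \bigcap_i B_i$, giving closedness of the complement of the intersection. By symmetric bookkeeping (i.e.\ also taking complements of the first application), both the union and the intersection will be shown to be closed and coclosed.

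For the lemma itself, suppose $\{C_i\}_{i\in I}$ is a chain of closed subsets of $\Phi^+$ and take $\alpha,\beta\in \bigcup_i C_i$ with $k_1\alpha+k_2\beta\in\Phi^+$ for some $k_1,k_2\in\mathbb{R}_{\geq 0}$. Choose $i,j\in I$ with $\alpha\in C_i$ and $\beta\in C_j$. Since $\{C_i\}$ is a chain, one of $C_i\subseteq C_j$ or $C_j\subseteq C_i$ holds, so both $\alpha$ and $\beta$ lie in a common member $C_k$ of the chain. Closedness of $C_k$ gives $k_1\alpha+k_2\beta\in C_k\subseteq \bigcup_i C_i$. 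This is the only step where the chain hypothesis is genuinely used.

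Assembling the proof: closedness of $\bigcup_i B_i$ follows from the lemma applied to the chain $\{B_i\}_{i\in I}$. Coclosedness of $\bigcup_i B_i$ means that $\Phi^+\setminus \bigcup_i B_i = \bigcap_i (\Phi^+\setminus B_i)$ is closed; this is straightforward since an intersection of closed sets is always closed (the chain hypothesis isn't even needed here). Symmetrically, closedness of $\bigcap_i B_i$ follows from intersections of closed sets being closed, while coclosedness of $\bigcap_i B_i$ means $\Phi^+\setminus \bigcap_i B_i = \bigcup_i (\Phi^+\setminus B_i)$ is closed, which follows from the lemma applied to the chain $\{\Phi^+\setminus B_i\}_{i\in I}$ of closed sets.

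There is essentially no obstacle in this proof; the content is entirely in the observation that when verifying closedness at a pair $\alpha,\beta$ in the union, one can pass to a single member of the chain containing both, which is exactly what distinguishes chains from arbitrary families (for which the union of biclosed sets need not be biclosed in general). No case analysis, no root-system-specific input, and no finiteness or rank hypothesis is required, so the lemma holds for arbitrary Coxeter systems.
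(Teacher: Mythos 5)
Your proof is correct and uses essentially the same idea as the paper: the only nontrivial step is that two roots in the union of a chain lie in a common member, and the remaining parts reduce to the fact that arbitrary intersections of closed sets are closed. The paper phrases the coclosedness of the union as a direct contradiction rather than via De Morgan duality, but the content is identical.
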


\begin{proof}
Let $B=\cup_{i\in I}B_i.$ Take $\alpha,\beta\in B$. Then $\alpha\in B_j,\beta\in B_k, j,k\in I$.
Without loss of generality assume that $B_j\subset B_k$. Then $\alpha,\beta\in B_k$. Since $B_k$ is closed, any root  of the form $k_1\alpha+k_2\beta, k_1, k_2>0$ must be contained in $B_k$.
Let $\alpha,\beta$ be two positive roots not contained $B.$ If a root  $\gamma=k_1\alpha+k_2\beta, k_1, k_2>0$ is contained in $B$, it must be contained in some $B_l, l\in I$. However $\alpha,\beta\not\in B_l$. This contradicts the coclosedness of $B_l$. Therefore $B$ is biclosed. Similarly one can prove the assertion for $\cap_iB_i.$
\end{proof}

\begin{theorem}\label{mainthmlattice}
The poset $\mathscr{B}(\widetilde{\Phi}^+)$ is a complete ortholattice.
\end{theorem}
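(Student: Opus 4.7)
The plan has two parts: first show that $\mathscr{B}(\widetilde{\Phi}^+)$ is a complete lattice, and then exhibit an orthocomplementation. Binary joins of biclosed sets are supplied by Proposition \ref{prop:lattice}, so induction gives the join of any finite subfamily of $\mathscr{B}(\widetilde{\Phi}^+)$. The issue is to promote finite joins to arbitrary ones, together with checking the ortholattice axioms.

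To construct arbitrary joins of a family $\mathcal{F} = \{B_i\}_{i\in I} \subset \mathscr{B}(\widetilde{\Phi}^+)$, I would form the collection $\mathcal{D}$ of all joins of finite subfamilies of $\mathcal{F}$. This $\mathcal{D}$ is upward-directed: if $B_1 = B_{i_1} \vee \cdots \vee B_{i_k}$ and $B_2 = B_{j_1} \vee \cdots \vee B_{j_m}$ lie in $\mathcal{D}$, then $B_1 \vee B_2$ (which exists by Proposition \ref{prop:lattice}) is again a finite join of members of $\mathcal{F}$, hence in $\mathcal{D}$. I then claim that $D := \bigcup_{B \in \mathcal{D}} B$ is biclosed and is the join of $\mathcal{F}$. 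Closedness of $D$ is immediate from directedness: any two roots in $D$ lie in a common $B \in \mathcal{D}$, so any positive root $k_1\alpha + k_2\beta$ they generate lies in $B \subset D$. Coclosedness is equally easy: if $\alpha, \beta \notin D$ while a positive root $\gamma = k_1\alpha + k_2\beta$ lies in $D$, then $\gamma \in B$ for some $B \in \mathcal{D}$, and $\alpha, \beta \notin B$ contradicts coclosedness of $B$. Since $D$ visibly contains every $B_i$ and is contained in every biclosed upper bound of $\mathcal{F}$, it is the join; an alternative route is a Zorn argument over chains in $\mathcal{D}$ using Lemma \ref{chainunionintersection}. Combined with the fact that $\emptyset$ is biclosed, this makes $\mathscr{B}(\widetilde{\Phi}^+)$ a complete lattice with minimum $\emptyset$ and maximum $\widetilde{\Phi}^+$.

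For the orthocomplementation, the natural candidate $B^{oc} := \widetilde{\Phi}^+ \setminus B$ is forced upon us by the symmetry of the definition of \emph{biclosed}, so $B^{oc}$ is biclosed whenever $B$ is, and the map $B \mapsto B^{oc}$ is a manifestly inclusion-reversing involution. To verify the two ortholattice identities, observe that $B \cap B^{oc} = \emptyset \in \mathscr{B}(\widetilde{\Phi}^+)$, forcing $B \wedge B^{oc} = \emptyset$, while any biclosed set containing both $B$ and $B^{oc}$ must contain their union $\widetilde{\Phi}^+$, forcing $B \vee B^{oc} = \widetilde{\Phi}^+$. The main obstacle in the whole argument is, of course, the construction of arbitrary joins, and the nontrivial content there is entirely packaged in Proposition \ref{prop:lattice}, which itself rests on the case-by-case work of Section \ref{sec:lattice}; the upgrade from binary to arbitrary joins and the verification of the orthocomplement axioms are formal.
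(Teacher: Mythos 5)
Your proof is correct, and it rests on exactly the same two pillars as the paper's: Proposition \ref{prop:lattice} for binary joins, and the observation (Lemma \ref{chainunionintersection} in the paper, which you generalize from chains to directed families) that suitable unions of biclosed sets remain biclosed. The difference is in how completeness is extracted from these ingredients. The paper works on the meet side: given a family $A$, it applies Zorn's Lemma to the set of biclosed lower bounds of $A$ (chains of lower bounds have biclosed unions), obtains a maximal lower bound, and uses the binary join to show it is unique, concluding that $\mathscr{B}(\widetilde{\Phi}^+)$ is a complete meet semilattice with a top element. You work on the join side: you take the directed system of all finite joins of the family and show its union is biclosed and is the least upper bound, then invoke the bottom element $\emptyset$ to pass to a complete lattice. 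Your route is slightly more explicit --- it describes the arbitrary join concretely as a directed union of finite joins and avoids Zorn's Lemma --- while the paper's route gets the same conclusion with a shorter (if less constructive) argument. Your verification of the orthocomplement axioms for $B \mapsto \widetilde{\Phi}^+ \setminus B$ matches what the paper leaves as "one easily sees." Both proofs are valid; the nontrivial content is, as you say, entirely in Proposition \ref{prop:lattice}.
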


\begin{proof}
Proposition \ref{prop:lattice} guarantees that
$\mathscr{B}(\widetilde{\Phi}^+)$ is a lattice. By defining
$B^{oc}=\widetilde{\Phi}^+\backslash B$ one easily sees that
$\mathscr{B}(\widetilde{\Phi}^+)$ is an ortholattice. Now let
$A\subset \mathscr{B}(\widetilde{\Phi}^+)$. Let $C$ be the set of
biclosed sets bounded by all elements in $A$. Since the union of a
chain of biclosed sets is a biclosed set by Lemma \ref{chainunionintersection}, the condition of Zorn's
Lemma is satisfied. The set $C$ has at least one maximal element. If the
maximal element is not unique, take two of them and their join again
is bounded by all elements in $A$, which is a contradiction. This
shows that the maximal element is unique and $A$ admits a meet. Hence
$\mathscr{B}(\widetilde{\Phi}^+)$ is a complete meet semilattice
with a unique maximal element(given by $\widetilde{\Phi}^+$). Therefore it is
a complete lattice.
\end{proof}

\begin{remark*}
In \cite{labbe} Section 2.4 the author replaced the closure used in this paper with convex closure and proved the lattice property for biconvex sets in the positive system of rank three Coxeter groups by studying the distribution of the roots in the space and  using the projective representation.
\end{remark*}

\begin{remark*}
Call a biclosed set $\Gamma$ in $\widetilde{\Phi}$
a \emph{quasi-positive system} (or a \emph{2 closure hemispace}) if $\Gamma\uplus
-\Gamma=\widetilde{\Phi}$. One might ask whether
$\mathscr{B}(\Gamma)$ (all the biclosed sets in $\Gamma$ under
inclusion) is a complete lattice. This is conjectured in the first version of \cite{DyerWeakOrder} (Conjecture 2.5(a)). We show that in general this is not true as
illustrated by the following example.

Consider the root system of $\widetilde{A}_2$. Let $\alpha,\beta$ be
two simple roots of a chosen positive system of $\Phi$, the root
system of the corresponding Weyl group.

The set $\Psi=\{-\alpha+l\delta, -\beta+m\delta,
-\alpha-\beta+n\delta|l,m,n\in \mathbb{Z}\}$ is a quasi-positive
system.

The following four sets are all biclosed in $\Psi$: $B_1=\{-\alpha+a\delta, -\beta+b\delta, -\alpha-\beta+c\delta|a,c\in
\mathbb{Z}, b\in \mathbb{Z}_{\leq 0}\}, B_2=\{-\alpha+d\delta,
-\beta+e\delta, -\alpha-\beta+f\delta|e,f\in \mathbb{Z}, d\in
\mathbb{Z}_{\geq 0}\}, B_3=\{-\alpha+g\delta|g\in \mathbb{Z}_{\geq
0}\}$  and $B_4=\{-\beta+h\delta|h\in \mathbb{Z}_{\leq 0}\}$.

The sets $B_3$ and $B_4$ are both contained in $B_1\cap B_2$. If $B_1$ and
$B_2$ admit a meet, say $C$, we have that $\overline{B_3\cup
B_4}\subset C\subset B_1\cap B_2$. But the first and third term
coincide, which is $\{-\alpha+i\delta, -\beta+j\delta,
-\alpha-\beta+k\delta|i\in \mathbb{Z}_{\geq 0}, j\in
\mathbb{Z}_{\leq 0}, k\in \mathbb{Z}\}$. This forces $C$ to be this
set. However this set is not biclosed in $\Psi$.
\end{remark*}

\section{Compact Biclosed Sets And Algebraic Lattice}\label{sec:compact}

For the first part of this section $\widetilde{W}$ is a general (irreducible)
affine Weyl group without restriction on the rank.

A \emph{compact biclosed set} in $\widetilde{\Phi}^+$ is meant to be a
compact element in $\mathscr{B}(\widetilde{\Phi}^+)$ provided that
$\mathscr{B}(\widetilde{\Phi}^+)$ is a lattice. A \emph{finitely generated
biclosed set} is a biclosed set which is equal to the closure of finitely many roots.

\begin{lemma}\label{fingencompt}
If $\mathscr{B}(\widetilde{\Phi}^+)$ is a lattice, a biclosed set in
$\widetilde{\Phi}^+$ is compact if it is finitely generated.
\end{lemma}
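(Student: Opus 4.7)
The plan is to reduce compactness of $B = \overline{\{\alpha_1, \ldots, \alpha_n\}}$ to the statement that each single root $\alpha_i$, viewed as an element of $\bigvee Y$, already lies in the join of some finite subset of $Y$. Once that is established, one collects the finite subsets $Y_1, \ldots, Y_n \subset Y$ witnessing each $\alpha_i \in \bigvee Y_i$, takes $Y_0 := Y_1 \cup \cdots \cup Y_n$, and observes that $\bigvee Y_0$ is closed and contains every generator of $B$, hence contains $B = \overline{\{\alpha_1, \ldots, \alpha_n\}}$.

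The core step is therefore: assuming $\mathscr{B}(\widetilde{\Phi}^+)$ is a lattice, I would show
\[
\bigvee Y \;=\; \bigcup\bigl\{\bigvee Y_0 \,:\, Y_0 \subset Y \text{ finite}\bigr\}
\]
for every $Y \subset \mathscr{B}(\widetilde{\Phi}^+)$. Call the right-hand side $Z$. Clearly $Z \subseteq \bigvee Y$ since each $\bigvee Y_0 \subseteq \bigvee Y$, and $Z$ contains each individual $y \in Y$ (by taking $Y_0 = \{y\}$). What remains is to check that $Z$ is biclosed: this makes $Z$ an upper bound of $Y$ in $\mathscr{B}(\widetilde{\Phi}^+)$, forcing $\bigvee Y \subseteq Z$.

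To verify that $Z$ is biclosed, the key observation is that the collection of finite joins is directed: given finite $Y_0, Y_1 \subset Y$, both $\bigvee Y_0$ and $\bigvee Y_1$ lie inside $\bigvee(Y_0 \cup Y_1)$. For closedness, any two roots $\alpha, \beta \in Z$ lie in a common biclosed $\bigvee(Y_0 \cup Y_1)$, so every positive $\mathbb{R}_{\geq 0}$-combination of $\alpha, \beta$ that is a positive root lies in $\bigvee(Y_0 \cup Y_1) \subseteq Z$. For coclosedness, if $\alpha, \beta \notin Z$ but some $k_1\alpha + k_2\beta \in Z \cap \widetilde{\Phi}^+$, then the combination sits in some biclosed $\bigvee Y_0$; by coclosedness of $\bigvee Y_0$, one of $\alpha, \beta$ must already lie in $\bigvee Y_0 \subseteq Z$, a contradiction.

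I do not expect any serious obstacle: the argument is the standard one showing that in any complete lattice in which finite joins are the ``manifest'' ones, finitely generated elements are compact. The only subtlety worth flagging is that one genuinely uses both halves of the biclosedness condition on each finite join $\bigvee Y_0$ (closedness to absorb $R_{\geq 0}$-combinations, coclosedness to exclude spurious generators), and that the existence of $\bigvee Y_0$ in $\mathscr{B}(\widetilde{\Phi}^+)$ requires the lattice hypothesis — which is exactly the hypothesis of the lemma.
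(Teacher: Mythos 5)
Your proof is correct, and its skeleton is the same as the paper's: reduce to showing that each generator $\alpha_j$ of $B=\overline{\{\alpha_1,\dots,\alpha_n\}}$ already lies in the join of a finite subfamily, then absorb $B$ into the closed set $\bigvee Y_0$ for the union $Y_0$ of the witnessing finite subfamilies. The difference is in how that reduction is justified. The paper first observes that $\mathscr{B}(\widetilde{\Phi}^+)$ is countable (by the classification of biclosed sets in an affine positive system), enumerates $Y$ as $B_1,B_2,\dots$, and then tacitly uses that the increasing chain $\bigvee_{i=1}^{1}B_i\subset\bigvee_{i=1}^{2}B_i\subset\cdots$ has biclosed union (Lemma \ref{chainunionintersection}), which must therefore equal $\bigvee_{i}B_i$. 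You instead prove directly that the directed union $Z=\bigcup\{\bigvee Y_0: Y_0\subset Y\ \text{finite}\}$ is biclosed, verifying closedness via directedness and coclosedness via the coclosedness of a single $\bigvee Y_0$; both verifications are sound. What your route buys is independence from the countability of the lattice: it works verbatim in any lattice of biclosed sets in which the relevant joins exist, whereas the paper's argument leans on the affine classification to linearize $Y$ into a chain. What it costs is a little more bookkeeping than simply citing the chain-union lemma; in effect you have reproved the directed-family analogue of Lemma \ref{chainunionintersection} inline.
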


\begin{proof}
According to the classification of the biclosed sets in an affine positive system, $\mathscr{B}(\widetilde{\Phi}^+)$ is an infinite countable set.
Let $B=\overline{\{\alpha_1,\alpha_2,\dots,\alpha_n\}}$ be biclosed in
$\widetilde{\Phi}^+$. Suppose that $B\subset \bigvee_{i=1}^{\infty}B_i$. So
$\alpha_j\in \bigvee_{i=1}^{\infty}B_i, 1\leq j\leq n$. This forces that
$\alpha_j\in \bigvee_{i=1}^{t_j}B_{i}, 1\leq j\leq n$ for some $t_j$. Take $t=\max\{t_j|1\leq j\leq n\}$ and one sees that $B\subset\bigvee_{i=1}^{t}B_{i}$.
\end{proof}

Let $\Psi^+$ be a positive system in $\Phi$ and $\Pi$ be its simple
system. Let $M$ be a subset of $\Pi$.

\begin{theorem}\label{thm:classifycompact}
Given $L=\{\alpha\in \Pi|(\alpha, M)=0\}$ for $M\neq \emptyset$ (resp. $L=\Pi$ for $M=\emptyset$), a biclosed set $\Gamma$
with $I_{\Gamma}=\Psi_{L,M}^+$ is finitely generated. Given
$L\subsetneq\{\alpha\in \Pi|(\alpha, M)=0\}$ for $M\neq \emptyset$ (resp. $L\subsetneq\Pi$ for $M=\emptyset$), a biclosed set
$\Gamma$ with $I_{\Gamma}=\Psi_{L,M}^+$ is not finitely generated.
\end{theorem}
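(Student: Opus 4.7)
The plan is to prove the two directions of the theorem separately.

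For the ``finitely generated'' direction, when $M = \emptyset$ and $L = \Pi$, $\Psi^+_{L,M} = \emptyset$, so $I_\Gamma = \emptyset$ forces $\Gamma$ to be finite and trivially finitely generated. In the substantive subcase $M \neq \emptyset$ and $L = \{\alpha \in \Pi : (\alpha, M) = 0\}$, I would exhibit the explicit finite generating set
\[
G = (\Psi^+ \setminus \Phi_L) \cup \{-\alpha + \delta : \alpha \in M\}
\]
and verify $\overline{G} = \widehat{\Psi^+_{L,M}}$. The containment $\overline{G} \subset \widehat{\Psi^+_{L,M}}$ is clear from the biclosedness of $\Psi^+_{L,M}$ in $\Phi$. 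For the reverse inclusion, the identities $(k+1)\alpha + k(-\alpha+\delta) = \alpha + k\delta$ and $k\alpha + (k+1)(-\alpha+\delta) = -\alpha + (k+1)\delta$ are valid two-element closure steps, yielding $\widehat{\alpha} \cup \widehat{-\alpha} \subset \overline{G}$ for each $\alpha \in M$. For other $\alpha \in \Psi^+_{L,M}$, I plan an induction on root height that decomposes $\alpha = \beta_1 + \beta_2$ with $\beta_i \in \Psi^+_{L,M}$ of smaller height and lifts the $\delta$-coefficient via $(\beta_1) + (\beta_2 + n\delta) = \alpha + n\delta$, using $\widehat{\beta_2} \subset \overline{G}$ inductively. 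For an arbitrary biclosed $\Gamma$ with the same $I_\Gamma$, the finite symmetric difference from $\widehat{\Psi^+_{L,M}}$ provided by Theorem \ref{classifybiclosedsetaffine}(2) can be absorbed into the generating set.

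For the ``not finitely generated'' direction, the hypothesis $L \subsetneq \{\alpha \in \Pi : (\alpha, M) = 0\}$ (or $L \subsetneq \Pi$ when $M = \emptyset$) provides some $\alpha_0 \in L_{\max} \setminus L$. The key claim $(*)$: if $\alpha_0 = c_1 \beta' + c_2 \beta''$ with $c_1, c_2 > 0$ and $\beta', \beta'' \in \Psi^+_{L,M}$, then $\beta' = \beta'' = \alpha_0$. To prove $(*)$, I handle cases. If both $\beta', \beta'' \in \Psi^+$, matching simple root coefficients and using non-negativity forces $\beta', \beta''$ to be supported on $\{\alpha_0\}$, hence both equal $\alpha_0$. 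If, say, $\beta'' = -\widetilde\beta$ with $\widetilde\beta \in \Phi_M^+$, then $c_1 \beta' = \alpha_0 + c_2 \widetilde\beta$ with $\alpha_0 \perp \widetilde\beta$; the simple root expansion of $\beta'$ is supported on $\{\alpha_0\} \cup \operatorname{supp}(\widetilde\beta) \subset \{\alpha_0\} \cup M$, but $\alpha_0 \perp M$ means this set is disconnected in the Dynkin diagram of $\Phi$, contradicting the fact that roots in an irreducible root system have connected support. The boundary case where one of $\beta', \beta''$ equals $\pm\alpha_0$ is excluded by noting $-\alpha_0 \notin \Psi^+_{L,M}$ (since $\alpha_0 \in L_{\max}$ forces $\alpha_0 \notin M$).

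Given $(*)$, suppose for contradiction that $\Gamma = \overline{S}$ for some finite $S$. Any two-element closure step producing a root with finite part $\alpha_0$ must, by $(*)$, combine two elements whose finite parts are both $\alpha_0$. Iterating back through the derivation to $S$, every $\alpha_0 + k\delta \in \overline{S}$ is obtained within the two-element closure of $S \cap \widehat{\alpha_0}$; but such closure inside $\widehat{\alpha_0}$ produces only $\delta$-coefficients in the interval determined by the inputs, so $\overline{S} \cap \widehat{\alpha_0}$ is finite. This contradicts $\Gamma \cap \widehat{\alpha_0}$ being infinite (a consequence of $\alpha_0 \in I_\Gamma$). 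The main obstacle I anticipate is completing the detailed inductive argument in the ``if'' direction—specifically, the decomposition $\alpha = \beta_1 + \beta_2$ requires care based on the Dynkin structure, and the extension from $\widehat{\Psi^+_{L,M}}$ to general $\Gamma$ in its block requires a separate argument.
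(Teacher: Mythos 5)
Your second half (``not finitely generated'') is essentially the paper's argument: the paper likewise fixes a simple root $\alpha_0$ of $\Psi^+$ with $\alpha_0\notin L$ and $(\alpha_0,M)=0$, asserts that $k_1\beta_1+k_2\beta_2=\alpha_0$ with $\beta_1,\beta_2$ ranging over the possible finite parts forces $\beta_1=\beta_2=\alpha_0$, and concludes that a finite generating set $\Lambda$ can only produce $\alpha_0+p\delta$ for $p$ up to $\max\{p\mid \alpha_0+p\delta\in\Lambda\}$. Your connected-support justification of the indecomposability claim is a reasonable filling-in of what the paper only asserts.

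The first half, however, has genuine gaps. First, the generating set $G=(\Psi^+\setminus\Phi_L)\cup\{-\alpha+\delta\mid\alpha\in M\}$ is too small: you only seed the negative $\delta$-strings at \emph{simple} roots of $M$, but $\widehat{\Psi^+_{L,M}}$ contains $\widehat{-\beta}$ for every $\beta\in\Psi_M^+$. Concretely, in type $\widetilde A_2$ with $\Psi^+=\Phi^+$, $M=\Pi=\{\alpha_1,\alpha_2\}$, $L=\emptyset$, the target set is all of $\widetilde\Phi^+$, but $\delta-\alpha_1-\alpha_2$ is a simple root of $\widetilde\Phi^+$, hence indecomposable, and it is not in your $G$; so $\overline{G}\subsetneq\widehat{\Psi^+_{\emptyset,\Pi}}$. (The paper's generating set includes the starting point $\gamma_i+n(\gamma_i)\delta$ for \emph{every} $\gamma_i\in\Psi_M$.) Second, your ``induction on root height'' decomposing $\alpha=\beta_1+\beta_2$ with both $\beta_i\in\Psi^+_{L,M}$ of smaller height breaks down exactly at the base case: a simple root $\alpha_j\in\Pi\setminus(L\cup M)$ admits no such decomposition. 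The decomposition that actually works is $\alpha_j=(\alpha_j+\beta)+(-\beta)$ for some $\beta\in M$ with $(\alpha_j,\beta)<0$ --- note that one factor has \emph{larger} height, and that such a $\beta$ exists precisely because $L=\{\alpha\in\Pi\mid(\alpha,M)=0\}$ is maximal. Your sketch never identifies where this maximality hypothesis is used, yet it is the entire content of the dichotomy in the theorem; the paper organizes the induction around the function $h_L$ (via Proposition \ref{prop:twofunctionequal}) and the reflection $s_\beta$ rather than around height. Third, the reduction from $\widehat{\Psi^+_{L,M}}$ to a general $\Gamma$ in its block by ``absorbing'' the finite symmetric difference is not automatic (a finite perturbation of a finitely generated biclosed set is not obviously finitely generated), and you flag it yourself as unproved; the paper avoids this by describing the three-part structure of an arbitrary such $\Gamma$ directly from Theorem \ref{classifybiclosedsetaffine}(2) and writing down a generating set adapted to $\Gamma$ (the finite $\delta$-strings through $\Psi_L$, the level-zero roots of $\Psi^+_{L\cup M,\emptyset}$, and the starting points of the infinite strings through $\Psi_M$).
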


\begin{proof}
A biclosed set $\Gamma$ with $I_{\Gamma}=\Psi_{L,M}^+$ consists of the
following three disjoint subsets:

(1) For each $\alpha_i\in \Psi_L$, there
may be a $\delta$-string through $\alpha_i$ of finite length
beginning from $(\alpha_i)_0$. These form a finite set, denoted by
$\Gamma_L$.

(2) The set $\widehat{\Psi_{L\cup M,\emptyset}^+}$.

(3) For each $\gamma_i\in \Psi_M$, there is a
$\delta$-string through $\gamma_i$ of infinite length beginning from
$\gamma_i+n(\gamma_i)\delta$.

To see (1), (2) and (3), one notes that by Theorem \ref{classifybiclosedsetaffine} (2), $\Gamma=w\cdot \widehat{\Psi_{L,M}^+}, w\in W'$ where $W'$ is the reflection subgroup generated by $\{s_{\alpha}|\alpha\in \widehat{\Psi_{M\cup L}}\}$. In particular, we have the formula $w\cdot \widehat{\Psi^+_{L,M}}=\widehat{\Psi^+_{L,M}}+(\Phi_{W'}^+\cap w(-\Phi_{W'}^+))$ where $+$ denotes the set symmetric difference and $\Phi_{W'}^+\subset \widehat{\Phi}$ is the positive system of $W'$. The set $(\Phi_{W'}^+\cap w(-\Phi_{W'}^+))$ is finite and consists of the positive roots in $\Phi_{W'}^+$ made negative by $w^{-1}$. Then the description of (1) (2) and (3) follows from Lemma \ref{lem:startfrombottom} and the fact that $\Phi_{W'}^+=\widehat{\Psi_{M\cup L}}$. (Note that the elements in (1) are in $(\Phi_{W'}^+\cap w(-\Phi_{W'}^+))\backslash \widehat{\Psi^+_{L,M}}$ and the elements in (2) and (3) are in $\widehat{\Psi^+_{L,M}}\backslash (\Phi_{W'}^+\cap w(-\Phi_{W'}^+))$.)

The set $w\cdot \widehat{\Psi^+_{\Pi,\emptyset}}$ is finite and is therefore finitely generated.

Now suppose that $M\neq \emptyset$ and $L=\{\alpha\in \Pi|(\alpha, M)=0\}$. We show that
$$\Gamma_L\cup \{\alpha_0|\alpha\in\Psi_{L\bigcup
M,\emptyset}^+\}\cup \{\gamma_i+n(\gamma_i)\delta|\gamma_i\in
\Psi_M\}$$ is a finite generating set of $\Gamma.$

Since part (1) has been included in the set so we only need to show that the elements in (2) and (3) can be generated by this set.

We first show that part (3) can be generated.

Take $\gamma,-\gamma\in \Psi_M$. By assumption
$\gamma+n(\gamma)\delta,-\gamma+n(-\gamma)\delta$ are in the
generating set.
Then

$$\frac{n(\gamma)+n(-\gamma)+1}{n(\gamma)+n(-\gamma)}(\gamma+n(\gamma)\delta)+\frac{1}{n(\gamma)+n(-\gamma)}(-\gamma+n(-\gamma)\delta)=\gamma+(n(\gamma)+1)\delta.$$
Proceeding in this way one sees that all elements in part (3) can be generated.

Now we check that part (2) can be generated.

Take $\alpha\in \Psi^+\backslash \Psi_{L\cup
M}$. We can define $h$ function as
in Section \ref{sec:bijection}, i.e. $h(\alpha):=\sum_{\alpha_i\not\in L}k_i$ if $\alpha=\sum_{\alpha_i\in \Pi}k_i\alpha_i$. By Proposition \ref{prop:twofunctionequal} the root
$\alpha$ can be written as a sum of $h(\alpha)$ roots in
$\Psi^+\backslash \Psi_L$. We first show that
for $\alpha$ such that $h(\alpha)=1$, we have that $\widehat{\alpha}$ can
be generated. Suppose that $\alpha=\alpha_1+\alpha_2+\cdots+\alpha_p$
with $\alpha_i\in \Pi$ and $\alpha_j\in \Pi\backslash L$ for one $j$
 and the rest $\alpha_i$'s are in $L$. In fact we have that $\alpha_j\in\Pi\backslash(L\cup M)$. So we can find $\beta\in
M$ such that $(\beta,\alpha_j)<0$. Then
$$s_{\beta}(\alpha)=\alpha-\frac{2(\alpha_j,\beta)}{(\beta,\beta)}\beta\in \Psi^+.$$
We note that
$\alpha-\frac{2(\alpha_j,\beta)}{(\beta,\beta)}\beta$ is still in $\Psi^+\backslash \Psi_{L\cup
M}$ and therefore  $(\alpha-\frac{2(\alpha_j,\beta)}{(\beta,\beta)}\beta)_0$ is in the generating set.
Suppose that $\alpha-\frac{2(\alpha_j,\beta)}{(\beta,\beta)}\beta\in
\Phi^+$. Take $k\geq n(-\beta)$,
$$\alpha-\frac{2(\alpha_j,\beta)}{(\beta,\beta)}\beta+(-\frac{2(\alpha_j,\beta)}{(\beta,\beta)})(-\beta+k\delta)$$
$$=\alpha-k\frac{2(\alpha_j,\beta)}{(\beta,\beta)}\delta.$$
Then $(\alpha)_0,
\alpha-\frac{2n(-\beta)(\alpha_j,\beta)}{(\beta,\beta)}\delta,\alpha-\frac{2(n(-\beta)+1)(\alpha_j,\beta)}{(\beta,\beta)}\delta,
\alpha-\frac{2(n(-\beta)+2)(\alpha_j,\beta)}{(\beta,\beta)}\delta,
\dots$ are all generated.

Otherwise suppose that
$\alpha-\frac{2(\alpha_j,\beta)}{(\beta,\beta)}\beta\in
\Phi^-$. Take $k\geq n(-\beta)$,
$$\alpha-\frac{2(\alpha_j,\beta)}{(\beta,\beta)}\beta+\delta+(-\frac{2(\alpha_j,\beta)}{(\beta,\beta)})(-\beta+k\delta)$$
$$=\alpha-k\frac{2(\alpha_j,\beta)}{(\beta,\beta)}\delta+\delta.$$
Then $(\alpha)_0,
\alpha-\frac{2n(-\beta)(\alpha_j,\beta)}{(\beta,\beta)}\delta+\delta,\alpha-\frac{2(n(-\beta)+1)(\alpha_j,\beta)}{(\beta,\beta)}\delta+\delta,
\alpha-\frac{2(n(-\beta)+2)(\alpha_j,\beta)}{(\beta,\beta)}\delta+\delta,
\dots$ are all generated.

The ``gaps" in those sequences can be filled. Consider
$\lambda+m\delta$ and $\lambda+n\delta$  both in a closed set
with $n>m$, then
$$\lambda+(m+1)\delta=\frac{n-m-1}{n-m}(\lambda+m\delta)+\frac{1}{n-m}(\lambda+n\delta).$$
So we conclude that for $\alpha$ with $h(\alpha)=1$ the set $\widehat{\alpha}$ can be
generated.

For $h(\alpha)=k>1$ we can assume that
$\alpha=\gamma_1+\gamma_2+\cdots+\gamma_k$ with $\gamma_i\in
\Psi^+\backslash \Psi_L$ and
$h(\gamma_i)=1$ and $\gamma_1+\gamma_2+\cdots+\gamma_{k-1}\in
\Psi^+$ by Chapter VI \S 1 Proposition 19 in \cite{Bourbaki}. We have shown either that $\widehat{\gamma_k}$ can be generated or that there is an infinite $\delta$-chain through $\gamma_k$ beginning from $n(\gamma_k)$ can be generated (depending on whether $\gamma_k$ lies in $\Phi_M$).
So we can suppose that there exists $N$ when $t\geq N$, $\gamma_k+t\delta$ can be generated.
Also there must be some $l$ such that $\gamma_1+\gamma_2+\cdots+\gamma_{k-1}+l\delta$ is in the generating set since $\gamma_1+\gamma_2+\cdots+\gamma_{k-1}\in \Psi^+_{L,\emptyset}$. Note that
$$\alpha+(N+l)\delta=(\gamma_1+\gamma_2+\cdots+\gamma_{k-1}+l\delta)+\gamma_k+N\delta.$$
Hence $\alpha_0, \alpha+(l+N)\delta, \alpha+(l+N+1)\delta,\dots$ are all generated.
Again by the previous ``gap-filling" method we see that $\widehat{\alpha}$ can be generated.

 If $M\neq \emptyset$ and $L\subsetneq\{\alpha\in
\Pi|(\alpha, M)=0\}$ (resp. $L\subsetneq \Pi$ and $M=\emptyset$), assume that $\{\alpha\in \Pi\backslash L|(\alpha,
M)=0\}=\{\alpha_1,\alpha_2,\dots,\alpha_k\}\neq \emptyset$ (if $M=\emptyset$, let this set  be $\Pi\backslash L$). Also
assume that $\Gamma$ is generated by a finite set $\Lambda.$ Suppose that
$t_i=\max\{p|\alpha_i+p\delta\in \Lambda\}$. Then we show that
$\alpha_i+(t_i+1)\delta$ cannot be generated by $\Lambda$ (which is a
contradiction): if there are two roots $\beta_1,\beta_2$ in
$\Psi_{\emptyset,M}^+$ and $k_1,k_2\geq 0$ such that
$k_1\beta_1+k_2\beta_2=\alpha_i,$ the only possibility is that
$\beta_1=\beta_2=\alpha_i$ and $k_1+k_2=1.$
\end{proof}

Now let $\widetilde{W}$ be of type $\widetilde{A}_2,
\widetilde{B}_2$ or $\widetilde{G}_2$.

\begin{corollary}
The complete lattice $\mathscr{B}(\widetilde{\Phi}^+)$ is algebraic.
\end{corollary}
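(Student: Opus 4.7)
The plan is to deduce the corollary from Lemma \ref{fingencompt}, which guarantees that every finitely generated biclosed set is compact, combined with the explicit classification of finitely generated biclosed sets in Theorem \ref{thm:classifycompact}. It then suffices to display each biclosed set $B$ as a join of finitely generated biclosed subsets.

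First I would apply Theorem \ref{classifybiclosedsetaffine} together with Lemma \ref{tworepresentation} to write $B = w \cdot \widehat{\Psi^+_{L,M}}$ for some $w \in \widetilde{W}$, some positive system $\Psi^+$ of $\Phi$ with simple system $\Delta_\Psi$, and some orthogonal $L, M \subseteq \Delta_\Psi$. The decisive observation is that in each of the rank three cases $\widetilde{A}_2, \widetilde{B}_2, \widetilde{G}_2$, the underlying finite root system $\Phi$ is of type $A_2$, $B_2$, or $G_2$; each has a connected Dynkin diagram with exactly two nodes, so no two distinct simple roots in any simple system of $\Phi$ are orthogonal. Since $W$ acts transitively on positive systems while preserving inner products, this holds for every choice of $\Psi^+$. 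Consequently, whenever $M \neq \emptyset$, the maximal admissible set $L^{\max} := \{\gamma \in \Delta_\Psi : (\gamma, M) = 0\}$ is empty.

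The case analysis then splits cleanly. If $M \neq \emptyset$, the orthogonality $L \perp M$ together with $L \subseteq \Delta_\Psi$ forces $L = \emptyset = L^{\max}$, so by Theorem \ref{thm:classifycompact}, $B$ is itself finitely generated (hence compact) and is trivially a join of one compact element. If $M = \emptyset$, then by Corollary \ref{cor:biclosedsetofinflongword}, $B = \Phi_x$ for some $x \in \overline{\widetilde{W}}$: either $x \in \widetilde{W}$ and $B$ is finite (hence finitely generated), or $x$ is an infinite reduced word $s_1 s_2 s_3 \cdots$ and $B = \bigcup_{n \geq 1} \Phi_{s_1 s_2 \cdots s_n}$ is the union of an ascending chain of finite biclosed sets. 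By Lemma \ref{chainunionintersection}, this union is biclosed and realizes the lattice join of the chain, each term of which is a finite (hence compact) biclosed subset of $B$.

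The main potential obstacle is simply careful bookkeeping: one must verify that the non-orthogonality of simple roots applies to every positive system of $\Phi$ (not only the standard one), which follows immediately from the $W$-equivariance of the setup, and that the classification conventions for $L$ and $M$ match across the cited results. With that point settled, the corollary follows by directly assembling the quoted theorems.
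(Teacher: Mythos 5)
Your proposal is correct and follows essentially the same route as the paper: both use the classification of biclosed sets, note that the rank-two nature of $\Phi$ (no two distinct simple roots orthogonal) forces every biclosed set with $M\neq\emptyset$ to be finitely generated via Theorem \ref{thm:classifycompact}, and express the remaining non-finitely-generated sets $\Phi_w$, $w\in\widetilde{W}_l$, as joins of ascending chains of finite (hence compact) biclosed sets. You merely spell out more explicitly the orthogonality observation that the paper compresses into the phrase ``the rank 2 nature of $\Phi$.''
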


\begin{proof}
By Theorem \ref{classifybiclosedsetaffine}, for every biclosed $B$
in $\widetilde{\Phi}^+$, $I_B$ is a biclosed set in $\Phi$. Now the
rank 2 nature of $\Phi$, Corollary \ref{cor:infinitelongform} and
Theorem \ref{thm:classifycompact} ensures that the only non-finitely
generated biclosed sets are $\Phi_w, w\in \widetilde{W}_l$. But they
are the union of an ascending chain of finite biclosed sets. The other biclosed sets in $\widetilde{\Phi}^+$ are finitely generated, and by Lemma \ref{fingencompt} and Theorem \ref{mainthmlattice} they are themselves compact.
\end{proof}

\section{Braid Operation On The Reflection Orders}\label{sec:braid}

First let $(W,S)$ be an arbitrary Coxeter system and $\Phi,\Phi^+$ be its root system and a positive system. If $s$ is a simple reflection, we denote by $\alpha_s$ the corresponding simple root. Dyer made the following conjecture (\cite{DyerPrivate}) on generalizing the braid relations on $W$ to the reflection orders.

Let $R$ be a finite set contained in $\Phi^+$. We construct a finite undirected graph as follows. The vertices are given by pairs $(R,\leq)$, where $\leq$ is a total order on $R$ obtained by restricting some reflection order to $R$.
Note that  there can be different reflection orders $\leq_1,\leq_2$ such that when restricted to $R$ they are identical. In that case $(R,\leq_1),(R,\leq_2)$ are regarded as the same element (vertex).

To define the edges of the graph, we first define a notion of dihedral sub-string of $(R,\leq)$. Suppose that $(R,\leq)$ is given by
$$\beta_1<\beta_2<\cdots<\beta_n.$$
$\beta_{p+1}<\beta_{p+2}<\cdots<\beta_q$ is called a dihedral substring of $(R,\leq)$ if there exists a maximal dihedral reflection subgroup $W'$ such that $R\cap \Phi_{W'}^+=\{\beta_{p+1},\beta_{p+2},\dots,\beta_q\}.$

Now $(R,\leq_1)$ and $(R,\leq_2)$ have an edge connecting them if and only if $(R,\leq_2)$ is obtained by reversing a dihedral substring of $(R,\leq_1)$. We call the graph thus defined the ``braid graph''.

\begin{conjecture}(Dyer)
The braid graph is connected for arbitrary $R$ and $(W,S)$.
\end{conjecture}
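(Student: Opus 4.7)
My plan is to attempt the conjecture by induction on $|R|$ combined with a reduction to connectivity in the space of reflection orders of $\Phi^+$. Define a \emph{dihedral flip} of a reflection order $\preceq$ to be the reversal of the induced order on $\Phi^+_{W'}$ for some maximal dihedral reflection subgroup $W'$, when this produces another reflection order of $\Phi^+$. If such a flip is applied and $R \cap \Phi^+_{W'}$ happens to be consecutive in $\preceq|_R$, then by definition it realizes a single edge in the braid graph for $R$. The overall goal is to prove that any two orderings of $R$ coming from reflection orders can be linked by a chain of such flips whose restrictions to $R$ are all braid-graph edges.

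For the inductive step, given two vertices $(R,\leq_1),(R,\leq_2)$, choose a pair $\alpha,\gamma\in R$ that are adjacent in $\leq_1$ but appear in reversed relative order in $\leq_2$, and consider the maximal dihedral subgroup $W'$ with $s_\alpha, s_\gamma \in W'$. If $\Phi^+_{W'}\cap R$ is consecutive in $\leq_1$, reversing the corresponding dihedral substring strictly reduces the inversion count between $\leq_1$ and $\leq_2$. If not, there exist roots $\beta\in R\setminus\Phi^+_{W'}$ interleaving with $\Phi^+_{W'}\cap R$ in $\leq_1|_R$; the plan is to push each such $\beta$ aside by first applying an auxiliary dihedral substring reversal for the maximal dihedral subgroup of $\{s_\alpha,s_\beta\}$ or $\{s_\gamma,s_\beta\}$, designed to decrease a suitable secondary measure of interleaving before performing the main reversal. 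Iterating this two-level procedure would give the induction.

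The main obstacle is to ensure the auxiliary flips in the previous paragraph are themselves valid braid-graph edges, that is, that each uses a maximal dihedral subgroup whose $R$-intersection is already consecutive in the current ordering; without additional input, this threatens to be circular. A natural way forward is to use the projective representation of $W$ (as in \cite{labbe}, \cite{DyerLimitRoot}) to organize the flips via the convex geometry of roots in an affine chart transverse to $\Phi^+$, which controls which subsets of $R$ can be made consecutive by some reflection order. In full generality, I expect this conjecture to be equivalent in strength to parts of Dyer's lattice conjecture for $\mathscr{B}(\Phi^+)$, since reflection orders correspond to maximal chains of biclosed sets (Theorem \ref{reflectionorderaffine} in the affine case) and dihedral flips correspond to elementary modifications of such chains. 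In the presently known cases --- finite Coxeter groups, where the statement for $R=\Phi^+$ is essentially Tits' solution to the word problem for $w_0$, the rank-three affine Coxeter groups treated later in this paper, and the locally finite cases --- the available structure on $\mathscr{B}(\Phi^+)$ supplies the missing convex-geometric input, and a uniform treatment beyond these cases appears to require genuinely new ideas.
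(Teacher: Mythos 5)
The statement you are addressing is stated in the paper as a conjecture: the paper does not prove it in general, and only establishes the special case of rank three affine Weyl groups (Theorem \ref{finalmainthm}). Your proposal likewise does not constitute a proof, and you say so yourself: the step in which the ``auxiliary flips'' must be shown to be braid-graph edges is left unresolved and, as you note, threatens to be circular. So the honest assessment is that there is a genuine gap, and it sits exactly where you locate it. Two further points make the gap concrete. First, an edge of the braid graph requires both endpoints to be vertices, i.e.\ restrictions of reflection orders; after reversing a dihedral substring of $(R,\leq_1)$ you never verify that the resulting total order on $R$ is again induced by some reflection order, and for infinite maximal dihedral subgroups this is a nontrivial global condition (in the affine case it is controlled by which biclosed sets occur as initial sections, cf.\ Theorem \ref{reflectionorderaffine} and Lemma \ref{initialsectionstru}). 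Second, your progress measure only decreases when $R\cap\Phi^+_{W'}$ is already consecutive in the current order, and the mechanism you propose for achieving consecutiveness is precisely the unproven part.

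It is also worth recording that the method the paper actually uses for the rank-three affine case is globally, not locally, organized, and is quite different from your flip-and-reduce-inversions scheme. There one first shows (Lemma \ref{initialsectionstru}) that every reflection order of $\widehat{\Phi}$ has $\widehat{\Psi^+}$ as an initial section for some positive system $\Psi^+$ of the underlying finite root system; one then inducts on $|\Psi_1^+\cap\Psi_2^+|$, handling the base case by splitting $R$ into its parts inside and outside $\widehat{\Psi^+}$ and reducing to braid moves on reduced expressions of finite group elements (Lemmas \ref{lem:finitecofinitereflorder} and \ref{findanelement}), and handling the inductive step by a single reversal of an infinite dihedral string attached to a simple root of $\Psi_1^+$. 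All of this leans on the classification of biclosed sets in the affine positive system and on the completeness of $\overline{\widetilde{W}}$ as a meet semilattice; your instinct that the missing ingredient in general is of the same nature as Dyer's lattice conjecture for $\mathscr{B}(\Phi^+)$ is reasonable, but in your write-up it remains an expectation rather than an argument.
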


For $W$ finite the conjecture is reduced to the fact that any two reduced expressions of an element can be connected by braid moves.

We call $(R,\leq_1,\leq_2,\Phi_w)$  a \emph{finite quadruple} if

(i) $R\subset \Phi^+$ and $w\in W$;

(ii) $\leq_1,\leq_2$ are two reflection orders such that when restricted on $R$, one obtains
$$\beta_1<_1\beta_2<_1\cdots<_1\beta_m, \gamma_1<_2\gamma_2<_2\cdots<_2\gamma_m$$
where $R=\{\beta_1,\beta_2,\dots,\beta_m\}=\{\gamma_1,\gamma_2,\dots,\gamma_m\}$;

(iii) there exists a reduced expression $s_1s_2\cdots s_k$ of $w$ such that
$$\beta_i=s_1s_2\cdots s_{j_{i}-1}(\alpha_{s_{j_{i}}}), j_1<j_2<\cdots<j_m$$
and there exists a reduced expression $r_1r_2\cdots r_k$ of $w$ such that
$$\gamma_i=r_1r_2\cdots r_{l_{i}-1}(\alpha_{r_{l_{i}}}), l_1<l_2<\cdots<l_m.$$

We call $(R,\leq_1,\leq_2,\Phi_w')$  a cofinite quadruple if (i) and (ii) as above hold and

(iii)$'$ there exists a reduced expression $s_1s_2\cdots s_k$ of $w$ such that
$$\beta_{m+1-i}=s_1s_2\cdots s_{j_{i}-1}(\alpha_{s_{j_{i}}}), j_1<j_2<\cdots<j_m$$
and there exists a reduced expression $r_1r_2\cdots r_k$ of $w$ such that
$$\gamma_{m+1-i}=r_1r_2\cdots r_{l_{i}-1}(\alpha_{r_{l_{i}}}), l_1<l_2<\cdots<l_m.$$

Let $\rho$ be a root in $\Phi^+$ and $\leq$ be a reflection order. Define $I_{\leq \rho}:=\{\gamma\in \Phi^+|\gamma\leq \rho\}$.

\begin{lemma}\label{lem:finitecofinitereflorder}
Let $W$ be an affine Weyl group
and  $(R,\leq_1,\leq_2,\Phi_w)$ be a finite quadruple (resp. $(R,\leq_1,\leq_2,\Phi_w')$ be a cofinite quadruple).
Suppose that $I$ is an initial section of  both $\leq_1$ and $\leq_2$ with $I_{\leq_1\beta_m}, I_{\leq_2\gamma_m}, \Phi_w\subset I$ (resp. $I_{\leq_1\beta_1}, I_{\leq_2\gamma_1}, \Phi_w'\supset I$).
Let $R'$ be another subset of $\Phi^+$ such that $R'\subset \Phi^+\backslash I$ (resp. $R'\subset I$) and $\leq_1|_{R'}=\leq_2|_{R'}$.   Then $(R\uplus R',\leq_1)$ and $(R\uplus R',\leq_2)$ are in the same connected component of the braid graph.
\end{lemma}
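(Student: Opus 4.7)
The two vertices $(R \uplus R', \leq_1)$ and $(R \uplus R', \leq_2)$ of the braid graph differ only in the order on $R$: the initial section hypothesis on $I$ forces $R$ and $R'$ to be completely separated in both orderings (with $R'$ above $R$ in the finite case and below in the cofinite case), and $\leq_1|_{R'} = \leq_2|_{R'}$ by assumption. The cofinite case reduces to the finite case via the duality $\leq \mapsto \leq^{\mathrm{op}}$: reflection orders are closed under reversal, the roles of $I$ and $I^c$ are interchanged, and a direct check turns the cofinite quadruple $(R, \leq_1, \leq_2, \Phi_w')$ into a finite quadruple $(R, \leq_1^{\mathrm{op}}, \leq_2^{\mathrm{op}}, \Phi_w)$ satisfying the finite hypotheses with $I$ replaced by $I^c$. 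Thus only the finite case needs to be handled.

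\textbf{Matsumoto chain lifted to reflection orders.} In the finite case, $\leq_1|_{\Phi_w}$ and $\leq_2|_{\Phi_w}$ are inversion sequences of two reduced expressions of $w$, so by Matsumoto's theorem they are joined by a finite sequence of braid moves. Each braid move corresponds to a (necessarily finite) dihedral reflection subgroup $W_i' = v_i \langle s, t\rangle v_i^{-1}$ of the affine Weyl group with $m_{st} < \infty$, whose $m_{st}$ positive roots $\Phi_{W_i'}^+$ lie entirely in $\Phi_w \subset I$; the move reverses the order of $\Phi_{W_i'}^+$ as a consecutive block in the inversion sequence of $w$. I lift this to a chain of reflection orders $\leq^{(0)} = \leq_1, \leq^{(1)}, \ldots, \leq^{(N)}$ on $\Phi^+$ by declaring $\leq^{(i+1)}$ to agree with $\leq^{(i)}$ except that the block $\Phi_{W_i'}^+$ is reversed. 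Two facts then need to be checked: (a) each $\leq^{(i+1)}$ is again a reflection order, which follows from the standard local modification property once one knows that $W_i'$ is maximal dihedral; and (b) every finite dihedral reflection subgroup in an affine Weyl group is maximal. For (b), the root system of $W_i'$ spans a $2$-dimensional subspace of $V \subset V' = V \oplus \mathbb{R}\delta$ that cannot have any $\delta$-component (otherwise the plane would carry an infinite $\delta$-chain, contradicting finiteness), and the affine roots in a plane of $V$ are precisely those of $\Phi$, all already contained in $W_i'$.

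\textbf{Restriction to the braid graph.} After the full chain, $\leq^{(N)}$ agrees with $\leq_2$ on $\Phi_w$ (by the conclusion of Matsumoto) and with $\leq_1$ on $\Phi^+ \setminus I$ (by construction, since the modifications are confined to $I$); hence $\leq^{(N)}|_{R \uplus R'} = \leq_2|_{R \uplus R'}$. Each single step $\leq^{(i)} \to \leq^{(i+1)}$ descends to an edge in the braid graph of $R \uplus R'$: the reversal is confined to $\Phi_{W_i'}^+ \subset I$ so it does not affect $R' \subset \Phi^+ \setminus I$, and $(R \uplus R') \cap \Phi_{W_i'}^+ = R \cap \Phi_{W_i'}^+$ is precisely the dihedral substring of $(R \uplus R', \leq^{(i)}|_{R \uplus R'})$ realized by the maximal dihedral $W_i'$. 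Concatenating the $N$ edges yields a path from $(R \uplus R', \leq_1)$ to $(R \uplus R', \leq_2)$ in the braid graph. The principal technical obstacle is point (a): rigorously justifying that reversing the positive roots of a maximal dihedral reflection subgroup in an arbitrary reflection order of the infinite positive system $\widetilde{\Phi}^+$ yields another reflection order. This requires combining the fact that the positive roots of a dihedral reflection subgroup form a consecutive block in any reflection order with the reflection-order axiom $\alpha \prec a\alpha + b\beta \prec \beta$ applied across the boundary of that block.
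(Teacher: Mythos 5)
Your high-level strategy (run the Matsumoto/braid-move chain on reduced expressions of $w$, realize each move as a dihedral substring reversal, and observe that $R'$ is untouched) is the same as the paper's, but your opening step contains a genuine gap. You assert that ``$\leq_1|_{\Phi_w}$ and $\leq_2|_{\Phi_w}$ are inversion sequences of two reduced expressions of $w$.'' The definition of a finite quadruple does not give this: it only guarantees the \emph{existence} of reduced expressions of $w$ whose inversion sequences restrict to $\leq_1|_R$ and $\leq_2|_R$; nothing forces $\Phi_w$ to be an initial section of $\leq_1$, nor the restriction $\leq_1|_{\Phi_w}$ to be realizable by a reduced expression. (Already in type $A_2$ with $w=s_\alpha s_\beta$, the reflection order $\beta\prec\alpha+\beta\prec\alpha$ restricts on $\Phi_w=\{\alpha,\alpha+\beta\}$ to $\alpha+\beta\prec\alpha$, which is not the inversion order of any reduced expression of $w$.) Without this, your Matsumoto chain has nothing to start from. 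The paper's proof exists precisely to repair this: using the bijection between reflection orders and maximal chains of $\mathscr{B}(\widehat{\Phi})$ (Theorem \ref{reflectionorderaffine}) it builds a reflection order $\leq_1'$ having $\Phi_{s_1},\dots,\Phi_w$ and $I$ as initial sections, then splices it with $\leq_1$ along the common initial section $I$ (via \cite{dyerhecke} Remark 2.10) to get $\leq_1''$ which agrees with $\leq_1$ on $\Phi^+\setminus I\supset R'$ and whose restriction to $R$ still equals $\leq_1|_R$ because of condition (iii) of the quadruple. Only then are the braid moves applied. This splicing step is the essential content you are missing, and it is exactly why the hypotheses single out the initial section $I$.

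A secondary, non-fatal error: your claim (b) that every finite dihedral reflection subgroup of an affine Weyl group is maximal is false. In $\widetilde{G}_2$ the subgroup generated by $s_\alpha$ and $s_{3\alpha+2\beta}$ is finite dihedral of order $4$ but is properly contained in the dihedral reflection subgroup on all twelve roots of the plane $V$. What saves the argument is that the subgroups arising from braid moves are rank-two \emph{parabolic} subgroups $v\langle s,t\rangle v^{-1}$, whose root systems equal $\Phi\cap\operatorname{span}(\alpha_s,\alpha_t)$ and which are therefore maximal dihedral for that reason, not because they are finite. Your final ``principal technical obstacle'' (that reversing such a block yields another reflection order) is handled in the paper by working throughout with maximal chains of biclosed sets, where a braid move simply replaces one saturated chain from $\Phi_{s_1\cdots s_{p}}$ to $\Phi_{s_1\cdots s_{q}}$ by another; I would encourage you to phrase the lift that way rather than as an ad hoc block reversal of a total order.
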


\begin{proof}
Suppose that $(R,\leq_1,\leq_2,\Phi_w)$ is a finite quadruple. Since $W$ is affine, the reflection orders are in bijection with the maximal totally ordered subsets of $\mathscr{B}(\widehat{\Phi})$ by Theorem \ref{reflectionorderaffine}.

Therefore we can find a reflection order $\leq_1'$ whose initial sections include $$\Phi_{s_1},\Phi_{s_1s_2},\dots,\Phi_{s_1s_2\cdots s_k}=\Phi_w,I.$$ When restricted to $R$, $\leq_1=\leq_1'$. Since $I$ is an initial section of both $\leq_1$ and $\leq_1'$, by \cite{dyerhecke} Remark 2.10 there exists a unique reflection order $\leq_1''$ such that $\leq_1''|_{I}=\leq_1'|_{I}$ and $\leq_1''|_{\Phi^+\backslash I}=\leq_1|_{\Phi^+\backslash I}$. Note that $R'\subset (\Phi^+\backslash I)$ and $I_{\leq_1\beta_m}\subset I$ (so $\alpha<_1\beta$ for any $\alpha\in R, \beta\in R'$). Therefore one has that $(R\uplus R',\leq_1)=(R\uplus R', \leq_1'')$.

Now we apply to $\leq_1''$ the reversals of dihedral strings  corresponding to the braid moves converting the reduced expression $s_1s_2\cdots s_k$ to $r_1r_2\cdots r_k$  and obtain a new reflection order $\leq_2'.$ Each braid move is equivalent to reversing the substring of the positive roots of a (possibly non-standard) maximal dihedral parabolic subgroup.  Therefore one sees that $(R\uplus R', \leq_1'')$ is connected to
$(R\uplus R', \leq_2')$ in the braid graph.

It is clear that $\leq_2'|_R=\leq_2|_R.$ Since $R'\subset\Phi^+\backslash I\subset \Phi^+\backslash \Phi_w$, $\leq_2'|_{R'}=\leq_1''|_{R'}=\leq_1|_{R'}$. Hence $\leq_2'|_{R'}=\leq_2|_{R'}$. Finally for any $\alpha\in R$ and $\beta\in R'$,
one has $\alpha<_2\beta$ and $\alpha<_2'\beta$. So we have that $(R\uplus R', \leq_2')=(R\uplus R', \leq_2)$. Therefore we are done.

The case where $(R,\leq_1,\leq_2,\Phi_w')$ is a cofinite quadruple is proved in the essentially same way and is omitted.
\end{proof}

\begin{lemma}\label{findanelement}
Let $\leq$ be a reflection order and $\gamma_1\leq \gamma_2\leq \dots \leq \gamma_t$ be a finite chain of positive roots under $\leq.$ Suppose that $I_{\leq \gamma_t}=\Phi_w$ for some $w\in \overline{W}$ (resp. $I_{\leq \gamma_1}=\Phi_w'$ for some $w\in \overline{W}$). Then there exists $x\in W$ such that $x\leq w$ and $x$ has a reduced expression $s_1s_2\cdots s_l$ with $\gamma_i=s_1s_2\cdots s_{j_{i}-1}(\alpha_{s_{j_{i}}}), 1\leq i\leq t, j_1<j_2<\dots<j_t$
(resp. $\gamma_{t+1-i}=s_1s_2\cdots s_{j_{i}-1}(\alpha_{s_{j_{i}}}), 1\leq i\leq t, j_1<j_2<\dots<j_t$).
\end{lemma}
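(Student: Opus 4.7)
The strategy rests on two structural facts available here. First, by Theorem \ref{reflectionorderaffine}(1)--(2), the initial sections of $\leq$ form a maximal totally ordered subset of $\mathscr{B}(\widehat{\Phi})$ in which consecutive members differ by exactly one root. Second, reduced expressions of $x\in\widetilde{W}$ are in bijection with maximal chains of finite biclosed sets (inversion sets) from $\emptyset$ up to $\Phi_x$, where each cover adds one root and corresponds to right-multiplication by a simple reflection; the root added at step $k$ is exactly $\beta_k=s_1\cdots s_{k-1}(\alpha_{s_k})$ in the corresponding reduced expression $s_1\cdots s_l$.

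For the finite case, each $\gamma_i\leq\gamma_t$ lies in $I_{\leq\gamma_t}=\Phi_w$, and inside the maximal chain of $\leq$ each $\gamma_i$ is precisely the single root added at the cover $I_{<\gamma_i}\lessdot I_{\leq\gamma_i}$. When $w\in\widetilde{W}$, the chain up to $\Phi_w$ lies entirely among finite biclosed sets, so taking $x=w$ converts this chain into a reduced expression of $w$ with $\gamma_i$ at position $j_i$, and $j_1<\cdots<j_t$ is forced by $\gamma_1<\cdots<\gamma_t$. When $w\in\widetilde{W}_l$, I first use $\Phi_w=\bigcup\{\Phi_u\mid u\in\widetilde{W},\,u\leq w\}$ together with Lemma \ref{lem:boundingrp} to produce a finite $y\leq w$ containing every $\gamma_i$. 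Then, using the maximal chain of $\leq$ restricted to the finite biclosed subsets of $\Phi_w$, together with Lemma \ref{chaininwbar} and the affine classification (Theorem \ref{thm:bijection}), I enlarge $y$ if necessary to some $x\leq w$ whose inversion set appears in the chain of $\leq$; the sub-chain from $\emptyset$ up to $\Phi_x$ then yields a reduced expression of $x$ realizing the $\gamma_i$'s at the required positions.

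The cofinite case is handled by the dual argument applied to the reverse reflection order $\leq^{*}$ of $\leq$, which is again a reflection order. Under $\leq^{*}$ the chain reverses to $\gamma_t<^{*}\cdots<^{*}\gamma_1$, and the hypothesis $I_{\leq\gamma_1}=\Phi_w'$ translates into $I_{\leq^{*}\gamma_1}=\Phi_w\cup\{\gamma_1\}$, which by the rank-3 affine dichotomy (Theorem \ref{thm:bijection} and the remark following Lemma \ref{lem:upisnotinbottom}) is identifiable with the inversion set of a suitable element of $\overline{\widetilde{W}}$ related to $w$. Applying the finite-case argument to $\leq^{*}$ and this element yields $x\in\widetilde{W}$ together with a reduced expression having $\gamma_{t+1-i}$ at positions $j_1<\cdots<j_t$, and the relation $x\leq w$ is extracted from the explicit identification. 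The main obstacle is the infinite-$w$ sub-case of the finite argument: since then no finite initial section of $\leq$ can contain $\gamma_t$, one must produce a reduced expression of $x$ whose root sequence is compatible with $\leq$ on $\{\gamma_1,\ldots,\gamma_t\}$ even though $\Phi_x$ is not itself an initial section of $\leq$; the affine rank-3 classification of biclosed sets and the semilattice structure of $\overline{\widetilde{W}}$ are the crucial tools for handling this.
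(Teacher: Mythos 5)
There is a genuine gap in the finite case when $w$ is an infinite reduced word, and it is exactly the point you flag at the end as ``the main obstacle'' without actually resolving it. Your plan is to find a finite $y\leq w$ with all $\gamma_i\in\Phi_y$ and then to ``enlarge $y$ \dots\ to some $x\leq w$ whose inversion set appears in the chain of $\leq$,'' reading the reduced expression off the sub-chain of initial sections of $\leq$ below $\Phi_x$. But if some $I_{\leq\gamma_i}$ is infinite (which is what happens when $w\in\widetilde{W}_l$, e.g.\ for $i=t$), then no finite initial section of $\leq$ contains $\gamma_i$ at all: the union of the finite initial sections of $\leq$ is itself an initial section, and $\gamma_i$ lies outside it. So no enlargement of $y$ inside the chain of $\leq$ can produce such an $x$; indeed your last sentence concedes that $\Phi_x$ cannot be an initial section of $\leq$, contradicting the mechanism proposed two sentences earlier. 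Note also that merely knowing $\gamma_1,\dots,\gamma_t\in\Phi_y$ for some finite $y\leq w$ does not yield a reduced expression of $y$ listing them in the order prescribed by $\leq$: the orders realizable on $\Phi_y$ by reduced expressions of $y$ are the restrictions of reflection orders having $\Phi_y$ as an initial section, and $\leq$ is not one of them.

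The missing idea is that one must \emph{replace} $\leq$ by a different reflection order agreeing with it on $\{\gamma_1,\dots,\gamma_t\}$. The paper lets $m$ be the largest index with $I_{\leq\gamma_m}$ finite and argues by reverse induction on $m$: writing $I_{\leq\gamma_{m+1}}=\Phi_u$ with $u$ an infinite reduced word, it picks a finite prefix $v'$ of $u$ whose inversion set contains $\gamma_{m+1}$ but none of $\gamma_{m+2},\dots,\gamma_t$ (possible since these lie outside $\Phi_u$), inserts $\Phi_{v'}$ into the chain $I_{\leq\gamma_1}\subset\cdots\subset I_{\leq\gamma_m}\subset\Phi_{v'}\subset I_{\leq\gamma_{m+2}}\subset\cdots$, extends to a maximal chain of biclosed sets, and invokes Theorem \ref{reflectionorderaffine} to get a new reflection order $\leq'$ with the same restriction to $\{\gamma_1,\dots,\gamma_t\}$ but with one more $\gamma_i$ in a finite initial section; since the conclusion depends only on that restriction, finitely many such modifications reduce to the base case $m=t$, where your argument (take $x=w$ and read the expression off the finite initial sections) is correct and matches the paper. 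For the cofinite half the paper simply dualizes; your reduction via the reversed order is workable in spirit, but the appeal to the rank-3 dichotomy and Lemma \ref{lem:distancenot1} is unnecessary and would restrict a lemma that the paper proves for arbitrary affine Weyl groups.
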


\begin{proof}
Let $m$ be maximal such that $I_{\leq\gamma_m}$ is finite. Thus $I_{\leq\gamma_m}$ is equal to $\Phi_v, v\in W$. (In case $I_{\leq\gamma_i}$ are all infinite for $1\leq i\leq t$, let $m=0$ and $v=e.$) We prove the lemma by reverse induction.

Suppose that $m=t$.
Then $I_{\leq\gamma_t}$ is finite biclosed and is equal to $\Phi_w$ for some $w\in W$. We show that $w$ itself can be chosen (i.e. $x=w$).
Let this initial section $I_{\leq\gamma_t}$ (equipped with the reflection order $\leq$) be $\beta_0<\beta_1<\dots<\beta_{l(w)}$ with $\beta_{i_j}=\gamma_j,1\leq j\leq t.$ (In particular $i_t=l(w)$.)
This gives rise to a reduced expression of $s_1s_2\cdots s_{l(w)}$  such that
$$\beta_i=s_1s_2\cdots s_{i-1}(\alpha_{s_i}),1\leq i\leq l(w)$$
and therefore
$$\gamma_i=s_1s_2\cdots s_{j_{i}-1}(\alpha_{s_{j_{i}}}), 1\leq i\leq t, j_1<j_2<\dots<j_t.$$

Now assume that the lemma holds for $m\geq k$. Let $m=k-1$.

Therefore  $I_{\leq\gamma_{k}}=\Phi_u$ where $u$ is an infinite reduced word. Since $\gamma_{k-1}<\gamma_{k}$, $\Phi_v:=I_{\leq\gamma_{k-1}}\subset I_{\leq\gamma_{k}}=\Phi_u$ and $v$ is a finite prefix of $u$ such that $\gamma_{k}\not\in\Phi_v$.
So $u$ has a finite prefix $v'$ such that $v$ is a prefix of $v'$ and $\gamma_{k}\in \Phi_{v'}$. Note that $\gamma_{k+1},\dots,\gamma_{t}\not\in \Phi_{u}$ and therefore they are not contained in $\Phi_{v'}$.
Also note that $\Phi_{v'}\subset \Phi_u$ and therefore $\Phi_{v'}\subset I_{\leq \gamma_i}, i\geq k.$
Then we have a finite chain of biclosed sets:
$$I_{\leq \gamma_1}\subset I_{\leq \gamma_2}\subset \cdots \subset I_{\leq\gamma_{k-1}}(=\Phi_v)\subset \Phi_{v'}\subset I_{\leq \gamma_{k+1}}\subset \cdots \subset I_{\gamma_t}.$$
Extend such a finite chain to a maximal chain of biclosed sets. This maximal chain of biclosed sets gives a reflection order $\leq'$ by Theorem \ref{reflectionorderaffine} (1).
When restricted to $\{\gamma_1,\gamma_2,\dots,\gamma_t\}$, $\leq'$ is the same as $\leq$. However $I_{\leq'\gamma_{k}}$ is finite and $I_{\leq'\gamma_{k+1}}$ is infinite.
(In particular if $m=t-1$, then $\Phi_{v'}\subset \Phi_u=I_{\leq\gamma_t}=\Phi_w$. Therefore $I_{\leq'\gamma_t}\subset \Phi_{v'}\subset \Phi_w$.)
Therefore the assertion follows by induction.

The other dual assertion of the lemma can be proved in the same manner and is omitted.
\end{proof}

Now let $\widetilde{W}$ be a rank 3 affine Weyl group with $\Phi,\widehat{\Phi}$ defined as in the previous sections.

\begin{lemma}\label{lem:distancenot1}
Let $u,v\in \overline{\widetilde{W}}$ and $\Phi_v'\supset \Phi_u$. Then $|\Phi_v'\backslash\Phi_u|\neq 1.$
\end{lemma}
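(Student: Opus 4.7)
The plan is to argue by contradiction. Suppose $\Phi_v'\setminus\Phi_u=\{\alpha\}$ for some single root $\alpha$; since $\Phi_u\subset\Phi_v'$ is equivalent to $\Phi_u\cap\Phi_v=\emptyset$, the hypothesis amounts to the ``near-partition''
$$\widehat{\Phi}=\Phi_u\sqcup\Phi_v\sqcup\{\alpha\}.$$
The strategy is to extract enough rigidity from the classification of biclosed sets coming from $\overline{\widetilde W}$ (Theorem \ref{thm:bijection}) to preclude this configuration.

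First I would run a $\delta$-string analysis. For any $\gamma\in\Phi$, closedness of $\Phi_u$ combined with the convex identity
$$\gamma+k\delta=\tfrac{k_2-k}{k_2-k_1}(\gamma+k_1\delta)+\tfrac{k-k_1}{k_2-k_1}(\gamma+k_2\delta)\qquad(k_1\le k\le k_2)$$
shows that $\{k:\gamma+k\delta\in\Phi_u\}$ is an interval in $\mathbb{Z}_{\ge 0}$ (or $\mathbb{Z}_{\ge 1}$), and the same holds for $\Phi_v$. Since $\widehat{\gamma}\setminus(\Phi_u\cup\Phi_v)$ contains at most one element, at least one of the two intervals is unbounded above, so $\gamma\in I_{\Phi_u}\cup I_{\Phi_v}$; conversely two disjoint infinite intervals in $\widehat{\gamma}$ cannot coexist, so $I_{\Phi_u}\cap I_{\Phi_v}=\emptyset$. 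Therefore
$$|I_{\Phi_u}|+|I_{\Phi_v}|=|\Phi|.$$

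Next I would identify these cardinalities using the classification. Theorem \ref{thm:bijection} writes $\Phi_u=w_1\cdot\widehat{\Phi^+_{L_1,\emptyset}}$ for some $w_1\in\widetilde W$ and $L_1\subset\Pi$, and Theorem \ref{reflectionorderaffine}(3) places $\Phi_u$ in the same block as $\widehat{(\Psi_1^+)_{L_1',\emptyset}}$, where $\Psi_1^+=\pi(w_1)\Phi^+$ and $L_1'=\pi(w_1)L_1$; analogous data $w_2,L_2,\Psi_2^+,L_2'$ describe $\Phi_v$. Since biclosed sets in the same block differ by only finitely many roots, $I_{\Phi_u}=(\Psi_1^+)_{L_1',\emptyset}$, so $|I_{\Phi_u}|=|\Psi_1^+|-|(\Psi_1)_{L_1'}\cap\Psi_1^+|$, and similarly for $v$. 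The identity $|I_{\Phi_u}|+|I_{\Phi_v}|=|\Phi|=2|\Psi_1^+|$ forces $(\Psi_i)_{L_i'}\cap\Psi_i^+=\emptyset$ for $i=1,2$, and hence $L_1'=L_2'=\emptyset$.

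Finally, in this extremal case the reflection subgroup attached to the block of $\widehat{\Psi_i^+}$ in Theorem \ref{classifybiclosedsetaffine}(2) is trivial, so the block is a singleton and $\Phi_u=\widehat{\Psi_1^+}$, $\Phi_v=\widehat{\Psi_2^+}$ exactly. Combined with $I_{\Phi_u}\sqcup I_{\Phi_v}=\Phi$ this forces $\Psi_2^+=-\Psi_1^+$, whence $\Phi_u\cup\Phi_v=\widehat{\Psi_1^+}\cup\widehat{-\Psi_1^+}=\widehat{\Phi}$, contradicting the existence of $\alpha$. The main subtlety I expect to encounter is justifying cleanly that $I_{\Phi_u}$ can be read off from the block representative $\widehat{(\Psi_i^+)_{L_i',\emptyset}}$ and that the cardinality identity genuinely pins down $L_i'=\emptyset$; once these are in place the remainder is a short comparison of positive systems.
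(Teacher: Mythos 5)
Your argument is correct, but it takes a genuinely different route from the paper's. The paper argues locally on $\Phi_v'$: writing $I_{\Phi_v}=\Psi^+_{L,\emptyset}$ via Theorem \ref{thm:bijection}, it splits into the case $L\neq\emptyset$, where $\Phi_v'$ contains infinite $\delta$-chains through some $\beta$ and $-\beta$ which survive the removal of a single root and can never lie in an inversion set (Lemma \ref{closedaffine} applied to a finite prefix), and the case $L=\emptyset$, where $\Phi_v'=\widehat{\Psi^-}$ and the explicit computation $(M-1)(-\alpha+q\delta)+M(\alpha+p\delta)=\alpha+(Mp+(M-1)q)\delta$ shows that deleting one root destroys coclosedness of the complement. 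Your route is global: from the near-partition $\widehat{\Phi}=\Phi_u\sqcup\Phi_v\sqcup\{\alpha\}$ you extract the counting identity $|I_{\Phi_u}|+|I_{\Phi_v}|=|\Phi|$ via the interval structure of $\delta$-strings in disjoint closed sets, and then Theorem \ref{reflectionorderaffine}(3) and Theorem \ref{classifybiclosedsetaffine}(2) force $L_1'=L_2'=\emptyset$, singleton blocks, and hence $\Phi_u=\widehat{\Psi_1^+}$ and $\Phi_v=\widehat{-\Psi_1^+}$, whose union already exhausts $\widehat{\Phi}$. The two subtleties you flagged are real but unproblematic: $I_B$ is invariant under finite symmetric difference, so it may be computed on the block representative, and a nonempty $L_i'$ contributes at least one positive root to $(\Psi_i)_{L_i'}\cap\Psi_i^+$, so the equality case does pin down $L_i'=\emptyset$. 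Your version is longer, but it buys a slightly stronger conclusion ($|\Phi_v'\setminus\Phi_u|$ is either $0$ or infinite) and, like the paper's argument, makes no essential use of the rank-3 hypothesis; the paper's is shorter because in the end only the affine positive systems $\widehat{\Psi^\pm}$ need the explicit one-root computation.
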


\begin{proof}
The lemma is an easy consequence of Theorem \ref{thm:bijection}. For an infinite reduced word $w$, $I_{\Phi_w}=\Psi^+_{L,\emptyset}$ for some positive system $\Psi^+$ in $\Phi$ and a subset  $L$ of its simple system. In particular, if $\alpha+m\delta\in \Phi_w$, then $\widehat{-\alpha}\cap \Phi_w=\emptyset$.
However if $L\neq \emptyset$, for some $\beta\in \Phi$, the set $\Phi_w'(=\widehat{\Psi^-_{\emptyset, -L}})$ must contain infinite $\delta$ chains through both $\beta$ and $-\beta$. Therefore removing one root from $\Phi_w'$ will not make it an inversion set. If $L=\emptyset$, then $\Phi_w'=\widehat{\Psi^-}$. We show that removing one root from $\Phi_w'$ in this case never creates a coclosed set. Suppose that $\alpha\in \Psi^-$ and the root $\alpha+p\delta$ is removed. Let $B=\widetilde{\Phi}^+\backslash (\widehat{\Psi^-}\backslash \{\alpha+p\delta\})$. Then $\widehat{-\alpha}\cup \{\alpha+p\delta\}\subset B$ and $\widehat{\alpha}\cap B=\{\alpha+p\delta\}$. The computation $(M-1)(-\alpha+q\delta)+M(\alpha+p\delta)=\alpha+(Mp+(M-1)q)\delta, M>0$ shows that $B$ is not closed.
\end{proof}

\begin{lemma}\label{initialsectionstru}
Let $\leq$ be a reflection order on $\widehat{\Phi}$. Then there exists some positive system $\Psi^+$ of $\Phi$ such that $\widehat{\Psi^+}$ is an initial section of $\leq$.
\end{lemma}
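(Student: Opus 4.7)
The plan is to exploit the rank-three classification of biclosed sets together with the maximality of the chain of initial sections. By Theorem \ref{reflectionorderaffine}(1), the collection $\mathcal{C}$ of initial sections of $\leq$ is a maximal chain in $\mathscr{B}(\widehat{\Phi})$, and by Theorem \ref{thm:bijection} together with Lemma \ref{lem:infinitelongbasic}(d) every $B\in\mathcal{C}$ has the form $\Phi_x$ (\emph{lower type}) or $\Phi_y'$ (\emph{upper type}) for some $x,y\in\overline{\widetilde{W}}$; the remark following Lemma \ref{lem:upisnotinbottom} identifies the overlap of these two classes as exactly the sets $\widehat{\Psi^+}$ with $\Psi^+$ a positive system of $\Phi$. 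Thus it suffices to exhibit a member of $\mathcal{C}$ which is simultaneously of both types.

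I would split $\mathcal{C}=\mathcal{C}_{\mathrm{low}}\cup\mathcal{C}_{\mathrm{up}}$ accordingly and set $L^*:=\bigcup\mathcal{C}_{\mathrm{low}}$ and $U^*:=\bigcap\mathcal{C}_{\mathrm{up}}$. Both sets are biclosed by Lemma \ref{chainunionintersection}, and Lemma \ref{lem:upisnotinbottom} forces $\Phi_x\subseteq\Phi_y'$ for every pair $\Phi_x\in\mathcal{C}_{\mathrm{low}}$, $\Phi_y'\in\mathcal{C}_{\mathrm{up}}$, so each of $L^*,U^*$ is comparable with every element of $\mathcal{C}$; maximality then places both in $\mathcal{C}$ with $L^*\subseteq U^*$. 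Applying Lemma \ref{chaininwbar} to the ascending chain in $\overline{\widetilde{W}}$ underlying $\mathcal{C}_{\mathrm{low}}$ yields $L^*=\Phi_{w_0}$ for some $w_0\in\overline{\widetilde{W}}$; applied instead to the ascending chain formed by the $y_B$'s (where $\Phi_{y_B}'$ runs over $\mathcal{C}_{\mathrm{up}}$, ordered so that smaller $B$ gives larger $y_B$), it gives $\widehat{\Phi}\setminus U^*=\bigcup_B\Phi_{y_B}=\Phi_{y^*}$ for some $y^*$, hence $U^*=\Phi_{y^*}'$.

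Once $L^*=U^*$ is established, the equality $\Phi_{w_0}=\Phi_{y^*}'$ combined with the remark above identifies $L^*$ with some $\widehat{\Psi^+}$, finishing the argument. To prove $L^*=U^*$, suppose for contradiction that $L^*\subsetneq U^*$. Lemma \ref{lem:distancenot1} rules out $|U^*\setminus L^*|=1$, so there exists $\alpha\in U^*\setminus L^*$ such that $T:=L^*\cup\{\alpha\}$ lies strictly between $L^*$ and $U^*$. Every element of $\mathcal{C}$ is comparable with $T$ (elements contained in $L^*$ are contained in $T$; elements containing $U^*$ contain $T$), so Theorem \ref{reflectionorderaffine}(2)---which promotes $\mathcal{C}$ to a maximal chain in $(\mathcal{P}(\widehat{\Phi}),\subset)$---forces $T\in\mathcal{C}$. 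Being biclosed, $T$ must be of the form $\Phi_z$ (impossible as $T\supsetneq L^*=\sup\mathcal{C}_{\mathrm{low}}$) or $\Phi_z'$ (impossible as $T\subsetneq U^*=\inf\mathcal{C}_{\mathrm{up}}$), a contradiction. The decisive ingredient---and the only point where the rank-three hypothesis enters in an essential way---is Lemma \ref{lem:distancenot1}: without its ``no one-root gap'' property a maximal chain could in principle leap from a lower to an upper set by the addition of a single root and bypass every $\widehat{\Psi^+}$.
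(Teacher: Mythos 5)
Your argument is correct and follows essentially the same route as the paper: both split the maximal chain of initial sections into inversion sets and complements of inversion sets, form the union of the former and the intersection of the latter, place both in the chain by maximality, identify them via Lemma \ref{chaininwbar}, and use maximality in $(\mathcal{P}(\widehat{\Phi}),\subset)$ together with Lemma \ref{lem:distancenot1} to force the two to coincide in a set that is simultaneously $\Phi_{w}$ and $\Phi_{y}'$, hence some $\widehat{\Psi^+}$. The only difference is cosmetic: the paper shows directly that nothing lies strictly between the two sets and then excludes a one-root gap, whereas you exclude the one-root gap first and derive a contradiction from an intermediate set $T$.
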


\begin{proof}
Under  inclusion the initial sections of $\leq$ form a maximal chain $\{B_j\}_{j\in J}$ of biclosed sets which is also a maximal chain of the poset $(\mathcal{P}(\widehat{\Phi}),\subset)$ (Theorem \ref{reflectionorderaffine} (1),(2)). We show that (regarded as a poset itself) this chain is chain complete (i.e. any subchain of it has a least upper bound and a greatest lower bound in $\{B_j\}_{j\in J}$). Suppose that to the contrary a collection of biclosed sets $\{B_i\}_{i\in I}, I\subset J$ of this chain has no least upper bound in this chain. By Lemma \ref{chainunionintersection}, the union $\cup_i B_i$ is biclosed and thus is not in this chain. (If it is in the chain, it has to be the least upper bound of $\{B_i\}_{i\in I}$.) Take another biclosed set $B$ in this chain. If it contains all $B_i, i\in I$ then it contains $\cup_i B_i.$ If it does not contain some $B_i, i\in I$, it has to be contained in $B_i$ and thus contained in $\cup_iB_i$. Hence we can add $\cup_iB_i$ to $\{B_j\}_{j\in J}$ to create a chain of biclosed sets with one more biclosed set and this contradicts the maximality of this chain. Similarly we can show that any subchain has a greatest lower bound.

Now we consider the subchain $\mathcal{C}=\{B_i\}_{i\in I}$ consisting of those $B_i$ contained in some $\widehat{\Psi^+}$ where  $\Psi^+$ is a positive system in $\Phi.$ (For different $B_i$ in this subchain, such $\widehat{\Psi^+}$ could be different though eventually we will see that a same $\widehat{\Psi^+}$ can be chosen.) Every biclosed set in this subchain is of the form $\Phi_u$ for some $u\in \overline{\widetilde{W}}$ (since they are bounded by some $\widehat{\Psi^+}$) and by the rank 2 nature of $\Phi$, the biclosed sets in $\{B_j\}_{j\in J}$ but not in $\mathcal{C}$ are of the form $\Phi_v'$ for some $v\in \overline{\widetilde{W}}$. Because of Lemma \ref{lem:upisnotinbottom}, take any biclosed set $B_i$ from this subchain $\mathcal{C}$, and any biclosed set $B_k$ from $\{B_j\}_{j\in J}$ but outside  this subchain $\mathcal{C}$, we have that $B_i\subset B_k.$

Take $B=\cup_{i\in I}B_i$. This is a biclosed set by Lemma \ref{chainunionintersection}. Thanks to the chain completeness of $\{B_j\}_{j\in J}$, $B\in \{B_j\}_{j\in J}.$ Denote by $w_{B_i}$ the group element or the infinite word such that $\Phi_{w_{B_i}}=B_i$. Thanks to Lemma \ref{chaininwbar}, $\vee_{i\in I}w_{B_i}$ exists and $\Phi_{\vee_{i\in I}w_{B_i}}=B.$ Therefore by definition $B\in \mathcal{C}$.

Now we consider the chain $\{\widehat{\Phi}\backslash B_j\}_{j\in J\backslash I}$ under containment. Since each $B_j, j\in J\backslash I$ is the complement of an inversion set, such a chain consists of inversion sets. Same argument as in the last paragraph shows that $\cup_{j\in J\backslash I}(\widehat{\Phi}\backslash B_j)=\widehat{\Phi}\backslash(\cap_{j\in J\backslash I}B_j)$ is an inversion set itself. Therefore $B':=\cap_{j\in J\backslash I}B_j$ is the complement of an inversion set. By chain completeness, $B'\in \{B_j\}_{j\in J}$.

Since $B_i\subset B_j$ for any $i\in I$ and $j\in J\backslash I$, we have that $B\subset B'.$ We show that there cannot be a biclosed set $B''$ such that $B\varsubsetneqq B''\varsubsetneqq B'.$ If such $B''$ exists, it is in the chain $\{B_{j}\}_{i\in J}$ by maximality. If it is an inversion set it has to be contained in $B$. If it is not an inversion set it has to contain $B'$. This is a contradiction.

On the other hand one notes that $\{B_{j}\}_{i\in J}$ is also a maximal chain in the poset $(\mathcal{P}(\widehat{\Phi}),\subset)$. Therefore either $B=B'$ or $B=B'\backslash \{\alpha\}$. But $B=\Phi_u$ and $B'=\Phi_v'$ for some $u,v\in \overline{\widetilde{W}}$, the latter is impossible by Lemma \ref{lem:distancenot1}. Hence we have the first situation and $B=B'$ is both an inversion set and the complement of an inversion set. This forces it to be $\widehat{\Psi^+}$ for some positive system $\Psi^+$ in $\Phi.$

Since $B$ is in the chain $\{B_j\}_{j\in J}$ and  every biclosed set in this chain is an initial section of $\leq$, $B=\widehat{\Psi^+}$ is an initial section of $\leq$.
\end{proof}

\begin{theorem}\label{finalmainthm}
For $\widetilde{W}$ and any finite $R\subset \widehat{\Phi}$, the braid graph is connected.
\end{theorem}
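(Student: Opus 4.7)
The plan is to combine the three key results from the previous sections: Lemma \ref{initialsectionstru}, Lemma \ref{findanelement}, and Lemma \ref{lem:finitecofinitereflorder}. The strategy is to split $R$ into a ``finite'' part and a ``cofinite'' part according to a common initial positive system, handle each part via the corresponding (finite or cofinite) case of Lemma \ref{lem:finitecofinitereflorder}, and interleave the two cases.

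Given two reflection orders $\leq_1, \leq_2$ on $\widehat{\Phi}$, I would first reduce to the case where both share a common initial positive system $\widehat{\Psi^+}$. Using Lemma \ref{initialsectionstru}, each $\leq_i$ has some $\widehat{\Psi_i^+}$ as initial section; the reduction to a common $\Psi^+$ is achieved by chaining the two choices through a sequence of positive systems related by elementary simple-root reflections, combined with the splicing of reflection orders along a common initial section (Remark 2.10 of \cite{dyerhecke}). With this reduction in place, split $R = R_1 \uplus R_2$ with $R_1 := R \cap \widehat{\Psi^+}$ and $R_2 := R \cap \widehat{-\Psi^+}$. Note that $\widehat{\Psi^+} = \Phi_{w_0}$ for the maximal $w_0 \in \overline{\widetilde{W}}$ associated to $\Psi^+$ (Theorem \ref{thm:bijection} and the proof of Theorem \ref{maximalthm}).

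For the finite part $R_1$, the initial section of $\leq_i$ ending at the largest element of $R_1$ lies inside $\widehat{\Psi^+}$, hence is of the form $\Phi_{u_i}$ with $u_i \leq w_0$ in $\overline{\widetilde{W}}$. Lemma \ref{findanelement} produces $x_i \in \widetilde{W}$ whose reduced expressions realize $\leq_i|_{R_1}$, and by Lemma \ref{lem:boundingrp} the join $w := x_1 \vee x_2 \in \widetilde{W}$ exists. Then $(R_1, \leq_1, \leq_2, \Phi_w)$ is a finite quadruple, and $I := \widehat{\Psi^+}$ serves as a common initial section containing $\Phi_w$. Dually, applying the cofinite version of Lemma \ref{findanelement} to $R_2$ (where the relevant initial sections must be of the form $\Phi_{v_i}'$ by Lemma \ref{lem:upisnotinbottom}, since they properly contain $\widehat{\Psi^+} = \Phi_{w_0}$), I obtain a cofinite quadruple over $R_2$.

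To conclude, I invoke Lemma \ref{lem:finitecofinitereflorder}: the finite case applied to $(R_1, \leq_1, \leq_2, \Phi_w)$ with $R' = R_2$ delivers the required braid-equivalence \emph{provided} $\leq_1|_{R_2} = \leq_2|_{R_2}$, and symmetrically for the cofinite case with $R' = R_1$. As these compatibility conditions are circular, the main obstacle is interleaving the two cases. I would argue by induction on the total number of pairs in $R$ ordered differently by $\leq_1$ and $\leq_2$, at each step constructing an intermediate reflection order that agrees with $\leq_2$ on one of $R_1, R_2$ while adjusting the other by a single dihedral-reversal braid move. The existence of these intermediate reflection orders with precisely prescribed restrictions to the finite set $R$ is the hard part, but it is ensured by the explicit classification in rank-3 affine of biclosed sets as either $\Phi_u$ or $\Phi_u'$ (Theorem \ref{thm:bijection} combined with Lemma \ref{lem:upisnotinbottom}), which allows any compatible combination of finite and cofinite initial data to be realized by a genuine reflection order via Theorem \ref{reflectionorderaffine}.
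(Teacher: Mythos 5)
Your identification of the three ingredients (Lemmas \ref{initialsectionstru}, \ref{findanelement}, \ref{lem:finitecofinitereflorder}) and of the splitting $R=R_1\uplus R_2$ along $\widehat{\Psi^+}$ is right, but there are two genuine gaps. First, the ``reduction to a common initial positive system'' is not a cost-free preprocessing step. Passing from a reflection order with initial section $\widehat{\Psi_1^+}$ to one with initial section $\widehat{\Psi_2^+}$ must itself be realized by edges of the braid graph, and the only available move is the reversal of the infinite dihedral substring corresponding to $\widehat{\{\alpha_1,-\alpha_1\}}$ for a simple root $\alpha_1$ of $\Psi_1^+$. That reversal is a legal single edge only after the elements of $R$ lying on $\widehat{\alpha_1}$ and $\widehat{-\alpha_1}$ have been repositioned so that they form a dihedral substring of $(R,\leq)$ (pushed to the end of the $\widehat{\Psi_1^+}$-part and the beginning of the $\widehat{\Psi_1^-}$-part, respectively) --- and that repositioning already requires the full finite/cofinite quadruple machinery. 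This is precisely the content of the paper's reverse induction on $|\Psi_1^+\cap\Psi_2^+|$, which changes one simple root at a time; your one-sentence ``chaining through elementary simple-root reflections'' skips the part of the argument that actually carries the weight.

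Second, in the equal-positive-system case the circularity you correctly detect is not resolved by an induction on discordant pairs; it is broken by a single explicit splice. By Remark 2.10 of \cite{dyerhecke} there is a unique reflection order $\leq_3$ with $\leq_3|_{\widehat{\Psi^+}}=\leq_2|_{\widehat{\Psi^+}}$ and $\leq_3|_{\widehat{\Psi^-}}=\leq_1|_{\widehat{\Psi^-}}$. Then $(R_1,\leq_1,\leq_3,\Phi_w)$ is a finite quadruple with $R'=R_2$ satisfying $\leq_1|_{R_2}=\leq_3|_{R_2}$ by construction, so Lemma \ref{lem:finitecofinitereflorder} connects $(R,\leq_1)$ to $(R,\leq_3)$; and $(R_2,\leq_2,\leq_3,\Phi_{w'}')$ is a cofinite quadruple with $R'=R_1$ satisfying $\leq_3|_{R_1}=\leq_2|_{R_1}$, connecting $(R,\leq_3)$ to $(R,\leq_2)$. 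No interleaving induction is needed, and your fallback claim that ``any compatible combination of finite and cofinite initial data'' can be realized by a genuine reflection order is exactly the unproved assertion your induction would rest on --- Theorem \ref{reflectionorderaffine} gives you reflection orders from maximal chains of biclosed sets, not from arbitrary prescribed restrictions to a finite set. You should replace the discordant-pair induction with the splice, and replace the informal reduction step with the induction on $|\Psi_1^+\cap\Psi_2^+|$ together with the explicit repositioning of $\widehat{\pm\alpha_1}\cap R$ before the infinite dihedral reversal.
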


\begin{proof}
Now let $(R,\leq_1)$ and $(R,\leq_2)$ be two vertices of the braid graph.  By Lemma \ref{initialsectionstru} we can suppose that $\widehat{\Psi_1^+}$ is an initial section of $\leq_1$ and $\widehat{\Psi_2^+}$ is an initial section of $\leq_2$  where $\Psi_1^+,\Psi^+_2$ are two positive systems of $\Phi$.

We prove the theorem by reverse induction on $|\Psi^+_1\cap\Psi^+_2|$.
First suppose that $\Psi_1^+=\Psi_2^+.$
Let $\Psi^+=\Psi_1^+=\Psi_2^+.$

Let $R\cap \widehat{\Psi^+}=\{\beta_1,\beta_2,\dots,\beta_t\}$ with $\beta_1<_1\beta_2<_1\cdots<_1\beta_t$ (resp. $R\cap \widehat{\Psi^+}=\{\gamma_1,\gamma_2,\dots,\gamma_t\}$ with $\gamma_1<_2\gamma_2<_2\cdots<_2\gamma_t$). Since $\leq_{1,\beta_t}\subset \widehat{\Psi^+}$ (resp. $\leq_{2,\gamma_t}\subset \widehat{\Psi^+}$), it is an inversion set. By Lemma \ref{findanelement} one can find $x\in \widetilde{W}$ (resp. $y\in \widetilde{W}$) such that there exists a reduced expression $s_1s_2\cdots s_l=x$ (resp. $r_1r_2\cdots r_{l'}=y$) and $\beta_i=s_1s_2\cdots s_{j_{i}-1}(\alpha_{s_{j_{i}}}), j_1<j_2<\dots<j_t$ (resp. $\gamma_i=r_1r_2\cdots r_{k_{i}-1}(\alpha_{r_{k_{i}}}), k_1<k_2<\dots<k_t$). Furthermore $\Phi_x\subset \widehat{\Psi^+}$ (resp. $\Phi_y\subset \widehat{\Psi^+}$).

By \cite{dyerhecke} Remark 2.10 there exists a unique reflection order $\leq_3$ on $\widehat{\Phi}$ such that  $\leq_3|_{R\cap\widehat{\Psi^+}}=\leq_2|_{R\cap\widehat{\Psi^+}}$ and $\leq_3|_{R\cap\widehat{\Psi^-}}=\leq_1|_{R\cap\widehat{\Psi^-}}$.
Since $\Phi_{x},\Phi_{y}\subset \widehat{\Psi^+}$, they admit a join $w$ in $\widetilde{W}$. Then one sees that
$(R\cap \widehat{\Psi^+},\leq_1,\leq_3,w)$ is a finite quadruple. Also note that $\leq_1\beta_t, \leq_3\gamma_t, \Phi_w\subset \widehat{\Psi^+}$. By Lemma \ref{lem:finitecofinitereflorder} we see that $(R,\leq_1)$ and $(R,\leq_3)$ are in the same connected component of the braid graph.

Let $R\cap \widehat{\Psi^-}=\{\beta_1',\beta_2',\dots,\beta_q'\}$ with $\beta_1'<_2\beta_2'<_2\cdots<_2\beta_q'$ (resp. $R\cap \widehat{\Psi^-}=\{\gamma_1',\gamma_2',\dots,\gamma_q'\}$ with $\gamma_1'<_3\gamma_2'<_3\cdots<_3\gamma_q'$). Since $\leq_{2,\beta_1'}\supset \widehat{\Psi^+}$ (resp. $\leq_{3,\gamma_1'}\supset \widehat{\Psi^+}$), it is the complement of an inversion set. By Lemma \ref{findanelement} one can find $x'\in \widetilde{W}$ (resp. $y'\in \widetilde{W}$) such that there exists a reduced expression $s_1s_2\cdots s_m=x'$ (resp. $r_1r_2\cdots r_{m'}=y'$) and $\beta_{q+1-i}'=s_1s_2\cdots s_{j_{i}-1}(\alpha_{s_{j_{i}}}), j_1<j_2<\dots<j_q$ (resp. $\gamma_{q+1-i}'=r_1r_2\cdots r_{k_{i}-1}(\alpha_{r_{k_{i}}}), k_1<k_2<\dots<k_q$). Furthermore $\Phi_{x'}\subset \widehat{\Psi^-}$ (resp. $\Phi_{y'}\subset \widehat{\Psi^-}$).

Since $\Phi_{x'},\Phi_{y'}\subset \widehat{\Psi^-}$ by Lemma \ref{findanelement}, they admit a join $w'$ in $\widetilde{W}$. Then
$(R\cap \widehat{\Psi^-},\leq_2,\leq_3,w')$ is a cofinite quadruple. Also note that $\leq_2\beta_1', \leq_3\gamma_1', \Phi_{w'}'\supset \widehat{\Psi^+}$. By Lemma \ref{lem:finitecofinitereflorder}   we see that $(R,\leq_2)$ and $(R,\leq_3)$ are in the same connected component of the braid graph and we are done with this case.

Now suppose that $h=|\Psi_1^+\cap\Psi_2^+|<\frac{|\Phi|}{2}$ and that the theorem holds for $(R,\leq_1)$ and $(R,\leq_2)$ with $|\Psi_1^+\cap\Psi_2^+|>h$.

Assume that $\Psi_1^+=\{\alpha_1,\alpha_2,\dots,\alpha_n\}$ where $\alpha_1<\alpha_2<\dots<\alpha_n$ and $<$ is one of the two reflection orders on $\Psi_1^+$. So $\alpha_1,\alpha_n$ are simple and $\alpha_2=s_{\alpha_1}(\alpha_n),\alpha_3=s_{\alpha_1}s_{\alpha_n}(\alpha_1),\dots$. Assume that $\Psi_2^+=g\Psi_1^+, g\in W$. Without loss of generality assume that $s_{\alpha_1}$ is a left prefix of $g$. (Otherwise we can choose the other reflection order $\alpha_n<\alpha_{n-1}<\dots<\alpha_1$ on $\Psi^+_1$ and the same argument below works.) Assume that $g=\underbrace{s_{\alpha_1}s_{\alpha_n}s_{\alpha_1}\cdots}_{\text{length}\,k}$ is a reduced expression.
Then $\Psi_2^+=\{-\alpha_1,-\alpha_2,\dots,-\alpha_k,\alpha_{k+1},\dots,\alpha_n\}$ for some $1\leq k\leq n$.

Assume that $R\cap \widehat{\Psi_1^+}=\{\beta_1,\beta_2,\dots,\beta_t\}$ with $\beta_1<_1\beta_2<_1\cdots<_1\beta_t.$ By Lemma \ref{findanelement} we can find some $w\in \widetilde{W}$ such that $s_1s_2\cdots s_l$ is a reduced expression of $w$ and
$\beta_i=s_1s_2\cdots s_{j_{i}-1}(\alpha_{s_{j_{i}}}), j_1<j_2<\dots<j_t$. Furthermore $\Phi_w\subset \widehat{\Psi_1^+}$.
Note that $\{\beta_1,\beta_2,\dots,\beta_t\}\backslash \widehat{\alpha_1}\subset \widehat{(\Psi^+_{1})_{\{\alpha_1\},\emptyset}}=\Phi_{u}$ where $u$ is an infinite reduced word.
Hence $\{\beta_1,\beta_2,\dots,\beta_t\}\backslash \widehat{\alpha_1}$ is
contained in some finite prefix $y$ of $u$.
Since $\Phi_w,\Phi_y\subset \widehat{\Psi_1^+}$, the elements $w,y$ admit a join $z\in \widetilde{W}$.
Extend the chain of biclosed sets $\Phi_y\subset \Phi_z\subset \widehat{\Psi_1^+}$ to a maximal chain of biclosed sets.
Such a maximal chain gives rise to
a reflection order $\leq_3$ which has $\widehat{\Psi_1^+},\Phi_z$ and $\Phi_{y}$ as initial sections. (Assume that under $\leq_3$, $\beta_{\sigma(1)}<_3 \beta_{\sigma(2)} <_3 \cdots <_3\beta_{\sigma(t)}$ where $\sigma$ is a permutation of $\{1,2,\dots, t\}$.) By \cite{dyerhecke} Remark 2.10 there exists a unique reflection order $\leq_4$ such that $\leq_{4}|_{\widehat{\Psi_1^+}}=\leq_3|_{\widehat{\Psi_1^+}},$ $\leq_4|_{\widehat{\Psi_1^-}}=\leq_1|_{\widehat{\Psi_1^-}}$. Therefore $(R\cap\widehat{\Psi_1^+},\leq_1,\leq_4,\Phi_z)$ is a finite quadruple. Also note that $\leq_1\beta_t,\leq_4\beta_{\sigma(t)},\Phi_z\subset \widehat{\Psi_1^+}$.  So by Lemma \ref{lem:finitecofinitereflorder}  $(R,\leq_1)$ and $(R,\leq_4)$ are in the same connected component of the braid graph. Comparing $(R,\leq_1)$ and $(R,\leq_4)$,
one sees that $\widehat{\alpha_1}\cap (R\cap \widehat{\Psi_1^+})$ is moved to the end of  $\leq_4|_{R\cap \widehat{\Psi_1^+}}$ (while the total order on $R\cap \widehat{\Psi_1^-}$ remains unchanged and $(\alpha_1)_0<_4(\alpha_1)_0+\delta<_4(\alpha_1)_0+2\delta<_4\dots$).

Now we can apply the above techniques to $R\cap \widehat{\Psi_1^-}$. Assume that $R\cap \widehat{\Psi_1^-}=\{\gamma_1,\gamma_2,\dots,\gamma_q\}$ with $\gamma_q<_1\gamma_{q-1}<_1\cdots <_1\gamma_1$ (also $\gamma_q<_4\gamma_{q-1}<_4\cdots <_4\gamma_1$ since $\leq_4$ coincides with $\leq_1$ on $\widehat{\Psi^-_1}$).
By Lemma \ref{findanelement} can find an element $w'\in \widetilde{W}$ such that $s_1's_2'\cdots s_m'$ is a reduced expression of $w'$ and
$\gamma_i=s_1's_2'\cdots s_{k_i-1}'(\alpha_{s_{k_i}'}),k_1<k_2<\dots <k_q.$
One notes that $\{\gamma_1,\gamma_2,\dots,\gamma_q\}\backslash\widehat{-\alpha_1}\subset \widehat{(\Psi^-_1)_{\{-\alpha_1\},\emptyset}}=\Phi_{u'}$ where $u'$ is an infinite reduced word.
So the finite set $\{\gamma_1,\gamma_2,\dots,\gamma_q\}\backslash\widehat{-\alpha_1}$ is  contained in some $\Phi_{y'}$ where $y'$ is a finite prefix of $u'$. Since $\Phi_{w'},\Phi_{y'}$ are both contained in $\widehat{\Psi^-_1}$, the elements $w',y'$ admit a join $z'$ in $\widetilde{W}$.
Extend the chain of biclosed sets $\widehat{\Psi_1^+}\subset \Phi_{z'}'\subset \Phi_{y'}'$ to a maximal chain of biclosed sets.
 This maximal chain gives rise to a reflection order $\leq_5$ having $\widehat{\Psi^-_1},\Phi_{z'}$ and $\Phi_{y'}$ as final sections. (Assume that under $\leq_5$, $\gamma_{\sigma(q)}<_5 \gamma_{\sigma(q-1)}<_5 \cdots <_5 \gamma_{\sigma(1)}$ where $\sigma$ is a permutation of $\{1,2,\dots,q\}$.) By \cite{dyerhecke} Remark 2.10 there exists a unique reflection order  $\leq_6$ with $\leq_6|_{\widehat{\Psi_1^+}}=\leq_4|_{\widehat{\Psi_1^+}}$ and $\leq_6|_{\widehat{\Psi^-_1}}=\leq_5|_{\widehat{\Psi^-_1}}$. Hence $(R\cap\widehat{\Psi_1^-},\leq_4,\leq_6,\Phi_{z'}')$ is a cofinite quadruple. Also note that $\leq_4\gamma_q, \leq_6\gamma_{\sigma(q)}, \Phi_{z'}'\supset \widehat{\Psi_1^+}$. Therefore by Lemma \ref{lem:finitecofinitereflorder}, $(R,\leq_6)$ and $(R,\leq_4)$ are in the same connected component of the braid graph. Consequently $(R,\leq_6)$ and $(R,\leq_1)$ are in the same connected component of the braid graph. Comparing $(R,\leq_1)$ and $(R,\leq_6)$, one sees that

(1) $\widehat{\alpha_1}\cap (R\cap \widehat{\Psi_1^+})$ is moved to the end of  $\leq_6|_{R\cap \widehat{\Psi_1^+}}$ (while $(\alpha_1)_0<_6(\alpha_1)_0+\delta<_6(\alpha_1)_0+2\delta<_6\dots$);

(2) $\widehat{-\alpha_1}\cap (R\cap \widehat{\Psi^-})$ is moved to the beginning of $\leq_6|_{R\cap \widehat{\Psi_1^-}}$ (while
$\cdots<_6(-\alpha_1)_0+2\delta<_6(-\alpha_1)_0+\delta<_6(-\alpha_1)_0$).

Note that the infinite dihedral reflection subgroup $W'$ generated by $s_{(\alpha_1)_0},s_{(-\alpha_1)_0}$ has the positive system $\widehat{\{\alpha_1,-\alpha_1\}}$ and one of the two reflection orders is given by
$$(\alpha_1)_0<_6(\alpha_1)_0+\delta<_6(\alpha_1)_0+2\delta<_6\cdots<_6(-\alpha_1)_0+2\delta<_6(-\alpha_1)_0+\delta<_6(-\alpha_1)_0. $$

Construct a reflection order $\leq_7$ from $\leq_6$ by simply reversing the above subchain of $\leq_6$. This indeed gives a reflection order as one can verify that the initial sections of $\leq_7$ form a maximal chain of biclosed sets.  Such reflection order $\leq_7$ has an initial section $\widehat{\{-\alpha_1,\alpha_2,\dots,\alpha_n\}}$.
The positive system $\Psi_3:=\{-\alpha_1,\alpha_2,\dots,\alpha_n\}$ of $\Phi$ has simple roots $-\alpha_1,\alpha_2$. One of its reflection order is given by
$\alpha_2<\alpha_3<\dots<\alpha_n<-\alpha_1.$
Now reversing the dihedral substring of $(R,\leq_6)$ corresponding to $W'$, we see that $(R,\leq_6)$ and $(R,\leq_7)$ are connected in the braid graph.

Since $|\Psi_3^+\cap \Psi_2^+|=|\Psi^+_1\cap \Psi^+_2|+1$, by induction $(R,\leq_7)$ and $(R,\leq_2)$ are connected in the braid graph and we are done.
\end{proof}

\begin{example*}
We give an example illustrating the process described in  the above proof. Let $\Phi$ be of type $A_2$ and $\alpha,\beta$ be two simple roots of $\Phi^+.$
Consider the maximal chain of biclosed sets in $\widehat{\Phi}$:
$$\emptyset\subset \Phi_{s_{\alpha}}\subset \Phi_{s_{\alpha}s_{\beta}}\subset \Phi_{s_{\alpha}s_{\beta}s_{\alpha}}\subset \Phi_{s_{\alpha}s_{\beta}s_{\alpha}s_{\delta-\alpha-\beta}}\subset \Phi_{s_{\alpha}s_{\beta}s_{\alpha}s_{\delta-\alpha-\beta}s_{\alpha}}\subset \dots$$
$$\subset\Phi_{s_{\alpha}s_{\beta}s_{\alpha}s_{\delta-\alpha-\beta}s_{\alpha}s_{\beta}s_{\alpha}s_{\delta-\alpha-\beta}\cdots}(=\Phi_{s_{\delta-\alpha-\beta}s_{\alpha}s_{\beta}s_{\alpha}s_{\delta-\alpha-\beta}s_{\alpha}s_{\beta}s_{\alpha}\cdots}')\subset$$
$$\cdots \subset \Phi_{s_{\delta-\alpha-\beta}s_{\alpha}s_{\beta}s_{\alpha}}'\subset\Phi_{s_{\delta-\alpha-\beta}s_{\alpha}s_{\beta}}'\subset\Phi_{s_{\delta-\alpha-\beta}s_{\alpha}}'\subset\Phi_{s_{\delta-\alpha-\beta}}'\subset \widehat{\Phi}.$$
This gives rise to a reflection order $\leq_1$:
$$\alpha<_1\alpha+\beta<_1\beta<_1\alpha+\beta+\delta<_1\alpha+\delta<_1\alpha+\beta+2\delta<_1\beta+\delta<_1\alpha+\beta+3\delta<_1\cdots$$
$$<_1-\beta+2\delta<_1-\alpha-\beta+3\delta<_1-\alpha+\delta<_1-\alpha-\beta+2\delta<_1-\beta+\delta<_1-\alpha-\beta+\delta.$$
Let $\Psi^+_1=\{\alpha,\alpha+\beta,\beta\}$ be a positive system of $\Phi$.
It is useful to note that $\widehat{\Psi_1^+}(=\Phi_{s_{\alpha}s_{\beta}s_{\alpha}s_{\delta-\alpha-\beta}s_{\alpha}s_{\beta}s_{\alpha}s_{\delta-\alpha-\beta}\cdots})$ is an initial section of $\leq_1.$

Now consider another maximal chain of biclosed sets in $\widehat{\Phi}$:
$$\emptyset\subset \Phi_{s_{\beta}}\subset \Phi_{s_{\beta}s_{\alpha}}\subset \Phi_{s_{\beta}s_{\alpha}s_{\delta-\alpha-\beta}}\subset \Phi_{s_{\beta}s_{\alpha}s_{\delta-\alpha-\beta}s_{\alpha}}\subset\cdots$$
$$\subset\Phi_{s_{\beta}s_{\alpha}s_{\delta-\alpha-\beta}s_{\alpha}s_{\beta}s_{\alpha}s_{\delta-\alpha-\beta}s_{\alpha}\cdots}(=\Phi_{s_{\alpha}s_{\delta-\alpha-\beta}s_{\alpha}s_{\beta}s_{\alpha}s_{\delta-\alpha-\beta}s_{\alpha}s_{\beta}\cdots}')\subset$$
$$\cdots\subset\Phi_{s_{\alpha}s_{\delta-\alpha-\beta}s_{\alpha}s_{\beta}}' \subset \Phi_{s_{\alpha}s_{\delta-\alpha-\beta}s_{\alpha}}'\subset\Phi_{s_{\alpha}s_{\delta-\alpha-\beta}}'\subset \Phi_{s_{\alpha}}'\subset\widehat{\Phi}$$
This gives rise to a reflection order $\leq_2$:
$$\beta<_2\alpha+\beta<_2\beta+\delta<_2-\alpha+\delta<_2\beta+2\delta<_2\alpha+\beta+\delta<_2\beta+3\delta<_2-\alpha+2\delta\cdots$$
$$<_2-\beta+3\delta<_2\alpha+\delta<_2-\beta+2\delta<_2-\alpha-\beta+\delta<_2-\beta+\delta<_2\alpha.$$
Let $\Psi^+_2=\{-\alpha,\alpha+\beta,\beta\}$ be a positive system of $\Phi$.
$\widehat{\Psi_2^+}(=\Phi_{s_{\beta}s_{\alpha}s_{\delta-\alpha-\beta}s_{\alpha}s_{\beta}s_{\alpha}s_{\delta-\alpha-\beta}s_{\alpha}\cdots})$ is an initial section of $\leq_2.$

Let $R=\{\alpha,\beta,\alpha+\beta+\delta,\alpha+\delta,-\alpha+\delta,-\beta+\delta,-\alpha-\beta+\delta\}$.
Restricting $\leq_1$ and $\leq_2$ to $R$ we obtain
$$\alpha<_1\beta<_1\alpha+\beta+\delta<_1\alpha+\delta<_1-\alpha+\delta<_1-\beta+\delta<_1-\alpha-\beta+\delta,$$
$$\beta<_2-\alpha+\delta<_2\alpha+\beta+\delta<_2\alpha+\delta<_2-\alpha-\beta+\delta<_2-\beta+\delta<_2\alpha.$$

We illustrate how to convert $(R,\leq_1)$ to $(R,\leq_2)$ by successively reversing the dihedral substrings.
Note that $R\cap \widehat{\Psi^+_1}=\{\alpha,\beta,\alpha+\beta+\delta,\alpha+\delta\}$ and we have that $\alpha<_1\beta<_1\alpha+\beta+\delta<_1\alpha+\delta$.
These roots are contained in the inversion set $\Phi_{s_{\alpha}s_{\beta}s_{\alpha}s_{\delta-\alpha-\beta}s_{\alpha}}$. Set $w=s_{\alpha}s_{\beta}s_{\alpha}s_{\delta-\alpha-\beta}s_{\alpha}$ and $y=s_{\beta}s_{\alpha}s_{\delta-\alpha-\beta}s_{\beta}$. Then $(R\cap \widehat{\Psi^+_1})\backslash \widehat{\alpha}=\{\beta,\alpha+\beta+\delta\}\subset \Phi_y=\{\beta,\alpha+\beta,\beta+\delta,\alpha+\beta+\delta\}$.
The elements $w$ and $y$ are both bounded by $s_{\alpha}s_{\beta}s_{\alpha}s_{\delta-\alpha-\beta}s_{\alpha}s_{\beta}s_{\alpha}s_{\delta-\alpha-\beta}\cdots$.
Take their join $z=s_{\alpha}s_{\beta}s_{\alpha}s_{\delta-\alpha-\beta}s_{\alpha}s_{\beta}s_{\alpha}=s_{\beta}s_{\alpha}s_{\delta-\alpha-\beta}s_{\beta}s_{\delta-\alpha-\beta}s_{\alpha}s_{\beta}.$

Now we consider another maximal chain of biclosed sets in $\widehat{\Phi}$ (containing  $\Phi_z, \Phi_y$ and $\widehat{\Psi_1^+}$):
$$\emptyset\subset \Phi_{s_{\beta}}\subset \Phi_{s_{\beta}s_{\alpha}}\subset \Phi_{s_{\beta}s_{\alpha}s_{\delta-\alpha-\beta}}\subset \Phi_{s_{\beta}s_{\alpha}s_{\delta-\alpha-\beta}s_{\beta}}\subset \Phi_{s_{\beta}s_{\alpha}s_{\delta-\alpha-\beta}s_{\beta}s_{\delta-\alpha-\beta}}$$$$\subset \Phi_{s_{\beta}s_{\alpha}s_{\delta-\alpha-\beta}s_{\beta}s_{\delta-\alpha-\beta}s_{\alpha}}\subset \Phi_{s_{\beta}s_{\alpha}s_{\delta-\alpha-\beta}s_{\beta}s_{\delta-\alpha-\beta}s_{\alpha}s_{\beta}}\subset\cdots$$
$$\subset\Phi_{s_{\alpha}s_{\beta}s_{\alpha}s_{\delta-\alpha-\beta}s_{\alpha}s_{\beta}s_{\alpha}s_{\delta-\alpha-\beta}\cdots}(=\Phi_{s_{\delta-\alpha-\beta}s_{\alpha}s_{\beta}s_{\alpha}s_{\delta-\alpha-\beta}s_{\alpha}s_{\beta}s_{\alpha}\cdots}')\subset$$
$$\cdots \subset \Phi_{s_{\delta-\alpha-\beta}s_{\alpha}s_{\beta}s_{\alpha}}'\subset\Phi_{s_{\delta-\alpha-\beta}s_{\alpha}s_{\beta}}'\subset\Phi_{s_{\delta-\alpha-\beta}s_{\alpha}}'\subset\Phi_{s_{\delta-\alpha-\beta}}'\subset \widehat{\Phi}.$$
(Note that one has more than one way to fill in the missing part of the above chain. One just picks and fixes one choice.) This gives rise to a reflection order $\leq_3$:
$$\beta<_3 \alpha+\beta<_3 \beta+\delta<_3 \alpha+\beta+\delta<_3 \alpha<_3 \alpha+\beta+2\delta<_3 \alpha+\delta<_3\cdots$$
$$<_3-\beta+2\delta<_3-\alpha-\beta+3\delta<_3-\alpha+\delta<_3-\alpha-\beta+2\delta<_3-\beta+\delta<_3-\alpha-\beta+\delta.$$
Restricting $\leq_3$ to $R$ we obtain:
$$\beta<_3 \alpha+\beta+\delta<_3 \alpha<_3 \alpha+\delta<_3-\alpha+\delta<_3-\beta+\delta<_3-\alpha-\beta+\delta. $$
$(R,\leq_1)$ and $(R,\leq_3)$ are connected in the braid graph as we can convert the former to the latter by successively reversing the dihedral substring of finite parabolic groups. (These reversals correspond to braid moves of reduced expressions.)

The infinite dihedral reflection subgroup generated by $s_{\alpha},s_{\delta-\alpha}$ has the positive system $\widehat{\{\alpha,-\alpha\}}$ and the following two reflection orders:
$$\alpha<\alpha+\delta<\alpha+2\delta<\dots<-\alpha+3\delta<-\alpha+2\delta<-\alpha+\delta,$$
$$-\alpha+\delta<-\alpha+2\delta<-\alpha+3\delta<\dots<\alpha+2\delta<\alpha+\delta<\alpha.$$

Then by reversing the dihedral substring $\alpha<_3 \alpha+\delta<_3-\alpha+\delta$ (corresponding to the above dihedral reflection subgroup) we obtain
$(R,\leq_4)$ with $\beta<_4 \alpha+\beta+\delta<_4 -\alpha+\delta<_4 \alpha+\delta<_4\alpha<_4-\beta+\delta<_4-\alpha-\beta+\delta$ where $\leq_4$ is the reflection order
$$\beta\leq_4\alpha+\beta<_4 \beta+\delta<_4 \alpha+\beta+\delta <_4 \beta+2\delta<_4 -\alpha+\delta<_4 \beta+3\delta\cdots$$
$$<_4\alpha+\delta<_4-\beta+2\delta<_4 \alpha<_4 -\beta+\delta<_4 -\alpha-\beta+\delta$$
which comes from the maximal chain of biclosed sets in $\widehat{\Phi}$ (Again there are more than one way to fill in the missing part of the chain below. One just picks and fixes one choice):
$$\emptyset\subset \Phi_{s_{\beta}}\subset \Phi_{s_{\beta}s_{\alpha}}\subset \Phi_{s_{\beta}s_{\alpha}s_{\delta-\alpha-\beta}}\subset \Phi_{s_{\beta}s_{\alpha}s_{\delta-\alpha-\beta}s_{\beta}}\subset \Phi_{s_{\beta}s_{\alpha}s_{\delta-\alpha-\beta}s_{\beta}s_{\alpha}}$$$$\subset \Phi_{s_{\beta}s_{\alpha}s_{\delta-\alpha-\beta}s_{\beta}s_{\alpha}s_{\beta}}\subset \Phi_{s_{\beta}s_{\alpha}s_{\delta-\alpha-\beta}s_{\beta}s_{\alpha}s_{\beta}s_{\delta-\alpha-\beta}}\subset\cdots$$
$$\subset\Phi_{s_{\beta}s_{\alpha}s_{\delta-\alpha-\beta}s_{\alpha}s_{\beta}s_{\alpha}s_{\delta-\alpha-\beta}s_{\alpha}\cdots}(=\Phi_{s_{\alpha}s_{\delta-\alpha-\beta}s_{\alpha}s_{\beta}s_{\alpha}s_{\delta-\alpha-\beta}s_{\alpha}s_{\beta}\cdots}')\subset$$
$$\cdots\subset  \Phi_{s_{\delta-\alpha-\beta}s_{\alpha}s_{\delta-\alpha-\beta}s_{\beta}s_{\alpha}}'\subset \Phi_{s_{\delta-\alpha-\beta}s_{\alpha}s_{\delta-\alpha-\beta}s_{\beta}}'\subset \Phi_{s_{\delta-\alpha-\beta}s_{\alpha}s_{\delta-\alpha-\beta}}'$$$$\subset \Phi_{s_{\delta-\alpha-\beta}s_{\alpha}}'\subset \Phi_{s_{\delta-\alpha-\beta}}'\subset \widehat{\Phi}$$

Now to convert $(R,\leq_4)$ to $(R,\leq_2)$ one only needs to reverse dihedral substrings of finite parabolic subgroups. Note that $\beta,\alpha+\beta+\delta$ and $-\alpha+\delta$ are contained in the inversion set $\Phi_u$ where $u=s_{\beta}s_{\alpha}s_{\delta-\alpha-\beta}s_{\beta}s_{\alpha}s_{\beta}$. By applying braid moves, this element $u$ is equal to
$s_{\beta}s_{\delta-\alpha-\beta}s_{\alpha}s_{\delta-\alpha-\beta}s_{\beta}s_{\alpha}$. Also note that $-\alpha-\beta+\delta,-\beta+\delta,\alpha$ and $\alpha+\delta$ are contained in the inversion set  $\Phi_v$ where $v=s_{\delta-\alpha-\beta}s_{\alpha}s_{\delta-\alpha-\beta}s_{\beta}s_{\alpha}$. By applying braid moves, this element $v$ is equal to $s_{\alpha}s_{\delta-\alpha-\beta}s_{\alpha}s_{\beta}s_{\alpha}$.

Therefore we have a reflection order $\leq_5$:
$$\beta<_5-\alpha+\delta<_5\beta+\delta<_5\alpha+\beta<_5\beta+2\delta<_5\alpha+\beta+\delta<_5\cdots$$
$$<_5\alpha+\delta<_5-\beta+2\delta<_5-\alpha-\beta+\delta<_5-\beta+\delta<_5\alpha.$$

This reflection order arises from the following maximal chain of biclosed sets in $\widehat{\Phi}$ (For the missing part, choose and fix a choice as above.):
$$\emptyset\subset \Phi_{s_{\beta}}\subset \Phi_{s_{\beta}s_{\delta-\alpha-\beta}}\subset \Phi_{s_{\beta}s_{\delta-\alpha-\beta}s_{\alpha}}\subset$$ $$\Phi_{s_{\beta}s_{\delta-\alpha-\beta}s_{\alpha}s_{\delta-\alpha-\beta}}\subset \Phi_{s_{\beta}s_{\delta-\alpha-\beta}s_{\alpha}s_{\delta-\alpha-\beta}s_{\beta}}\subset \Phi_{s_{\beta}s_{\delta-\alpha-\beta}s_{\alpha}s_{\delta-\alpha-\beta}s_{\beta}s_{\alpha}}\subset$$
$$\cdots\subset \Phi_{s_{\beta}s_{\alpha}s_{\delta-\alpha-\beta}s_{\alpha}s_{\beta}s_{\alpha}s_{\delta-\alpha-\beta}s_{\alpha}\cdots}(=\Phi_{s_{\alpha}s_{\delta-\alpha-\beta}s_{\alpha}s_{\beta}s_{\alpha}s_{\delta-\alpha-\beta}s_{\alpha}s_{\beta}\cdots}')\subset \cdots$$
$$\subset\Phi_{s_{\alpha}s_{\delta-\alpha-\beta}s_{\alpha}s_{\beta}s_{\alpha}}'\subset\Phi_{s_{\alpha}s_{\delta-\alpha-\beta}s_{\alpha}s_{\beta}}'\subset\Phi_{s_{\alpha}s_{\delta-\alpha-\beta}s_{\alpha}}'\subset\Phi_{s_{\alpha}s_{\delta-\alpha-\beta}}'\subset\Phi_{s_{\alpha}}'\subset\widehat{\Phi}$$

Therefore $(R,\leq_1)$ and $(R,\leq_5)$ are connected in the braid graph. But $(R,\leq_5)=(R,\leq_2)$ and so we finish the process.
\end{example*}

\section{Acknowledgements}

The author acknowledges the support from Guangdong  Natural Science Foundation  Project 2018A030313581. The paper is based on part of author's dissertation.
 The author wishes to thank his advisor Dr. Matthew Dyer for his guidance, many helpful discussions and for reading the manuscript. The author thanks Jiefang Xu, Meili Miao, Xiao Xu, Yiqing Zou and Zhisheng Wang for their encouragement and help. The author thanks the anonymous referee for his/her  useful comments and some important corrections.

\end{document}